\documentclass[11pt,reqno]{amsart}

\usepackage[T1]{fontenc}
\usepackage[utf8]{inputenc}
\usepackage{lmodern}
\usepackage{microtype}
\usepackage{amsmath,amssymb,amsfonts,amsthm,mathtools,mathrsfs}
\usepackage[a4paper,margin=1in]{geometry}
\usepackage{enumitem}
\usepackage[dvipsnames]{xcolor}
\usepackage[hypertexnames=false]{hyperref}

\allowdisplaybreaks
\numberwithin{equation}{section}

\hypersetup{
  hidelinks,
  pdftitle={The Gromov-Ros conjecture for rank-one symmetric spaces of noncompact type},
  pdfauthor={David Kalaj}
}

\newtheorem{theorem}{Theorem}[section]
\newtheorem{proposition}[theorem]{Proposition}
\newtheorem{lemma}[theorem]{Lemma}
\newtheorem{corollary}[theorem]{Corollary}

\theoremstyle{definition}
\newtheorem{definition}[theorem]{Definition}

\theoremstyle{remark}
\newtheorem{remark}[theorem]{Remark}
\newtheorem{warning}[theorem]{Warning}

\newcommand{\CH}{\mathbb{C}\mathrm{H}}
\newcommand{\HH}{\mathbb{H}\mathrm{H}}
\newcommand{\OH}{\mathbb{O}\mathrm{H}}
\newcommand{\Area}{\operatorname{Area}}
\newcommand{\Vol}{\operatorname{Vol}}
\newcommand{\Per}{\operatorname{Per}}
\newcommand{\Jac}{\operatorname{Jac}}
\newcommand{\diver}{\operatorname{div}}
\newcommand{\divSigma}{\operatorname{div}_{\Sigma}}
\newcommand{\Ric}{\operatorname{Ric}}

\newcommand{\sphere}{\mathbb S_o^{n-1}}

\newcommand{\eps}{\varepsilon}

\newcommand{\distg}{d_g}
\newcommand{\Id}{\mathrm{Id}}
\newcommand{\cof}{\operatorname{cof}}
\newcommand{\ithead}[1]{%
  \par\medskip\noindent\textit{#1}\enspace
}

\title[Gromov--Ros conjecture in rank-one symmetric spaces]{The Gromov--Ros conjecture for rank-one symmetric spaces of noncompact type}

\author{David Kalaj} 
\address{Faculty of Natural Sciences and Mathematics, University of Montenegro, 81000 Podgorica, Montenegro}
\email{davidk@ucg.ac.me}
\urladdr{https://orcid.org/0000-0002-0017-6562}

\date{}

\subjclass[2020]{Primary 53C42, 49Q10; Secondary 53C40, 58E12}
\keywords{Gromov--Ros conjecture, rank-one symmetric spaces,
complex hyperbolic space, quaternionic hyperbolic space, Cayley hyperbolic plane,
isoperimetric inequality, finite-perimeter sets, volume radius,
geometric median, cofactor trace, BV radiality, stable constant mean curvature hypersurface,
weighted Bergman spaces, contractive embeddings, Lieb--Wehrl entropy,
Faber--Krahn inequality, Bergman--Fock limit}

\begin{document}
\begin{abstract}
We prove the Gromov--Ros conjecture for every rank-one symmetric space of
noncompact type: finite-perimeter isoperimetric regions are precisely the
geodesic balls, up to ambient isometry and null sets.  The proof is organized
uniformly in the root multiplicities $(p,q)$.  We introduce the volume radius
\[
 R(r)^n=n\int_0^r\sinh^{n-1}s\,\cosh^q s\,ds,
\]
which converts the polar volume form into
$R^{n-1}dR\,d\sigma$.  Hence radial--angular stretches
$R\mapsto e^{t\varphi(\theta)}R$ have the exact Jacobian
$e^{nt\varphi}$.  Starting from a volume geometric median, a log-partition
correction produces an exactly volume-preserving family of global
bi-Lipschitz stretches.  The reduced-boundary area formula and the ambient
cofactor yield a universal pointwise trace identity depending only on
$(p,q)$ and the horizontal and vertical components of the
measure-theoretic normal.  Its specializations for $q=1,3,7$ admit explicit
strict sign certificates, forcing the normal of an isoperimetric region to
be radial almost everywhere.  Isotropy invariance in BV and a
one-dimensional weighted endpoint comparison then force a single ball.
For complex hyperbolic space we additionally prove that every smooth bounded
fixed-volume stable critical domain is a geodesic ball.  The resulting
complex-hyperbolic isoperimetric theorem removes the geometric hypothesis in
several sharp weighted-Bergman contraction, Faber--Krahn, and Lieb--Wehrl
inequalities, and their high-weight scaling limit recovers holomorphic
Gaussian hypercontractivity.
\end{abstract}

\maketitle

\section{Introduction}

Let $M$ be a rank-one symmetric space of noncompact type and let
\[
 I_M(V)=\inf\{\Per(E):\Vol(E)=V\},\qquad V>0,
\]
be its isoperimetric profile.  The assertion that geodesic balls realize this profile, with equality only
for geodesic balls, is commonly called the \emph{Gromov--Ros conjecture}.
Gromov explicitly singled out the sharp isoperimetric problem in symmetric
spaces, observing that it was unknown even in real rank one; see
\cite[\S6.28(1/2+), pp.~331--332]{GromovMetricStructures}.
Ros later recorded the complex-hyperbolic case among the natural open
isoperimetric conjectures \cite[p.~177]{Ros2005}.
Following Silini \cite[p.~24]{SiliniThesis2024}, we use the name
\emph{Gromov--Ros conjecture} for the resulting rank-one problem. The real-hyperbolic
case is classical; see, for example, \cite{Ros2005,Ritore2023}.  The remaining
rank-one spaces are the complex hyperbolic spaces $\CH^m$, the quaternionic
hyperbolic spaces $\HH^m$, and the Cayley hyperbolic plane $\OH^2$.

Their polar geometry is anisotropic.  In the standard normalization the
possible radial curvature eigenvalues are $-1$ and $-4$, so the two angular
root spaces have different Jacobi scale factors.  This is the obstruction to
a direct use of the usual real-hyperbolic symmetrization.  The point of the
present paper is that the variational mechanism needed for isoperimetry is
nevertheless uniform across all rank-one spaces once the radial variable is
chosen from volume rather than from curvature.

\ithead{Main theorem.}
Our principal result is the full finite-perimeter classification.

\begin{theorem}[Gromov--Ros conjecture for rank-one symmetric spaces]
\label{thm:intro-rank-one}
Let $M$ be a rank-one symmetric space of noncompact type.  If a
finite-perimeter set $E\subset M$ and a geodesic ball $B\subset M$ satisfy
\[
 0<\Vol(E)=\Vol(B)<\infty,
\]
then
\[
 \Per(E)\ge \Per(B),
\]
with equality if and only if $E$ is a geodesic ball up to an ambient isometry
and a null set.
\end{theorem}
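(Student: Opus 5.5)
We follow the route announced in the abstract. The aim is to show that the measure-theoretic normal of an isoperimetric region is radial about a suitably chosen centre, and then to pin down the region by a one-dimensional comparison.

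\textbf{Step 1: existence and reduction.} Since $M$ is homogeneous with bounded geometry, concentration-compactness gives an isoperimetric region $E$ for each volume $V$; mass escaping to infinity is recovered by translating with isometries. A standard regularity theory makes $\partial^*E$ a bounded, smooth stable constant-mean-curvature hypersurface off a closed singular set of codimension at least $8$. It therefore suffices to prove that every minimizer $E$ is a geodesic ball. The inequality $\Per(E)\ge\Per(B)$ then follows from minimality, and the equality case follows from the same classification.

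\textbf{Step 2: volume radius and stretches.} Choose as centre a volume geometric median $o$ of $E$, that is, a minimizer of $x\mapsto\int_E d(x,y)\,dy$. Its first-order condition gives $\int_E \nabla_x d(o,y)\,dy=0$, i.e.\ the unit radial directions of $E$ average to zero. In polar coordinates about $o$, pass to the volume radius $R(r)$. The volume form becomes $R^{n-1}dR\,d\sigma$, so the stretch $\Phi_t(R,\theta)=(e^{t\varphi(\theta)}R,\theta)$ has Jacobian exactly $e^{nt\varphi}$.
\begin{itemize}
\item We take $\varphi$ from the span of the first spherical harmonics (the coordinate functions restricted to the sphere).
\item We add the log-partition constant $c(t)=-\tfrac1{nt}\log\bigl(\int_E e^{nt\varphi}/\Vol(E)\bigr)$, so that $\Vol(\Phi_t E)=\Vol(E)$ exactly.
\item The maps $\Phi_t$ are global bi-Lipschitz homeomorphisms, because $R$ is a smooth increasing function of $r$ with $R\sim r$ near $0$.
\end{itemize}
Minimality then forces $\frac{d}{dt}\big|_0\Per(\Phi_tE)=0$ and $\frac{d^2}{dt^2}\big|_0\Per(\Phi_tE)\ge0$.

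\textbf{Step 3: pointwise trace identity (the main obstacle).} Use the area formula on $\partial^*E$, where the perimeter density under $\Phi_t$ is $|\cof(D\Phi_t)\nu|$. Expanding in $t$ expresses the second variation as an integral over $\partial^*E$. Split the normal $\nu$ into its radial part $\nu_r$, its horizontal angular part $\nu_h$ (the $\sinh$-scaled root space of dimension $p$) and its vertical angular part $\nu_v$ (the $\sinh2r$-scaled root space of dimension $q$). Summing over an orthonormal family of $\varphi$'s and using the median condition to kill the first-order and cross terms should leave
\[
\sum_\varphi \frac{d^2}{dt^2}\Big|_0\Per(\Phi_tE)=-\int_{\partial^*E} Q_{p,q}\bigl(r;|\nu_h|^2,|\nu_v|^2\bigr)\,d\mathcal H^{n-1},
\]
where $Q_{p,q}$ depends only on $r$, the multiplicities, and the angular components of $\nu$. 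The hard part is proving the strict sign certificate $Q_{p,q}>0$ whenever $\nu_h$ or $\nu_v$ is nonzero. This reduces to explicit inequalities between $\sinh$, $\cosh$ and the integral $R(r)^n$ for $q=1,3,7$. I would first try to verify them by series expansion at $r=0$ and asymptotics as $r\to\infty$, combined with monotonicity of suitable ratios. Given the certificate, stability forces $\nu_h=\nu_v=0$ $\mathcal H^{n-1}$-a.e., so $\nu$ is radial a.e.

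\textbf{Step 4: from radial normal to one ball.} A radial normal means the distributional derivative of $\chi_E$ along every isotropy Killing field of $o$ vanishes. Since the isotropy group acts transitively on spheres, $E$ is a union of concentric annuli $\{a_i<r<b_i\}$ up to a null set. Among radial sets of fixed volume, the perimeter $\sum_i(A(a_i)+A(b_i))$ is minimized only by a single ball centred at $o$, where $A(r)$ is the area of the sphere of radius $r$. This follows from a one-dimensional endpoint comparison, using that $A$ is increasing and that $A$ as a function of the enclosed volume is increasing, with $A(0)=0$. This gives the classification and hence the theorem.
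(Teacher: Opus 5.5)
Your outline matches the paper's proof step for step: the volume radius, a volume geometric median, the log-partition correction that makes the stretches exactly volume preserving, the cofactor area formula on $\partial^*E$, a pointwise trace whose sign forces $\nu_E=\pm N$, isotropy invariance in BV, and an outermost-endpoint comparison. The regularity theory in Step~1 is superfluous; only existence and boundedness of minimizers are used.

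The gap is Step~3, which carries the entire content of the theorem: you neither compute the integrand nor prove its sign. For the computation, note that along the $j$-th axis the exponent is $t\varphi_j+s_E(te_j)$. The median enters only through $Ds_E(0)=0$ and through $\operatorname{tr}D^2s_E(0)=-n\operatorname{tr}\operatorname{Cov}_{\mu_E}(\Theta)=-n$, which holds because $|\Theta|=1$ and $\int\Theta\,d\mu_E=0$. Hence the density is $\sum_jJ_\Pi''(0,\varphi_j)-nJ_\Pi'(0,1)$, and it is the correction term $-nJ_\Pi'(0,1)$ that makes it vanish at radial normals. The mixed angular terms do not die by the median condition. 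They die pointwise, by $\sum_j\varphi_j^2=1$, $\sum_j\varphi_j\nabla_{\mathbb S}\varphi_j=0$, $\sum_j\nabla_{\mathbb S}\varphi_j\otimes\nabla_{\mathbb S}\varphi_j=\mathrm{Id}$, and $\mathcal H_\theta\perp\mathcal V_\theta$. Writing $g=\mathfrak a^2dy^2+b^2g_{\mathcal H}+c^2g_{\mathcal V}$ with $y=\log R$ and $\mathfrak a b^pc^q=e^{ny}$ then yields an explicit universal formula in $\log\mathfrak a,\log b,\log c$, their first two $y$-derivatives, and the ratios $\mathfrak a/b$ and $\mathfrak a/c$.

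For the sign, your plan (series at $r=0$, asymptotics at $r=\infty$, monotonicity of unspecified ratios) cannot establish an inequality that must hold jointly in $(\beta,\gamma)$ and at every intermediate radius. The missing idea is that for odd $q$ the volume radius is algebraic in $x=\sinh^2r$. Indeed $R^n=\frac n2\int_0^xt^{(n-2)/2}(1+t)^{(q-1)/2}\,dt$ equals $x^{n/2}$ for $q=1$ (so $R=\sinh r$), $x^{n/2}\bigl(1+\frac{n}{n+2}x\bigr)$ for $q=3$, and $x^8L(x)/165$ for $q=7$. Thus $d/dy$ is a rational multiple of $d/dx$. The trace then equals minus a quadratic form in $(\beta,\gamma)$ with no constant term, divided by a positive polynomial in $x$.

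Even then the sign is not visible coefficientwise: for $q=1$ the $\beta^2$ and $\gamma^2$ coefficients $x^2-x-1$ and $4x^2-1$ are negative for small $x$. One must use the constraint $\delta=\alpha^2=1-\beta-\gamma\ge0$ and rewrite the form in the basis $\beta^2,\beta\gamma,\gamma^2,\beta\delta,\gamma\delta$ (for $q=1$, the rearrangement using $\delta c\ge\min\{0,c\}$). After this, every coefficient is a polynomial in $x$ with positive coefficients, and vanishing forces $\beta=\gamma=0$.

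A smaller omission: to speak of $\frac{d^2}{dt^2}\Per(\Phi_tE)$ you need uniform bounds on the first two $t$-derivatives of $|\cof D\Phi_t\,\nu|$ up to the pole. These come from bounded first and second $y$-derivatives of $\log\mathfrak a,\log b,\log c$ and from boundedness of $\mathfrak a/b$ and $\mathfrak a/c$. The observation that $R\sim r$ near $0$ does not by itself supply them.
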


The non-real cases are proved simultaneously.  If $(p,q)$ are the short- and
long-root multiplicities and $n=p+q+1$, the polar volume density is
\[
 \mathfrak j_{p,q}(r)=\sinh^{n-1}r\,\cosh^q r.
\]
We introduce the \emph{volume radius}
\[
 R(r)^n=n\int_0^r\mathfrak j_{p,q}(s)\,ds.
\]
It is characterized by
\[
 R^{n-1}dR=\mathfrak j_{p,q}(r)\,dr,
 \qquad
 dV=R^{n-1}dR\,d\sigma.
\]
Thus, with $y=\log R$,
\[
 dV=e^{ny}\,dy\,d\sigma.
\]
For every spherical function $\varphi$ the radial--angular stretch
\[
 F_{t,\varphi}(y,\theta)=(y+t\varphi(\theta),\theta)
\]
has the exact ambient Jacobian
\[
 \Jac F_{t,\varphi}=e^{nt\varphi}.
\]
This is the structural replacement for the special complex-hyperbolic
identity obtained from $\rho=\sinh r$; indeed $R=\sinh r$ when $q=1$.

Starting from a volume geometric median, the coordinate functions
$\varphi_j(\theta)=\langle\theta,e_j\rangle$ are balanced.  A log-partition
correction then produces an $n$-parameter family of exactly volume-preserving
global bi-Lipschitz stretches.  The reduced-boundary area formula expresses
the perimeter of the deformed set through the ambient cofactor.  Writing
\[
 g=\mathfrak a(y)^2dy^2+b(y)^2g_{\mathcal H}+c(y)^2g_{\mathcal V},
 \qquad
 \mathfrak a b^pc^q=e^{ny},
\]
and decomposing a unit normal as
\[
 \nu=\alpha N+v+w,
 \qquad
 \beta=|v|^2,
 \qquad
 \gamma=|w|^2,
\]
we obtain a single root-multiplicity trace
\[
 \mathscr T_{p,q}
 =\sum_{j=1}^nJ_\Pi''(0,\varphi_j)-nJ_\Pi'(0,1).
\]
All mixed angular terms disappear after summation by the completeness
relations for the coordinate functions on the direction sphere.

The three non-real rank-one geometries now enter only through their
multiplicities
\[
 (p,q)=(2m-2,1),\qquad (4m-4,3),\qquad (8,7).
\]
For $q=1,3,7$ the universal trace becomes the negative of an explicit
nonnegative polynomial expression, and in every case equality occurs exactly
when
\[
 \beta=\gamma=0,
\]
that is, when the measure-theoretic normal is radial.  Global isoperimetric
minimality therefore forces radiality of the reduced boundary.  Invariance
under the isotropy group then makes the set radial modulo null sets, and a
one-dimensional weighted endpoint comparison leaves only a centered ball.
This proves Theorem~\ref{thm:intro-rank-one} without any regularity assumption
on the topological boundary.

The complex-hyperbolic case may be recorded separately as follows.

\begin{theorem}[Complex-hyperbolic Gromov--Ros theorem]
\label{thm:intro-gromov-ros}
Let $m\ge2$.  Every finite-perimeter isoperimetric region in $\CH^m$ agrees,
up to an ambient isometry and a null set, with a geodesic ball.  Equivalently,
if $E\subset\CH^m$ and a geodesic ball $B$ have the same positive finite
volume, then
\[
 \Per(E)\ge\Per(B),
\]
with equality if and only if $E$ is a geodesic ball up to an ambient isometry
and a null set.
\end{theorem}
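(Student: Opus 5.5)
Theorem~\ref{thm:intro-gromov-ros} is the specialization of Theorem~\ref{thm:intro-rank-one} to $M=\CH^m$, $(p,q)=(2m-2,1)$, $n=2m$. The plan, however, is to give the argument directly in this case, following the general scheme with the simplification $R=\sinh r$. The first step is existence. By the standard compactness theory for homogeneous spaces, isoperimetric regions exist for every volume, and any minimizing sequence either concentrates or loses mass to infinity. Strict concavity-type properties of the profile then exclude splitting, so it suffices to classify minimizers $E$. The equivalence of the two formulations follows once every minimizer is shown to be a ball, because balls of each volume exist and are mutually isometric.

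The second step sets up the variation. Given a minimizer $E$, apply an isometry so that a volume geometric median sits at the base point $o$. This makes the coordinate functions $\varphi_j(\theta)=\langle\theta,e_j\rangle$ balanced against the angular volume distribution of $E$. In the coordinates $y=\log\sinh r$ we have $dV=e^{ny}dy\,d\sigma$, so the stretch $F_{t,\varphi}$ has Jacobian $e^{nt\varphi}$. Add a log-partition shift $t\mapsto y-\tfrac1n\log Z(t)$, with $Z(t)=\int e^{nt\varphi}d\mu_E$ normalized; this makes the family exactly volume preserving. The first variation in $t$ vanishes because $\varphi_j$ is balanced. Minimality then gives $\frac{d^2}{dt^2}\Per(F_t(E))|_{t=0}\ge0$ for each $j$. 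Summing over $j$ and subtracting the dilation term yields $\int_{\partial^*E}\mathscr T_{2m-2,1}\,d\mathcal H^{n-1}\ge0$. Here the reduced-boundary area formula with the cofactor of $DF_t$ is used, and the completeness relation $\sum_j\varphi_j^2=1$, $\sum_j\nabla\varphi_j\otimes\nabla\varphi_j=g_{\mathbb S}$ kills the mixed terms.

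The main obstacle is the pointwise sign of the trace. I would compute $\mathscr T_{2m-2,1}$ explicitly using $b=\sinh r$, $c=\sinh r\cosh r$ rescaled to the volume radius, and $\alpha^2+\beta+\gamma=1$. The aim is to write $-\mathscr T$ as a sum of squares, or as a nonnegative polynomial in $(\beta,\gamma)$ with coefficients in $\tanh r$, that vanishes only when $\beta=\gamma=0$. I would first check the boundary cases $\alpha=0$ and $\gamma=0$ separately, then use convexity in $(\beta,\gamma)$ on the simplex. The result is $\mathscr T\le0$ with equality only for radial normals. Combined with the integral inequality, this forces $\nu=\pm N$ for $\mathcal H^{n-1}$-a.e.\ point of $\partial^*E$.

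Finally, radial normal means the distributional derivative $D\chi_E$ is parallel to $N$. Hence $\chi_E$ is annihilated by all angular derivations, which are generated by the isotropy group $U(m)$ acting transitively on the direction sphere. So $\chi_E$ is a.e.\ invariant under it, and $E$ is a union of spherical shells $\{r\in A\}$ modulo null sets. Then a one-dimensional comparison applies. Among sets $A\subset(0,\infty)$ of fixed weighted measure $\int_A\mathfrak j\,dr$, the perimeter $\sum_{\partial A}\mathfrak j'$-type boundary weight $\sum_{s\in\partial A}\sinh^{n-1}s\cosh s$ is uniquely minimized by an interval $[0,r_0)$, since the boundary weight is increasing. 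This gives $E=B_{r_0}(o)$ a.e. and proves the theorem.
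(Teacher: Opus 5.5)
Your outline follows the paper's proof step for step. The steps are: existence of minimizers in the homogeneous space, a volume geometric median, log-partition exact-volume stretches, the cofactor area formula on $\partial^*E$, the summed trace, $U(m)$-invariance in BV, and the one-dimensional endpoint comparison. The architecture is fine. However, the one step that carries the theorem, the pointwise sign of $\mathscr T_{n-2,1}$, is left as an aim, and the tactic you propose for it fails.

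Carrying out the computation with $\mathfrak a=\tanh r$, $b=\sinh r$, $c=\sinh r\cosh r$, $x=\sinh^2 r$ and $\alpha^2=1-\beta-\gamma$ gives $-(1+x)^2\mathscr T_{n-2,1}=\mathcal R_{n,x}(\beta,\gamma)$. Its quadratic part $(x^2-x-1)\beta^2+(4x^2-x-2)\beta\gamma+(4x^2-1)\gamma^2$ equals $-(\beta+\gamma)^2$ at $x=0$ and is indefinite at $x=1$. So there is no convexity (or concavity) in $(\beta,\gamma)$ on the simplex to exploit.

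No unconstrained sum-of-squares certificate can exist either. Indeed $\mathcal R_{n,0}=s(n-s)$ with $s=\beta+\gamma$, which is negative for $s>n$, so positivity genuinely uses the constraint $\alpha^2\ge0$.

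The missing idea is to keep $\delta=\alpha^2=1-\beta-\gamma$ as a separate nonnegative variable and rewrite
\[
\mathcal R_{n,x}=(n-1)(1+x)^2\beta+\delta(1+x-x^2)\beta+L_T\gamma+\delta(1-3x^2)\gamma+x^2\gamma^2,
\]
with $L_T=(n-1)+(3n-2)x+(2n-5)x^2$. The bound $\delta c\ge\min\{0,c\}$ for $\delta\in[0,1]$ then yields $\mathcal R_{n,x}\ge c_H\beta+c_T\gamma$ with $c_H,c_T>0$ for $n\ge4$ and $x\ge0$. This gives both the sign and the equality case $\beta=\gamma=0$.

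Two smaller points.

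First, existence alone is not enough; you also need the minimizer to be bounded, which is part of the cocompact existence theory the paper cites. Otherwise $\int_E d_g(p,z)\,dV_z$ may be infinite and the volume geometric median need not exist. Conversely, no concavity of the profile is needed: in a homogeneous space, escaping pieces of a minimizing sequence can be translated back and, being bounded, placed disjointly.

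Second, the first variation vanishes by minimality, not by the balance. The balance is what gives $s_j'(0)=0$ and $\sum_j s_j''(0)=-n\operatorname{tr}\operatorname{Cov}_{\mu_E}(\Theta)=-n$. That is exactly what makes the summed second derivatives equal $\int_{\partial^*E}\bigl(\sum_jJ''_\Pi(0,\varphi_j)-nJ'_\Pi(0,1)\bigr)\,d\mathcal H^{n-1}$.
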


\ithead{A separate smooth consequence in the complex case.}
The universal finite-perimeter proof is the main geometric argument of the
paper.  Complex hyperbolic space has an additional feature: in the coordinate
$\rho=\sinh r$ the radial--angular fields admit a particularly transparent
smooth second-variation analysis.  Retaining that simpler $q=1$ description,
we also prove the following rigidity statement for stable critical domains.

\begin{theorem}\label{thm:intro-main}
Let $m\ge2$ and let $\Omega\Subset\CH^m$ have smooth compact boundary.  If
$\partial\Omega$ is critical and stable for perimeter under a fixed-volume
constraint, then $\Omega$ is a geodesic ball.
\end{theorem}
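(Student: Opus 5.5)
We argue by the classical Barbosa--do Carmo/Ros strategy of testing stability against an $m$-dimensional (in fact $n=2m$-dimensional) family of explicit variations adapted to the polar structure, now in the coordinate $\rho=\sinh r$ where $q=1$ and $R=\sinh r$. Since $\partial\Omega$ is smooth, compact and critical, it has constant mean curvature $H$. Stability means $Q(f,f)=\int_{\partial\Omega}\bigl(|\nabla f|^2-(|A|^2+\Ric(\nu,\nu))f^2\bigr)\ge0$ for every $f$ with $\int_{\partial\Omega}f=0$. Equivalently, the second variation of perimeter is nonnegative along every smooth volume-preserving ambient flow. Step one is to place the pole $o$ at a volume geometric median of $\Omega$. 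This is obtained by minimizing $x\mapsto\int_\Omega R(d(x,y))\,dV(y)$ or an equivalent convex-in-$x$ functional, which exists by properness and convexity of distance in negative curvature. With $o$ so chosen, the coordinate functions $\varphi_j(\theta)=\langle\theta,e_j\rangle$, $j=1,\dots,2m$, are balanced against the volume of $\Omega$, i.e. $\int_\Omega\varphi_j\,dV=0$.

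Step two is to build the variations. For each $j$ we take the stretch $F_{t,\varphi_j}$ in the variable $y=\log\rho$, which has Jacobian $e^{2mt\varphi_j}$. We then compose it with the radial dilation $y\mapsto y-\lambda_j(t)$, where $\lambda_j(t)=\frac1{2m}\log\bigl(\int_\Omega e^{2mt\varphi_j}dV/\Vol(\Omega)\bigr)$ is the log-partition correction. This makes each family exactly volume preserving, and the balancing gives $\lambda_j'(0)=0$. Along each such family the first variation of perimeter vanishes by criticality, so stability yields
\[
 0\le \sum_{j=1}^{2m}\frac{d^2}{dt^2}\Big|_{t=0}\Per\bigl(\Phi_{j,t}(\Omega)\bigr).
\]
Step three is to compute this sum on $\partial\Omega$ via the area formula with the ambient cofactor. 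In the smooth setting this is a direct first-and-second-order expansion of $|\cof(DF)\nu|$. The correction terms from $\lambda_j''(0)$ combine into $-2m\,J'(0,1)$, the first variation under pure dilation. Using $\sum_j\varphi_j^2=1$ and $\sum_j\varphi_j\nabla\varphi_j=0$ on the sphere, the mixed angular terms cancel, and we are left with $\int_{\partial\Omega}\mathscr T_{2m-2,1}\,d\Area$. For $q=1$ the trace is, pointwise, minus an explicit nonnegative expression in $\alpha,\beta,\gamma$ and the warping functions $b,c$ of $\rho$. That expression vanishes only when $\beta=\gamma=0$. Hence stability forces $\nu=\pm N$ everywhere on $\partial\Omega$.

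Step four is to conclude. A smooth closed hypersurface whose normal is everywhere radial is a union of geodesic spheres centered at $o$. Since $\Omega$ is bounded and $\partial\Omega$ is compact, $\Omega$ is a ball or an annulus about $o$. An annulus cannot be critical, because its inner and outer boundary spheres have different constant mean curvatures. Nor can it be stable. So $\Omega$ is a geodesic ball.

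The main obstacle is step three. It requires carrying out the second-order expansion explicitly for the $\CH^m$ metric $d r^2+\sinh^2r\,g_{\mathcal H}+\sinh^2r\cosh^2r\,g_{\mathcal V}$ in the $\rho$ coordinate, and verifying the strict sign certificate. I would first reduce to the pointwise quadratic form in $(\alpha^2,\beta,\gamma)$ with $\alpha^2+\beta+\gamma=1$. I would then check negativity using $\cosh r>1$, expecting a sum-of-squares identity plus a term like $-(\cosh^2 r-1)\gamma$.
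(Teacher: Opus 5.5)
Your architecture is the paper's: volume geometric median, balanced stretches $F_{t,\varphi_j}$ in $y=\log\rho$, the cofactor expansion summed over $j$ to give the trace $\mathscr T_{n-2,1}$, the $q=1$ sign certificate, and the concentric-sphere endgame. The one structural change is that you make each family exactly volume preserving by the log-partition dilation, which is the device of the paper's finite-perimeter section. The paper's smooth proof instead works with $A-HV$, identifies $Q_\Sigma(f_j)=A_j''(0)-HV_j''(0)$, and cancels the volume terms through $H\sum_jV_j''(0)=nA'_{\mathrm{rad}}(0)$. The two agree, since $\lambda_j''(0)A'_{\mathrm{rad}}(0)=Hn^2\int_\Omega\varphi_j^2\,dV=HV_j''(0)$. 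Yours has the advantage of avoiding the separate cancellation identity, because $\sum_j\lambda_j''(0)=n$ directly.

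\textbf{The gap: the polar singularity.} The map $(\rho,\theta)\mapsto(e^{t\varphi_j(\theta)}\rho,\theta)$ is only Lipschitz at $o$, and nothing prevents the median from lying on $\partial\Omega$. In that case:
\begin{itemize}
\item $t\mapsto\Phi_{j,t}(\partial\Omega)$ is not a smooth variation;
\item its normal speed $f_j=\langle\varphi_j\rho\partial_\rho,\nu\rangle$ is merely Lipschitz at $o$;
\item stability only controls smooth zero-mean speeds, equivalently smooth volume-preserving flows.
\end{itemize}
So your inequality $0\le\sum_j\frac{d^2}{dt^2}\Per(\Phi_{j,t}(\Omega))$ is unjustified there. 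What needs proof is precisely the identification of that second derivative, along a non-smooth family, with a nonnegative Jacobi form.

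\textbf{How the paper handles it.} It replaces the exponent by $\chi_\eps(r)\varphi_j(\theta)$. It observes $|\rho\,\partial_\rho(\chi_\eps\varphi_j)|\le C$ uniformly in $\eps$, so the first two $t$-derivatives of the cofactor Jacobian are bounded independently of $\eps$ on the transition annulus. It then uses $\Area(\partial\Omega\cap B_{2\eps}(o))=O(\eps^{n-1})$ and $\Vol(B_{2\eps}(o))=O(\eps^n)$ to pass second derivatives to the limit. Finally, $|f_j|\le Cr$ gives $\chi_\eps f_j\to f_j$ in $H^1$, and a zero-mean correction plus continuity of $Q_\Sigma$ on $H^1$ transfers stability to $f_j$.

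In your framework the repair is the same cutoff, composed with a uniform radial dilation chosen to restore exact volume. That dilation is smooth through $o$, so this gives smooth admissible families whose second derivatives converge to yours. When $o\in\Omega$ you need no cutoff: the surface variation is smooth and the enclosed volume equals $\int_\Omega\Jac$ by the bi-Lipschitz change of variables. You should say so explicitly.

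Your fourth step must also exclude $o\in\partial\Omega$, since $N$ is undefined at $o$. The paper argues as follows for the component $\Sigma_0\ni o$. The set $\Sigma_0\setminus\{o\}$ is connected because $\dim\Sigma_0=n-1\ge3$, and $r$ is constant on it. Continuity then forces $r\equiv0$, a contradiction.

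\textbf{Two smaller corrections.}
\begin{itemize}
\item The median must minimize $\int_\Omega d(x,y)\,dV(y)$ itself. Minimizing $\int_\Omega\sinh d(x,y)\,dV(y)$ gives the weighted balance $\int_\Omega\cosh d(o,y)\,\theta_o(y)\,dV_y=0$, not $\int_\Omega\varphi_j\,dV=0$.
\item The deferred certificate is $\mathscr T_{n-2,1}=-\mathcal R_{n,x}(\beta,\gamma)/(1+x)^2$ with $x=\sinh^2r$. Here $\mathcal R_{n,x}$ is not nonnegative for all $\beta,\gamma\ge0$: its $\beta^2$-coefficient $x^2-x-1$ is negative for small $x$. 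Positivity must therefore use $\alpha^2=1-\beta-\gamma\ge0$, as in the paper's rewriting in terms of $\delta=\alpha^2$.
\end{itemize}
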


This smooth theorem is logically independent of the existence theory for
isoperimetric regions.  Its proof uses the same $q=1$ trace sign, together
with cutoff arguments at the polar center and the Jacobi form for the
fixed-volume constraint.

\ithead{Relation with previous work.}
Ros recorded the complex-hyperbolic problem among the natural open
isoperimetric conjectures \cite[p.~2]{Ros2005}.  More recent work established
the conjecture in symmetric classes and obtained quantitative or local
stability results under additional hypotheses
\cite{Silini2024,Silini2025,Kalaj2026}.  In particular, the all-volume theorem
of Silini \cite[Corollary~1.6]{Silini2024} is restricted to the
Hopf-symmetric class.  None of these partial results is used as an input in
the proof below.

\ithead{Analytic consequences.}
The complex theorem supplies the geometric input in conditional sharp
weighted-Bergman contraction, Faber--Krahn, and Lieb--Wehrl results of Li--Su
and Singh \cite{LiSu2025,Singh2026}.  Through the resulting convex Bergman
inequality it also removes the corresponding hypothesis in the
higher-dimensional stability theorem of Melentijevi\'c
\cite[Theorem~4]{Melentijevic2025}.  These deductions are specifically
complex analytic and are not asserted for the quaternionic or Cayley spaces.

\ithead{Organization.}
Section~\ref{sec:rank-one-framework} contains the main proof: the volume
radius, global exact-volume stretches, the universal cofactor trace, the
three sign certificates, BV radiality, and the endpoint comparison.  It ends
with the full rank-one classification and the explicit complex-hyperbolic
profile.  Sections~\ref{sec:geometric-setup}--\ref{sec:integrated-trace} then
give the independent smooth stability argument in $\CH^m$, using the simpler
coordinate $\rho=\sinh r$.  Finally,
Section~\ref{sec:analytic-consequences} records the sharp complex-analytic
consequences and the Bergman--Fock scaling limit.

\section{The volume-radius framework for rank-one spaces}
\label{sec:rank-one-framework}

The proof of the finite-perimeter theorem is organized entirely in the
root-multiplicity variables $(p,q)$.  The complex, quaternionic, and Cayley
cases differ only at the final sign certificate.  We use the standard polar
root-space description of rank-one symmetric spaces; see, for example,
\cite{Silini2024}.

\subsection{Root multiplicities and the volume radius}

Let
\[
M\in\{\CH^m,\HH^m,\OH^2\}
\]
with the normalization in which the possible radial curvature eigenvalues are
$-1$ and $-4$, with multiplicities $p$ and $q$, respectively.  If one of the
multiplicities is zero, only the other eigenvalue occurs.  Fix $o\in M$.  At every
$\theta\in\mathbb S_o^{n-1}$ there is an orthogonal splitting
\begin{equation}
T_\theta\mathbb S_o^{n-1}
=
\mathcal H_\theta\oplus\mathcal V_\theta,
\qquad
\dim\mathcal H_\theta=p,
\qquad
\dim\mathcal V_\theta=q,
\label{eq:rank-one-angular-splitting}
\end{equation}
where
\[
\begin{array}{c|c|c|c}
M&p&q&n\\ \hline
\CH^m&2m-2&1&2m\\
\HH^m&4m-4&3&4m\\
\OH^2&8&7&16.
\end{array}
\]
Thus $p+q=n-1$.

\begin{remark}[Normalization and the case $\HH^1$]
\label{rem:rank-one-normalization}
The curvature normalization used in this section is harmless for the
isoperimetric classification.  A constant homothety of the metric multiplies
$n$-dimensional volume and $(n-1)$-dimensional perimeter by fixed positive
powers of the scale factor and therefore does not change the minimizing sets.
For $\HH^1$ one has $p=0$ and $q=3$, so the $-1$ radial eigenspace is absent;
with the present normalization the polar metric is the constant-curvature
real-hyperbolic metric of curvature $-4$.  The formulas below remain valid
without modification.
\end{remark}

The radial Jacobi equation gives
\[
g
=
dr^2
+
\sinh^2r\,g_{\mathcal H}
+
\sinh^2r\cosh^2r\,g_{\mathcal V},
\]
and hence
\[
dV
=
\mathfrak j_{p,q}(r)\,dr\,d\sigma,
\qquad
\mathfrak j_{p,q}(r)
=
\sinh^{n-1}r\,\cosh^q r.
\]

Define the \emph{volume radius} $R=R(r)$ by
\begin{equation}
R(r)^n
=
n\int_0^r\mathfrak j_{p,q}(s)\,ds.
\label{eq:volume-radius-definition}
\end{equation}
Then $R(0)=0$, $R'(r)>0$ for $r>0$, and
\[
R^{n-1}\,dR
=
\mathfrak j_{p,q}(r)\,dr.
\]
Consequently
\[
dV=R^{n-1}\,dR\,d\sigma
\]
and
\[
\Vol(B_r(o))
=
\frac{\omega_{n-1}}{n}R(r)^n.
\]

Put
\[
x=\sinh^2r.
\]
Since
\[
dx=2\sinh r\cosh r\,dr,
\]
we also have
\[
R^n
=
\frac{n}{2}
\int_0^x
t^{(n-2)/2}(1+t)^{(q-1)/2}\,dt.
\]
For $q=1$ this reduces to
\begin{equation}
R=\sinh r,
\label{eq:complex-volume-radius-recovery}
\end{equation}
so in complex hyperbolic space the volume radius is exactly the familiar
coordinate $\rho=\sinh r$.

For quaternionic hyperbolic space, where $q=3$ and $n=4m$,
\begin{equation}
R^n
=
x^{n/2}
\left(1+\frac{n}{n+2}x\right).
\label{eq:quaternionic-volume-radius}
\end{equation}
For the Cayley hyperbolic plane, where $q=7$ and $n=16$,
\begin{equation}
R^{16}
=
x^8
+\frac{8}{3}x^9
+\frac{12}{5}x^{10}
+\frac{8}{11}x^{11}.
\label{eq:Cayley-volume-radius}
\end{equation}

\subsection{Volume-radius stretches and exact-volume correction}

Put
\[
y=\log R.
\]
Then
\begin{equation}
dV=e^{ny}\,dy\,d\sigma.
\label{eq:rank-one-y-volume-form}
\end{equation}
For $\varphi\in C^1(\mathbb S_o^{n-1})$ define
\[
F_{t,\varphi}(y,\theta)
=
(y+t\varphi(\theta),\theta),
\]
or equivalently
\[
F_{t,\varphi}(R,\theta)
=
(e^{t\varphi(\theta)}R,\theta).
\]
Equation \eqref{eq:rank-one-y-volume-form} immediately gives
\[
\Jac F_{t,\varphi}=e^{nt\varphi}.
\]
The infinitesimal field is
\[
X_\varphi
=
\varphi R\partial_R,
\]
and
\[
\diver X_\varphi=n\varphi.
\]

\begin{lemma}
\label{lem:rank-one-median-balance}
Let $E\subset M$ be bounded and measurable with $0<\Vol(E)<\infty$.  The
functional
\[
\Phi_E(p)=\int_E d_g(p,z)\,dV_z
\]
attains a minimum.  If $o$ is any minimizer and
$\theta_o(z)\in T_oM$ is the initial unit tangent of the geodesic from $o$ to
$z\ne o$, then
\begin{equation}
\int_E\theta_o(z)\,dV_z=0.
\label{eq:rank-one-median-vector-balance}
\end{equation}
Equivalently, for every orthonormal basis $e_1,\ldots,e_n$ of $T_oM$ and
$\varphi_j(\theta)=\langle\theta,e_j\rangle$,
\begin{equation}
\int_E\varphi_j(\theta_o(z))\,dV_z=0,
\qquad j=1,\ldots,n.
\label{eq:rank-one-median-coordinate-balance}
\end{equation}
\end{lemma}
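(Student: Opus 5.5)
The plan has two parts: prove that a minimizer exists by coercivity, then derive the balance condition from the first-order condition, using strict convexity of the distance function and the fact that $E$ sees the point $o$ only on a null set.

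\emph{Existence.} The function $\Phi_E$ is finite everywhere because $E$ is bounded. It is $\Vol(E)$-Lipschitz by the triangle inequality, since
\[
|\Phi_E(p)-\Phi_E(p')|\le \Vol(E)\,d_g(p,p').
\]
It is also coercive: if $E\subset B_\rho(z_0)$, then
\[
\Phi_E(p)\ge \Vol(E)\,\bigl(d_g(p,z_0)-\rho\bigr)\to\infty .
\]
Because $M$ is complete and closed bounded sets are compact (Hopf--Rinow), a minimizer exists.

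\emph{First variation.} Let $o$ be a minimizer. On the Hadamard manifold $M$, for each fixed $z$ the function $p\mapsto d_g(p,z)$ is convex along geodesics. It is smooth away from $z$, with gradient
\[
\nabla_p d_g(p,z)=-\theta_p(z),
\]
the negative of the unit initial direction toward $z$. Take a unit vector $u\in T_oM$ and the geodesic $\gamma(s)=\exp_o(su)$. Convexity makes the difference quotients
\[
\frac{d_g(\gamma(s),z)-d_g(o,z)}{s}
\]
monotone in $s$ and bounded by $1$ in absolute value. Their limit as $s\downarrow0$ is $-\langle\theta_o(z),u\rangle$ for $z\ne o$, and $1$ for $z=o$. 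The point $\{o\}$ is a null set, so dominated convergence gives the one-sided derivative
\[
\frac{d}{ds}\Big|_{s=0^+}\Phi_E(\gamma(s))=-\Big\langle\int_E\theta_o(z)\,dV_z,\;u\Big\rangle .
\]
Minimality forces this to be $\ge0$ for every unit $u$. Replacing $u$ by $-u$ gives $\le0$, so
\[
\int_E\theta_o(z)\,dV_z=0,
\]
which is \eqref{eq:rank-one-median-vector-balance}.

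\emph{Equivalence of the two forms.} Writing the vector integral in an orthonormal basis gives \eqref{eq:rank-one-median-coordinate-balance}. The map $z\mapsto\theta_o(z)$ is measurable and bounded, so the vector integral is defined componentwise.

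The main subtlety is that $\Phi_E$ is not differentiable at $o$ when $o$ meets $E$ in a set of positive measure. This cannot happen, since $\{o\}$ has measure zero; this is exactly why only one-sided derivatives together with dominated convergence are used, and why no smoothness of $\Phi_E$ is needed. Uniqueness of the minimizer would follow from strict convexity of $\Phi_E$ (non-collinearity of $E$, which has positive volume), but it is not required here.
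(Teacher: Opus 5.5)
Your proof is correct and follows essentially the same route as the paper: existence from properness (triangle inequality plus Hopf--Rinow), then the balance identity from differentiating $\Phi_E$ under the integral. That differentiation uses the first variation of distance, the dominating bound $|d_g(\exp_o(su),z)-d_g(o,z)|\le |s|$, and the fact that $\{o\}$ is a null set. The only difference is cosmetic: your appeal to geodesic convexity of $d_g(\cdot,z)$ to get monotone one-sided difference quotients is harmless but unnecessary, since the Lipschitz bound alone already justifies dominated convergence for the two-sided derivative, as the paper does.
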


\begin{proof}
Every rank-one symmetric space of noncompact type is complete, simply
connected, and has nonpositive sectional curvature; in particular it is a
Hadamard manifold and has no cut locus.  Properness and continuity of
$\Phi_E$ are proved exactly as in the complex case by the triangle inequality
and Hopf--Rinow.  For $X\in T_pM$, the first variation formula for distance and
the bound
\[
|d_g(\exp_p(sX),z)-d_g(p,z)|\le d_g(\exp_p(sX),p)
\]
permit differentiation under the integral, giving
\[
D\Phi_E(p)[X]
=-\int_E\langle X,\theta_p(z)\rangle\,dV_z.
\]
At a minimizer this vanishes for every $X$, which proves
\eqref{eq:rank-one-median-vector-balance} and hence
\eqref{eq:rank-one-median-coordinate-balance}.
\end{proof}

Thus the infinitesimal fields $X_j=\varphi_jR\partial_R$ are balanced in the
same sense as in the complex case.

It is useful to record the metric in the $y$-coordinate.  Write
\[
g
=
\mathfrak a(y)^2dy^2
+b(y)^2g_{\mathcal H}
+c(y)^2g_{\mathcal V},
\]
where
\[
\mathfrak a(y)=\frac{dr}{dy},
\qquad
b(y)=\sinh r(y),
\qquad
c(y)=\sinh r(y)\cosh r(y).
\]
Comparison with \eqref{eq:rank-one-y-volume-form} gives the fundamental
identity
\begin{equation}
\mathfrak a\,b^pc^q=e^{ny}.
\label{eq:rank-one-abc-volume-identity}
\end{equation}

\begin{lemma}
\label{lem:rank-one-global-stretches}
For every $\psi\in C^1(\mathbb S_o^{n-1})$, the map
\[
F_\psi(o)=o,
\qquad
F_\psi(R,\theta)=(e^{\psi(\theta)}R,\theta)
\]
is an orientation-preserving global bi-Lipschitz homeomorphism with inverse
$F_{-\psi}$, and
\[
\Jac F_\psi=e^{n\psi}
\qquad\text{a.e. on }M.
\]
Its bi-Lipschitz constants depend only on
$\|\psi\|_{C^1(\mathbb S_o^{n-1})}$.

If $\psi=\psi(t,\cdot)$ is $C^2$ in $t$ with uniformly bounded $C^1$
spatial norm together with its first two $t$-derivatives, then for
\[
\mathcal J(t,x,\nu)
=
|\cof DF_{\psi(t)}(x)\nu|
\]
the quantities $\partial_t^k\mathcal J$, $k=0,1,2$, are uniformly bounded for
$x\ne o$ and $|\nu|=1$ on compact $t$-intervals.
\end{lemma}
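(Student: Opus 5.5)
The plan is to work in the coordinates $(y,\theta)$ with $y=\log R$ and use the orthonormal coframe adapted to the metric
\[
 g=\mathfrak a(y)^2dy^2+b(y)^2g_{\mathcal H}+c(y)^2g_{\mathcal V}.
\]
In these coordinates $F_\psi(y,\theta)=(y+\psi(\theta),\theta)$. Bijectivity is immediate, since $F_{-\psi}\circ F_\psi=\Id$ on $M\setminus\{o\}$ and both maps fix $o$. Each map is a homeomorphism because $e^{\psi}R\to0$ uniformly as $R\to0$. Orientation preservation follows from the Jacobian, which is positive. The Jacobian identity $\Jac F_\psi=e^{n\psi}$ follows from \eqref{eq:rank-one-y-volume-form}: $F_\psi$ is a shear in $y$ along each fibre, so the coordinate Jacobian is $1$ and the density $e^{ny}$ changes to $e^{n(y+\psi)}$. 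Off the single point $o$ the map is $C^1$, which gives the a.e. statement.

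For the bi-Lipschitz bound I will compute $DF_\psi$ in orthonormal frames at $x=(y,\theta)$ and at $F_\psi(x)=(y',\theta)$, where $y'=y+\psi$. In the frame $\{\mathfrak a^{-1}\partial_y\}\cup\{b^{-1}e_i\}\cup\{c^{-1}f_k\}$, with $e_i,f_k$ orthonormal in $g_{\mathcal H},g_{\mathcal V}$, the matrix is block lower-triangular. Its diagonal entries are
\[
 \frac{\mathfrak a(y')}{\mathfrak a(y)},\qquad \frac{b(y')}{b(y)},\qquad \frac{c(y')}{c(y)},
\]
and its off-diagonal entries in the radial row are $\mathfrak a(y')\,d\psi(e_i)/b(y)$ and $\mathfrak a(y')\,d\psi(f_k)/c(y)$.

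The key step is a pair of uniform estimates for all $y\in\mathbb R$ and $|\delta|\le\|\psi\|_\infty$:
\begin{itemize}
 \item the ratios $\mathfrak a(y+\delta)/\mathfrak a(y)$, $b(y+\delta)/b(y)$ and $c(y+\delta)/c(y)$ lie in a compact subset of $(0,\infty)$;
 \item the ratios $\mathfrak a(y)/b(y)$ and $\mathfrak a(y)/c(y)$ are bounded.
\end{itemize}
Both will come from the asymptotics of $R(r)$.

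As $r\to0$, $R\sim r$ and $b,c\sim r$. Hence $\mathfrak a=dr/dy=R/R'(r)\sim r$, and every function involved behaves like $e^{y}$ times a factor tending to $1$.

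As $r\to\infty$, $R^n\sim Ce^{(n-1+q)r}$. So $y$ is asymptotically affine in $r$, $\mathfrak a$ tends to the constant $n/(n-1+q)$, and $b$, $c$ grow exponentially in $y$.

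In the middle range all these functions are smooth and positive. Therefore $\log\mathfrak a$, $\log b$ and $\log c$ are uniformly Lipschitz in $y$, which gives the first estimate. The second estimate holds because $\mathfrak a/b$ and $\mathfrak a/c$ tend to $1$ at $r=0$ and decay at infinity. This yields $\|DF_\psi\|\le C(\|\psi\|_{C^1})$. The same bound applied to $F_{-\psi}$ bounds the inverse. Lipschitz bounds for global distances then follow on the geodesically star-shaped complement of $o$: integrate along curves, approximating those through $o$, since $F_\psi$ is continuous there.

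For the cofactor claim I will write $\cof DF=(\det DF)\,(DF)^{-T}$. In the orthonormal frames both $\det DF=e^{n\psi}$ and $DF^{-1}=DF_{-\psi}$ at the image point are explicit. Each entry is a product of the ratio functions above, evaluated at $y$ and $y+\psi(t,\theta)$, with $d\psi$, $\psi$, and the bounded ratios $\mathfrak a/b$, $\mathfrak a/c$. Differentiating in $t$ up to twice produces factors $\partial_t^k\psi$ and $\partial_t^k d\psi$, which are bounded by hypothesis. It also produces $y$-derivatives of $\log\mathfrak a$, $\log b$ and $\log c$ of order at most two at shifted points. These are bounded by the same asymptotic analysis, applied now to second derivatives of $R(r)$, which is smooth in $r$ and has the explicit expansions above.

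Since $|\nu|=1$ and $\mathcal J=|\cof DF\,\nu|$ is bounded below by $e^{n\psi}\|DF\|^{-1}>0$, the norm is smooth in $t$. Its first two $t$-derivatives are therefore controlled by those of the matrix entries, and all these bounds are uniform on compact $t$-intervals.

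The main obstacle is the uniform control at the two ends, the polar center $r\to0$ and infinity, of $\mathfrak a$, $b$, $c$ and their logarithmic $y$-derivatives to second order. I will handle it by writing $R$ as a function of $x=\sinh^2 r$ through the integral formula. Near $x=0$ this gives $R^n=x^{n/2}h(x)$ with $h$ analytic and $h(0)=1$. At infinity a uniform exponential expansion in $r$ is needed. For $q=1,3,7$ the expressions \eqref{eq:complex-volume-radius-recovery}--\eqref{eq:Cayley-volume-radius} make both expansions explicit.
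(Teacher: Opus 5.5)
Your proposal is correct and follows the paper's proof essentially step for step. The shared steps are: the orthonormal polar-frame matrix of $DF_\psi$; log-Lipschitz control of the ratios $U(y+\delta)/U(y)$ for $U\in\{\mathfrak a,b,c\}$; boundedness of $\mathfrak a/b$ and $\mathfrak a/c$; the same bound for $F_{-\psi}$; integration along geodesics away from $o$; and $\cof L=(\det L)L^{-T}$ with $|\cof L\,\nu|\ge|\det L|/\|L\|$ to control the $t$-derivatives.

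The only real difference is how you handle the step you single out as the main obstacle. The paper bounds $\partial_y\log U$ and $\partial_y^2\log U$ uniformly in $(p,q)$ using $\mathfrak a=R^n/\mathfrak j_{p,q}$ and the identities $\partial_y\log\mathfrak a=n-\mathfrak aH_{p,q}$, $\partial_y\log b=\mathfrak a\coth r$, $\partial_y\log c=\mathfrak a(\coth r+\tanh r)$. These need only the endpoint limits of $\mathfrak a$ and never differentiate asymptotic remainders. Your route through $R^n=x^{n/2}h(x)$ also works: for $q=1,3,7$ the function $h$ is a polynomial, so every log-derivative is a rational function of $x$ bounded on $[0,\infty)$, and no separate exponential expansion at infinity is needed. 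Note, however, that the function-level endpoint asymptotics you list first do not by themselves justify your earlier ``therefore uniformly Lipschitz''; that conclusion needs this derivative analysis.
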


\begin{proof}
The map preserves every radial ray and multiplies its volume radius by the
positive factor $e^{\psi(\theta)}$, hence it is globally bijective with inverse
$F_{-\psi}$.  We record the uniform radial estimates needed below.

Near $r=0$, the Taylor expansion of \eqref{eq:volume-radius-definition} gives
\[
R=r\bigl(1+O(r^2)\bigr),
\qquad
y=\log r+O(r^2).
\]
Consequently, as $y\to-\infty$,
\[
r=e^y\bigl(1+O(e^{2y})\bigr),
\qquad
\mathfrak a=e^y\bigl(1+O(e^{2y})\bigr),
\qquad
b=e^y\bigl(1+O(e^{2y})\bigr),
\qquad
c=e^y\bigl(1+O(e^{2y})\bigr).
\]
At the other end, if
\[
\kappa=\frac{n+q-1}{n},
\]
then
\[
R(r)=C_{p,q}e^{\kappa r}\bigl(1+O(e^{-2r})\bigr)
\]
for a positive constant $C_{p,q}$.  Hence, as $y\to+\infty$,
\[
r=\kappa^{-1}y+O(1)+O(e^{-2y/\kappa}),
\qquad
\mathfrak a=\kappa^{-1}+O(e^{-2y/\kappa}),
\]
while
\[
b=C_1e^{y/\kappa}\bigl(1+O(e^{-2y/\kappa})\bigr),
\qquad
c=C_2e^{2y/\kappa}\bigl(1+O(e^{-2y/\kappa})\bigr)
\]
for positive constants $C_1,C_2$.

We next justify the derivative bounds directly, without differentiating the
asymptotic remainders.  Put
\[
H_{p,q}(r)
=
\frac{d}{dr}\log\mathfrak j_{p,q}(r)
=
(n-1)\coth r+q\tanh r.
\]
Since
\[
\mathfrak a=\frac{dr}{dy}=\frac{R^n}{\mathfrak j_{p,q}(r)},
\]
one has
\[
\partial_y\log\mathfrak a=n-\mathfrak a H_{p,q},
\qquad
\partial_y\log b=\mathfrak a\coth r,
\qquad
\partial_y\log c=\mathfrak a(\coth r+\tanh r).
\]
Writing
\[
H_{p,q}'(r)=-(n-1)\operatorname{csch}^2r
+q\operatorname{sech}^2r,
\]
and using
$\partial_y\mathfrak a=\mathfrak a(n-\mathfrak a H_{p,q})$, a second
differentiation gives
\[
\begin{aligned}
\partial_y^2\log\mathfrak a
&=-\mathfrak a(n-\mathfrak a H_{p,q})H_{p,q}
  -\mathfrak a^2H_{p,q}',\\
\partial_y^2\log b
&=\mathfrak a(n-\mathfrak a H_{p,q})\coth r
  -\mathfrak a^2\operatorname{csch}^2r,\\
\partial_y^2\log c
&=\mathfrak a(n-\mathfrak a H_{p,q})(\coth r+\tanh r)\\
&\hspace{2.2cm}
  +\mathfrak a^2(-\operatorname{csch}^2r+\operatorname{sech}^2r).
\end{aligned}
\]
Near the pole, $\mathfrak a=r(1+O(r^2))$, while
$\coth r=r^{-1}+O(r)$,
$\operatorname{csch}^2r=r^{-2}+O(1)$, and $\tanh r=O(r)$;
hence every expression displayed above is bounded.  As $r\to\infty$,
$\mathfrak a\to\kappa^{-1}$, while $\coth r,\tanh r$ remain bounded and
$\operatorname{csch}^2r,\operatorname{sech}^2r$ decay exponentially.
Smoothness on a compact intervening interval therefore yields
\begin{equation}
\sup_{y\in\mathbb R}
\left(
|\partial_y\log U(y)|+|\partial_y^2\log U(y)|
\right)<\infty,
\qquad
U\in\{\mathfrak a,b,c\}.
\label{eq:rank-one-log-derivative-bounds}
\end{equation}
The endpoint expansions also give
$\mathfrak a/b\to1$ and $\mathfrak a/c\to1$ as $r\downarrow0$, whereas
both ratios tend to $0$ as $r\to\infty$; hence
$\mathfrak a/b$ and $\mathfrak a/c$ are globally bounded.

For a general exponent $\psi$, split
$\nabla_{\mathbb S}\psi=h+\zeta$ and put $y_+=y+\psi(\theta)$.  In source and
target orthonormal polar frames,
\[
DF_\psi=
\begin{pmatrix}
\dfrac{\mathfrak a(y_+)}{\mathfrak a(y)}
&\dfrac{\mathfrak a(y_+)}{b(y)}h^{\mathsf T}
&\dfrac{\mathfrak a(y_+)}{c(y)}\zeta^{\mathsf T}\\[0.8em]
0&\dfrac{b(y_+)}{b(y)}I_p&0\\[0.8em]
0&0&\dfrac{c(y_+)}{c(y)}I_q
\end{pmatrix}.
\]
The mean-value theorem applied to the logarithms, together with
\eqref{eq:rank-one-log-derivative-bounds}, bounds all diagonal ratios whenever
$\|\psi\|_{\infty}$ is bounded; the off-diagonal blocks are controlled by the
same estimates, the bounds for $\mathfrak a/b,\mathfrak a/c$, and
$\|d\psi\|_\infty$.  Applying the same estimate to $F_{-\psi}$ gives a uniform
bound for the inverse differential on $M\setminus\{o\}$.  The local expansion
$R=r(1+O(r^2))$ shows that $F_\psi$ and $F_{-\psi}$ extend continuously across
$o$.  To pass from the differential bounds to global metric bounds, join two
points by a minimizing geodesic.  Apart from at most the single parameter value
at which the geodesic meets $o$, the differential estimate bounds the length of
its image by a fixed multiple of the original length.  Hence
\[
d_g(F_\psi(x),F_\psi(x'))\le C\,d_g(x,x')
\]
for all $x,x'\in M$.  The same argument for $F_{-\psi}$ gives the reverse
Lipschitz estimate.  Thus $F_\psi$ is globally bi-Lipschitz.

The determinant of the displayed matrix is $e^{n\psi}$ by
\eqref{eq:rank-one-abc-volume-identity}, proving the Jacobian formula.  If
$\psi=\psi(t,\cdot)$, differentiating the matrix in $t$ up to order two uses
only the first two $y$-derivatives in
\eqref{eq:rank-one-log-derivative-bounds} and the assumed $C^1$ bounds on
$\partial_t^k\psi$, $k=0,1,2$.  Thus
$DF_{\psi(t)}$, its first two $t$-derivatives, and the inverse matrices are
uniformly bounded.  Since the cofactor is polynomial in the matrix entries and
$|\cof DF_{\psi(t)}\nu|$ is uniformly separated from zero for $|\nu|=1$,
differentiating its norm proves the asserted uniform bounds for
$\partial_t^k\mathcal J$, $k=0,1,2$.
\end{proof}

\begin{lemma}
\label{lem:rank-one-finite-perimeter-transport}
Let $E\subset M$ have finite perimeter and let
$t\mapsto\psi(t,\cdot)$ satisfy the hypotheses of
Lemma~\ref{lem:rank-one-global-stretches}.  Then
\begin{equation}
\Per(F_{\psi(t)}E)
=
\int_{\partial^*E}
\left|\cof DF_{\psi(t)}\,\nu_E\right|
\,d\mathcal H^{n-1}.
\label{eq:rank-one-finite-perimeter-transport}
\end{equation}
Moreover the left-hand side is $C^2$ in $t$, and its first two derivatives are
obtained by differentiating the integrand in
\eqref{eq:rank-one-finite-perimeter-transport}.
\end{lemma}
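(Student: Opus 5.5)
The formula \eqref{eq:rank-one-finite-perimeter-transport} is the area formula for reduced boundaries under bi-Lipschitz maps, applied to the maps $F_{\psi(t)}$ of Lemma~\ref{lem:rank-one-global-stretches}. The plan has two parts: first establish the representation formula for a single $t$, then differentiate under the integral using the uniform cofactor bounds.

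For fixed $t$, write $F=F_{\psi(t)}$. By Lemma~\ref{lem:rank-one-global-stretches}, $F$ is a global bi-Lipschitz homeomorphism, orientation preserving, and smooth on $M\setminus\{o\}$. The point $o$ is $\mathcal H^{n-1}$-null, so it is irrelevant both for $\partial^*E$ and for $\partial^*(FE)$.

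The first step is to show that $FE$ has finite perimeter and that, up to $\mathcal H^{n-1}$-null sets, $\partial^*(FE)=F(\partial^*E)$. The cleanest route is a localization. On any chart $U\Subset M\setminus\{o\}$ the map $F$ is a $C^1$ diffeomorphism onto its image, so the Euclidean theory (De Giorgi structure theorem together with the change of variables for $\mathrm{div}$-test fields) applies. It gives $D\chi_{FE}=F_\#\bigl(\cof DF\,\nu_E\,\mathcal H^{n-1}\llcorner\partial^*E\bigr)$ after normalization. Equivalently, one can use the weak formulation directly. For $X\in C^1_c$ one has
\[
\int_{FE}\diver X\,dV=\int_E (\diver X)\circ F\,\Jac F\,dV,
\]
and the pulled-back field $(\cof DF)^{\mathsf T}X\circ F$ has divergence $(\diver X\circ F)\Jac F$ by the Piola identity. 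In the Riemannian setting the Piola identity should be applied in orthonormal polar frames, with $\Jac F$ the Riemannian Jacobian.

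Covering $M\setminus\{o\}$ by countably many such charts, and using that $\{o\}$ has zero capacity for perimeter (so test fields may be cut off near $o$ at cost $O(\epsilon^{n-1})$), one obtains
\[
\Per(FE)=\int_{\partial^*E}|\cof DF\,\nu_E|\,d\mathcal H^{n-1}.
\]
The cutoff near $o$ is justified by the global bi-Lipschitz bound, and the formula then follows from the area formula on rectifiable sets.

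The second step is differentiability in $t$. By Lemma~\ref{lem:rank-one-global-stretches}, the integrand $\mathcal J(t,x,\nu_E(x))$ and its first two $t$-derivatives are bounded uniformly for $x\ne o$ on compact $t$-intervals. The integrand is also $C^2$ in $t$ for each fixed $x$, since $\psi$ is $C^2$ in $t$ and $\cof DF$ is smooth in the entries with $|\cof DF\,\nu|$ bounded below. Because $\mathcal H^{n-1}(\partial^*E)=\Per(E)<\infty$, dominated convergence (via the mean value theorem on difference quotients) lets us differentiate twice under the integral. Continuity of the second derivative follows by dominated convergence as well.

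I expect the main obstacle to be the first step in the Riemannian setting. Specifically, it must be checked that the normal transported by $F$ is $\cof DF\,\nu_E/|\cof DF\,\nu_E|$, computed with respect to orthonormal frames, and that the singular point $o$ (where $F$ is only Lipschitz) contributes nothing. I would handle both issues by working in the orthonormal polar frames of the previous lemma and cutting off in small balls around $o$.
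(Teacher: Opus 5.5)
Your proposal is correct and follows essentially the same route as the paper. Like the paper, you localize to a countable atlas of $M\setminus\{o\}$ where $F_{\psi(t)}$ is a $C^1$ bi-Lipschitz diffeomorphism, apply the standard reduced-boundary transformation with the cofactor computed in orthonormal frames, discard the $\mathcal H^{n-1}$-null point $o$, and differentiate twice under the integral by dominated convergence using the uniform bounds of Lemma~\ref{lem:rank-one-global-stretches} and $\mathcal H^{n-1}(\partial^*E)=\Per(E)<\infty$; your explicit cutoff of test fields near $o$, at cost $O(\varepsilon^{n-1})$, simply spells out more carefully the step the paper settles by noting that a singleton is $\mathcal H^{n-1}$-null.
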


\begin{proof}
On $M\setminus\{o\}$, the maps $F_{\psi(t)}$ and their inverses are $C^1$
bi-Lipschitz diffeomorphisms.  We recall explicitly the finite-perimeter
transformation used here.  If $G$ is a $C^1$ bi-Lipschitz diffeomorphism between
coordinate domains and $E$ has finite perimeter, then, up to
$\mathcal H^{n-1}$-null sets,
\[
G(\partial^*E)=\partial^*G(E),
\qquad
\nu_{G(E)}(G(x))
=
\frac{\cof DG(x)\,\nu_E(x)}{|\cof DG(x)\,\nu_E(x)|},
\]
and the tangential area factor at $x\in\partial^*E$ is
$|\cof DG(x)\nu_E(x)|$; this is the standard reduced-boundary transformation
and rectifiable area formula for sets of finite perimeter
\cite{AFP2000,Maggi2012}.  Applying this statement in a countable smooth atlas
of $M\setminus\{o\}$, the coordinate metric factors combine exactly into the
Riemannian cofactor computed in orthonormal frames.  Since a singleton has zero
$\mathcal H^{n-1}$ measure and the perimeter measure of a finite-perimeter set
is $\mathcal H^{n-1}\llcorner\partial^*E$, neither side acquires a contribution
at $o$.  Summing the local formulas proves
\eqref{eq:rank-one-finite-perimeter-transport} on all of $M$.

By Lemma~\ref{lem:rank-one-global-stretches}, the integrand and its first two
$t$-derivatives are bounded uniformly in the boundary point on every compact
$t$-interval.  Since
$\mathcal H^{n-1}(\partial^*E)=\Per(E)<\infty$, dominated convergence may be
applied twice.  This proves the $C^2$ assertion and the differentiation
formula.
\end{proof}

Let $E$ be a bounded finite-perimeter set of positive volume and let $o$ be a
volume geometric median.  Define the angular probability measure $\mu_E$ on
$\mathbb S_o^{n-1}$ by
\[
\int_{\mathbb S_o^{n-1}}g(\theta)\,d\mu_E(\theta)
=
\frac{1}{\Vol(E)}
\int_E g(\theta_o(z))\,dV_z.
\]
The median balance is
\[
\int_{\mathbb S_o^{n-1}}\theta\,d\mu_E(\theta)=0.
\]
For $\mathbf a\in T_oM$ put
\[
Z_E(\mathbf a)
=
\int_{\mathbb S_o^{n-1}}
e^{n\langle\mathbf a,\theta\rangle}\,d\mu_E(\theta),
\qquad
s_E(\mathbf a)
=
-\frac{1}n\log Z_E(\mathbf a),
\]
and
\begin{equation}
\psi_{\mathbf a}(\theta)
=
\langle\mathbf a,\theta\rangle+s_E(\mathbf a).
\label{eq:rank-one-exact-volume-exponent}
\end{equation}
Then the Jacobian formula and the definition of $\mu_E$ give
\[
\Vol(F_{\psi_{\mathbf a}}E)
=
\int_Ee^{n\psi_{\mathbf a}(\theta_o(z))}\,dV_z
=
\Vol(E)e^{ns_E(\mathbf a)}Z_E(\mathbf a)
=
\Vol(E),
\]
that is,
\[
\Vol(F_{\psi_{\mathbf a}}E)=\Vol(E)
\qquad(\mathbf a\in T_oM).
\]
Differentiating the logarithmic moment-generating function, and using
Lemma~\ref{lem:rank-one-median-balance}, gives
\[
Ds_E(0)=0,
\qquad
D^2s_E(0)=-n\operatorname{Cov}_{\mu_E}(\Theta).
\]
For later use, we record explicitly the trace of this Hessian.  Since
$\Theta$ takes values in the unit direction sphere and the median balance gives
$\int \Theta\,d\mu_E=0$,
\[
\begin{aligned}
\operatorname{tr}\operatorname{Cov}_{\mu_E}(\Theta)
&=
\int_{\mathbb S_o^{n-1}}|\Theta|^2\,d\mu_E
-
\left|\int_{\mathbb S_o^{n-1}}\Theta\,d\mu_E\right|^2\\
&=1.
\end{aligned}
\]
Consequently
\begin{equation}
\operatorname{tr}D^2s_E(0)=-n.
\label{eq:rank-one-log-partition-trace}
\end{equation}


\begin{lemma}
\label{lem:cofactor}
Let $L:V\to W$ be an invertible linear map between oriented Euclidean
$n$-spaces, and let $\nu\in V$ be a unit vector.  Then
\begin{equation}
 J_{n-1}(L|_{\nu^\perp})
 =|\cof L\,\nu|
 =|\det L|\,|L^{-T}\nu|.
 \label{eq:cofactor}
\end{equation}
\end{lemma}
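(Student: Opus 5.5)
The plan is to establish the two equalities in \eqref{eq:cofactor} separately, both from the defining identity of the cofactor, $(\cof L)^{\mathsf T}L=(\det L)\,\Id$, equivalently $\cof L=(\det L)L^{-\mathsf T}$ for invertible $L$. Here $L^{-\mathsf T}$ means the inverse of the adjoint $L^{\mathsf T}:W\to V$, and orientations are used only to fix the sign of $\det L$. The second equality is then immediate: $|\cof L\,\nu|=|\det L|\,|L^{-\mathsf T}\nu|$.

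For the first equality I will compare volumes of parallelepipeds. Choose an orthonormal basis $\tau_1,\dots,\tau_{n-1}$ of $\nu^\perp$, so that $(\tau_1,\dots,\tau_{n-1},\nu)$ is an oriented orthonormal basis of $V$. By definition, $J_{n-1}(L|_{\nu^\perp})$ is the $(n-1)$-volume of the parallelepiped spanned by $L\tau_1,\dots,L\tau_{n-1}$ in $W$. Set $\eta=L^{-\mathsf T}\nu/|L^{-\mathsf T}\nu|$. For each $i$,
\[
\langle L\tau_i,L^{-\mathsf T}\nu\rangle=\langle\tau_i,\nu\rangle=0,
\]
so $\eta$ is a unit normal to $L(\nu^\perp)$. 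Therefore
\[
|\det L|=\bigl|\det(L\tau_1,\dots,L\tau_{n-1},L\nu)\bigr|
=J_{n-1}(L|_{\nu^\perp})\,|\langle L\nu,\eta\rangle|,
\]
because the $n$-volume equals the base area times the height of $L\nu$ over the hyperplane $L(\nu^\perp)$. The height is
\[
\langle L\nu,\eta\rangle=\frac{\langle \nu,\nu\rangle}{|L^{-\mathsf T}\nu|}=\frac{1}{|L^{-\mathsf T}\nu|}.
\]
Rearranging gives $J_{n-1}(L|_{\nu^\perp})=|\det L|\,|L^{-\mathsf T}\nu|$, which closes the chain.

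The only point needing care is the base-times-height identity. I will justify it by Gram–Schmidt: write $L\nu=u+h\eta$ with $u\in L(\nu^\perp)$ and $h=\langle L\nu,\eta\rangle$. Subtracting $u$, a combination of the first $n-1$ columns, does not change the determinant. Expressing the remaining columns in an orthonormal basis of $W$ whose last vector is $\eta$ then gives a block-triangular matrix, whose determinant is the $(n-1)$-dimensional Jacobian of $L|_{\nu^\perp}$ times $h$. Everything else is routine linear algebra, so I expect no real obstacle.
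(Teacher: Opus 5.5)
Your argument is correct, but you reach the first equality by a different route than the paper. The paper completes an oriented orthonormal basis $e_1,\dots,e_{n-1}$ of $\nu^\perp$ by $e_n=\nu$ and invokes the Hodge-star identity $*(Le_1\wedge\cdots\wedge Le_{n-1})=(\cof L)\nu$. Since $*:\Lambda^{n-1}W\to W$ is an isometry and $|Le_1\wedge\cdots\wedge Le_{n-1}|=J_{n-1}(L|_{\nu^\perp})$, this gives $J_{n-1}=|\cof L\,\nu|$ at once, and $\cof L=(\det L)L^{-T}$ then yields the last expression. You instead link $J_{n-1}$ directly to $|\det L|\,|L^{-T}\nu|$ by base times height: $L^{-T}\nu$ is normal to $L(\nu^\perp)$, and the height of $L\nu$ over that hyperplane is $1/|L^{-T}\nu|$. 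Only afterwards do you pass to the cofactor.

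The paper's version is shorter and proves a vector identity rather than a norm identity. It identifies $(\cof L)\nu$, including its sign, as the oriented normal of the image hyperplane scaled by the area factor. That is exactly the normal transformation rule used later in the finite-perimeter transport lemma. However, the paper quotes the Hodge-star identity without proof. Verifying it means pairing both sides with $Lw$ to get $\det(Le_1,\dots,Le_{n-1},Lw)=\det L\,\langle\nu,w\rangle$, which is essentially your computation.

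Your route is elementary and self-contained, and it needs orientations only up to sign. It still recovers the normal direction $L^{-T}\nu/|L^{-T}\nu|$, which agrees with $\cof L\,\nu/|\cof L\,\nu|$ for the orientation-preserving maps the paper actually uses.
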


\begin{proof}
Choose an oriented orthonormal basis $e_1,\ldots,e_{n-1}$ of $\nu^\perp$ and
complete it by $e_n=\nu$.  The Hodge-star identity gives
\[
 *\bigl(Le_1\wedge\cdots\wedge Le_{n-1}\bigr)=(\cof L)\nu.
\]
Taking norms proves the first equality, while
$\cof L=(\det L)L^{-T}$ gives the second.
\end{proof}

\subsection{The universal cofactor trace}

Fix $(y,\theta)$ and abbreviate
\[
\mathfrak a=\mathfrak a(y),
\qquad
b=b(y),
\qquad
c=c(y).
\]
For $y_t=y+t\varphi(\theta)$ write
\[
\mathfrak a_t=\mathfrak a(y_t),
\qquad
b_t=b(y_t),
\qquad
c_t=c(y_t).
\]
Split the spherical gradient as
\[
\nabla_{\mathbb S}\varphi
=
h+\zeta,
\qquad
h\in\mathcal H_\theta,
\qquad
\zeta\in\mathcal V_\theta.
\]
In the source and target orthonormal polar frames,
\[
DF_{t,\varphi}
=
\begin{pmatrix}
\dfrac{\mathfrak a_t}{\mathfrak a}
&
\dfrac{t\mathfrak a_t}{b}h^{\mathsf T}
&
\dfrac{t\mathfrak a_t}{c}\zeta^{\mathsf T}
\\[0.8em]
0&\dfrac{b_t}{b}I_p&0
\\[0.8em]
0&0&\dfrac{c_t}{c}I_q
\end{pmatrix}.
\]
Indeed, the first row records the radial component generated by the angular
derivative of $y+t\varphi$, while the two diagonal angular blocks record the
change of the two Jacobi scale factors.  By
\eqref{eq:rank-one-abc-volume-identity},
\[
\det DF_{t,\varphi}
=
\frac{\mathfrak a_tb_t^pc_t^q}{\mathfrak a b^pc^q}
=
e^{nt\varphi}.
\]

Let $\Pi=\nu^\perp$ and decompose the unit normal as
\begin{equation}
\nu
=
\alpha N+v+w,
\qquad
v\in\mathcal H_\theta,
\qquad
w\in\mathcal V_\theta.
\label{eq:rank-one-normal-decomposition}
\end{equation}
Put
\[
\beta=|v|^2,
\qquad
\gamma=|w|^2,
\qquad
\alpha^2+\beta+\gamma=1.
\]
Solving $DF_{t,\varphi}^{T}\xi=\nu$ and using the cofactor identity gives
\[
J_\Pi(t,\varphi)
=
e^{nt\varphi}\sqrt{K(t)},
\]
where
\begin{equation}
\begin{aligned}
K(t)
={}&
\alpha^2
\left(\frac{\mathfrak a}{\mathfrak a_t}\right)^2
+
\left(\frac{b}{b_t}\right)^2
\left|v-t\alpha\frac{\mathfrak a}{b}h\right|^2
\\
&+
\left(\frac{c}{c_t}\right)^2
\left|w-t\alpha\frac{\mathfrak a}{c}\zeta\right|^2.
\end{aligned}
\label{eq:rank-one-K}
\end{equation}

\begin{theorem}[Universal rank-one trace identity]
\label{thm:universal-rank-one-trace}
Let
\[
A=\log\mathfrak a,
\qquad
B=\log b,
\qquad
C=\log c,
\]
and let a dot denote differentiation with respect to $y=\log R$.  Set
\[
\eta_H=\frac{\mathfrak a}{b},
\qquad
\eta_V=\frac{\mathfrak a}{c},
\]
and
\[
Z
=
\dot A\,\alpha^2
+
\dot B\,\beta
+
\dot C\,\gamma.
\]
Then
\begin{equation}
\begin{aligned}
\mathscr T_{p,q}
:={}&
\sum_{j=1}^nJ_\Pi''(0,\varphi_j)
-nJ_\Pi'(0,1)
\\
={}&
-nZ-Z^2
\\
&+
\alpha^2
\left(
-\ddot A+2\dot A^2
+\eta_H^2(p-\beta)
+\eta_V^2(q-\gamma)
\right)
\\
&+
\beta(-\ddot B+2\dot B^2)
+
\gamma(-\ddot C+2\dot C^2).
\end{aligned}
\label{eq:universal-rank-one-trace}
\end{equation}
\end{theorem}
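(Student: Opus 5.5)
The plan is a direct second-order Taylor expansion of $J_\Pi(t,\varphi)=e^{nt\varphi}\sqrt{K(t)}$ at $t=0$, using the explicit formula \eqref{eq:rank-one-K}. After that, we sum over $j$ using the completeness relations of the coordinate functions $\varphi_j(\theta)=\langle\theta,e_j\rangle$ on $\mathbb S_o^{n-1}$.

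\textbf{Step 1: expand $K(t)$.} Since $y_t=y+t\varphi$, we have $(\mathfrak a/\mathfrak a_t)^2=\exp(-2(A(y+t\varphi)-A(y)))$. Hence
\[
(\mathfrak a/\mathfrak a_t)^2=1-2t\varphi\dot A+t^2\varphi^2(2\dot A^2-\ddot A)+O(t^3),
\]
and the ratios $(b/b_t)^2$ and $(c/c_t)^2$ expand in the same way with $B$ and $C$. Also
\[
|v-t\alpha\eta_Hh|^2=\beta-2t\alpha\eta_H\langle v,h\rangle+t^2\alpha^2\eta_H^2|h|^2,
\]
and similarly for the vertical block. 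Collecting terms gives $K=1+tK_1+t^2K_2+O(t^3)$ with
\[
K_1=-2\varphi Z-2\alpha\bigl(\eta_H\langle v,h\rangle+\eta_V\langle w,\zeta\rangle\bigr).
\]
The coefficient $K_2$ contains three kinds of terms. First, $\varphi^2[\alpha^2(2\dot A^2-\ddot A)+\beta(2\dot B^2-\ddot B)+\gamma(2\dot C^2-\ddot C)]$. Second, the pure stretch terms $\alpha^2(\eta_H^2|h|^2+\eta_V^2|\zeta|^2)$. Third, the cross terms $4\alpha\varphi(\dot B\eta_H\langle v,h\rangle+\dot C\eta_V\langle w,\zeta\rangle)$. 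Then $\sqrt K=1+\tfrac12K_1t+(\tfrac12K_2-\tfrac18K_1^2)t^2+O(t^3)$. Multiplying by $e^{nt\varphi}$ gives
\[
J_\Pi'(0,\varphi)=n\varphi+\tfrac12K_1,\qquad
J_\Pi''(0,\varphi)=n^2\varphi^2+n\varphi K_1+K_2-\tfrac14K_1^2 .
\]
For $\varphi\equiv1$ we have $h=\zeta=0$, so $J_\Pi'(0,1)=n-Z$ and $nJ_\Pi'(0,1)=n^2-nZ$.

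\textbf{Step 2: sum over $j$.} At a fixed $\theta$, write $\nabla_{\mathbb S}\varphi_j=h_j+\zeta_j$. The needed identities are:
\begin{itemize}
\item $\sum_j\varphi_j^2=1$;
\item $\sum_j\varphi_j\nabla_{\mathbb S}\varphi_j=\tfrac12\nabla_{\mathbb S}|\theta|^2=0$;
\item $\sum_j\nabla_{\mathbb S}\varphi_j\otimes\nabla_{\mathbb S}\varphi_j=\mathrm{Id}_{T_\theta\mathbb S}$, because the $\nabla_{\mathbb S}\varphi_j$ are the tangential projections of an orthonormal basis.
\end{itemize}
The third identity, restricted to the splitting \eqref{eq:rank-one-angular-splitting}, gives $\sum_j|h_j|^2=p$, $\sum_j|\zeta_j|^2=q$, $\sum_j\langle v,h_j\rangle^2=\beta$, $\sum_j\langle w,\zeta_j\rangle^2=\gamma$. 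It also gives $\sum_j\langle v,h_j\rangle\langle w,\zeta_j\rangle=\langle v,w\rangle=0$, because $\mathcal H_\theta\perp\mathcal V_\theta$.

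These identities produce the following sums. The term $n^2\varphi_j^2$ sums to $n^2$. Since every term of $K_1$ linear in $\nabla\varphi_j$ dies against $\varphi_j$, we get $\sum_jn\varphi_jK_1=-2nZ$. The cross terms in $K_2$ vanish, so $\sum_jK_2$ equals the $\varphi^2$-bracket plus $\alpha^2(p\eta_H^2+q\eta_V^2)$. Finally
\[
\tfrac14\sum_jK_1^2=Z^2+\alpha^2(\eta_H^2\beta+\eta_V^2\gamma),
\]
because the $\varphi_j$–gradient mixed terms and the $H$–$V$ mixed terms both vanish. Subtracting $n^2-nZ$ then yields exactly $-nZ-Z^2$ plus the asserted $\alpha^2$, $\beta$, $\gamma$ brackets, with $\eta_H^2(p-\beta)$ and $\eta_V^2(q-\gamma)$ arising as $\sum_jK_2$ minus $\tfrac14\sum_jK_1^2$.

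\textbf{Main obstacle.} The only delicate point is the bookkeeping in Step 2. One has to check that every mixed angular term cancels. This covers both the terms linear in $\nabla\varphi_j$ multiplied by $\varphi_j$, and the $\mathcal H$–$\mathcal V$ cross term in $K_1^2$. The argument uses the identity-tensor relation together with the orthogonality of the splitting. The remaining steps are routine Taylor expansion of the exponentials of $A,B,C$.
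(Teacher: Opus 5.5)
Your proposal is correct and follows essentially the same route as the paper. You expand $K(t)$ to second order, and your $K_1,K_2$ coincide with the paper's $k_1,k_2$. You then use $J_\Pi''(0,\varphi)=n^2\varphi^2+n\varphi K_1+K_2-\tfrac14K_1^2$ and $J_\Pi'(0,1)=n-Z$, and sum over $j$ with the completeness relations \eqref{eq:rank-one-trace-identities-1}--\eqref{eq:rank-one-normal-traces}. The paper only states those relations; you make them explicit via $\nabla_{\mathbb S}\varphi_j=e_j-\langle e_j,\theta\rangle\theta$ and check directly that the $\mathcal H$--$\mathcal V$ cross term in $\sum_jK_1^2$ vanishes.
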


\begin{proof}
For the coordinate functions
$\varphi_j(\theta)=\langle\theta,e_j\rangle$, write
\[
\nabla_{\mathbb S}\varphi_j=h_j+\zeta_j.
\]
The orthogonal splitting \eqref{eq:rank-one-angular-splitting} gives
\begin{align}
\sum_j\varphi_j^2&=1,
&
\sum_j\varphi_jh_j&=0,
&
\sum_j\varphi_j\zeta_j&=0,
\label{eq:rank-one-trace-identities-1}
\\
\sum_jh_j\otimes h_j&=I_{\mathcal H},
&
\sum_j\zeta_j\otimes\zeta_j&=I_{\mathcal V},
&
\sum_jh_j\otimes\zeta_j&=0.
\label{eq:rank-one-trace-identities-2}
\end{align}
In particular,
\begin{equation}
\sum_j\langle v,h_j\rangle^2=\beta,
\qquad
\sum_j\langle w,\zeta_j\rangle^2=\gamma.
\label{eq:rank-one-normal-traces}
\end{equation}

Expanding \eqref{eq:rank-one-K} as
\[
K(t)=1+k_1t+k_2t^2+O(t^3)
\]
gives
\[
k_1
=
-2\varphi Z
-2\alpha\eta_H\langle v,h\rangle
-2\alpha\eta_V\langle w,\zeta\rangle
\]
and
\begin{align}
k_2
={}&
\varphi^2
\left[
\alpha^2(2\dot A^2-\ddot A)
+\beta(2\dot B^2-\ddot B)
+\gamma(2\dot C^2-\ddot C)
\right]
\nonumber\\
&+
4\alpha\varphi
\left[
\dot B\eta_H\langle v,h\rangle
+\dot C\eta_V\langle w,\zeta\rangle
\right]
\nonumber\\
&+
\alpha^2
\left(
\eta_H^2|h|^2
+\eta_V^2|\zeta|^2
\right).
\label{eq:rank-one-k2}
\end{align}
Since
\[
J_\Pi(t,\varphi)
=
e^{nt\varphi}
\sqrt{1+k_1t+k_2t^2+O(t^3)},
\]
we have
\begin{equation}
J_\Pi''(0,\varphi)
=
n^2\varphi^2+n\varphi k_1+k_2-\frac{1}{4}k_1^2.
\label{eq:rank-one-J-second}
\end{equation}
For the uniform radial variation,
\[
J_\Pi'(0,1)=n-Z.
\]
Summing \eqref{eq:rank-one-J-second}, using
\eqref{eq:rank-one-trace-identities-1}--\eqref{eq:rank-one-normal-traces}, and
subtracting $nJ_\Pi'(0,1)$ gives
\eqref{eq:universal-rank-one-trace}.
\end{proof}

\subsection{The three trace sign certificates}

The universal formula now separates the argument into a geometric part,
which is common to all rank-one spaces, and a purely algebraic sign check for
$q=1,3,7$.  The first case is the complex trace polynomial.

\begin{theorem}[Complex trace certificate]
\label{thm:intrinsic-trace}
Let $q=1$, $p=n-2$, $n\ge4$, and put $x=\sinh^2r$.  For every hyperplane
$\Pi=\nu^\perp$ with normal decomposition
\eqref{eq:rank-one-normal-decomposition},
\begin{equation}
 \mathscr T_{n-2,1}
 =-\frac{\mathcal R_{n,x}(\beta,\gamma)}{(1+x)^2},
 \label{eq:intrinsic-trace}
\end{equation}
where
\begin{equation}
\begin{aligned}
\mathcal R_{n,x}(\beta,\gamma)
={}&(x^2-x-1)\beta^2
 +(4x^2-x-2)\beta\gamma
 +(4x^2-1)\gamma^2
\\
&+\bigl((n-2)x^2+(2n-1)x+n\bigr)\beta
\\
&+\bigl((2n-8)x^2+(3n-2)x+n\bigr)\gamma.
\end{aligned}
\label{eq:R-polynomial}
\end{equation}
\end{theorem}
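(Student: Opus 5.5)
The plan is to specialize the universal identity \eqref{eq:universal-rank-one-trace} of Theorem~\ref{thm:universal-rank-one-trace} to $q=1$, using the explicit volume radius \eqref{eq:complex-volume-radius-recovery}. Since $R=\sinh r$ when $q=1$, we have $y=\log\sinh r$. Everything then becomes a rational function of $x=\sinh^2 r$ alone. The basic chain rule is
\[
\frac{dr}{dy}=\tanh r,
\qquad
\frac{dx}{dy}=2\sinh r\cosh r\cdot\tanh r=2x .
\]

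\textbf{Step 1: the profile coefficients.} I first compute the three profile functions and their logarithmic derivatives. The functions are $\mathfrak a=\tanh r$, $b=\sinh r$ and $c=\sinh r\cosh r$; one can check $\mathfrak a\,b^{n-2}c=\sinh^{n}r=e^{ny}$, in agreement with \eqref{eq:rank-one-abc-volume-identity}. This gives
\[
\dot B=1,\quad \ddot B=0,\qquad
\dot A=\frac{1}{1+x},\quad \ddot A=-\frac{2x}{(1+x)^2},\qquad
\dot C=\frac{1+2x}{1+x},\quad \ddot C=\frac{2x}{(1+x)^2}.
\]
The second derivatives come from $dx/dy=2x$. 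The two angular ratios are
\[
\eta_H^2=\frac{1}{\cosh^2 r}=\frac{1}{1+x},
\qquad
\eta_V^2=\frac{1}{\cosh^4 r}=\frac{1}{(1+x)^2}.
\]

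\textbf{Step 2: substitution.} Next I insert these into \eqref{eq:universal-rank-one-trace}, with $p=n-2$ and $q=1$. I eliminate $\alpha^2=1-\beta-\gamma$, so that $Z$ becomes the affine expression
\[
Z=\frac{1+(1+x)(\beta-1)\cdot 0+x\beta\cdot\tfrac{1}{x}\cdot 0+\dots}{1+x}
\]
more precisely, $(1+x)Z=(1-\beta-\gamma)+(1+x)\beta+(1+2x)\gamma=1+x\beta+2x\gamma$. After multiplying through by $(1+x)^2$, the trace $(1+x)^2\mathscr T_{n-2,1}$ is a polynomial in $(\beta,\gamma)$ of degree at most two, with coefficients polynomial in $x$ and affine in $n$. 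The quadratic part comes from $-Z^2$, from the cross terms $-\alpha^2\eta_H^2\beta$ and $-\alpha^2\eta_V^2\gamma$, and from $\alpha^2$ multiplying the other $\beta$- and $\gamma$-free bracket terms. The linear part collects everything else.

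\textbf{Step 3: checks and coefficient matching.} A useful check is the constant term, i.e.\ $\beta=\gamma=0$. There $\alpha=1$ and $Z=1/(1+x)$, and the bracket sums to
\[
\frac{-n}{1+x}-\frac{1}{(1+x)^2}+\frac{2x+2}{(1+x)^2}+\frac{n-2}{1+x}+\frac{1}{(1+x)^2}=0,
\]
as required since $\mathcal R_{n,x}(0,0)=0$. I then match the remaining five coefficients against \eqref{eq:R-polynomial} in turn. First I take $\gamma=0$ and read off the $\beta$ and $\beta^2$ coefficients. Then I take $\beta=0$ and read off the $\gamma$ and $\gamma^2$ coefficients. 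Finally the mixed $\beta\gamma$ term follows from the full expansion.

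\textbf{Main obstacle.} The only real difficulty is bookkeeping in Step~3. The most error-prone contributions are $-Z^2$, whose numerator $(1+x\beta+2x\gamma)^2$ feeds all three quadratic coefficients, and the $\alpha^2$-weighted terms $-\eta_H^2\beta$ and $-\eta_V^2\gamma$. I would organize the computation by writing every term over the common denominator $(1+x)^2$ before expanding, and then verify the final polynomial at a couple of numerical values of $(n,x)$.
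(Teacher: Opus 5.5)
Your proposal is correct and is essentially the paper's own proof, which the paper's appendix spells out: the same profile data $\dot A=1/(1+x)$, $\ddot A=-2x/(1+x)^2$, $\dot B=1$, $\ddot B=0$, $\dot C=(1+2x)/(1+x)$, $\ddot C=2x/(1+x)^2$, $\eta_H^2=1/(1+x)$, $\eta_V^2=1/(1+x)^2$, the same simplification $(1+x)Z=1+x\beta+2x\gamma$, and the same clearing of the factor $(1+x)^2$ before collecting coefficients, and carrying this out does reproduce all five coefficients of $\mathcal R_{n,x}$. Two cosmetic fixes: delete the garbled first display for $Z$, and note that $\alpha^2=1-\beta-\gamma$ times the $\beta,\gamma$-free bracket terms contributes only to the linear coefficients, so the quadratic coefficients come solely from $-Z^2$ and $-\alpha^2(\eta_H^2\beta+\eta_V^2\gamma)$.
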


\begin{proof}
For $q=1$ one has $R=\sinh r$ by
\eqref{eq:complex-volume-radius-recovery}.  Substitution of
\[
 \mathfrak a=\tanh r,\qquad b=\sinh r,\qquad
 c=\sinh r\cosh r
\]
into Theorem~\ref{thm:universal-rank-one-trace}, followed by
$\alpha^2=1-\beta-\gamma$ and $x=\sinh^2r$, gives
\eqref{eq:intrinsic-trace} after collecting coefficients.
\end{proof}

\begin{corollary}
\label{cor:tangent-plane-trace}
In the setting of Theorem~\ref{thm:intrinsic-trace}, if
\[
 J_\Pi(t,\varphi)=|\cof DF_{t,\varphi}\nu|,
\]
then
\[
 \sum_{j=1}^nJ_\Pi''(0,\varphi_j)-nJ_\Pi'(0,1)
 =-\frac{\mathcal R_{n,x}(\beta,\gamma)}{(1+x)^2}.
\]
In particular the formula applies at almost every approximate tangent
hyperplane of a reduced boundary.
\end{corollary}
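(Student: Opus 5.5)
The corollary should follow by identifying the two expressions and then checking a measure-theoretic point. The plan has three steps.

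First, I will identify $J_\Pi(t,\varphi)=|\cof DF_{t,\varphi}\nu|$ with the quantity $e^{nt\varphi}\sqrt{K(t)}$ used in Theorem~\ref{thm:universal-rank-one-trace}. By Lemma~\ref{lem:cofactor},
\[
|\cof DF_{t,\varphi}\nu|=|\det DF_{t,\varphi}|\,|DF_{t,\varphi}^{-T}\nu|.
\]
The determinant equals $e^{nt\varphi}$ by \eqref{eq:rank-one-abc-volume-identity}. I will then invert the upper block-triangular transpose $DF_{t,\varphi}^T$ explicitly in the orthonormal polar frames:
\begin{itemize}
\item the radial component of $\xi=DF_{t,\varphi}^{-T}\nu$ is $\alpha\mathfrak a/\mathfrak a_t$;
\item the $\mathcal H$ component is $(b/b_t)\bigl(v-t\alpha(\mathfrak a/b)h\bigr)$;
\item the $\mathcal V$ component is $(c/c_t)\bigl(w-t\alpha(\mathfrak a/c)\zeta\right)$.
\end{itemize}
Taking $|\xi|^2$ reproduces \eqref{eq:rank-one-K} verbatim. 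So the $J_\Pi$ of the corollary coincides with the one whose derivatives were computed in Theorem~\ref{thm:universal-rank-one-trace}.

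Second, with this identification, the left-hand side is by definition $\mathscr T_{n-2,1}$. Theorem~\ref{thm:intrinsic-trace} then gives the value $-\mathcal R_{n,x}(\beta,\gamma)/(1+x)^2$ directly, because it is a pointwise algebraic identity valid for every unit normal $\nu$ and every $x=\sinh^2r>0$. The only care needed is that $t$-differentiation at $t=0$ commutes with the identification, which is clear since both sides are the same smooth function of $t$ for $x\neq o$.

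Third, for the reduced-boundary statement I will invoke De Giorgi's structure theorem. At $\mathcal H^{n-1}$-a.e. point $x\in\partial^*E$ the approximate tangent space exists and equals $\nu_E(x)^\perp$ with $|\nu_E(x)|=1$. Removing the single point $o$, which is $\mathcal H^{n-1}$-null, every remaining such point has $r>0$. There the polar frame and the splitting \eqref{eq:rank-one-angular-splitting} are defined, so the decomposition \eqref{eq:rank-one-normal-decomposition} applies with $\Pi=\nu_E(x)^\perp$.

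Moreover, by Lemma~\ref{lem:rank-one-finite-perimeter-transport}, the integrand whose $t$-derivatives enter the perimeter variation is exactly $|\cof DF_{t,\varphi}\nu_E|$. The pointwise formula therefore holds at almost every point of the reduced boundary.

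I expect the main obstacle to be the first step, namely confirming the signs and scale factors in inverting $DF^{T}$. In particular, the off-diagonal entries $t\mathfrak a_t h^{\mathsf T}/b$ must produce the correction $t\alpha(\mathfrak a/b)h$, with $\mathfrak a/b$ evaluated at the source point. Everything after that is a citation of the already established identities.
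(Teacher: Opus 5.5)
Your proposal is correct and follows essentially the same route as the paper. Just before Theorem~\ref{thm:universal-rank-one-trace}, the paper identifies $|\cof DF_{t,\varphi}\nu|=e^{nt\varphi}\sqrt{K(t)}$ via Lemma~\ref{lem:cofactor} and the inversion of the block-triangular $DF_{t,\varphi}^{T}$; your explicit components of $\xi=DF_{t,\varphi}^{-T}\nu$ fill in that step correctly. The corollary is then Theorem~\ref{thm:intrinsic-trace} read pointwise in $\Pi$ and applied at the approximate tangent planes $\nu_E(x)^\perp$ of $\partial^*E\setminus\{o\}$, exactly as in the appendix's remark.
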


\begin{theorem}[Strict positivity of the complex certificate]
\label{thm:strict-positivity}
Let
\[
 n\ge4,\qquad x\ge0,\qquad
 \beta,\gamma\ge0,\qquad \beta+\gamma\le1.
\]
Then
\[
 \mathcal R_{n,x}(\beta,\gamma)\ge0,
\]
and equality holds if and only if $\beta=\gamma=0$.
\end{theorem}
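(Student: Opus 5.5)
The idea is to write $\mathcal R_{n,x}$ as a combination of $\beta$ and $\gamma$ with coefficients that are \emph{affine} in $(\beta,\gamma)$. Positivity of an affine function on the triangle $\{\beta,\gamma\ge0,\ \beta+\gamma\le1\}$ reduces to checking its three vertices. Concretely, I would assign the cross term $\beta\gamma$ to the $\beta$-coefficient and write
\[
\mathcal R_{n,x}(\beta,\gamma)=\beta\,L_1(\beta,\gamma)+\gamma\,L_2(\gamma),
\]
where
\[
L_1=(x^2-x-1)\beta+(4x^2-x-2)\gamma+(n-2)x^2+(2n-1)x+n
\]
and
\[
L_2=(4x^2-1)\gamma+(2n-8)x^2+(3n-2)x+n.
\]
This identity is immediate from \eqref{eq:R-polynomial}.

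Next I will show that $L_1>0$ and $L_2>0$ on the closed triangle for every $x\ge0$ and $n\ge4$. Since $L_1$ is affine in $(\beta,\gamma)$, its minimum over the triangle is attained at a vertex, so it suffices to check the following values.
\begin{itemize}
\item At $(0,0)$: $L_1=(n-2)x^2+(2n-1)x+n\ge n>0$.
\item At $(1,0)$: $L_1=(n-1)x^2+(2n-2)x+(n-1)=(n-1)(1+x)^2>0$.
\item At $(0,1)$: $L_1=(n+2)x^2+(2n-2)x+(n-2)\ge n-2>0$.
\end{itemize}
Similarly $L_2$ is affine in $\gamma\in[0,1]$.
\begin{itemize}
\item At $\gamma=0$: $L_2=(2n-8)x^2+(3n-2)x+n\ge n>0$, using $n\ge4$ so that $2n-8\ge0$.
\item At $\gamma=1$: $L_2=(2n-4)x^2+(3n-2)x+(n-1)\ge n-1>0$.
\end{itemize}
Each of these is a polynomial in $x$ with nonnegative coefficients and a positive constant term, so it is strictly positive for all $x\ge0$.

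It follows that $\mathcal R_{n,x}=\beta L_1+\gamma L_2\ge0$ whenever $\beta,\gamma\ge0$. Equality forces $\beta L_1=\gamma L_2=0$, and the strict positivity of $L_1$ and $L_2$ then gives $\beta=\gamma=0$. Conversely, $\mathcal R_{n,x}(0,0)=0$ trivially.

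The only real obstacle is that the pure quadratic coefficient $x^2-x-1$, and for small $x$ also $4x^2-x-2$ and $4x^2-1$, are negative. A naive term-by-term sign check therefore fails, and one has to use the constraint $\beta+\gamma\le1$. The factorization above handles this: the negative quadratic contributions are absorbed into affine coefficients, and the vertex $(1,0)$ produces the perfect square $(n-1)(1+x)^2$. The hypothesis $n\ge4$ is used only to ensure $2n-8\ge0$ and $n-2>0$.
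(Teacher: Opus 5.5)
Your proof is correct and is essentially the paper's argument: both write $\mathcal R_{n,x}$ as $\beta$ times an affine coefficient plus $\gamma$ times an affine coefficient, and both check positivity at the extreme points allowed by $\beta+\gamma\le1$. The difference is only bookkeeping: the paper splits the $\beta\gamma$ term so that each coefficient depends on $(\beta,\gamma)$ only through $\delta=1-\beta-\gamma$ (after discarding $x^2\gamma^2\ge0$) and checks $\delta\in\{0,1\}$, whereas you put the whole cross term into the $\beta$-coefficient and check the vertices of the triangle.
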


\begin{proof}
Put
\[
 \delta=1-\beta-\gamma\in[0,1],
 \qquad
 L_T=(n-1)+(3n-2)x+(2n-5)x^2.
\]
A direct rearrangement of \eqref{eq:R-polynomial} gives
\[
\begin{aligned}
\mathcal R_{n,x}
={}&(n-1)(1+x)^2\beta
 +\delta(1+x-x^2)\beta
 +L_T\gamma\\
&+\delta(1-3x^2)\gamma+x^2\gamma^2.
\end{aligned}
\]
For $0\le\delta\le1$ and real $c$ one has
$\delta c\ge\min\{0,c\}$.  Therefore
\[
 \mathcal R_{n,x}\ge c_H\beta+c_T\gamma+x^2\gamma^2,
\]
where
\[
 c_H=(n-1)(1+x)^2+\min\{0,1+x-x^2\}>0
\]
and
\[
 c_T=L_T+\min\{0,1-3x^2\}>0.
\]
In the only nontrivial branches these become, respectively,
\[
 n+(2n-1)x+(n-2)x^2
\]
and
\[
 n+(3n-2)x+(2n-8)x^2,
\]
which are strictly positive for $n\ge4$ and $x\ge0$.  Hence equality forces
$\beta=\gamma=0$, and the converse follows immediately from
\eqref{eq:R-polynomial}.
\end{proof}

\begin{theorem}[Quaternionic trace sign]
\label{thm:quaternionic-trace-sign}
Let $M=\HH^m$, $m\ge1$, so that
\[
n=4m\ge4,
\qquad
p=n-4,
\qquad
q=3.
\]
Put $x=\sinh^2r$ and $\delta=1-\beta-\gamma=\alpha^2$.  Then
\[
\mathscr T_{n-4,3}
=
-\frac{\mathcal P^{\mathbb H}_{n,x}(\beta,\gamma)}{(n+2)^2(1+x)^4},
\]
where
\begin{equation}
\mathcal P^{\mathbb H}_{n,x}
=
A_{20}\beta^2
+A_{11}\beta\gamma
+A_{02}\gamma^2
+A_{10}\beta\delta
+A_{01}\gamma\delta,
\label{eq:quaternionic-positive-decomposition}
\end{equation}
and
\begin{align}
A_{20}
={}&
(1+x)(nx+n+2)
\Bigl[(n^2+n)(1+x)^2-6x-2\Bigr],
\label{eq:quat-A20}
\\
A_{11}
={}&
(nx+n+2)
\Bigl[
3n^2x^3+(8n^2+6n-24)x^2
\nonumber\\
&\hspace{3.8cm}
+(7n^2+8n-16)x+2n^2+2n-4
\Bigr],
\label{eq:quat-A11}
\\
A_{02}
={}&
(nx+n+2)
\Bigl[
2n^2x^3+(5n^2+4n-16)x^2
\nonumber\\
&\hspace{3.8cm}
+(4n^2+5n-8)x+n^2+n-2
\Bigr],
\label{eq:quat-A02}
\\
A_{10}
={}&
n^3x^4
+(4n^3+5n^2-16n)x^3
\nonumber\\
&+(6n^3+14n^2-16n-48)x^2
\nonumber\\
&+(4n^3+13n^2+4n-12)x
+n^3+4n^2+4n,
\label{eq:quat-A10}
\\
A_{01}
={}&
(2n^3-4n^2)x^4
+(7n^3-48n)x^3
\nonumber\\
&+(9n^3+16n^2-48n-96)x^2
\nonumber\\
&+(5n^3+16n^2+4n-16)x
+n^3+4n^2+4n.
\label{eq:quat-A01}
\end{align}
Consequently
\[
\mathscr T_{n-4,3}\le0,
\qquad
\mathscr T_{n-4,3}=0
\quad\Longleftrightarrow\quad
\beta=\gamma=0.
\]
\end{theorem}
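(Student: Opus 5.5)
The plan has two parts: an explicit substitution that produces the stated formula for $\mathscr T_{n-4,3}$, and a sign argument on the stated decomposition.

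\textbf{Substitution.} I would specialize the universal identity of Theorem~\ref{thm:universal-rank-one-trace} to $q=3$, $p=n-4$, mirroring the proof of Theorem~\ref{thm:intrinsic-trace}. By \eqref{eq:quaternionic-volume-radius}, $R^n=x^{n/2}(1+\tfrac{n}{n+2}x)=x^{n/2}\,\frac{nx+n+2}{n+2}$ with $x=\sinh^2r$. Since $\mathfrak a=R^n/\mathfrak j_{n-4,3}$ and $\mathfrak j_{n-4,3}=x^{(n-1)/2}(1+x)^{3/2}$, this gives
\[
\mathfrak a=\frac{\sqrt{x}\,(nx+n+2)}{(n+2)(1+x)^{3/2}},
\qquad
\eta_H=\frac{\mathfrak a}{b}=\frac{nx+n+2}{(n+2)(1+x)^{3/2}},
\qquad
\eta_V=\frac{\eta_H}{\sqrt{1+x}}.
\]
I would compute $\dot A,\dot B,\dot C$ from the proof of Lemma~\ref{lem:rank-one-global-stretches}: $\dot A=n-\mathfrak aH_{n-4,3}$, $\dot B=\mathfrak a\coth r$, $\dot C=\mathfrak a(\coth r+\tanh r)$. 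Each is a rational function of $x$, because $\mathfrak a\coth r=\mathfrak a\sqrt{1+x}/\sqrt x$ and the square roots cancel. The second derivatives come from the displayed formulas for $\partial_y^2\log(\cdot)$. Every term is then rational in $x$, with denominators dividing $(n+2)^2(1+x)^4$; the squares $Z^2$, $\dot A^2$ and $\eta_H^2$ each carry $(n+2)^{-2}$. Next I would insert $\alpha^2=\delta$ and homogenize: write $-nZ$ and the linear terms as $(\text{linear})\cdot(\beta+\gamma+\delta)$, and expand $Z^2$ quadratically in $(\delta,\beta,\gamma)$. The result is a quadratic form in $(\delta,\beta,\gamma)$. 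The key structural check is that the $\delta^2$ coefficient vanishes. This must happen, since a radial normal ($\beta=\gamma=0$) gives trace $0$, as for the sphere. Collecting the remaining coefficients then yields \eqref{eq:quat-A20}--\eqref{eq:quat-A01}.

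\textbf{Main obstacle.} This bookkeeping is the hard step, because it is a large but finite symbolic computation in $(n,x)$. I would organize it by first computing the $\beta=0$ and $\gamma=0$ slices separately, which isolates $A_{20},A_{10}$ and $A_{02},A_{01}$. I would then recover $A_{11}$ from the mixed coefficient, which comes only from $-Z^2$ together with the $-\alpha^2\eta_H^2\beta$ and $-\alpha^2\eta_V^2\gamma$ terms after $\alpha^2\to\delta$. As a sanity check, I would test $n=4$ (the case $\HH^1$) against the constant-curvature real-hyperbolic computation, and also test $x=0$.

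\textbf{Sign conclusion.} Here $\beta,\gamma,\delta\ge0$ with $\beta+\gamma+\delta=1$, so it suffices that every coefficient $A_{ij}$ is nonnegative, and that $A_{10},A_{01},A_{20},A_{02}$ are strictly positive for $n\ge4$, $x\ge0$. I would verify this coefficient by coefficient in powers of $x$. For $n\ge4$ each bracket has only nonnegative $x$-coefficients: for example $6n^2+14n-16n-48>0$, $7n^3-48n>0$, $n^2+n-6x$ is absorbed by $(n^2+n)(1+x)^2\ge 20(1+2x)$, and $8n^2+6n-24>0$. Their constant terms $n^3+4n^2+4n$ are positive. Positivity of $\mathcal P^{\mathbb H}$ then gives $\mathscr T_{n-4,3}\le0$.

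\textbf{Equality case.} If $\mathcal P^{\mathbb H}=0$, every nonnegative term vanishes. The terms $A_{20}\beta^2=0$ and $A_{02}\gamma^2=0$ force $\beta=\gamma=0$. Conversely, $\beta=\gamma=0$ kills every term.
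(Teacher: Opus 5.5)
Your proposal is correct and follows essentially the paper's route. You substitute the quaternionic volume radius into the universal trace of Theorem~\ref{thm:universal-rank-one-trace}, set $\alpha^2=\delta$, clear the positive denominator $(n+2)^2(1+x)^4$, check that the radial ($\delta^2$) part cancels and that each $A_{ij}$ has positive $x$-coefficients for $n\ge4$, and read off the equality case from $A_{20}\beta^2$ and $A_{02}\gamma^2$. Your direct homogenization is equivalent to the paper's appendix, which expands in the ordinary basis and converts via $A_{11}=q_{11}+q_{10}+q_{01}$ and its analogues.

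There is one bookkeeping slip in your description of how to recover $A_{11}$. In the homogenized $(\delta,\beta,\gamma)$ form, the $\beta\gamma$ coefficient also receives $-n(\dot B+\dot C)$ from $-nZ$, together with the homogenized $\beta(-\ddot B+2\dot B^2)$ and $\gamma(-\ddot C+2\dot C^2)$ terms. The $-\alpha^2\eta_H^2\beta$ and $-\alpha^2\eta_V^2\gamma$ terms feed only $\beta\delta$ and $\gamma\delta$. What you described is the ordinary-basis coefficient $q_{11}$, so you must add $q_{10}+q_{01}$ to obtain $A_{11}$.
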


\begin{proof}
Substitute \eqref{eq:quaternionic-volume-radius} into
\eqref{eq:universal-rank-one-trace}, use
$\alpha^2=1-\beta-\gamma$, and clear the positive denominator
$(n+2)^2(1+x)^4$.  Direct expansion gives
\eqref{eq:quaternionic-positive-decomposition}--\eqref{eq:quat-A01}.
For $n\ge4$ and $x\ge0$, every displayed coefficient is strictly positive.
Since $\beta,\gamma,\delta\ge0$, the polynomial is nonnegative, and it can
vanish only when $\beta=\gamma=0$.  The converse is immediate.
\end{proof}

\begin{theorem}[Cayley trace sign]
\label{thm:Cayley-trace-sign}
Let $M=\OH^2$, so that
\[
n=16,
\qquad
p=8,
\qquad
q=7.
\]
Put $x=\sinh^2r$ and $\delta=1-\beta-\gamma$.  Then
\[
\mathscr T_{8,7}
=
-\frac{\mathcal P^{\mathbb O}_{x}(\beta,\gamma)}{27225(1+x)^8},
\]
where
\begin{equation}
\mathcal P^{\mathbb O}_{x}
=
B_{20}\beta^2
+B_{11}\beta\gamma
+B_{02}\gamma^2
+B_{10}\beta\delta
+B_{01}\gamma\delta.
\label{eq:Cayley-positive-decomposition}
\end{equation}
Set
\[
L(x)=120x^3+396x^2+440x+165.
\]
Then
\begin{align}
B_{20}
={}&
(1+x)L(x)
(2520x^4+9972x^3+14828x^2+9845x+2475),
\label{eq:Cayley-B20}
\\
B_{11}
={}&
2L(x)
(3600x^5+16944x^4+31752x^3+29568x^2
\nonumber\\
&\hspace{4.5cm}+13640x+2475),
\label{eq:Cayley-B11}
\\
B_{02}
={}&
L(x)
(4800x^5+21792x^4+39144x^3+34628x^2
\nonumber\\
&\hspace{4.5cm}+14960x+2475),
\label{eq:Cayley-B02}
\\
B_{10}
={}&
288000x^8
+2399040x^7
+8774208x^6
+18408720x^5
\nonumber\\
&+24235728x^4
+20497400x^3
+10865800x^2
+3294225x
+435600,
\label{eq:Cayley-B10}
\\
B_{01}
={}&
8\bigl(
64800x^8
+512640x^7
+1774992x^6
+3511992x^5
\nonumber\\
&\hspace{1.0cm}
+4339236x^4
+3421880x^3
+1675850x^2
+462825x
+54450
\bigr).
\label{eq:Cayley-B01}
\end{align}
Consequently
\[
\mathscr T_{8,7}\le0,
\qquad
\mathscr T_{8,7}=0
\quad\Longleftrightarrow\quad
\beta=\gamma=0.
\]
\end{theorem}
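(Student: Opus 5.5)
The proof is a direct specialization of Theorem~\ref{thm:universal-rank-one-trace}, parallel to the quaternionic case. First express every ingredient of \eqref{eq:universal-rank-one-trace} as a rational function of $x=\sinh^2r$, using \eqref{eq:Cayley-volume-radius}. Write $P(x)=x^8+\tfrac83x^9+\tfrac{12}5x^{10}+\tfrac8{11}x^{11}$, so $R^{16}=P(x)$. The Jacobian is $\mathfrak j_{8,7}=\sinh^{15}r\cosh^7r$, and $dx=2\sinh r\cosh r\,dr$ gives
\[
\mathfrak a=\frac{dr}{dy}=\frac{R^{16}}{\mathfrak j_{8,7}}.
\]
Hence
\[
\mathfrak a^2=\frac{P(x)^2}{x^{15}(1+x)^7},
\qquad
\eta_H^2=\frac{\mathfrak a^2}{x},
\qquad
\eta_V^2=\frac{\mathfrak a^2}{x(1+x)}.
\]
Since $P=x^8Q(x)$ with $Q$ a cubic, we get $\mathfrak a^2=x\,Q^2/(1+x)^7$. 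So $\eta_H^2=Q^2/(1+x)^7$ and $\eta_V^2=Q^2/(1+x)^8$, both rational with no singularity at $x=0$.

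The logarithmic derivatives follow from the formulas in the proof of Lemma~\ref{lem:rank-one-global-stretches}:
\[
\dot A=16-\mathfrak aH_{8,7},\qquad \dot B=\mathfrak a\coth r,\qquad \dot C=\mathfrak a(\coth r+\tanh r),
\]
together with the displayed expressions for the second derivatives. In $x$ these read $\mathfrak a\coth r=\mathfrak a^2(1+x)\cdot\frac{\cosh r}{\mathfrak a\sinh r}$, and so on. The cleanest route is $\frac{d}{dy}=\mathfrak a\,\frac{d}{dr}=2\mathfrak a\sqrt{x(1+x)}\,\frac{d}{dx}$. Combined with $\mathfrak a\sqrt{x(1+x)}=Q\,x/(1+x)^3$, this gives the key simplification
\[
\frac{d}{dy}=\frac{2xQ(x)}{(1+x)^3}\,\frac{d}{dx}.
\]
So $\dot A,\dot B,\dot C,\ddot A,\ddot B,\ddot C$ are all rational in $x$, with denominators powers of $(1+x)$ and of $165=\mathrm{lcm}$-type constants coming from $Q$. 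Note that $L(x)$ is $165Q(x)/x^0$ up to normalization: $165Q=165+440x+396x^2+120x^3=L$. This explains both the factor $L$ in $B_{20},B_{11},B_{02}$ and the constant $27225=165^2$.

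Next, substitute into \eqref{eq:universal-rank-one-trace} with $\alpha^2=\delta$. Observe that the trace is a quadratic form in $(\alpha^2,\beta,\gamma)$ plus terms linear in these. Homogenize the linear terms using $\alpha^2+\beta+\gamma=1$, e.g.\ writing $nZ=nZ(\delta+\beta+\gamma)$. After this, $-\mathscr T_{8,7}$ becomes a quadratic form in $(\delta,\beta,\gamma)$. Multiply through by $27225(1+x)^8$ and collect coefficients. The $\delta^2$ coefficient must vanish identically, since $\mathscr T=0$ at the radial normal; this is a useful consistency check. The remaining coefficients give $B_{20},B_{11},B_{02},B_{10},B_{01}$. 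This collection step is pure polynomial algebra and I would carry it out with computer algebra, verifying the factorization by $L(x)$ for the three pure $\beta,\gamma$ coefficients.

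Finally, the sign. Every $B_{ij}$ displayed is a product of polynomials with strictly positive coefficients, so each is $>0$ for $x\ge0$. With $\beta,\gamma,\delta\ge0$, $\mathcal P^{\mathbb O}_x\ge0$. If $\mathcal P^{\mathbb O}_x=0$, then $B_{20}\beta^2=B_{02}\gamma^2=0$, forcing $\beta=\gamma=0$. Conversely, $\beta=\gamma=0$ kills every term. The main obstacle is the bulk expansion, in particular tracking $\ddot A$, which involves $H_{8,7}'$ and $\mathfrak a^2$ with high-degree numerators. I would control this by working entirely in the variable $x$ with the operator $\frac{2xQ}{(1+x)^3}\frac{d}{dx}$ rather than in $r$. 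I would also cross-check the result at $x=0$, where the Euclidean limit should give the trace of the flat case, and as $x\to\infty$ against the leading coefficients.
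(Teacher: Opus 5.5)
Your proposal is correct and is essentially the paper's own argument, as spelled out in its appendix: with $L=165Q$ one has $\frac{d}{dy}=\frac{2xL}{165(1+x)^3}\frac{d}{dx}$, so every ingredient of the universal trace is rational in $x$, clearing $27225(1+x)^8$ produces the five coefficients, and positivity of all their coefficients gives the sign and the equality statement. Your homogenization via $\alpha^2+\beta+\gamma=1$ is equivalent to the paper's ordinary-basis expansion followed by its $\delta$-basis conversion, since that representation is unique; the only blemish is the discarded aside $\mathfrak a\coth r=\mathfrak a^2(1+x)\cosh r/(\mathfrak a\sinh r)$, which is off by a factor $1+x$ and is never used in your $d/dy$-operator computation.
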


\begin{proof}
Substitute \eqref{eq:Cayley-volume-radius} into
\eqref{eq:universal-rank-one-trace} and clear the positive denominator
$27225(1+x)^8$.  Direct expansion gives
\eqref{eq:Cayley-positive-decomposition}--\eqref{eq:Cayley-B01}.  Every
coefficient of every polynomial displayed there is strictly positive.  Since
$\beta,\gamma,\delta\ge0$, the equality statement follows immediately.
\end{proof}


\subsection{Finite-perimeter rigidity from the trace sign}

For the exact-volume family \eqref{eq:rank-one-exact-volume-exponent}, define
\[
 \mathscr P_E(\mathbf a)=\Per(F_{\psi_{\mathbf a}}E).
\]

\begin{proposition}
\label{prop:rank-one-exact-volume-trace}
Let $E\subset M$ be a bounded finite-perimeter set of positive volume and let
$o$ be a volume geometric median.  Then $\mathscr P_E$ is $C^2$ near the
origin and
\begin{equation}
 \operatorname{tr}D^2\mathscr P_E(0)
 =\int_{\partial^*E}\mathscr T_{p,q}(r,\nu_E)\,d\mathcal H^{n-1}.
\label{eq:rank-one-perimeter-Hessian-trace}
\end{equation}
\end{proposition}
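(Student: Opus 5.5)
We apply Lemma~\ref{lem:rank-one-finite-perimeter-transport} along lines in parameter space and then compute the trace with Theorem~\ref{thm:universal-rank-one-trace}. Fix an orthonormal basis $e_1,\dots,e_n$ of $T_oM$ and write $\mathbf a=\sum_j a_je_j$. Then $\psi_{\mathbf a}=\sum_j a_j\varphi_j+s_E(\mathbf a)$.

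\emph{Regularity of $s_E$.} Since $\mu_E$ is a probability measure on a compact sphere, $Z_E$ is real-analytic and positive. Hence $s_E$ is smooth near $0$. The family $\mathbf a\mapsto\psi_{\mathbf a}$ is therefore smooth with values in $C^1(\mathbb S_o^{n-1})$, and its derivatives up to order two in $\mathbf a$ are uniformly $C^1$-bounded near the origin.

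\emph{$C^2$ regularity of $\mathscr P_E$.} Lemma~\ref{lem:rank-one-finite-perimeter-transport} is stated for a one-parameter family. For the multi-parameter statement we repeat its dominated-convergence argument with partial derivatives in $\mathbf a$. The cofactor bounds of Lemma~\ref{lem:rank-one-global-stretches} hold verbatim when $t$ is replaced by $\mathbf a$, because they depend only on $C^1$ bounds of $\psi$ and its parameter derivatives. This gives $\mathscr P_E\in C^2$ near $0$, with
\[
D^2\mathscr P_E(0)=\int_{\partial^*E}D^2_{\mathbf a}\bigl|\cof DF_{\psi_{\mathbf a}}\nu_E\bigr|\Big|_{\mathbf a=0}\,d\mathcal H^{n-1}.
\]

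\emph{Pointwise trace.} At a point $x=(y,\theta)$ with normal $\nu$, the integrand depends on $\mathbf a$ only through the exponent $\psi_{\mathbf a}$. The key observation is that the cofactor integrand is a function $\mathcal G(\psi(\theta),\nabla_{\mathbb S}\psi(\theta))$ of the exponent's value and its spherical gradient at $\theta$. Moreover $F_\psi$ coincides with $F_{1,\psi}$ in the notation of the cofactor-trace subsection, so $J_\Pi(t,\varphi)=\mathcal G(t\varphi,t\nabla\varphi)$. The constant $s_E(\mathbf a)$ changes only the value $\psi$, not its gradient. Differentiating $\mathbf a\mapsto\mathcal G(\psi_{\mathbf a},\nabla\psi_{\mathbf a})$ twice at $0$ and taking the trace, the chain rule gives
\[
\sum_j\partial_{a_j}^2\big|_0=\sum_j\frac{d^2}{dt^2}\Big|_0\mathcal G(t\varphi_j,t\nabla\varphi_j)+\partial_\psi\mathcal G(0,0)\,\operatorname{tr}D^2s_E(0)+\text{(terms involving }Ds_E(0)).
\]
The $Ds_E(0)$ terms vanish by Lemma~\ref{lem:rank-one-median-balance}, since $Ds_E(0)=0$. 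Also $\partial_\psi\mathcal G(0,0)=J_\Pi'(0,1)$, because the constant function $1$ has zero gradient. By \eqref{eq:rank-one-log-partition-trace}, $\operatorname{tr}D^2s_E(0)=-n$, so the middle term is $-nJ_\Pi'(0,1)$. The first sum is $\sum_jJ_\Pi''(0,\varphi_j)$. Together these give exactly $\mathscr T_{p,q}(r,\nu_E)$ from Theorem~\ref{thm:universal-rank-one-trace}.

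Integrating this pointwise identity over $\partial^*E$ yields \eqref{eq:rank-one-perimeter-Hessian-trace}. The point $o$ contributes nothing, since it is $\mathcal H^{n-1}$-null.

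\emph{Main obstacle.} The delicate step is the chain-rule bookkeeping. One must check carefully that the mixed terms $\partial_{a_j}s_E\cdot(\dots)$ vanish at $0$, and that the Hessian of $s_E$ enters only through its trace against $\partial_\psi\mathcal G$. The analytic interchange of derivatives and integral is routine, given the uniform cofactor bounds already established.
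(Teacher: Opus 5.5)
Your proposal is correct and follows essentially the same route as the paper. Both arguments differentiate under the integral using the uniform cofactor bounds of Lemma~\ref{lem:rank-one-global-stretches}, then apply the second-order chain rule pointwise: $Ds_E(0)=0$ removes the mixed terms and $\operatorname{tr}D^2s_E(0)=-n$ produces $-nJ_\Pi'(0,1)$. The only difference is bookkeeping: you differentiate the finite-dimensional function $\mathcal G(\psi(\theta),\nabla_{\mathbb S}\psi(\theta))$ along the full map $\mathbf a\mapsto\psi_{\mathbf a}$, while the paper restricts to the coordinate curves $t\mapsto t\varphi_j+s_E(te_j)\mathbf 1$ and treats $\mathcal F_x$ as a functional of $\psi$.
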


\begin{proof}
The map $\mathbf a\mapsto\psi_{\mathbf a}$ is smooth with values in
$C^1(\mathbb S_o^{n-1})$, and on a small closed ball its derivatives through
order two have uniformly bounded $C^1$ norm.  The cofactor integrand depends
smoothly on the finite-dimensional data $\psi_{\mathbf a}(\theta)$ and
$d\psi_{\mathbf a}(\theta)$.  The bounds from
Lemma~\ref{lem:rank-one-global-stretches} therefore dominate, uniformly on a
small parameter ball, every first and second partial derivative with respect
to $\mathbf a$.  Since $\mathcal H^{n-1}(\partial^*E)=\Per(E)<\infty$,
dominated convergence gives
\[
 \mathscr P_E(\mathbf a)
 =\int_{\partial^*E}|\cof DF_{\psi_{\mathbf a}}\nu_E|\,d\mathcal H^{n-1}
\]
and shows directly that $\mathscr P_E$ is $C^2$ near the origin, with all
first and second parameter derivatives obtained by differentiating under the
integral.

Choose an orthonormal basis $e_1,\ldots,e_n$ of $T_oM$ and put
$\varphi_j(\theta)=\langle\theta,e_j\rangle$.  Along the $j$-th coordinate
axis, write
\[
 \zeta_j(t)=t\varphi_j+s_E(te_j)\mathbf1.
\]
The median balance and \eqref{eq:rank-one-log-partition-trace} give
\[
 \zeta_j'(0)=\varphi_j,
 \qquad
 \zeta_j''(0)=D^2s_E(0)[e_j,e_j]\mathbf1.
\]
To make the second-order chain rule explicit, fix a reduced-boundary point
$x$ and set $\Pi_x=\nu_E(x)^\perp$.  Regard the pointwise tangential Jacobian
as the smooth functional
\[
 \mathcal F_x(\psi)
 :=|\cof DF_\psi(x)\nu_E(x)|.
\]
Then
\[
\begin{aligned}
\left.\frac{d^2}{dt^2}\right|_{t=0}\mathcal F_x(\zeta_j(t))
={}&D^2\mathcal F_x(0)[\varphi_j,\varphi_j]\\
&+D\mathcal F_x(0)
 \bigl[D^2s_E(0)[e_j,e_j]\mathbf1\bigr].
\end{aligned}
\]
By the notation used in the universal trace calculation,
\[
D^2\mathcal F_x(0)[\varphi_j,\varphi_j]
=J_{\Pi_x}''(0,\varphi_j),
\qquad
D\mathcal F_x(0)[\mathbf1]=J_{\Pi_x}'(0,1).
\]
There is no mixed first-order term involving $s_E$ because $Ds_E(0)=0$.
Consequently
\[
\begin{aligned}
 D^2\mathscr P_E(0)[e_j,e_j]
 =\int_{\partial^*E}\bigl(&J_{\Pi_x}''(0,\varphi_j)\\
 &+D^2s_E(0)[e_j,e_j]J_{\Pi_x}'(0,1)\bigr)
 \,d\mathcal H^{n-1}.
\end{aligned}
\]
Summing over $j$ and using
$\operatorname{tr}D^2s_E(0)=-n$ gives
\[
\begin{aligned}
\operatorname{tr}D^2\mathscr P_E(0)
&=\int_{\partial^*E}
\left(
\sum_{j=1}^nJ_{\Pi_x}''(0,\varphi_j)
-nJ_{\Pi_x}'(0,1)
\right)d\mathcal H^{n-1}\\
&=\int_{\partial^*E}\mathscr T_{p,q}(r,\nu_E)\,d\mathcal H^{n-1},
\end{aligned}
\]
which is \eqref{eq:rank-one-perimeter-Hessian-trace}.
\end{proof}

\begin{proposition}
\label{prop:rank-one-reduced-boundary-radiality}
Let
\[
 M=\CH^m\ (m\ge2),\qquad M=\HH^m\ (m\ge1),
 \qquad\text{or}\qquad M=\OH^2.
\]
If $E\subset M$ is a bounded finite-perimeter isoperimetric region of
positive volume and $o$ is a volume geometric median, then
\begin{equation}
 \nu_E=\pm N
 \qquad
 \mathcal H^{n-1}\text{-a.e. on }\partial^*E.
 \label{eq:rank-one-radial-normal}
\end{equation}
\end{proposition}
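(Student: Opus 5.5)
The idea is to use the exact-volume family $\mathbf a\mapsto F_{\psi_{\mathbf a}}E$ as a competitor class and read off a second-order necessary condition at $\mathbf a=0$. Since $\Vol(F_{\psi_{\mathbf a}}E)=\Vol(E)$ for every $\mathbf a\in T_oM$ and $E$ is isoperimetric, we have
\[
\mathscr P_E(\mathbf a)=\Per(F_{\psi_{\mathbf a}}E)\ge \Per(E)=\mathscr P_E(0)
\]
for all $\mathbf a$. Thus $\mathscr P_E$ attains a minimum at the origin.

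By Proposition~\ref{prop:rank-one-exact-volume-trace}, $\mathscr P_E$ is $C^2$ near the origin. A $C^2$ function with an interior local minimum has $D\mathscr P_E(0)=0$ and $D^2\mathscr P_E(0)\ge0$ in the sense of quadratic forms. In particular $\operatorname{tr}D^2\mathscr P_E(0)\ge0$, and the trace formula \eqref{eq:rank-one-perimeter-Hessian-trace} then gives
\[
\int_{\partial^*E}\mathscr T_{p,q}(r,\nu_E)\,d\mathcal H^{n-1}\ge0 .
\]

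Next we apply the pointwise sign certificates. Corollary~\ref{cor:tangent-plane-trace} and its analogues say that the universal identity of Theorem~\ref{thm:universal-rank-one-trace} holds at every reduced-boundary point $x\ne o$, with $\Pi=\nu_E(x)^\perp$; the single point $o$ is $\mathcal H^{n-1}$-null. For $\CH^m$ we use Theorems~\ref{thm:intrinsic-trace} and~\ref{thm:strict-positivity} with $n=2m\ge4$. For $\HH^m$ we use Theorem~\ref{thm:quaternionic-trace-sign}, and for $\OH^2$ Theorem~\ref{thm:Cayley-trace-sign}. Each gives $\mathscr T_{p,q}\le0$ pointwise, with equality exactly when $\beta=\gamma=0$. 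A nonpositive integrand with nonnegative integral against the finite measure $\mathcal H^{n-1}\llcorner\partial^*E$ must vanish almost everywhere. Hence $\beta=\gamma=0$ $\mathcal H^{n-1}$-a.e., so $\alpha^2=1$ there, which is \eqref{eq:rank-one-radial-normal}.

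The main point to check is the bookkeeping that connects the pointwise quantities to the measure-theoretic normal, namely that the decomposition \eqref{eq:rank-one-normal-decomposition} applies at a.e. reduced-boundary point. The splitting $\mathcal H_\theta\oplus\mathcal V_\theta$ is defined for every $\theta$ and varies measurably. The normal $\nu_E$ is Borel on $\partial^*E$. So $\alpha,\beta,\gamma$ are Borel functions and the integral above makes sense. Integrability is automatic, since by Lemma~\ref{lem:rank-one-global-stretches} the second derivatives of the Jacobian are bounded.

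Some care is also needed for $\HH^1$, where $p=0$. The certificate of Theorem~\ref{thm:quaternionic-trace-sign} is stated for $m\ge1$, and in that case $\beta\equiv0$, so the conclusion reduces to $\gamma=0$.

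Boundedness of $E$ enters twice: it guarantees that a geometric median exists (Lemma~\ref{lem:rank-one-median-balance}), and it ensures that $\mu_E$ and hence $s_E$ are well defined and smooth.
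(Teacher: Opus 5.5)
Your proposal is correct and follows the paper's own argument. Global minimality over the exact-volume family $F_{\psi_{\mathbf a}}E$ gives $\operatorname{tr}D^2\mathscr P_E(0)\ge0$, Proposition~\ref{prop:rank-one-exact-volume-trace} turns this into $\int_{\partial^*E}\mathscr T_{p,q}\,d\mathcal H^{n-1}\ge0$, and the pointwise sign certificates (Theorems~\ref{thm:strict-positivity}, \ref{thm:quaternionic-trace-sign}, \ref{thm:Cayley-trace-sign}) then force $\beta=\gamma=0$ almost everywhere.
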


\begin{proof}
Every $F_{\psi_{\mathbf a}}E$ has the same volume as $E$.  Hence global
minimality gives $D^2\mathscr P_E(0)\ge0$ and therefore
\[
 0\le\operatorname{tr}D^2\mathscr P_E(0).
\]
If $M=\CH^m$, Theorems~\ref{thm:intrinsic-trace} and
\ref{thm:strict-positivity} imply
$\mathscr T_{n-2,1}\le0$, with equality exactly when
$\beta=\gamma=0$.  If $M=\HH^m$ or $M=\OH^2$, the corresponding statement is
Theorem~\ref{thm:quaternionic-trace-sign} or
Theorem~\ref{thm:Cayley-trace-sign}.  Proposition~\ref{prop:rank-one-exact-volume-trace}
therefore gives
\[
 0\le \operatorname{tr}D^2\mathscr P_E(0)
 =\int_{\partial^*E}\mathscr T_{p,q}\,d\mathcal H^{n-1}\le0.
\]
The strict equality statements force $\beta=\gamma=0$ almost everywhere,
which is equivalent to \eqref{eq:rank-one-radial-normal}.
\end{proof}

\begin{lemma}
\label{lem:rank-one-BV-radiality}
Under the assumptions of Proposition~\ref{prop:rank-one-reduced-boundary-radiality},
there exists a measurable set $\mathscr A\subset(0,\infty)$ such that, modulo
volume zero,
\begin{equation}
 E=\{(R,\theta):R\in\mathscr A,\ \theta\in\mathbb S_o^{n-1}\}.
 \label{eq:rank-one-radial-set}
\end{equation}
On every compact subinterval of $(0,\infty)$, the set $\mathscr A$ has finite
perimeter.
\end{lemma}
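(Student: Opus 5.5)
We pass from Proposition~\ref{prop:rank-one-reduced-boundary-radiality} to the conclusion by showing that the distributional angular derivatives of $\mathbf 1_E$ vanish. We use the polar coordinates $(R,\theta)$ on $M\setminus\{o\}$, in which $dV=R^{n-1}dR\,d\sigma$. Let $u=\mathbf 1_E$. Its distributional gradient is the vector measure $Du=\nu_E\,\mathcal H^{n-1}\llcorner\partial^*E$, up to the sign convention for the inner normal. Let $Y$ be a smooth vector field, compactly supported in $M\setminus\{o\}$, that is tangent to the geodesic spheres about $o$, i.e.\ $\langle Y,N\rangle=0$. Then
\[
\int_E\diver Y\,dV=-\int_{\partial^*E}\langle Y,\nu_E\rangle\,d\mathcal H^{n-1}=0,
\]
because $\nu_E=\pm N$ almost everywhere on $\partial^*E$ by \eqref{eq:rank-one-radial-normal}.

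The natural test fields are the Killing fields of the isotropy group $K_o$, which acts transitively on each sphere $\mathbb S_o^{n-1}$. For a Killing field $Y$ and a cutoff $\chi(R)$ supported in $(0,\infty)$, the field $\chi(R)Y$ is tangential with $\diver(\chi Y)=\langle\nabla\chi,Y\rangle+\chi\diver Y=0$, since $\diver Y=0$ and $Y\perp\nabla R$. So these fields give nothing directly. Instead we use $fY$ with $f$ an arbitrary smooth function compactly supported in $M\setminus\{o\}$. Then $\diver(fY)=Yf$, and the identity above yields $\int_E Yf\,dV=0$ for all such $f$. Hence $Yu=0$ in $\mathcal D'(M\setminus\{o\})$ for every Killing field $Y$ in the Lie algebra of $K_o$. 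Since $K_o$ is connected (it is $U(m)$, $Sp(m)Sp(1)$ or $\mathrm{Spin}(9)$), $u\circ k=u$ a.e.\ for every $k\in K_o$. The passage from the infinitesimal to the global statement goes by mollifying $u$ along the group: $t\mapsto\int u(\exp(tY)x)f(x)\,dV$ has zero derivative because the flow preserves volume.

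Transitivity on spheres then gives the radial form. Disintegrate $dV=R^{n-1}dR\,d\sigma$. For a.e.\ $R$, the slice $\theta\mapsto u(R,\theta)$ is $K_o$-invariant in the a.e.\ sense (Fubini together with Haar averaging). Averaging over $K_o$ shows that $u(R,\cdot)$ equals its spherical mean a.e. This mean takes only the values $0,1$ because $u$ is an indicator function. Set $\mathscr A=\{R:\ \text{the mean is }1\}$; this set is measurable, and \eqref{eq:rank-one-radial-set} holds modulo null sets. The point $o$ is negligible.

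For the perimeter claim, fix $[a,b]\subset(0,\infty)$. On the annulus $a<R<b$ we have $\Per(E)\ge\int_a^b\ldots$, and we make this precise by testing the radial component of $Du$ with fields $g(R)\partial_R$. For $E$ radial,
\[
\int_E\diver(g\partial_R)\,dV=\omega_{n-1}\int_{\mathscr A}(R^{n-1}g)'\,dR .
\]
The weight $R^{n-1}$ and the metric factor $\mathfrak a$ relating $dR$ to arclength are bounded above and below on $[a,b]$. Hence $\sup\int_{\mathscr A}h'\,dR$ over $h\in C_c^1((a,b))$ with $|h|\le1$ is bounded by $C_{a,b}\Per(E)$, so $\mathscr A$ has finite perimeter in $(a,b)$. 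We expect the main obstacle to be the rigorous passage from the vanishing of the angular derivatives of $u$ to a.e.\ $K_o$-invariance, and the measurability of the slice description. The flow/mollification argument above is the route we would try first.
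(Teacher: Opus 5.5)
Your proposal is correct and follows essentially the paper's route. The isotropy Killing fields annihilate $Du$ because $\nu_E=\pm N$; the flow computation together with connectedness of $K$ gives a.e.\ $K$-invariance of $\chi_E$; and Haar averaging plus transitivity on the direction sphere yields the radial representative with $\mathscr A$ measurable. The only difference is the last step: you bound $|D\chi_{\mathscr A}|((a,b))\le C_{a,b}\Per(E)$ directly by testing the perimeter against radial fields $g(R)\partial_R$, which is the same duality computation the paper later uses in Lemma~\ref{lem:rank-one-endpoint-comparison}, whereas the paper invokes the BV slicing theorem in product coordinates on the annulus.
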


\begin{proof}
Put $u=\chi_E$ and let $K$ be the isotropy group fixing $o$.  In the three
cases under consideration,
\[
 K=U(m),\qquad K=\operatorname{Sp}(m)\operatorname{Sp}(1),
 \qquad K=\operatorname{Spin}(9),
\]
respectively.  Each $K$ is compact and connected and acts transitively on the
unit direction sphere.

The measure $Du$ is concentrated on $\partial^*E$ and has polar vector
$-\nu_E$ under the standard convention; see \cite{AFP2000,Maggi2012}.  Every
fundamental Killing field $Z$ of the $K$-action is tangent to centered
geodesic spheres.  Hence \eqref{eq:rank-one-radial-normal} gives
\[
 Z\cdot Du=0
\]
as a Radon measure.  Since $Z$ is Killing, $\operatorname{div}Z=0$, and thus
for every $\zeta\in C_c^\infty(M)$,
\[
 \int_MuZ\zeta\,dV
 =\int_Mu\operatorname{div}(\zeta Z)\,dV
 =-\int_M\zeta\,Z\cdot Du=0.
\]
Let $\Phi_t$ be the flow of $Z$ and, for
$\zeta\in C_c^\infty(M)$, set
\[
 \mathcal I(t)=\int_Mu(x)\zeta(\Phi_{-t}x)\,dV_x.
\]
Because $Z$ generates its own flow,
$Z(\zeta\circ\Phi_{-t})=(Z\zeta)\circ\Phi_{-t}$, and therefore the preceding
distributional identity gives
\[
 \mathcal I'(t)
 =-\int_Mu\,Z(\zeta\circ\Phi_{-t})\,dV
 =0.
\]
The flow of a Killing field preserves $dV$, so changing variables yields
\[
 \int_M (u\circ\Phi_t)\zeta\,dV
 =\mathcal I(t)=\mathcal I(0)
 =\int_Mu\zeta\,dV.
\]
Thus $u\circ\Phi_t=u$ in $L^1_{\mathrm{loc}}(M)$ for every $t$.  Since each of
the groups $K$ above is connected, it is generated by its one-parameter
subgroups; finite products of the corresponding flow invariances therefore give
$K$-invariance of $u$ in $L^1_{\mathrm{loc}}$.

Choose a Borel representative and average it with normalized Haar measure:
\[
 \bar u(x)=\int_Ku(kx)\,d\mu_K(k).
\]
We justify explicitly that the averaging does not change $u$ modulo null sets.
For every compact set $C\subset M$, Fubini's theorem and the already proved
$L^1_{\mathrm{loc}}$ invariance under every $k\in K$ give
\[
\begin{aligned}
\int_C|\bar u(x)-u(x)|\,dV_x
&\le
\int_K\int_C|u(kx)-u(x)|\,dV_x\,d\mu_K(k)\\
&=0.
\end{aligned}
\]
Hence $\bar u=u$ almost everywhere, while $\bar u$ is pointwise $K$-invariant
by Haar invariance.  Fix $\theta_0\in\mathbb S_o^{n-1}$ and put
$f(R)=\bar u(R,\theta_0)$.  Transitivity of $K$ on each centered sphere gives
$\bar u(R,\theta)=f(R)$.  Since $\bar u=u\in\{0,1\}$ almost everywhere,
Fubini's theorem in the polar measure
$R^{n-1}dR\,d\sigma$ implies that $f(R)\in\{0,1\}$ for almost every $R$.
Taking $\mathscr A=\{R:f(R)=1\}$ proves \eqref{eq:rank-one-radial-set}.

Finally, on every annulus $a<R<b$, polar coordinates are smooth and uniformly
nondegenerate, while $R^{n-1}$ is bounded above and away from zero.  The local
BV slicing theorem \cite[Theorem~3.103 and Remark~3.104]{AFP2000} applied in
product coordinates shows that $\chi_{\mathscr A}\in BV(a,b)$.
\end{proof}

\begin{lemma}
\label{lem:rank-one-endpoint-comparison}
Let $E_{\mathscr A}$ be the radial finite-perimeter set in
\eqref{eq:rank-one-radial-set}, assume that it is bounded and has positive
volume, and let $B_{r_0}(o)$ be the centered ball with
$\Vol(B_{r_0}(o))=\Vol(E_{\mathscr A})$.  Then
\[
 \Per(E_{\mathscr A})\ge\Per(B_{r_0}(o)),
\]
with equality if and only if
\begin{equation}
 \mathscr A=(0,R(r_0))
 \qquad\text{modulo one-dimensional measure}.
 \label{eq:rank-one-endpoint-equality}
\end{equation}
\end{lemma}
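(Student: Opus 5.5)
The plan is to reduce everything to a one-dimensional weighted problem in the volume radius $R$. For a radial set $E_{\mathscr A}$ the volume is
\[
\Vol(E_{\mathscr A})=\omega_{n-1}\int_{\mathscr A}R^{n-1}\,dR .
\]
Its perimeter is computed in the smooth annular polar coordinates used at the end of Lemma~\ref{lem:rank-one-BV-radiality}. Since $\chi_{\mathscr A}\in BV_{\mathrm{loc}}(0,\infty)$, we may take a good representative of $\mathscr A$ as a locally finite union of intervals. The reduced boundary of $E_{\mathscr A}$ is then, up to $\mathcal H^{n-1}$-null sets, the union of the centered spheres $\{R=\rho\}$ over the jump points $\rho\in\partial^*\mathscr A$. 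This gives
\[
\Per(E_{\mathscr A})=\sum_{\rho\in\partial^*\mathscr A}\mathcal A(\rho),
\qquad
\mathcal A(\rho)=\omega_{n-1}\,b^p c^q\big|_{r=r(\rho)}=\omega_{n-1}\sinh^{n-1}r\cosh^q r .
\]
Here $\mathcal A(\rho)$ is the area of the geodesic sphere of volume radius $\rho$. The only contribution that needs care is near $R\to0$. Boundedness of $E$ gives $\sup\mathscr A<\infty$, and finiteness of perimeter implies $\partial^*\mathscr A$ has finite total weight. Because $\mathcal A$ is increasing and positive, the jump set is finite away from $0$, and could only accumulate at $0$.

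The comparison itself goes as follows. Let $\rho^*=\operatorname{ess\,sup}\mathscr A$. Then $\rho^*\in\partial^*\mathscr A$, so $\Per(E_{\mathscr A})\ge\mathcal A(\rho^*)$. On the other hand,
\[
\tfrac{\omega_{n-1}}{n}R(r_0)^n=\Vol(E_{\mathscr A})\le\omega_{n-1}\int_0^{\rho^*}R^{n-1}\,dR=\tfrac{\omega_{n-1}}{n}(\rho^*)^n ,
\]
so $\rho^*\ge R(r_0)$. Since $r\mapsto\sinh^{n-1}r\cosh^q r$ is strictly increasing and $R$ is strictly increasing in $r$, $\mathcal A$ is strictly increasing in $\rho$. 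Hence
\[
\Per(E_{\mathscr A})\ge\mathcal A(\rho^*)\ge\mathcal A(R(r_0))=\Per(B_{r_0}(o)).
\]

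For the equality case, suppose $\Per(E_{\mathscr A})=\Per(B_{r_0}(o))$. Then both inequalities are equalities. The second forces $\rho^*=R(r_0)$. The volume identity then forces $\mathscr A\supset(0,\rho^*)$ up to null sets, because any missing set of positive measure in $(0,\rho^*)$ would make the volume strictly smaller. This is exactly \eqref{eq:rank-one-endpoint-equality}. The first equality is consistent with this: with $\mathscr A=(0,\rho^*)$ there are no other jump points in $(0,\infty)$, and no jump is counted at $R=0$, since that corresponds to the single point $o$ of zero $\mathcal H^{n-1}$-measure. The converse direction is immediate.

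The main obstacle is rigorously identifying $\Per(E_{\mathscr A})$ with the weighted jump count. This needs two things. First, the BV structure of $\chi_{\mathscr A}$ on compact subintervals has to be globalized to the whole of $(0,\infty)$, including possible accumulation of jumps at $0$. Second, the coarea and slicing identification of the perimeter measure must be checked annulus by annulus: on each annulus the reduced boundary should consist of the full spheres $\{R=\rho\}$, $\rho\in J_{\chi_{\mathscr A}}$, with radial normal. My first approach is to compute $|D\chi_E|$ on each annulus $a<R<b$ by the product structure $dV=R^{n-1}dR\,d\sigma$, using the tensor-product form $D\chi_E=D\chi_{\mathscr A}\otimes\sigma$ together with the metric factor $\mathfrak a^{-1}$. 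I would then take the monotone limit $a\downarrow0$, $b\uparrow\infty$. For the strict inequality it suffices to use only the single jump at $\rho^*$, so any accumulation at $0$ merely adds nonnegative terms.
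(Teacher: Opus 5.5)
Your proposal is correct and follows essentially the same route as the paper. You identify $\Per(E_{\mathscr A})$ on each annulus with the $\mathfrak j_{p,q}$-weighted jump count of $\chi_{\mathscr A}$; the paper does this with strict BV approximation for the upper bound and radial test fields $X=f(R)N$ for the lower bound. You then show the essential supremum of $\mathscr A$ is a reduced endpoint, and combine strict monotonicity of sphere area with the volume constraint. The only differences are cosmetic: the paper reparametrizes by $s=R^n/n$ so that volume becomes Lebesgue measure, while you compare volumes directly in $R$; and, as you observe, only the annulus-local lower bound at the top endpoint is actually needed.
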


\begin{proof}
Fix $0<\varepsilon<L<\infty$.  Since
\[
 |dR|_g=\frac{dR}{dr}
 =\frac{\mathfrak j_{p,q}(r(R))}{R^{n-1}},
 \qquad
 dV=R^{n-1}dR\,d\sigma,
\]
strict one-dimensional BV approximation on $(\varepsilon,L)$
\cite[Theorem~3.9]{AFP2000}, together with lower semicontinuity of total
variation and the product structure of the radial representative, gives
\[
 \Per(E_{\mathscr A};\{\varepsilon<R<L\})
 \le
 \omega_{n-1}\int_{(\varepsilon,L)}
 \mathfrak j_{p,q}(r(R))\,d|D\chi_{\mathscr A}|(R).
\]
For the reverse inequality take a radial vector field $X=f(R)N$ with
$f\in C_c^1((\varepsilon,L))$ and $|f|\le1$.  Since
$N=(dR/dr)\partial_R$ and
$R^{n-1}(dR/dr)=\mathfrak j_{p,q}(r(R))$,
\[
 \operatorname{div}X
 =\frac{1}{R^{n-1}}\frac{d}{dR}
 \left(\mathfrak j_{p,q}(r(R))f(R)\right).
\]
The polar volume formula and one-dimensional BV integration by parts therefore
give
\[
\begin{aligned}
\int_{E_{\mathscr A}\cap\{\varepsilon<R<L\}}
\operatorname{div}X\,dV
&=\omega_{n-1}
\int_{(\varepsilon,L)}
\chi_{\mathscr A}(R)
\frac{d}{dR}
\left(\mathfrak j_{p,q}(r(R))f(R)\right)dR\\
&=-\omega_{n-1}
\int_{(\varepsilon,L)}
\mathfrak j_{p,q}(r(R))f(R)\,dD\chi_{\mathscr A}(R).
\end{aligned}
\]
Because $|X|=|f|\le1$, the dual definition of perimeter yields
\[
\Per(E_{\mathscr A};\{\varepsilon<R<L\})
\ge
-\omega_{n-1}
\int_{(\varepsilon,L)}
\mathfrak j_{p,q}(r(R))f(R)\,dD\chi_{\mathscr A}(R).
\]
Taking the supremum over such $f$ gives the weighted total variation,
\[
\Per(E_{\mathscr A};\{\varepsilon<R<L\})
\ge
\omega_{n-1}\int_{(\varepsilon,L)}
\mathfrak j_{p,q}(r(R))\,d|D\chi_{\mathscr A}|(R).
\]
Together with the preceding upper bound, this proves equality on every annulus.
Letting $\varepsilon\downarrow0$ and $L\uparrow\infty$ gives
\[
\begin{aligned}
 \Per(E_{\mathscr A})
 & =\omega_{n-1}\int_{(0,\infty)}
 \mathfrak j_{p,q}(r(R))\,d|D\chi_{\mathscr A}|(R)\\
 & =\omega_{n-1}\sum_{R\in\partial^*\mathscr A}
 \mathfrak j_{p,q}(r(R)).
\end{aligned}
\]
On every compact interval of $(0,\infty)$, a one-dimensional
finite-perimeter set is, modulo null sets, a finite union of intervals; hence
$|D\chi_{\mathscr A}|$ is the counting measure on its reduced endpoints there.
This gives the second equality above.  The limit
$\varepsilon\downarrow0$, $L\uparrow\infty$ is justified by monotone
convergence.  No boundary term is created at the pole: a singleton has zero
$\mathcal H^{n-1}$ measure, and moreover
$\mathfrak j_{p,q}(r(R))\to0$ as $R\downarrow0$.

Set
\[
 s=\frac{R^n}{n},\qquad
 \mathscr B=\left\{\frac{R^n}{n}:R\in\mathscr A\right\},\qquad
 \ell=|\mathscr B|=\frac{\Vol(E_{\mathscr A})}{\omega_{n-1}}.
\]
The increasing $C^1$ diffeomorphism $R\mapsto R^n/n$ carries reduced
endpoints of $\mathscr A$ to those of $\mathscr B$, and
\[
 \Per(E_{\mathscr A})
 =\omega_{n-1}\sum_{s\in\partial^*\mathscr B}\mathfrak q_{p,q}(s),
 \qquad
 \mathfrak q_{p,q}(s)
 =\mathfrak j_{p,q}\left(r((ns)^{1/n})\right).
\]
Both $r\mapsto\mathfrak j_{p,q}(r)$ and $r\mapsto R(r)$ are strictly
increasing, so $\mathfrak q_{p,q}$ is strictly increasing and
$\mathfrak q_{p,q}(0)=0$.

Let $b=\operatorname*{ess\,sup}\mathscr B$.  Boundedness and positive volume
give $0<\ell\le b<\infty$.  Choose $\delta>0$ so small that
$\mathfrak q_{p,q}\ge c_\delta>0$ on $(b-\delta,b+\delta)\cap(0,\infty)$.
The finite weighted endpoint sum then implies that
$\partial^*\mathscr B\cap(b-\delta,b+\delta)$ is finite.  The
one-dimensional structure theorem consequently represents $\mathscr B$, modulo
null sets, as a finite union of intervals in this neighborhood.  Since $b$ is
the essential supremum, one of those intervals has right endpoint $b$; thus
$b\in\partial^*\mathscr B$.  Consequently
\[
 \Per(E_{\mathscr A})
 \ge\omega_{n-1}\mathfrak q_{p,q}(b)
 \ge\omega_{n-1}\mathfrak q_{p,q}(\ell)
 =\Per(B_{r_0}(o)).
\]
If equality holds, strict monotonicity gives $b=\ell$.  Since
$\mathscr B\subset(0,b)$ modulo null sets and $|\mathscr B|=b$, one has
$\mathscr B=(0,b)$ modulo null sets.  Pulling back by $R\mapsto R^n/n$ yields
\eqref{eq:rank-one-endpoint-equality}.  The converse is immediate.
\end{proof}

\begin{theorem}[Non-real rank-one isoperimetric classification]
\label{thm:nonreal-rank-one-rigidity}
Let
\[
 M=\CH^m\ (m\ge2),\qquad M=\HH^m\ (m\ge1),
 \qquad\text{or}\qquad M=\OH^2.
\]
Every bounded finite-perimeter isoperimetric region of positive volume in
$M$ agrees, up to a null set and an ambient isometry, with a geodesic ball.
\end{theorem}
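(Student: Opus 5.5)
The plan is to assemble the preceding results in order, centered at a volume geometric median. Let $E$ be a bounded finite-perimeter isoperimetric region with $0<\Vol(E)<\infty$. First I will apply Lemma~\ref{lem:rank-one-median-balance} to obtain a point $o$ minimizing $\Phi_E$. Then I will take polar coordinates, the volume radius and the exact-volume stretches $F_{\psi_{\mathbf a}}$ all centered at this $o$.

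Next, Proposition~\ref{prop:rank-one-reduced-boundary-radiality} gives $\nu_E=\pm N$ $\mathcal H^{n-1}$-a.e. on $\partial^*E$. Its hypotheses are exactly the present ones. The three spaces are covered by the certificates of Theorems~\ref{thm:strict-positivity}, \ref{thm:quaternionic-trace-sign}, \ref{thm:Cayley-trace-sign}, combined with the trace formula of Proposition~\ref{prop:rank-one-exact-volume-trace}. Feeding this into Lemma~\ref{lem:rank-one-BV-radiality} yields a measurable $\mathscr A\subset(0,\infty)$ with $E=E_{\mathscr A}$ modulo a null set. Since $E$ is bounded, $\mathscr A$ is essentially bounded, and $\Vol(E_{\mathscr A})=\Vol(E)>0$.

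Now let $B_{r_0}(o)$ be the centered ball of the same volume. Lemma~\ref{lem:rank-one-endpoint-comparison} gives $\Per(E)=\Per(E_{\mathscr A})\ge\Per(B_{r_0}(o))$. Because $E$ is isoperimetric and the ball is an admissible competitor of the same volume, we also have $\Per(E)\le\Per(B_{r_0}(o))$. Hence equality holds. The equality case of the lemma then forces $\mathscr A=(0,R(r_0))$ a.e., so $E=B_{r_0}(o)$ up to a null set. No ambient isometry is even needed beyond the choice of center; moving $o$ to any reference point by homogeneity gives the stated form.

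I expect the main point requiring care to be the compatibility of the hypotheses rather than any new estimate. The perimeter of $E$ and of $E_{\mathscr A}$ must agree, which holds because perimeter is insensitive to null modifications. Isoperimetric minimality must be understood globally among finite-perimeter sets of equal volume, which is what both the Hessian nonnegativity in Proposition~\ref{prop:rank-one-reduced-boundary-radiality} and the final comparison use. The case $\HH^1$ needs no separate treatment, by Remark~\ref{rem:rank-one-normalization}.
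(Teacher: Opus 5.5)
Your proposal is correct and is essentially the paper's own proof: choose a volume geometric median, obtain $\nu_E=\pm N$ from Proposition~\ref{prop:rank-one-reduced-boundary-radiality}, pass to a radial representative $E_{\mathscr A}$ via Lemma~\ref{lem:rank-one-BV-radiality}, and squeeze the endpoint comparison of Lemma~\ref{lem:rank-one-endpoint-comparison} against isoperimetric minimality. The only cosmetic difference is that the paper modifies $\mathscr A$ on a one-dimensional null set so that $\mathscr A\subset(0,R_*)$ and $E_{\mathscr A}$ is literally bounded; your remark that $\mathscr A$ is essentially bounded accomplishes the same thing.
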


\begin{proof}
Let $E$ be such a region and choose a volume geometric median $o$.
Proposition~\ref{prop:rank-one-reduced-boundary-radiality} and
Lemma~\ref{lem:rank-one-BV-radiality} give a radial representative
$E_{\mathscr A}$ modulo null sets.  Since $E$ has a bounded representative,
there is $R_*>0$ with $\Vol(E\cap\{R>R_*\})=0$.  Because
$dV=R^{n-1}dR\,d\sigma$, after modifying $\mathscr A$ on a
one-dimensional null set we may assume $\mathscr A\subset(0,R_*)$.
Thus Lemma~\ref{lem:rank-one-endpoint-comparison} applies.  Minimality gives
$\Per(E_{\mathscr A})\le\Per(B_{r_0}(o))$, while the endpoint lemma gives the
reverse inequality.  Its equality statement forces
$\mathscr A=(0,R(r_0))$ modulo null sets.
\end{proof}

\begin{theorem}[Gromov--Ros conjecture for rank-one symmetric spaces]
\label{thm:rank-one-gromov-ros}
Let $M$ be a rank-one symmetric space of noncompact type.  If a
finite-perimeter set $E\subset M$ and a geodesic ball $B\subset M$ have the
same positive finite volume, then
\[
 \Per(E)\ge\Per(B),
\]
with equality if and only if $E$ is a geodesic ball up to an ambient isometry
and a null set.
\end{theorem}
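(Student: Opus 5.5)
The plan is to deduce Theorem~\ref{thm:rank-one-gromov-ros} from Theorem~\ref{thm:nonreal-rank-one-rigidity} and the classical real-hyperbolic case, using an existence theory for isoperimetric regions in homogeneous manifolds. First, the real-hyperbolic spaces $\mathbb R\mathrm H^n$ are handled by the classical symmetrization argument cited in the introduction \cite{Ros2005,Ritore2023}. For $\HH^1$, Remark~\ref{rem:rank-one-normalization} identifies it with a rescaled real-hyperbolic space, so either route applies. Hence it suffices to treat $M\in\{\CH^m\ (m\ge2),\HH^m\ (m\ge2),\OH^2\}$.

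Fix $V>0$ and the ball $B$ with $\Vol(B)=V$. The main step is existence and boundedness of minimizers for $I_M(V)$. Since $M$ is homogeneous (its isometry group acts cocompactly, indeed transitively), the standard concentration-compactness argument for manifolds with cocompact isometry group applies. Take a minimizing sequence $E_k$. Either no mass escapes after translating by isometries, or the sequence splits into pieces drifting apart. Translating each piece back by isometries, the limits combine into a finite or countable family of isoperimetric regions whose volumes sum to $V$ and whose perimeters sum to $I_M(V)$. Standard density estimates for $(\Lambda,r_0)$-minimizers, with constants uniform by homogeneity, show every such isoperimetric region is bounded.

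I then apply Theorem~\ref{thm:nonreal-rank-one-rigidity} to each piece: each is a geodesic ball of volume $V_i$, so
\[
I_M(V)=\sum_i I_B(V_i),
\]
where $I_B(v)$ denotes the perimeter of the ball of volume $v$. To exclude splitting, I use strict subadditivity of $I_B$. By the endpoint formula, $I_B(v)=\omega_{n-1}\mathfrak q_{p,q}(v/\omega_{n-1})$. Writing $I_B(v)=v\cdot(I_B(v)/v)$, the quantity $I_B(v)/v$ is strictly decreasing: along balls the ratio perimeter/volume has derivative governed by the mean curvature, which decreases in $r$, and one checks $(\mathfrak j/\!\int_0^r\mathfrak j)'<0$. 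Hence $I_B(v_1+v_2)<I_B(v_1)+I_B(v_2)$. Thus the infimum is attained by a single ball, giving $I_M(V)=\Per(B)$ and so $\Per(E)\ge\Per(B)$ for every $E$.

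For the equality case, an $E$ with $\Per(E)=\Per(B)$ is itself an isoperimetric region. By the density-estimate boundedness above, it has a bounded representative, and Theorem~\ref{thm:nonreal-rank-one-rigidity} shows it is a ball up to isometry and a null set. The converse direction is trivial. I expect the main obstacle to be the existence and boundedness step: making the concentration-compactness decomposition and the uniform density estimates precise in the finite-perimeter setting. I would quote the general existence theorem for isoperimetric regions in manifolds with cocompact isometry group (Morgan's result, or Ritoré--Rosales style statements) rather than reprove it.
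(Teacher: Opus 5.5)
Your proposal is correct and follows the paper's argument. The real-hyperbolic cases, to which $\CH^1$ and $\HH^1$ reduce up to homothety, are classical; existence and boundedness of isoperimetric regions follow from the transitive (hence cocompact) isometry group via Ritor\'e's results; and Theorem~\ref{thm:nonreal-rank-one-rigidity} then gives both the inequality and the equality case. The one difference is that you let the minimizing sequence split and then rule out splitting by strict subadditivity of the ball profile. That step is valid, because the perimeter-to-volume ratio of balls decreases since $(n-1)\coth r+q\tanh r$ is decreasing, but it is not needed: the paper quotes the cocompact existence theorem, which already yields a genuine bounded minimizer that the classification theorem identifies directly.
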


\begin{proof}
The real-hyperbolic case is classical.  The complex line $\CH^1$ is a
real-hyperbolic surface up to the normalization used here.  Every remaining
case is covered by Theorem~\ref{thm:nonreal-rank-one-rigidity} once one knows
that isoperimetric regions exist and are bounded.

All rank-one symmetric spaces of noncompact type are complete and homogeneous.
Their full isometry groups act transitively, so the corresponding quotient is a
single point; in particular the action is cocompact.  Thus the hypotheses of
Ritor\'e's cocompact existence theorem are satisfied: at every positive volume
there is an isoperimetric region.  The accompanying boundedness result gives
every such minimizer a bounded representative
\cite[Section~4.4, Theorem~4.25 and Lemmas~4.26--4.27]{Ritore2023}.  Therefore
every minimizer in the non-real cases falls within
Theorem~\ref{thm:nonreal-rank-one-rigidity} and is a geodesic ball.

The inequality now follows from the definition of the isoperimetric profile.
If equality holds for an arbitrary finite-perimeter set $E$, then $E$ itself
is an isoperimetric region; the same boundedness theorem and the preceding
classification make it a geodesic ball modulo a null set.  Conversely every
geodesic ball realizes the profile.
\end{proof}

This proves Theorem~\ref{thm:intro-rank-one}.

\begin{corollary}
\label{thm:gromov-ros-all-m}
Let $m\ge2$, put $n=2m$, and let $V>0$.  Then
\begin{equation}
 I_{\CH^m}(V)
 =\omega_{n-1}
 \left(\frac{nV}{\omega_{n-1}}\right)^{(n-1)/n}
 \sqrt{1+\left(\frac{nV}{\omega_{n-1}}\right)^{2/n}}.
 \label{eq:all-dimensional-profile}
\end{equation}
Equivalently, every finite-perimeter set $E\subset\CH^m$ of positive finite
volume satisfies the corresponding sharp inequality, with equality exactly
for geodesic balls up to ambient isometry and null sets.
\end{corollary}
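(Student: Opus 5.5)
The plan is to deduce the profile formula from Theorem~\ref{thm:rank-one-gromov-ros} applied to $M=\CH^m$, by computing $\Per(B_r(o))$ explicitly in terms of $\Vol(B_r(o))$. By Theorem~\ref{thm:rank-one-gromov-ros}, for every $V>0$ the infimum $I_{\CH^m}(V)$ is attained exactly by geodesic balls of volume $V$. All geodesic balls of a given radius are isometric, so $I_{\CH^m}(V)=\Per(B_r(o))$, where $r$ is determined by $\Vol(B_r(o))=V$. The ``equivalently'' clause, with its equality case, then follows verbatim from the inequality and rigidity in Theorem~\ref{thm:rank-one-gromov-ros}.

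The computation uses the volume radius with $q=1$, where $R=\sinh r$ by \eqref{eq:complex-volume-radius-recovery}.
\begin{itemize}
\item \emph{Volume.} From $\Vol(B_r(o))=\frac{\omega_{n-1}}{n}R(r)^n$ we get
\[
\sinh r=R=\left(\frac{nV}{\omega_{n-1}}\right)^{1/n}.
\]
\item \emph{Perimeter.} The boundary of $B_r(o)$ is the smooth geodesic sphere. Its area is obtained from the polar density $dV=\mathfrak j_{n-2,1}(r)\,dr\,d\sigma$, which gives
\[
\Per(B_r(o))=\omega_{n-1}\,\mathfrak j_{n-2,1}(r)=\omega_{n-1}\sinh^{n-1}r\,\cosh r .
\]
This is consistent with the weighted endpoint formula in Lemma~\ref{lem:rank-one-endpoint-comparison} for $\mathscr A=(0,R(r))$.
\item \emph{Substitution.} Writing $\cosh r=\sqrt{1+\sinh^2 r}=\sqrt{1+R^2}$ and substituting $R=(nV/\omega_{n-1})^{1/n}$ gives exactly \eqref{eq:all-dimensional-profile}.
\end{itemize}

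I do not expect a genuine obstacle here. The only point needing care is the normalization: the formula presumes the curvature normalization of Section~\ref{sec:rank-one-framework} (radial eigenvalues $-1$ and $-4$), under which $c(r)=\sinh r\cosh r$ and $R=\sinh r$. I would state this explicitly. I would also note that $\omega_{n-1}$ denotes the area of the unit direction sphere, matching its use in the volume formula. The case $m\ge2$ ensures $n\ge4$ and that the non-real part of Theorem~\ref{thm:rank-one-gromov-ros} applies.
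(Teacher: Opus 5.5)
Your proposal is correct and follows the paper's proof: with $q=1$ one has $R=\sinh r$, so $\Vol(B_r(o))=\frac{\omega_{n-1}}{n}R^n$ and $\Per(B_r(o))=\omega_{n-1}R^{n-1}\sqrt{1+R^2}$, and eliminating $R$ and invoking Theorem~\ref{thm:rank-one-gromov-ros} yields both the profile formula and the equality statement. Two points the paper leaves implicit are handled in your write-up: the curvature normalization, and the fact that every $V>0$ is the volume of some ball, which your explicit solution $\sinh r=(nV/\omega_{n-1})^{1/n}$ shows.
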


\begin{proof}
For $q=1$, $R=\sinh r$.  Hence
\[
 \Vol(B_r(o))=\frac{\omega_{n-1}}nR^n,
 \qquad
 \Area(\partial B_r(o))=\omega_{n-1}R^{n-1}\sqrt{1+R^2}.
\]
Eliminating $R$ and applying Theorem~\ref{thm:rank-one-gromov-ros} gives
\eqref{eq:all-dimensional-profile} and the equality statement.
\end{proof}

This also proves Theorem~\ref{thm:intro-gromov-ros}.

\section{A direct complex-hyperbolic smooth stability argument}\label{sec:geometric-setup}

The main finite-perimeter theorem has already been proved in the universal
volume-radius framework.  We now specialize to $M=\CH^m$ and retain the
simpler coordinate $\rho=\sinh r$ in order to prove the additional smooth
stability theorem.  This part is logically independent of the classification
of arbitrary finite-perimeter minimizers.


\subsection{Polar geometry}

Let
\[
M=\CH^m,
\qquad
n=2m\ge4,
\]
and let \(g\) denote the complex-hyperbolic metric normalized to have
holomorphic sectional curvature \(-4\).  Let \(J\) denote the canonical
parallel complex structure of the K\"ahler manifold \((M,g)\).  Thus
\[
J^2=-\Id,
\qquad
g(JX,JY)=g(X,Y),
\qquad
\nabla J=0.
\] Here \(\nabla=\nabla^g\) denotes the Levi--Civita connection of \(g\); in particular,
\(J\) is parallel.
We write \(\distg\) for the Riemannian distance induced by \(g\).  In the
unit-ball model \(\mathbb B^m\subset\mathbb C^m\), where
\(\langle z,w\rangle=\sum_{k=1}^m z_k\overline{w_k}\), the normalized
Bergman metric is
\begin{equation}
g_z(X,Y)
=
\operatorname{Re}
\frac{
(1-|z|^2)\langle X,Y\rangle
+\langle X,z\rangle\langle z,Y\rangle
}{(1-|z|^2)^2},
\label{eq:normalized-Bergman-metric}
\end{equation}
and its distance satisfies
\[
\cosh^2\!\distg(z,w)
=
\frac{|1-\langle z,w\rangle|^2}{(1-|z|^2)(1-|w|^2)}.
\]
In particular,
\[
\distg(0,z)=\operatorname{arctanh}|z|.
\]
See \cite[Chapter~1]{Zhu2005} and \cite{Goldman1999}.

Fix \(o\in M\) and set
\[
r=\distg(o,\cdot),
\qquad
\rho=\sinh r,
\qquad
x=\rho^2,
\qquad
a=1+x=\cosh^2r.
\]

The unit sphere of directions at \(o\) is denoted by
\[
\sphere\subset T_oM.
\]

At \(\theta\in\sphere\), define the Reeb direction
\[
\bar T=J\theta
\]
and the horizontal space
\[
\mathcal H_\theta
=
\{\theta,\bar T\}^{\perp}
\subset T_oM.
\]

Thus
\[
T_\theta\sphere
=
\mathcal H_\theta\oplus\mathbb R\bar T,
\qquad
\dim \mathcal H_\theta=n-2.
\]

Let \(g_{\mathbb S}\) be the round metric on the unit direction sphere.
We write \(g_{\mathcal H}=g_{\mathbb S}|_{\mathcal H_\theta}\), and \(\eta\) denotes the
\(g_{\mathbb S}\)-dual one-form of \(\bar T\). Thus
\(g_{\mathbb S}=g_{\mathcal H}+\eta^2\).

\ithead{Polar metric.}

For completeness, we briefly derive the radial scale factors. Along the
unit-speed radial geodesic \(\gamma_\theta\), with
\(N=\dot\gamma_\theta\), the curvature operator satisfies
\[
R(\,\cdot\,,N)N=-\Id
\quad\text{on the parallel translate of }\mathcal H_\theta,
\qquad
R(JN,N)N=-4JN.
\]
The normal Jacobi equation, with initial data \(Y(0)=0\) and
\(Y'(0)=E\), therefore gives the scale factor \(\sinh r\) in each
horizontal direction and
\[
\frac{1}{2}\sinh(2r)=\sinh r\cosh r
\]
in the Reeb direction. Consequently the exponential map pulls the ambient
metric back to the following polar form.

With the curvature normalization above, the polar metric is
\begin{equation}
g
=
dr^2
+
\sinh^2r\,g_{\mathcal H}
+
\sinh^2r\cosh^2r\,\eta^2.
\label{eq:polar-metric-r}
\end{equation}

Since
\[
d\rho=\cosh r\,dr=\sqrt a\,dr,
\]
equation \eqref{eq:polar-metric-r} becomes
\[
g
=
\frac{d\rho^2}{a}
+
\rho^2g_{\mathcal H}
+
\rho^2a\,\eta^2.
\]

\ithead{Volume density.}

The \(n-2\) horizontal directions contribute \(\rho^{n-2}\), the Reeb
direction contributes \(\rho\sqrt a\), and the radial direction contributes
\(a^{-1/2}\,d\rho\). Therefore
\begin{equation}
dV
=
\rho^{n-1}\,d\rho\,d\sigma.
\label{eq:polar-volume}
\end{equation}

\begin{warning}[Curvature normalization]
Every subsequent radial formula depends on the normalization
\(\operatorname{HolSec}=-4\). A change to holomorphic sectional curvature
\(-1\) rescales the radial variable and all ball formulas.
\end{warning}

\begin{remark}[Why the volume density becomes Euclidean in $\rho$]
The horizontal and Reeb scale factors are different, but their extra powers
of $\cosh r$ cancel the factor coming from the radial change of variables
$d\rho=\cosh r\,dr$.  Consequently the polar volume density is exactly
$\rho^{n-1}d\rho\,d\sigma$.  This cancellation is the reason that the
radial-angular deformation used later has the simple ambient Jacobian
$e^{nt\varphi}$.
\end{remark}

\ithead{Geodesic balls.}

Let
\[
\omega_{n-1}=|\mathbb S^{n-1}|.
\]
If \(R_0=\sinh r_0\), then
\begin{equation}
\Vol(B_{r_0}(o))
=
\frac{\omega_{n-1}}{n}R_0^n
\label{eq:ball-volume}
\end{equation}
and
\begin{equation}
\Area(\partial B_{r_0}(o))
=
\omega_{n-1}R_0^{n-1}\sqrt{1+R_0^2}.
\label{eq:ball-area}
\end{equation}

\ithead{The ball profile.}

Eliminating \(R_0\) between \eqref{eq:ball-volume} and
\eqref{eq:ball-area} gives, in every real dimension \(n=2m\ge4\),
\begin{equation}
I_n(V)
=
\omega_{n-1}
\left(\frac{nV}{\omega_{n-1}}\right)^{(n-1)/n}
\sqrt{
1+
\left(\frac{nV}{\omega_{n-1}}\right)^{2/n}
}.
\label{eq:ball-profile}
\end{equation}

Formula \eqref{eq:ball-profile} is the candidate profile supplied by
geodesic balls. The classification theorem below proves that it is the
isoperimetric profile.

\subsection{The volume geometric median}

\ithead{Definition and existence.}

For a bounded measurable set \(\Omega\Subset M\) of positive volume,
define
\[
\Phi_\Omega(p)
=
\int_\Omega \distg(p,y)\,dV_y.
\]

Fix \(p_0\in M\). By the triangle inequality,
\[
\distg(p,y)
\ge
\distg(p,p_0)-\distg(p_0,y).
\]
Hence
\[
\Phi_\Omega(p)
\ge
|\Omega|\,\distg(p,p_0)
-
\int_\Omega \distg(p_0,y)\,dV_y.
\]
Thus \(\Phi_\Omega\) is proper. Complex hyperbolic space is complete and
finite-dimensional, hence proper by Hopf--Rinow. Since \(\Phi_\Omega\) is
continuous, its sublevel sets are compact and it attains a minimum.

\begin{definition}
A minimizer \(o\) of \(\Phi_\Omega\) is called a
\emph{volume geometric median} of \(\Omega\).
\end{definition}

\ithead{Differentiability and balance.}

For \(p\in M\) and \(y\ne p\), let
\[
\theta_p(y)\in T_pM
\]
be the initial unit tangent of the geodesic from \(p\) to \(y\).

\begin{lemma}
\label{lem:median-differentiability}
For every \(p\in M\) and \(X\in T_pM\),
\begin{equation}
D\Phi_\Omega(p)[X]
=
-
\int_\Omega
\langle X,\theta_p(y)\rangle\,dV_y.
\label{eq:median-first-variation}
\end{equation}
Equivalently,
\begin{equation}
\nabla\Phi_\Omega(p)
=
-
\int_\Omega\theta_p(y)\,dV_y.
\label{eq:median-gradient}
\end{equation}
\end{lemma}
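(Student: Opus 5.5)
The plan is to differentiate under the integral sign, using that $\CH^m$ is a Hadamard manifold, so $y\mapsto\distg(p,y)$ is smooth on $M\setminus\{p\}$ with gradient in $p$ equal to $-\theta_p(y)$. Fix $p$ and $X\in T_pM$, and set $\gamma(s)=\exp_p(sX)$. For every fixed $y\ne p$, the first variation formula for arc length along the unique minimizing geodesic from $p$ to $y$ (there is no cut locus) gives
\[
\left.\frac{d}{ds}\right|_{s=0}\distg(\gamma(s),y)=-\langle X,\theta_p(y)\rangle .
\]
The single point $y=p$ has volume zero, so it can be discarded.

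To pass the derivative inside the integral, consider the difference quotients
\[
q_s(y)=\frac{\distg(\gamma(s),y)-\distg(p,y)}{s}.
\]
By the triangle inequality, $|q_s(y)|\le \distg(\gamma(s),p)/|s|\le|X|$ for all $y$ and $s\ne0$. Since $\Vol(\Omega)<\infty$, the constant $|X|$ is an integrable dominating function on $\Omega$. Dominated convergence then gives that the directional derivative of $\Phi_\Omega$ at $p$ in direction $X$ exists and equals the right-hand side of \eqref{eq:median-first-variation}.

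To upgrade this to genuine differentiability, I would note two facts. First, the map $X\mapsto-\int_\Omega\langle X,\theta_p(y)\rangle\,dV_y$ is linear. Second, $\Phi_\Omega$ is Lipschitz with constant $\Vol(\Omega)$. I would also show that the candidate gradient $p\mapsto-\int_\Omega\theta_p(y)\,dV_y$ is continuous, working in normal coordinates around $p$ or via parallel transport. This holds because $\theta_p(y)$ depends continuously on $(p,y)$ away from the diagonal, it is bounded by $1$, and the diagonal contributes only sets $B_\epsilon(p)$ of arbitrarily small volume. A Lipschitz function whose directional derivatives exist everywhere and are given by a continuous linear field is $C^1$, for example by the mean value theorem along curves. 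This yields \eqref{eq:median-first-variation}, and \eqref{eq:median-gradient} follows by Riesz duality in $T_pM$.

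The main obstacle is this continuity step near the diagonal, where $\theta_p(y)$ jumps. I would handle it by splitting $\Omega=(\Omega\cap B_\epsilon(p))\cup(\Omega\setminus B_\epsilon(p))$. The first piece has volume $O(\epsilon^n)$. On the second piece the integrand is uniformly continuous for $p'$ near $p$.
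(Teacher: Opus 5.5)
Your proposal is correct and follows essentially the same route as the paper. Both arguments combine the first variation of distance along $\exp_p(sX)$ with the triangle-inequality bound on difference quotients and dominated convergence, and then upgrade the Gâteaux derivative to $C^1$ via continuity of $p\mapsto\int_\Omega\theta_p(y)\,dV_y$. The only difference is cosmetic: you prove that continuity by splitting off a small ball and using uniform continuity away from the diagonal, whereas the paper uses almost-everywhere convergence of the unit direction vectors together with dominated convergence.
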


\begin{proof}
Let
\[
p_s=\exp_p(sX).
\]
Complex hyperbolic space is a Hadamard manifold, so there is no cut locus.
For every \(y\ne p\), the first variation formula for distance gives
\[
\left.
\frac{d}{ds}
\right|_{s=0}
\distg(p_s,y)
=
-
\langle X,\theta_p(y)\rangle.
\]

Moreover,
\[
|\distg(p_s,y)-\distg(p,y)|
\le
\distg(p_s,p).
\]
Therefore, for small nonzero \(s\),
\[
\left|
\frac{\distg(p_s,y)-\distg(p,y)}{s}
\right|
\le
|X|+o(1),
\]
uniformly in \(y\). The exceptional point \(y=p\) has zero volume.
Dominated convergence proves \eqref{eq:median-first-variation}.  In a smooth
local orthonormal frame, the same argument shows that
\[
p\longmapsto \int_\Omega\theta_p(y)\,dV_y
\]
is continuous: for a convergent sequence of base points, the direction vectors
converge for almost every \(y\) and have norm one.  Thus the G\^ateaux
differential of \(\Phi_\Omega\) is represented by a continuous covector field.
Since \(M\) is finite-dimensional, the continuous-G\^ateaux-differential
criterion gives \(\Phi_\Omega\in C^1\), and
\eqref{eq:median-gradient} follows.
\end{proof}

\begin{corollary}
\label{cor:median-balance}
If \(o\) is a volume geometric median, then
\begin{equation}
\int_\Omega\theta_o(y)\,dV_y=0.
\label{eq:median-balance}
\end{equation}
\end{corollary}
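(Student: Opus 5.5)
The corollary follows at once from Lemma~\ref{lem:median-differentiability}, combined with the first-order optimality condition at an interior minimum. My plan has three steps.

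\emph{Step 1: $\Phi_\Omega$ has a vanishing differential at $o$.} Since $o$ minimizes $\Phi_\Omega$ over all of $M$ and $M$ has no boundary, I fix $X\in T_oM$ and consider the one-variable function $s\mapsto\Phi_\Omega(\exp_o(sX))$. By Lemma~\ref{lem:median-differentiability} it is differentiable at $s=0$, with derivative $D\Phi_\Omega(o)[X]$. It also attains its minimum at $s=0$. Comparing one-sided difference quotients gives $D\Phi_\Omega(o)[X]\ge0$ from the right and $D\Phi_\Omega(o)[X]\le0$ from the left, so $D\Phi_\Omega(o)[X]=0$.

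\emph{Step 2: translate this into the integral identity.} Formula \eqref{eq:median-first-variation} then gives
\[
\Bigl\langle X,\int_\Omega\theta_o(y)\,dV_y\Bigr\rangle=0
\qquad\text{for every }X\in T_oM.
\]
Here the integral is a Bochner integral of a bounded measurable $T_oM$-valued function over a set of finite volume. It is well defined because $|\theta_o(y)|=1$ for $y\ne o$, and the single point $y=o$ is null. The pairing with $X$ can be moved inside the integral by linearity.

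\emph{Step 3: conclude.} Taking $X=\int_\Omega\theta_o(y)\,dV_y$ itself, or equivalently running $X$ over an orthonormal basis, yields \eqref{eq:median-balance}.

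There is essentially no obstacle here. The only points needing care are already covered by the earlier lemma: existence of the minimizer, and the differentiability of $\Phi_\Omega$ obtained through dominated convergence, which uses the Hadamard property (no cut locus) and the uniform Lipschitz bound for the difference quotients.
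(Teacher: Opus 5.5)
Your proposal is correct and follows essentially the same route as the paper. There the corollary is read off directly from Lemma~\ref{lem:median-differentiability}: the differential of $\Phi_\Omega$ vanishes at an interior minimizer, so \eqref{eq:median-first-variation} forces $\int_\Omega\theta_o(y)\,dV_y=0$, exactly as stated in the rank-one version, Lemma~\ref{lem:rank-one-median-balance}.
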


\begin{remark}[Interpretation of the balance identity]
The vector $\theta_o(y)$ has unit length and records only the direction from
$o$ to $y$, not its distance.  Thus the median condition is different from a
Riemannian center-of-mass condition.  It is precisely adapted to the vector
fields $X_j=\varphi_j\rho\partial_\rho$, whose divergence is the constant
multiple $n\varphi_j$.  This compatibility is what converts the interior
balance into zero boundary mean for the normal speeds.
\end{remark}

\ithead{Coordinate functions.}

Choose an orthonormal basis
\[
e_1,\dots,e_n
\]
of \(T_oM\), and define
\[
\varphi_j(\theta)
=
\langle\theta,e_j\rangle,
\qquad
j=1,\dots,n.
\]

Equation \eqref{eq:median-balance} is equivalent to
\[
\int_\Omega\varphi_j\,dV=0,
\qquad
j=1,\dots,n.
\]

\subsection{Balanced polar vector fields}

Away from the polar center \(o\), define
\[
X_j
=
\varphi_j(\theta)\,\rho\,\partial_\rho,
\qquad
j=1,\ldots,n.
\]
These are the infinitesimal generators of the radial-angular deformations
\[
(\rho,\theta)
\longmapsto
\bigl(e^{t\varphi_j(\theta)}\rho,\theta\bigr).
\]

Using the polar volume form \eqref{eq:polar-volume}, we compute
\[
\begin{aligned}
\diver X_j
&=
\rho^{1-n}
\partial_\rho
\left(
\rho^{n-1}\varphi_j(\theta)\rho
\right)
\\
&=
n\varphi_j(\theta).
\end{aligned}
\]

Let
\[
\Sigma=\partial\Omega
\]
be smooth, and let \(\nu\) denote its outer unit normal. Define
\[
f_j
=
\langle X_j,\nu\rangle.
\]
Thus \(f_j\) is the initial normal speed of the deformation generated by
\(X_j\).

Assume first that
\[
o\notin\overline{\Omega}.
\]
Then \(X_j\) is smooth on a neighborhood of \(\overline{\Omega}\), and the
divergence theorem gives
\[
\int_\Sigma f_j\,dA
=
\int_\Omega \diver X_j\,dV
=
n\int_\Omega\varphi_j\,dV.
\]
If \(o\) is a volume geometric median of \(\Omega\), then the median-balance
identity yields
\[
\int_\Omega\varphi_j\,dV=0.
\]
Consequently,
\begin{equation}
\int_\Sigma f_j\,dA=0.
\label{eq:fj-mean-zero-direct}
\end{equation}

When \(o\in\Omega\) or \(o\in\Sigma\), the field \(X_j\) is not smooth at
the polar center, so the preceding application of the divergence theorem
must be justified by cutoff arguments. These two cases are treated in
Sections~\ref{sec:interior-cutoff} and~\ref{sec:boundary-cutoff},
respectively.
\ithead{Local behavior near the center.}

Let \(z=(z_1,\dots,z_n)\) be geodesic normal coordinates centered at \(o\).
Then
\[
r=|z|,
\qquad
\theta=\frac{z}{r},
\qquad
\varphi_j=\frac{z_j}{r}.
\]

Since
\[
\rho\partial_\rho
=
\tanh r\,\partial_r
\]
and
\[
\partial_r
=
\frac{z}{r},
\]
we obtain
\[
X_j
=
\frac{z_j}{r}\tanh r\,\partial_r.
\]

Because
\[
\tanh r=r+O(r^3),
\]
we have
\[
|X_j|\le Cr
\]
near the pole. To control the first derivatives of \(X_j\), let \(z\) be
normal coordinates centered at \(o\). In the associated
normal-coordinate frame,
\[
X_j(z)
=
\frac{\tanh r}{r}\,
\frac{z_jz}{r}.
\]
The factor \(\tanh r/r\) extends smoothly across \(r=0\), while the map
\[
z\longmapsto \frac{z_jz}{|z|}
\]
is homogeneous of degree one and has uniformly bounded first derivatives
for \(z\ne0\). Consequently, after defining \(X_j(o)=0\), the vector field
\(X_j\) is locally Lipschitz near \(o\), and its first derivatives are
bounded almost everywhere.

If \(o\in\Sigma\), the smoothness of the unit normal \(\nu\) then implies
that
\[
f_j=\langle X_j,\nu\rangle
\]
is locally Lipschitz on \(\Sigma\) and satisfies
\[
|f_j|\le Cr,
\qquad
|\nabla_\Sigma f_j|\le C
\]
almost everywhere near \(o\). In particular, \(f_j\in H^1(\Sigma)\), where
\(\nabla_\Sigma f_j\) denotes the weak tangential gradient and
\[
H^1(\Sigma)
=
\{f\in L^2(\Sigma):\nabla_\Sigma f\in L^2(\Sigma)\}
\]
is equipped with its standard norm.

\subsection{Radial-angular deformations}

\ithead{Definition.}

For a smooth function \(\varphi\) on \(\sphere\), define
\[
F_{t,\varphi}(\rho,\theta)
=
\left(
e^{t\varphi(\theta)}\rho,
\theta
\right).
\]

Set
\[
u=e^{t\varphi},
\qquad
a_t=1+xu^2.
\]

At \(t=0\),
\[
\left.
\partial_tF_{t,\varphi}
\right|_{t=0}
=
\varphi\rho\partial_\rho.
\]

For \(\varphi=\varphi_j\), this velocity is \(X_j\).

\ithead{Orthonormal polar frames.}

At the source use
\[
N,E_1,\dots,E_{n-2},T_\rho,
\]
where
\[
N=\partial_r=\sqrt a\,\partial_\rho,
\]
the \(E_i\) are horizontal ambient unit vectors, and
\[
T_\rho
=
\frac{1}{\rho\sqrt a}\bar T
\]
is the ambient unit Reeb vector.

Use the corresponding primed orthonormal frame at the target radius
\(u\rho\).

Split the spherical derivative of \(\varphi\) as
\[
h=\nabla_{\mathcal H}\varphi,
\qquad
q=\bar T\varphi.
\]

Here \(h\) and \(q\) are derivatives on the unit direction sphere.

\ithead{Differential matrix.}

\begin{proposition}
\label{prop:basic-matrix}
At every point $(\rho,\theta)$ with $\rho>0$, set
\[
\rho'=u(\theta)\rho,
\qquad
u=e^{t\varphi(\theta)},
\qquad
a=1+\rho^{2},
\qquad
a_t=1+\rho'^2=1+\rho^{2}u^{2}.
\]
Let
\[
N,E_{1},\ldots,E_{n-2},T_{\rho}
\]
and
\[
N',E_{1}',\ldots,E_{n-2}',T_{\rho'}'
\]
denote the source and target orthonormal polar frames, respectively.
Then
\[
DF_{t,\varphi}
=
\begin{pmatrix}
u\sqrt{\dfrac{a}{a_t}}
&
\dfrac{ut}{\sqrt{a_t}}h^{\mathsf T}
&
\dfrac{ut}{\sqrt{aa_t}}q
\\[8pt]
0
&
uI_{n-2}
&
0
\\
0
&
0
&
u\sqrt{\dfrac{a_t}{a}}
\end{pmatrix},
\]
where
\[
h=\nabla_{\mathcal H}\varphi,
\qquad
q=\bar T\varphi.
\]
In particular,
\begin{equation}
\Jac F_{t,\varphi}
=
\det DF_{t,\varphi}
=
u^n
=
e^{nt\varphi}.
\label{eq:basic-ambient-jacobian}
\end{equation}
\end{proposition}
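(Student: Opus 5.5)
The plan is to compute $DF_{t,\varphi}$ directly in the coordinates $(\rho,\theta)$, using the polar form $g=a^{-1}d\rho^2+\rho^2g_{\mathcal H}+\rho^2a\,\eta^2$. The map is $(\rho,\theta)\mapsto(u\rho,\theta)$ with $u=e^{t\varphi(\theta)}$. In coordinate (non-normalized) terms this gives
\[
DF(\partial_\rho)=u\,\partial_{\rho'},\qquad DF(V)=V+\rho\,t u\,(V\varphi)\,\partial_{\rho'}
\]
for a spherical tangent vector $V$, because $d(u\rho)=u\,d\rho+\rho\,tu\,d\varphi$. The angular part of $DF$ is therefore the identity on $T_\theta\sphere$, and the only off-diagonal contributions land in the radial component.

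Next I will convert to orthonormal frames. On the source side, $N=\sqrt a\,\partial_\rho$, $E_i=\rho^{-1}\hat E_i$ with $\hat E_i$ a $g_{\mathcal H}$-orthonormal basis of $\mathcal H_\theta$, and $T_\rho=(\rho\sqrt a)^{-1}\bar T$. On the target side, $N'=\sqrt{a_t}\,\partial_{\rho'}$, $E_i'=(u\rho)^{-1}\hat E_i$ and $T'_{\rho'}=(u\rho\sqrt{a_t})^{-1}\bar T$. Each column then follows from the coordinate formula.

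\begin{itemize}
\item \textbf{Radial column.} $DF(N)=\sqrt a\,u\,\partial_{\rho'}=u\sqrt{a/a_t}\,N'$.
\item \textbf{Horizontal columns.} $DF(E_i)=\rho^{-1}\bigl(\hat E_i+\rho tu(\hat E_i\varphi)\partial_{\rho'}\bigr)=u\,E_i'+\dfrac{tu\,h_i}{\sqrt{a_t}}N'$.
\item \textbf{Reeb column.} $DF(T_\rho)=(\rho\sqrt a)^{-1}\bigl(\bar T+\rho tu\,q\,\partial_{\rho'}\bigr)=u\sqrt{a_t/a}\,T'_{\rho'}+\dfrac{tu\,q}{\sqrt{a\,a_t}}N'$.
\end{itemize}

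These three computations reproduce the displayed matrix exactly. The main point needing care is that the horizontal space $\mathcal H_\theta$ and the Reeb direction $\bar T=J\theta$ depend only on $\theta$. Since $F$ fixes $\theta$, the target frames are built from the same angular vectors, so no rotation enters the angular block.

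The matrix is upper block-triangular, so its determinant is the product of the diagonal entries:
\[
u\sqrt{a/a_t}\cdot u^{n-2}\cdot u\sqrt{a_t/a}=u^n=e^{nt\varphi}.
\]
This agrees with the volume form $dV=\rho^{n-1}d\rho\,d\sigma$ in \eqref{eq:polar-volume} and with the $q=1$ case of Lemma~\ref{lem:rank-one-global-stretches}, where $R=\sinh r=\rho$. I expect no real obstacle here; the only step needing attention is the bookkeeping of the normalizations $\sqrt a$ and $\sqrt{a_t}$ in the off-diagonal entries.
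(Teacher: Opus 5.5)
Your proposal is correct and follows essentially the same route as the paper. You compute $DF_{t,\varphi}$ in coordinates from $d(u\rho)=u\,d\rho+\rho tu\,d\varphi$, then convert each column using the source frame $N=\sqrt a\,\partial_\rho$, $E_i=\rho^{-1}\widehat E_i$, $T_\rho=(\rho\sqrt a)^{-1}\bar T$ and its primed target analogue, and finally read off $\det DF_{t,\varphi}=u^n$ from the upper-triangular form.
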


\begin{proof}
The deformation is
\[
F_{t,\varphi}(\rho,\theta)
=
\bigl(\rho',\theta\bigr),
\qquad
\rho'=u(\theta)\rho.
\]
Hence, for a coordinate tangent vector
\[
(s,Y)
\in
\mathbb R\,\partial_\rho
\oplus
T_\theta\mathbb S_o^{n-1},
\]
we have
\[
DF_{t,\varphi}(s,Y)
=
\left(
us+\rho\,du(Y),\,Y
\right).
\]
Since
\[
u=e^{t\varphi},
\qquad
du=ut\,d\varphi,
\]
it follows that
\begin{equation}
DF_{t,\varphi}(s,Y)
=
\left(
us+\rho ut\,d\varphi(Y),\,Y
\right).
\label{eq:master-differential}
\end{equation}

We now express \eqref{eq:master-differential} in the source and
target orthonormal polar frames.

At the source radius $\rho$, the radial unit vector is
\[
N=\sqrt a\,\partial_\rho.
\]
At the target radius $\rho'=u\rho$, it is
\[
N'=\sqrt{a_t}\,\partial_{\rho'}.
\]
Equivalently,
\[
\partial_{\rho'}=\frac{1}{\sqrt{a_t}}N'.
\]

For the radial input $N$, equation
\eqref{eq:master-differential} gives
\[
DF_{t,\varphi}(N)
=
DF_{t,\varphi}\bigl(\sqrt a\,\partial_\rho\bigr)
=
u\sqrt a\,\partial_{\rho'}.
\]
Therefore,
\begin{equation}
DF_{t,\varphi}(N)
=
u\sqrt{\frac{a}{a_t}}\,N'.
\label{eq:DF-radial}
\end{equation}

Next, choose a round-orthonormal horizontal basis
\[
\widehat E_1,\ldots,\widehat E_{n-2}
\in\mathcal H_\theta.
\]
Since the horizontal part of the ambient metric is
\[
\rho^2 g_{\mathcal H},
\]
the corresponding ambient unit vectors at the source are
\[
E_i=\frac{1}{\rho}\widehat E_i.
\]
At the target radius $\rho'=u\rho$, the corresponding ambient unit
vectors are
\[
E_i'=\frac{1}{u\rho}\widehat E_i.
\]

Applying \eqref{eq:master-differential} with
\[
s=0,
\qquad
Y=\frac{1}{\rho}\widehat E_i,
\]
the radial coordinate component is
\[
\rho ut\,
d\varphi\left(\frac{1}{\rho}\widehat E_i\right)
=
ut\,d\varphi(\widehat E_i).
\]
Its coefficient in the target radial unit direction is therefore
\[
\frac{ut}{\sqrt{a_t}}\,
d\varphi(\widehat E_i).
\]
Because $\widehat E_i$ is horizontal,
\[
d\varphi(\widehat E_i)
=
\left\langle
\nabla_{\mathcal H}\varphi,\widehat E_i
\right\rangle
=
\langle h,\widehat E_i\rangle.
\]

The angular component satisfies
\[
\frac{1}{\rho}\widehat E_i
=
u\left(\frac{1}{u\rho}\widehat E_i\right)
=
uE_i'.
\]
Consequently,
\begin{equation}
DF_{t,\varphi}(E_i)
=
\frac{ut}{\sqrt{a_t}}
\langle h,\widehat E_i\rangle N'
+
uE_i'.
\label{eq:DF-horizontal}
\end{equation}

Finally, the source unit Reeb vector is
\[
T_\rho
=
\frac{1}{\rho\sqrt a}\bar T,
\]
whereas at the target radius it is
\[
T_{\rho'}'
=
\frac{1}{u\rho\sqrt{a_t}}\bar T.
\]
Applying \eqref{eq:master-differential} with
\[
s=0,
\qquad
Y=\frac{1}{\rho\sqrt a}\bar T,
\]
and using
\[
q=\bar T\varphi,
\]
the radial coordinate component becomes
\[
\rho ut\,
d\varphi\left(
\frac{1}{\rho\sqrt a}\bar T
\right)
=
\frac{ut}{\sqrt a}\,q.
\]
After converting $\partial_{\rho'}$ into the target radial unit
vector $N'$, this gives
\[
\frac{ut}{\sqrt{aa_t}}q\,N'.
\]

The angular component satisfies
\[
\frac{1}{\rho\sqrt a}\bar T
=
u\sqrt{\frac{a_t}{a}}\,
\frac{1}{u\rho\sqrt{a_t}}\bar T
=
u\sqrt{\frac{a_t}{a}}\,T_{\rho'}'.
\]
Therefore,
\begin{equation}
DF_{t,\varphi}(T_\rho)
=
\frac{ut}{\sqrt{aa_t}}q\,N'
+
u\sqrt{\frac{a_t}{a}}\,T_{\rho'}'.
\label{eq:DF-Reeb}
\end{equation}

Equations \eqref{eq:DF-radial}, \eqref{eq:DF-horizontal}, and
\eqref{eq:DF-Reeb} give the three block columns of the asserted
matrix.

Since this matrix is upper triangular,
\[
\det DF_{t,\varphi}
=
u\sqrt{\frac{a}{a_t}}\,
u^{n-2}\,
u\sqrt{\frac{a_t}{a}}
=
u^n
=
e^{nt\varphi}.
\]
This concludes the proof.
\end{proof}

\section{Complex surface Jacobians and local second-order formulas}\label{sec:surface-trace}

\subsection{The intrinsic surface Jacobian}

At a point of \(\Sigma\), decompose the unit normal as
\[
\nu
=
\alpha N+v+\tau T_\rho,
\qquad
v\in\mathcal H_\theta.
\]
Here \(N=\nabla r\) is the radial unit direction, \(T_\rho=JN\) is the unit
Reeb direction, and \(\mathcal H_\theta=\{N,T_\rho\}^{\perp}\) is the
horizontal subspace tangent to the geodesic sphere centered at \(o\).

Set
\[
\beta=|v|^2,
\qquad
\gamma=\tau^2,
\qquad
\alpha^2+\beta+\gamma=1.
\]

No sign assumption is imposed on \(\alpha\).

\ithead{The cofactor identity.}
We use the tangential cofactor formula from
Lemma~\ref{lem:cofactor}.  It packages the restriction of the ambient
differential to an arbitrary tangent hyperplane and avoids any radial-graph
parametrization.

\ithead{Explicit inversion.}

Let \(L=DF_{t,\varphi}\). Solving
\[
L^Tw=\nu
\]
gives
\[
w_N
=
u^{-1}\alpha\sqrt{\frac{a_t}{a}},
\]
\[
w_H
=
u^{-1}
\left(
v-\frac{\alpha t}{\sqrt a}h
\right),
\]
and
\[
w_T
=
u^{-1}
\frac{\tau a-\alpha tq}{\sqrt{aa_t}}.
\]

Therefore
\begin{equation}
|L^{-T}\nu|^2
=
u^{-2}K(t),
\label{eq:Linverse}
\end{equation}
where
\begin{equation}
K(t)
=
\alpha^2\frac{a_t}{a}
+
\left|
v-\frac{\alpha t}{\sqrt a}h
\right|^2
+
\frac{(\tau a-\alpha tq)^2}{aa_t}.
\label{eq:K}
\end{equation}

Combining
\eqref{eq:basic-ambient-jacobian},
\eqref{eq:cofactor}, and
\eqref{eq:Linverse}, we obtain the following result.

\begin{theorem}[Exact intrinsic surface Jacobian]
\label{thm:surface-jacobian}
For every smooth hypersurface and every point away from the polar center,
let
\[
J_\Sigma(t,\varphi)
:=
J_{n-1}\!\left(
DF_{t,\varphi}|_{T\Sigma}
\right)
=
\left|\cof DF_{t,\varphi}\,\nu\right|
\]
denote the tangential metric Jacobian. Then
\[
J_\Sigma(t,\varphi)
=
u^{n-1}\sqrt{K(t)}.
\]
\end{theorem}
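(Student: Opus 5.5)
The plan is to combine three facts already established: the cofactor identity of Lemma~\ref{lem:cofactor}, the Jacobian formula \eqref{eq:basic-ambient-jacobian} from Proposition~\ref{prop:basic-matrix}, and the explicit inversion \eqref{eq:Linverse}. Fix a point of $\Sigma$ with $\rho>0$ and put $L=DF_{t,\varphi}$, written in the source and target orthonormal polar frames of Proposition~\ref{prop:basic-matrix}. Both frames are orthonormal, so $L$ is a linear map between oriented Euclidean $n$-spaces, and Lemma~\ref{lem:cofactor} applies with $V=T_xM$, $W=T_{F(x)}M$ and the unit normal $\nu$. The restriction $L|_{\nu^\perp}=DF_{t,\varphi}|_{T\Sigma}$, because $T\Sigma=\nu^\perp$. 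Hence
\[
J_\Sigma(t,\varphi)=|\cof L\,\nu|=|\det L|\,|L^{-T}\nu|.
\]

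For the determinant, I would use \eqref{eq:basic-ambient-jacobian}, which gives $\det L=u^n>0$. So $L$ is invertible and orientation-preserving, and $|\det L|=u^n$.

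The remaining work is to verify \eqref{eq:Linverse}, that is, to solve $L^Tw=\nu$. I would take the transpose of the upper block-triangular matrix of Proposition~\ref{prop:basic-matrix}, which is lower block-triangular, and solve it by forward substitution:
\begin{itemize}
\item The first equation gives $u\sqrt{a/a_t}\,w_N=\alpha$.
\item The horizontal equation is $\frac{ut}{\sqrt{a_t}}w_N h+u\,w_H=v$.
\item The Reeb equation is $\frac{ut}{\sqrt{aa_t}}q\,w_N+u\sqrt{a_t/a}\,w_T=\tau$.
\end{itemize}
Substituting $w_N=u^{-1}\alpha\sqrt{a_t/a}$ yields $w_H=u^{-1}(v-\alpha t h/\sqrt a)$ and $w_T=u^{-1}(\tau a-\alpha tq)/\sqrt{aa_t}$.

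I would then sum the squares of the components. The frame is orthonormal, so the horizontal block contributes $|v-\alpha th/\sqrt a|^2$ with the round metric identified with the ambient one through the orthonormal $E_i$. This gives $|L^{-T}\nu|^2=u^{-2}K(t)$ with $K$ as in \eqref{eq:K}.

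Putting these together, $J_\Sigma=u^n\cdot u^{-1}\sqrt{K(t)}=u^{n-1}\sqrt{K(t)}$. Here $K(t)>0$, since $L^{-T}$ is invertible and $\nu\neq0$.

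The only point needing care, and the main obstacle, is bookkeeping: $h$ and $q$ are round-sphere quantities, while the frame vectors $E_i$, $T_\rho$ are ambient unit vectors. I will check that the scalings $1/\rho$ and $1/(\rho\sqrt a)$ were already absorbed into the matrix entries of Proposition~\ref{prop:basic-matrix}, so that the inner products in $K$ are consistent. Beyond that, the argument is a direct linear-algebra verification.
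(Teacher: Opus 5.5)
Your proposal is correct and follows the paper's route exactly: the paper derives the theorem by combining the cofactor identity \eqref{eq:cofactor}, the determinant $\det DF_{t,\varphi}=u^n$ from \eqref{eq:basic-ambient-jacobian}, and the explicit solution of $L^Tw=\nu$, which gives $|L^{-T}\nu|^2=u^{-2}K(t)$. Your forward substitution reproduces the paper's $w_N$, $w_H$, $w_T$, and your identification of the round-orthonormal $\widehat E_i$ with the ambient $E_i$ is the same bookkeeping already built into the matrix of Proposition~\ref{prop:basic-matrix}.
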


No radial-graph representation has been used.

\subsection{Second-order expansion}

Since
\[
u=e^{t\varphi},
\]
we have
\[
a_t
=
1+xe^{2t\varphi}
=
a+2x\varphi t+2x\varphi^2t^2+O(t^3).
\]

Write
\[
K(t)
=
1+k_1t+k_2t^2+O(t^3).
\]

Let
\[
s=\langle v,h\rangle.
\]

A direct expansion of \eqref{eq:K} gives
\[
k_1
=
\frac{2x\varphi}{a}(\alpha^2-\gamma)
-
\frac{2\alpha}{\sqrt a}s
-
\frac{2\alpha\tau}{a}q
\]
and
\[
\begin{aligned}
k_2
={}&
\varphi^2
\left(
\frac{2\alpha^2x}{a}
+
\frac{2\gamma x(x-1)}{a^2}
\right)
+
\frac{\alpha^2}{a}|h|^2
\\
&+
\frac{4\alpha\tau x}{a^2}\varphi q
+
\frac{\alpha^2}{a^2}q^2.
\end{aligned}
\]

Put
\[
b=n-1.
\]

From
\[
J_\Sigma(t,\varphi)
=
e^{bt\varphi}
\sqrt{
1+k_1t+k_2t^2+O(t^3)
},
\]
we obtain
\begin{equation}
J_\Sigma''(0,\varphi)
=
b^2\varphi^2
+
b\varphi k_1
+
k_2
-
\frac{1}{4}k_1^2.
\label{eq:J-second}
\end{equation}

For the uniform deformation \(\varphi\equiv1\),
\begin{equation}
J_\Sigma'(0,1)
=
b+\frac{x}{a}(\alpha^2-\gamma).
\label{eq:J-radial-first}
\end{equation}

\subsection{Complex specialization of the universal trace}

For $q=1$ the volume radius is $R=\rho=\sinh r$.  Therefore the universal
rank-one trace in Theorem~\ref{thm:universal-rank-one-trace} specializes to
Theorem~\ref{thm:intrinsic-trace} and its strictly positive polynomial
certificate in Theorem~\ref{thm:strict-positivity}.  In particular, at every
fixed tangent hyperplane,
\[
 \sum_{j=1}^nJ_\Pi''(0,\varphi_j)-nJ_\Pi'(0,1)
 =-\frac{\mathcal R_{n,x}(\beta,\gamma)}{(1+x)^2},
\]
and the right-hand side vanishes exactly for a radial normal.  The explicit
formulas \eqref{eq:J-second} and \eqref{eq:J-radial-first} retained above are
used only for the cutoff estimates in the smooth argument.

\section{Cutoffs and variational justification}\label{sec:cutoff-variational}
\subsection{The interior-center cutoff}\label{sec:interior-cutoff}

The case
\[
o\in\Omega
\]
requires a smooth ambient approximation even though the boundary is separated
from the polar singularity.  Indeed, the surface deformation is already
smooth near $\Sigma$, but the enclosed-volume functional depends on an
ambient map on all of $\Omega$.  The cutoff makes that ambient map smooth at
$o$ while leaving it unchanged on a fixed neighborhood of the boundary.

\begin{lemma}
\label{lem:interior-cutoff}
Assume
\[
o\in\Omega,
\qquad
o\notin\Sigma.
\]
Let
\[
d_0=\distg(o,\Sigma)>0.
\]
Choose
\[
0<\eps<\frac{d_0}{4}
\]
and a smooth radial cutoff \(\chi_\eps\) satisfying
\[
\chi_\eps=0
\quad\text{on }[0,\eps],
\]
\[
\chi_\eps=1
\quad\text{on }[2\eps,\infty),
\]
and
\[
|\chi_\eps'|\le\frac{C}{\eps}.
\]

Define
\[
\zeta_{j,\eps}(r,\theta)
=
\chi_\eps(r)\varphi_j(\theta)
\]
and extend it smoothly by zero at \(o\). Set
\[
F^\eps_{t,j}(\rho,\theta)
=
\left(
e^{t\zeta_{j,\eps}(\rho,\theta)}\rho,
\theta
\right).
\]

Let \(A_j(t)\) and \(V_j(t)\) denote the area and enclosed volume under the
exact polar deformation \(F_{t,\varphi_j}\).
Although this map need not be differentiable at \(o\), extend it by
\(F_{t,\varphi_j}(o)=o\).  Lemma~\ref{lem:rank-one-global-stretches}
shows that it is bi-Lipschitz for small \(t\), with inverse
\(F_{-t,\varphi_j}\).  Hence the Lipschitz area formula and
\eqref{eq:basic-ambient-jacobian} give
\[
V_j(t)
=
\int_\Omega e^{nt\varphi_j}\,dV,
\]
where the value assigned to \(\varphi_j\) at \(o\) is irrelevant.

Then:

\begin{enumerate}[label=\textup{(\roman*)}]

\item
For all sufficiently small \(|t|\), with a time interval independent of
\(\eps\), \(F^\eps_{t,j}\) is a smooth ambient diffeomorphism.

\item
The cutoff deformation agrees with the exact polar deformation on a
neighborhood of \(\Sigma\).

\item
If \(A^\eps_j(t)\) and \(V^\eps_j(t)\) are the area and enclosed volume
under \(F^\eps_{t,j}\), then
\begin{equation}
(A^\eps_j)'(0)=A_j'(0),
\qquad
(A^\eps_j)''(0)=A_j''(0),
\label{eq:interior-area-equality}
\end{equation}
and
\begin{equation}
(V^\eps_j)'(0)\to V_j'(0),
\qquad
(V^\eps_j)''(0)\to V_j''(0).
\label{eq:interior-volume-convergence}
\end{equation}

\item
If \(o\) is a volume geometric median, then
\begin{equation}
\int_\Sigma f_j\,dA=0.
\label{eq:interior-zero-mean}
\end{equation}

\item
If \(H\equiv\lambda\), then
\begin{equation}
Q_\Sigma(f_j)
=
A_j''(0)-\lambda V_j''(0).
\label{eq:interior-Hessian-identification}
\end{equation}

\end{enumerate}
\end{lemma}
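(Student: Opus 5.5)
The plan is to verify the five items in order, exploiting that the cutoff is radial and supported in the small ball $B_{2\eps}(o)$, which lies at distance at least $d_0/2$ from $\Sigma$.

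For (i), note that $F^\eps_{t,j}$ fixes every ray and acts on the volume radius by the factor $e^{t\zeta_{j,\eps}}$. On $\{r\le\eps\}$ it is the identity, so it is smooth at $o$. Everywhere it is a stretch $F_\psi$ with $\psi=t\zeta_{j,\eps}$ now depending also on $r$. Injectivity along each ray requires
\[
\partial_\rho\bigl(e^{t\zeta}\rho\bigr)=e^{t\zeta}\bigl(1+t\rho\,\partial_\rho\zeta\bigr)>0.
\]
The key observation is that $\rho|\partial_\rho\zeta|\le \rho\,C/\eps\lesssim C$ on the transition annulus $\eps\le r\le 2\eps$, since there $\rho\approx r\le2\eps$. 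Hence this derivative stays positive for $|t|<t_0$ with $t_0$ independent of $\eps$. The differential is then upper triangular in polar frames with positive diagonal, as in Proposition~\ref{prop:basic-matrix} with an extra radial term, so $F^\eps_{t,j}$ is a smooth diffeomorphism with smooth inverse by the inverse function theorem. Globally it is bijective because it is a monotone reparametrization of each ray.

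For (ii), $\zeta_{j,\eps}=\varphi_j$ on $\{r\ge2\eps\}$, and this set contains a neighborhood of $\Sigma$ because $2\eps<d_0/2$. Hence the two maps coincide there.

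The area identity \eqref{eq:interior-area-equality} is then immediate, since the area is computed from the map on $\Sigma$ alone. For volume, write
\[
V^\eps_j(t)=\int_\Omega \Jac F^\eps_{t,j}\,dV.
\]
Outside $B_{2\eps}$ the Jacobian equals $e^{nt\varphi_j}$. Inside, it is
\[
e^{nt\zeta}\bigl(1+t\rho\,\partial_\rho\zeta\bigr),
\]
because the extra radial derivative only changes the $(N,N)$ entry, and the map still maps $B_{2\eps}$ onto itself. Its first two $t$-derivatives at $0$ are bounded uniformly in $\eps$. Integrating over a set of volume $O(\eps^n)$ gives differences $O(\eps^n)\to0$, and likewise for the exact map's contribution from $B_{2\eps}$. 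This proves \eqref{eq:interior-volume-convergence}.

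For (iv), apply the divergence theorem to the smooth field $Y_\eps=\zeta_{j,\eps}\rho\partial_\rho$, which equals $X_j$ near $\Sigma$. This gives
\[
\int_\Sigma f_j\,dA=\int_\Omega \diver Y_\eps\,dV,
\qquad
\diver Y_\eps=n\zeta+\rho\,\partial_\rho\zeta .
\]
The divergence is bounded uniformly in $\eps$ and differs from $n\varphi_j$ only on $B_{2\eps}$. Letting $\eps\to0$ yields $n\int_\Omega\varphi_j\,dV$, which vanishes by Corollary~\ref{cor:median-balance}. Alternatively, one can note that $V_j'(0)=n\int_\Omega\varphi_j$ directly.

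For (v), use the standard second-variation formula for a smooth ambient variation of a CMC hypersurface with $H\equiv\lambda$:
\[
(A^\eps_j)''(0)-\lambda(V^\eps_j)''(0)=Q_\Sigma(f_j).
\]
Here the normal speed is $f_j$ by (ii), and the acceleration terms cancel because of the Lagrange multiplier; this is the Barbosa--do Carmo--Eschenburg identity, valid for arbitrary smooth variations, not only volume-preserving ones. Substituting the area equality and passing to the limit in the volume term gives \eqref{eq:interior-Hessian-identification}.

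The main obstacle is the uniform-in-$\eps$ control in (i) and (iii), namely that $\rho\,\partial_\rho\chi_\eps$ is bounded. This holds precisely because the cutoff scale matches the radius, and it is what prevents the time interval from shrinking as $\eps\to0$.
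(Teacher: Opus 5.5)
Your proposal is correct and follows essentially the same route as the paper. The uniform bound on $\rho\,\partial_\rho\zeta_{j,\eps}$ yields an $\eps$-independent time interval and monotone rays. Agreement on $\{r\ge 2\eps\}$ gives the area identities, and the Jacobian $e^{nt\zeta}(1+t\rho\,\partial_\rho\zeta)$, with discrepancy supported in a set of volume $O(\eps^n)$, gives the volume limits. The zero-mean statement comes from the first variation of volume (your divergence theorem for $\zeta_{j,\eps}\rho\,\partial_\rho$ is the same computation), and the Hessian statement from Lemma~\ref{lem:Hessian-property} plus the limit $\eps\to0$.

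One harmless slip: the cutoff map does not send $B_{2\eps}(o)$ onto itself, since on $\{r=2\eps\}$ it already stretches by $e^{t\varphi_j}$. Nothing depends on this, because the Jacobians are compared pointwise on the source domain $\Omega$.
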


\begin{proof}
Because \(2\eps<d_0/2\), one has
\[
\chi_\eps\equiv1
\]
on a fixed neighborhood of \(\Sigma\). Thus the cutoff and exact
deformations agree near the boundary, proving
\eqref{eq:interior-area-equality}.

For a general radial-dependent exponent \(\zeta\),
\[
\rho_t=e^{t\zeta}\rho
\]
and
\[
d\rho_t
=
e^{t\zeta}
\left[
(1+t\rho\partial_\rho\zeta)d\rho
+
t\rho\,d_{\mathbb S}\zeta
\right].
\]
The angular term wedges to zero against the full spherical volume form.
Therefore
\begin{equation}
\Jac F_t
=
e^{nt\zeta}
(1+t\rho\partial_\rho\zeta).
\label{eq:general-ambient-Jacobian}
\end{equation}

On the transition annulus
\[
\eps<r<2\eps,
\]
one has
\[
|\zeta_{j,\eps}|\le1
\]
and
\[
|\rho\partial_\rho\zeta_{j,\eps}|
\le C
\]
uniformly in \(\eps\). Indeed,
\[
\rho\partial_\rho\zeta_{j,\eps}
=
\rho\chi_\eps'(r)\frac{dr}{d\rho}\varphi_j,
\qquad
\frac{dr}{d\rho}
=
\frac{1}{\sqrt{1+\rho^2}}.
\]
On this annulus \(r\asymp\eps\), and
\(\rho=\sinh r=r+O(r^3)\) as \(r\to0\); hence
\[
\rho\asymp r\asymp\eps.
\]
Using \(|\chi_\eps'|\le C/\eps\) and
\(|\varphi_j|\le1\), we therefore obtain
\[
|\rho\partial_\rho\zeta_{j,\eps}|
\le
\frac{C\rho}{\eps\sqrt{1+\rho^2}}
\le C.
\]

Choose \(t_0>0\), independently of \(\eps\), so that
\[
1+t\rho\partial_\rho\zeta_{j,\eps}\ge\frac{1}{2}
\qquad (|t|\le t_0).
\]
For each fixed direction \(\theta\), the radial map
\[
\rho\longmapsto e^{t\zeta_{j,\eps}(\rho,\theta)}\rho
\]
is then strictly increasing. It is the identity near \(0\), and for large
\(\rho\) it is multiplication by the positive number
\(e^{t\varphi_j(\theta)}\). It is therefore a diffeomorphism of
\([0,\infty)\), and the full polar map is a smooth global diffeomorphism.
This proves the first assertion.

Hence the first two \(t\)-derivatives at \(t=0\) of
\eqref{eq:general-ambient-Jacobian} are uniformly bounded. The exact and
cutoff deformations differ only in \(B_{2\eps}(o)\), whose volume is
\(O(\eps^n)\). This proves
\eqref{eq:interior-volume-convergence}.

The first variation of volume and the agreement of the two deformations
near \(\Sigma\) give
\[
\int_\Sigma f_j\,dA=(V^\eps_j)'(0).
\]
If \(o\) is a volume geometric median, then
\[
(V^\eps_j)'(0)
\longrightarrow
V_j'(0)
=n\int_\Omega\varphi_j\,dV
=0,
\]
which proves \eqref{eq:interior-zero-mean}.

Along \(\Sigma\), the cutoff deformation has the same initial normal speed \(f_j\)
as the exact flow. Lemma~\ref{lem:Hessian-property} gives
\[
(A^\eps_j)''(0)-\lambda(V^\eps_j)''(0)
=
Q_\Sigma(f_j).
\]
Passing to the limit using
\eqref{eq:interior-area-equality} and
\eqref{eq:interior-volume-convergence} proves
\eqref{eq:interior-Hessian-identification}.
\end{proof}

\subsection{The boundary-center cutoff}\label{sec:boundary-cutoff}

Assume now that
\[
o\in\Sigma.
\]
This is the genuinely singular configuration.  Both the ambient polar field
and its normal trace meet the center.  The estimate $|f_j|=O(r)$ is crucial:
it compensates for the $O(\eps^{-1})$ derivative of the cutoff and yields
strong $H^1$ convergence on the $(n-1)$-dimensional boundary.  Two different
approximations will be used: one comes from an explicit ambient deformation, while
the other has exactly zero mean and is admissible for stability.

Let \(\chi_\eps\) be a smooth radial cutoff satisfying
\[
\chi_\eps=0
\quad\text{on }r\le\eps,
\]
\[
\chi_\eps=1
\quad\text{on }r\ge2\eps,
\]
and
\[
|\chi_\eps'|\le\frac{C}{\eps}.
\]

Set
\[
f_{j,\eps}
=
\chi_\eps f_j.
\]

\ithead{The \texorpdfstring{\(H^1\)}{H1} approximation.}

Since
\[
|f_j|\le Cr
\]
and
\[
|\nabla_\Sigma f_j|\le C
\]
near \(o\), while
\[
\Area(\Sigma\cap B_{2\eps}(o))
=
O(\eps^{n-1}),
\]
we obtain
\[
\|f_{j,\eps}-f_j\|_{L^2(\Sigma)}^2
\le
C\eps^{n+1}
\]
and
\[
\|
\nabla_\Sigma(f_{j,\eps}-f_j)
\|_{L^2(\Sigma)}^2
\le
C\eps^{n-1}.
\]

Therefore
\begin{equation}
f_{j,\eps}
\longrightarrow
f_j
\quad\text{strongly in }H^1(\Sigma).
\label{eq:fjeps-H1}
\end{equation}

\ithead{The weak divergence identity.}

\begin{lemma}
\label{lem:boundary-zero-mean}
If \(o\in\Sigma\), then
\[
\int_\Sigma f_j\,dA
=
n\int_\Omega\varphi_j\,dV.
\]
In particular, at a volume geometric median,
\[
\int_\Sigma f_j\,dA=0.
\]
\end{lemma}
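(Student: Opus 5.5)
The plan is to apply the divergence theorem to the field $X_j=\varphi_j\rho\partial_\rho$ on $\Omega$ with a small ball around the pole removed, and then let the radius shrink to zero. Since $X_j$ is smooth away from $o$ and $\diver X_j=n\varphi_j$ there, the only issue is controlling what happens near $o\in\Sigma$.

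Concretely, for small $\eps>0$ set $\Omega_\eps=\Omega\setminus\overline{B_\eps(o)}$. This is a Lipschitz (indeed piecewise smooth) domain for almost every $\eps$, since by Sard/transversality the sphere $\partial B_\eps(o)$ meets $\Sigma$ transversally for a.e.\ small $\eps$. On $\Omega_\eps$ the field $X_j$ is smooth up to the boundary, so
\[
 n\int_{\Omega_\eps}\varphi_j\,dV
 =\int_{\Sigma\setminus B_\eps(o)}f_j\,dA
 -\int_{\Omega\cap\partial B_\eps(o)}\langle X_j,N\rangle\,dA .
\]
We then pass to the limit term by term. The left side converges to $n\int_\Omega\varphi_j\,dV$ by dominated convergence, since $|\varphi_j|\le1$ and $\Omega$ is bounded. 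The first boundary term converges to $\int_\Sigma f_j\,dA$ because $f_j$ is bounded (indeed $|f_j|\le Cr$) and $\Sigma$ has finite area. For the spherical cap term we use the estimate $|X_j|\le Cr$ from the local analysis near the center, together with $\Area(\partial B_\eps(o))=O(\eps^{n-1})$. This bounds that term by $C\eps^n\to0$.

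An alternative route that avoids the transversality issue altogether is to use the Lipschitz regularity of $X_j$ established above. Since $X_j$ extends by $X_j(o)=0$ to a locally Lipschitz field with bounded a.e.\ derivatives, the divergence theorem for Lipschitz vector fields on the smooth domain $\Omega$ applies directly. The weak divergence equals $n\varphi_j$ a.e., because a point carries no distributional divergence for a bounded $W^{1,\infty}$ field. I would mention this as the cleanest justification, with the cutoff computation as a check. Equivalently, one can replace $X_j$ by $\chi_\eps X_j$ with the radial cutoff from Section~\ref{sec:boundary-cutoff}. The extra term $\int_\Omega\langle\nabla\chi_\eps,X_j\rangle\,dV$ is then $O(\eps^{-1}\cdot\eps\cdot\eps^n)\to0$, while $\int_\Sigma\chi_\eps f_j\to\int_\Sigma f_j$.

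The final claim follows from Corollary~\ref{cor:median-balance}, which gives $\int_\Omega\varphi_j\,dV=0$ at a volume geometric median. The main point needing care is the vanishing of the contribution near the pole. I expect the cutoff version to be the most robust, since it needs only $|X_j|\le Cr$, $|\nabla\chi_\eps|\le C/\eps$ and $\Vol(B_{2\eps})=O(\eps^n)$.
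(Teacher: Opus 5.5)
Your proposal is correct, and the cutoff variant you call the most robust is exactly the paper's proof: apply the divergence theorem to the smooth field $\chi_\eps X_j$, kill the annular term $\int_\Omega\langle\nabla\chi_\eps,X_j\rangle\,dV$ using $|\nabla\chi_\eps|\le C/\eps$, $|X_j|\le Cr$ and annular volume $O(\eps^n)$, and then pass to the limit in $\int_\Sigma\chi_\eps f_j\,dA$. Your primary ball-excision route and your Lipschitz divergence-theorem route are also valid and rest on the same bound $|X_j|\le Cr$; the smooth cutoff simply avoids the transversality and corner issues that the excision requires.
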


\begin{proof}
Apply the divergence theorem to the smooth vector field
\[
\chi_\eps X_j.
\]
One obtains
\[
\int_\Sigma
\chi_\eps\langle X_j,\nu\rangle\,dA
=
\int_\Omega
\chi_\eps\diver X_j\,dV
+
\int_\Omega
\langle\nabla\chi_\eps,X_j\rangle\,dV.
\]

The first term converges to
\[
n\int_\Omega\varphi_j\,dV.
\]

The second term is supported on
\[
B_{2\eps}(o)\setminus B_\eps(o).
\]
On this annulus,
\[
|\nabla\chi_\eps|
\le
\frac{C}{\eps}
\]
and
\[
|X_j|\le Cr,
\]
so the integrand is uniformly bounded. Since the annular volume is
\(O(\eps^n)\), that term tends to zero.

The left-hand side converges to
\[
\int_\Sigma f_j\,dA
\]
by \eqref{eq:fjeps-H1}.
\end{proof}

\ithead{Exact mean correction.}

Assume for the remainder of this subsection that \(o\) is a volume geometric
median.  Fix
\[
\psi\in C_c^\infty(\Sigma\setminus\{o\})
\]
with
\[
\int_\Sigma\psi\,dA=1.
\]

Define
\[
m_{j,\eps}
=
\int_\Sigma f_{j,\eps}\,dA
\]
and
\begin{equation}
g_{j,\eps}
=
f_{j,\eps}-m_{j,\eps}\psi.
\label{eq:gjeps}
\end{equation}

Then
\[
\int_\Sigma g_{j,\eps}\,dA=0.
\]

By Lemma~\ref{lem:boundary-zero-mean},
\[
\int_\Sigma f_j\,dA=0,
\]
we have
\[
m_{j,\eps}
=
-\int_\Sigma(1-\chi_\eps)f_j\,dA
=
O(\eps^n).
\]

Consequently,
\begin{equation}
g_{j,\eps}
\longrightarrow
f_j
\quad\text{strongly in }H^1(\Sigma).
\label{eq:gjeps-H1}
\end{equation}

\subsection{Radial-dependent cutoff deformations}

Define
\[
\zeta_{j,\eps}(r,\theta)
=
\chi_\eps(r)\varphi_j(\theta)
\]
and extend it smoothly by zero near \(o\).

Consider
\[
F^\eps_{t,j}(\rho,\theta)
=
\left(
e^{t\zeta_{j,\eps}(\rho,\theta)}\rho,
\theta
\right).
\]

Write, temporarily,
\[
\zeta=\zeta_{j,\eps},
\qquad
u=e^{t\zeta},
\qquad
\rho_t=u\rho,
\qquad
a_t=1+\rho_t^2.
\]

Also define
\[
\zeta_\rho=\partial_\rho\zeta,
\qquad
h_\zeta=\nabla_{\mathcal H}\zeta,
\qquad
q_\zeta=\bar T\zeta.
\]

\begin{lemma}
\label{lem:cutoff-matrix}
In the source and target orthonormal polar frames,
\begin{equation}
DF^\eps_{t,j}
=
\begin{pmatrix}
u(1+t\rho\zeta_\rho)\sqrt{a/a_t}
&
\dfrac{ut}{\sqrt{a_t}}h_\zeta^{\mathsf T}
&
\dfrac{ut}{\sqrt{aa_t}}q_\zeta
\\[0.8em]
0&uI_{n-2}&0
\\
0&0&u\sqrt{a_t/a}
\end{pmatrix}.
\label{eq:cutoff-matrix}
\end{equation}

Consequently,
\[
\det DF^\eps_{t,j}
=
u^n(1+t\rho\zeta_\rho).
\]
\end{lemma}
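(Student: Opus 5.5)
The plan is to repeat the column-by-column computation from the proof of Proposition~\ref{prop:basic-matrix}, now with the exponent $t\zeta(\rho,\theta)$ depending also on $\rho$. The map is $F^\eps_{t,j}(\rho,\theta)=(\rho_t,\theta)$ with $\rho_t=u\rho$ and $u=e^{t\zeta(\rho,\theta)}$. Differentiating gives
\[
d\rho_t=u\bigl[(1+t\rho\zeta_\rho)\,d\rho+t\rho\,d_{\mathbb S}\zeta\bigr],
\]
so for a coordinate vector $(s,Y)\in\mathbb R\partial_\rho\oplus T_\theta\sphere$,
\[
DF^\eps_{t,j}(s,Y)=\bigl(u(1+t\rho\zeta_\rho)s+\rho ut\,d\zeta(Y),\,Y\bigr),
\]
where $d\zeta(Y)$ is the spherical differential at fixed $\rho$. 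This replaces \eqref{eq:master-differential}. The only structural change is the extra factor $1+t\rho\zeta_\rho$ in the radial-to-radial entry.

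Next, I will evaluate this on the source orthonormal frame $N=\sqrt a\,\partial_\rho$, $E_i=\rho^{-1}\widehat E_i$, $T_\rho=(\rho\sqrt a)^{-1}\bar T$. The result is then expressed in the target frame $N'=\sqrt{a_t}\,\partial_{\rho_t}$, $E_i'=(u\rho)^{-1}\widehat E_i$, $T'_{\rho_t}=(u\rho\sqrt{a_t})^{-1}\bar T$.

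The columns come out as follows.
\begin{enumerate}[label=\textup{(\arabic*)}]
\item The radial column is $u(1+t\rho\zeta_\rho)\sqrt a\,\partial_{\rho_t}=u(1+t\rho\zeta_\rho)\sqrt{a/a_t}\,N'$.
\item The horizontal columns have radial coordinate component $ut\langle h_\zeta,\widehat E_i\rangle$, which becomes $\frac{ut}{\sqrt{a_t}}\langle h_\zeta,\widehat E_i\rangle N'$. Their angular part is $uE_i'$, exactly as in \eqref{eq:DF-horizontal}.
\item The Reeb column has radial component $\frac{ut}{\sqrt a}q_\zeta$, which becomes $\frac{ut}{\sqrt{aa_t}}q_\zeta N'$. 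Its angular part is $u\sqrt{a_t/a}\,T'_{\rho_t}$, as in \eqref{eq:DF-Reeb}.
\end{enumerate}
Here $h_\zeta$ and $q_\zeta$ are the horizontal and Reeb spherical derivatives of $\zeta$ at fixed $\rho$. Assembling the three columns gives \eqref{eq:cutoff-matrix}.

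The matrix is block upper triangular, so its determinant is the product of the diagonal entries:
\[
u(1+t\rho\zeta_\rho)\sqrt{a/a_t}\cdot u^{n-2}\cdot u\sqrt{a_t/a}=u^n(1+t\rho\zeta_\rho).
\]
This is consistent with \eqref{eq:general-ambient-Jacobian}.

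The only point needing care is the bookkeeping in the angular columns. The angular coordinate is unchanged, so the angular components pick up no $\zeta_\rho$ term. The new term $t\rho\zeta_\rho$ therefore enters only through the $d\rho$ coefficient, and hence only through the $(1,1)$ entry. One must also check that the horizontal/Reeb splitting at $\theta$ is the same in source and target, because $\theta$ is fixed by the map; this gives the zero lower-left blocks.

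The computation is valid away from $o$. Near $o$ the map is the identity, since $\zeta=0$ for $r\le\eps$, so the formula holds there trivially with $u=1$ and $\zeta_\rho=0$.
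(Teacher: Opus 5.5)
Your proposal is correct and follows the paper's proof. The paper likewise computes $\partial_\rho\rho_t=u(1+t\rho\zeta_\rho)$ to obtain the radial column, reuses the horizontal and Reeb computations of Proposition~\ref{prop:basic-matrix} with $\varphi$ replaced by $\zeta$, and reads off the determinant from the upper-triangular form; you simply write out the modified master differential and all three columns explicitly instead of deferring to that proposition.
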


\begin{proof}
Since
\[
\rho_t=e^{t\zeta(\rho,\theta)}\rho,
\]
we have
\[
\partial_\rho\rho_t
=
u(1+t\rho\zeta_\rho).
\]
Therefore
\[
DF(N)
=
u(1+t\rho\zeta_\rho)
\sqrt{\frac{a}{a_t}}N'.
\]

The horizontal and Reeb computations are the same as in
Proposition~\ref{prop:basic-matrix}, with \(\varphi\) replaced by \(\zeta\).
This gives the remaining matrix entries. The determinant follows from the
upper-triangular form.
\end{proof}

\ithead{Uniform annular bounds.}

On
\[
\mathcal A_\eps
=
\{\eps<r<2\eps\},
\]
one has
\begin{equation}
|\zeta|
+
|\rho\zeta_\rho|
+
|h_\zeta|
+
|q_\zeta|
\le C,
\label{eq:uniform-zeta}
\end{equation}
where \(C\) is independent of \(\eps\).

Indeed,
\[
\zeta=\chi_\eps\varphi_j,
\]
and the spherical derivatives of \(\varphi_j\) are uniformly bounded.
Moreover,
\[
\rho\zeta_\rho
=
\rho\chi_\eps'(r)
\frac{dr}{d\rho}
\varphi_j.
\]
Since
\[
\frac{dr}{d\rho}=\frac{1}{\sqrt{1+\rho^2}}
\]
and, on \(\mathcal A_\eps\),
\[
r\asymp\eps,
\qquad
\rho=\sinh r=r+O(r^3),
\]
we have
\[
\rho\asymp r\asymp\eps.
\]
Consequently, using
\(|\chi_\eps'|\le C/\eps\) and \(|\varphi_j|\le1\),
\[
|\rho\zeta_\rho|
\le
\frac{C\rho}{\eps\sqrt{1+\rho^2}}
\le C.
\]

\begin{lemma}
\label{lem:uniform-Jacobian}
There exist
\[
t_0>0
\quad\text{and}\quad
C>0,
\]
independent of \(\eps\), such that for \(|t|\le t_0\):

\begin{enumerate}[label=\textup{(\roman*)}]

\item
\(F^\eps_{t,j}\) is a smooth ambient diffeomorphism.

\item
On \(\mathcal A_\eps\),
\[
\|DF^\eps_{t,j}\|
+
\|(DF^\eps_{t,j})^{-1}\|
\le C.
\]

\item
For every unit vector \(\nu\), the tangential Jacobian
\[
J^\eps_\Sigma(t)
=
|\det DF^\eps_{t,j}|
\,
|(DF^\eps_{t,j})^{-T}\nu|
\]
satisfies
\begin{equation}
\left|
\partial_t^kJ^\eps_\Sigma(0)
\right|
\le C,
\qquad
k=0,1,2.
\label{eq:uniform-tangential-Jacobian}
\end{equation}

\end{enumerate}
\end{lemma}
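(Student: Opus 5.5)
The plan is to read everything off the explicit matrix \eqref{eq:cutoff-matrix} of Lemma~\ref{lem:cutoff-matrix}, together with the $\eps$-independent annular bounds \eqref{eq:uniform-zeta}. The key observation is that every entry of $DF^\eps_{t,j}$ is a smooth function of a few quantities: the exponential factor $u=e^{t\zeta}$, the ratio $a_t/a$, the quantities $t\rho\zeta_\rho$, $h_\zeta$ and $q_\zeta$, and the prefactors $1/\sqrt{a_t}$ and $1/\sqrt{a a_t}$. Each of these is bounded independently of $\eps$ once $|t|\le t_0$.

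\textbf{Step (i).} Choose $t_0$ from \eqref{eq:uniform-zeta} so that $|t\rho\zeta_\rho|\le\tfrac12$ and $|t\zeta|\le1$ for $|t|\le t_0$. Then $u\in[e^{-1},e]$, and on the annulus $\mathcal A_\eps$ (where $\rho$ is small) we have $a_t/a\in[e^{-2},e^2]$; outside the annulus, $a_t/a$ is likewise trapped between $u^2$ and $1$. Exactly as in the proof of Lemma~\ref{lem:interior-cutoff}(i), the radial map $\rho\mapsto e^{t\zeta(\rho,\theta)}\rho$ then has derivative $u(1+t\rho\zeta_\rho)\ge u/2>0$. It is the identity near $0$ and multiplication by $e^{t\varphi_j(\theta)}$ for $r\ge2\eps$. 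Hence it is a smooth diffeomorphism of each ray, and $F^\eps_{t,j}$ is a smooth global diffeomorphism. Smoothness at $o$ holds because $\zeta$ vanishes on $B_\eps(o)$.

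\textbf{Step (ii).} On $\mathcal A_\eps$ the diagonal entries lie in a fixed compact subset of $(0,\infty)$, by the bounds just listed. The off-diagonal entries are bounded by $C(|h_\zeta|+|q_\zeta|)\le C$, since $a,a_t\ge1$. Because the matrix is upper block-triangular with a bounded-below diagonal, its inverse can be written explicitly: it has inverse diagonal entries and off-diagonal entries equal to products of bounded quantities. This gives $\|(DF)^{-1}\|\le C$.

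\textbf{Step (iii).} Write $J^\eps_\Sigma(t)=|\cof DF^\eps_{t,j}\,\nu|$ using Lemma~\ref{lem:cofactor}. The cofactor entries are polynomials in the matrix entries. The $t$-derivatives of the entries up to order two are again bounded polynomials in $\zeta$, $\rho\zeta_\rho$, $h_\zeta$ and $q_\zeta$; the same holds for $\partial_t^k(a_t/a)$, because $\partial_t a_t=2\zeta\rho_t^2$ and $\rho_t^2/a_t\le1$. So $\partial_t^k\cof DF$ is uniformly bounded for $k=0,1,2$.

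It remains to differentiate the norm twice. This requires a lower bound on $|\cof DF\,\nu|$, which follows from
\[
|\cof DF\,\nu|=|\det DF|\,|DF^{-T}\nu|\ge|\det DF|/\|DF^{T}\|\ge c>0 .
\]
Here $|\det DF|=u^n(1+t\rho\zeta_\rho)\ge e^{-n}/2$. Since $|\cof DF\,\nu|$ is bounded below, the derivatives of $s\mapsto\sqrt s$ stay bounded, and \eqref{eq:uniform-tangential-Jacobian} follows.

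\textbf{Main obstacle.} The only delicate point is uniformity in $\eps$ of the radial entry $u(1+t\rho\zeta_\rho)$. The bare derivative $\zeta_\rho$ is of size $\eps^{-1}$, and only the weighted combination $\rho\zeta_\rho$ is controlled. It must therefore be checked that $\zeta_\rho$ enters the matrix and its $t$-derivatives only through $t\rho\zeta_\rho$. I expect this to be visible directly from \eqref{eq:cutoff-matrix}: $t$-derivatives act on $u$, on $a_t$ and on the explicit factor $t$, but never produce an extra $\partial_\rho$.
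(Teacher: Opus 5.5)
Your proposal is correct and follows the paper's proof essentially step by step. Both arguments use the $\eps$-uniform entry bounds from \eqref{eq:uniform-zeta}, the choice of $t_0$ with $1+t\rho\zeta_\rho\ge\tfrac12$, monotonicity along each ray to get a global diffeomorphism, and smoothness of $L\mapsto|\det L|\,|L^{-T}\nu|$, together with the observation that only $\zeta$, $\rho\zeta_\rho$ and $\nabla_{\mathbb S}\zeta$ enter. The one cosmetic difference is that you derive the lower bound on $|\cof DF\,\nu|$ explicitly, via $|\det DF|/\|DF\|$, whereas the paper just notes that the matrices and their inverses range over a compact subset of $GL(n)$.
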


\begin{proof}
By \eqref{eq:uniform-zeta}, every matrix entry in
\eqref{eq:cutoff-matrix}, together with its first two derivatives in \(t\)
at \(t=0\), is bounded uniformly in \(\eps\).

Choose \(t_0\) so small that
\[
1+t\rho\zeta_\rho
\ge\frac{1}{2}
\]
for every \(|t|\le t_0\). The diagonal entries are then bounded above and
away from zero.

For each fixed direction \(\theta\), the radial map
\[
\rho\longmapsto
e^{t\zeta(\rho,\theta)}\rho
\]
has positive derivative. It equals the identity near the origin and is a
positive linear scaling for large \(\rho\). Hence it is a diffeomorphism of
\([0,\infty)\), and the full map is a global diffeomorphism.

The matrix and its inverse range over a compact subset of \(GL(n)\).
The function
\[
L\longmapsto
|\det L|\,|L^{-T}\nu|
\]
has uniformly bounded first and second derivatives on this compact set.
The chain rule proves
\eqref{eq:uniform-tangential-Jacobian}.

Only
\[
\zeta,
\qquad
\rho\zeta_\rho,
\qquad
\nabla_{\mathbb S}\zeta
\]
occur. No second derivative of \(\chi_\eps\) appears in the first two
derivatives with respect to \(t\).
\end{proof}

\ithead{Convergence of the cutoff derivatives.}

Let
\[
A^\eps_j(t)
=
\Area\bigl(F^\eps_{t,j}(\Sigma)\bigr),
\qquad
V^\eps_j(t)
=
\Vol\bigl(F^\eps_{t,j}(\Omega)\bigr).
\]

Away from the center, define
\[
A_j^{(k)}(0)
=
\int_{\Sigma\setminus\{o\}}
\partial_t^kJ_\Sigma(0,\varphi_j)\,dA,
\qquad
k=1,2.
\]

Define similarly
\[
V_j^{(k)}(0)
=
\int_{\Omega\setminus\{o\}}
\left.
\partial_t^k
\right|_{t=0}
e^{nt\varphi_j}\,dV,
\qquad
k=1,2.
\]

Below we use the equivalent prime notation
\[
A_j'(0):=A_j^{(1)}(0),\qquad A_j''(0):=A_j^{(2)}(0),
\]
and likewise \(V_j'(0):=V_j^{(1)}(0)\) and
\(V_j''(0):=V_j^{(2)}(0)\).

Thus
\[
V_j'(0)
=
n\int_\Omega\varphi_j\,dV
\]
and
\[
V_j''(0)
=
n^2\int_\Omega\varphi_j^2\,dV.
\]

The values assigned to \(\varphi_j\) at \(o\) are irrelevant.

\begin{lemma}
\label{lem:cutoff-derivative-convergence}
Assume that \(o\in\Sigma\). Then, for \(k=1,2\),
\[
(A^\eps_j)^{(k)}(0)
\longrightarrow
A_j^{(k)}(0)
\]
and
\[
(V^\eps_j)^{(k)}(0)
\longrightarrow
V_j^{(k)}(0).
\]

More precisely,
\begin{equation}
\left|
(A^\eps_j)^{(k)}(0)-A_j^{(k)}(0)
\right|
\le
C\,
\Area\bigl(\Sigma\cap B_{2\eps}(o)\bigr)
=
O(\eps^{n-1}),
\label{eq:area-error-explicit}
\end{equation}
and
\begin{equation}
\left|
(V^\eps_j)^{(k)}(0)-V_j^{(k)}(0)
\right|
\le
C\,
\Vol\bigl(\Omega\cap B_{2\eps}(o)\bigr)
=
O(\eps^n).
\label{eq:volume-error-explicit}
\end{equation}
\end{lemma}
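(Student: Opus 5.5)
The approach is to write both the cutoff and the exact derivatives as integrals of pointwise Jacobian derivatives, observe that the integrands coincide outside $B_{2\eps}(o)$, and bound the difference on the small set using uniform estimates.

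\emph{Area.} Since $\chi_\eps\equiv1$ on $\{r\ge2\eps\}$, the map $F^\eps_{t,j}$ agrees with $F_{t,\varphi_j}$ on a neighborhood of $\Sigma\setminus B_{2\eps}(o)$ for all $|t|\le t_0$. For $k=1,2$ I would write
\[
(A^\eps_j)^{(k)}(0)=\int_\Sigma\partial_t^kJ^\eps_\Sigma(0)\,dA ,
\]
which is justified because $F^\eps_{t,j}$ is a smooth diffeomorphism (Lemma~\ref{lem:uniform-Jacobian}(i)) and $\Sigma$ is compact. The exact quantity $A_j^{(k)}(0)$ is by definition the integral of $\partial_t^kJ_\Sigma(0,\varphi_j)$ over $\Sigma\setminus\{o\}$. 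The two integrands coincide on $\Sigma\setminus B_{2\eps}(o)$, so
\[
\bigl|(A^\eps_j)^{(k)}(0)-A_j^{(k)}(0)\bigr|\le\int_{\Sigma\cap B_{2\eps}(o)}\Bigl(|\partial_t^kJ^\eps_\Sigma(0)|+|\partial_t^kJ_\Sigma(0,\varphi_j)|\Bigr)\,dA .
\]
The two integrands are bounded as follows.
\begin{enumerate}
\item On the annulus $\mathcal A_\eps$, the bound \eqref{eq:uniform-tangential-Jacobian} controls the cutoff term uniformly in $\eps$.
\item On $\{r\le\eps\}$, the cutoff map is the identity, so $J^\eps_\Sigma\equiv1$ and its $t$-derivatives vanish.
\item The exact term is bounded away from $o$ by Lemma~\ref{lem:rank-one-global-stretches}, since $\varphi_j$ has bounded $C^1$ norm on the sphere. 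Alternatively, one can read this off from \eqref{eq:J-second} and \eqref{eq:J-radial-first} together with $x/a\le1$, $|h|,|q|\le1$, and $k_1,k_2$ bounded.
\end{enumerate}
Hence the area error is at most $C\,\Area(\Sigma\cap B_{2\eps}(o))$. Smoothness of $\Sigma$ at $o$ gives $\Area(\Sigma\cap B_{2\eps}(o))=O(\eps^{n-1})$, since $\Sigma$ is locally a graph over its tangent plane. This proves \eqref{eq:area-error-explicit}.

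\emph{Volume.} Both deformations are bi-Lipschitz, so the area formula applies. Using \eqref{eq:general-ambient-Jacobian}, or equivalently the determinant in Lemma~\ref{lem:cutoff-matrix}, I get
\[
V^\eps_j(t)=\int_\Omega e^{nt\zeta}(1+t\rho\zeta_\rho)\,dV,\qquad V_j(t)=\int_\Omega e^{nt\varphi_j}\,dV .
\]
The integrand is smooth in $t$, and its first two $t$-derivatives at $t=0$ are
\[
n\zeta+\rho\zeta_\rho,\qquad n^2\zeta^2+2n\zeta\rho\zeta_\rho ,
\]
both bounded uniformly in $\eps$ by \eqref{eq:uniform-zeta}. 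Differentiation under the integral is therefore legitimate. The integrands agree with $\partial_t^ke^{nt\varphi_j}|_{t=0}$ outside $B_{2\eps}(o)$, and both sides are bounded inside. So the volume error is at most $C\Vol(\Omega\cap B_{2\eps}(o))=O(\eps^n)$, which is \eqref{eq:volume-error-explicit}.

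The main obstacle I expect is justifying the bound on the exact area integrand near $o$ uniformly in the normal. The normal $\nu$ varies over $\Sigma$ near the singular point, while the polar frame degenerates at $o$. The point to check is that the formulas of Theorem~\ref{thm:surface-jacobian} involve only the bounded quantities $\alpha,\beta,\gamma,x/a,h,q$. These are bounded for every unit normal, so no regularity of $\theta$ along $\Sigma$ is required.
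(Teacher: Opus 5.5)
Your proposal is correct and follows the paper's proof essentially step for step. The shared steps are: the two deformations agree outside $B_{2\eps}(o)$; the cutoff map is the identity on $\{r\le\eps\}$; the annulus is controlled uniformly by Lemma~\ref{lem:uniform-Jacobian}; the exact integrand is bounded near $o$ via the explicit $k_1,k_2$ formulas; and the volume is handled by comparing the Jacobians $e^{nt\zeta}(1+t\rho\zeta_\rho)$ and $e^{nt\varphi_j}$. Your alternative of bounding the exact area integrand through Lemma~\ref{lem:rank-one-global-stretches}, which applies here because $R=\rho$ when $q=1$, is an equally valid substitute for the explicit-formula check the paper uses.
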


\begin{proof}
The cutoff and uncut deformations agree on
\[
M\setminus B_{2\eps}(o).
\]
Consequently, their area derivatives differ only on
\[
\Sigma\cap B_{2\eps}(o).
\]

On \(\Sigma\cap B_\eps(o)\) one has
\(\zeta_{j,\eps}=0\), so \(F^\eps_{t,j}\) is the identity there,
\(J^\eps_\Sigma(t)\equiv1\), and
\[
\partial_t^kJ^\eps_\Sigma(0)=0,
\qquad k=1,2.
\]
On the remaining transition annulus
\(\Sigma\cap(B_{2\eps}(o)\setminus B_\eps(o))\),
Lemma~\ref{lem:uniform-Jacobian} gives
\[
\left|
\partial_t^kJ^\eps_\Sigma(0)
\right|
\le C,
\qquad
k=1,2,
\]
with a constant independent of \(\eps\).

The explicit formulas
\[
J_\Sigma'(0,\varphi_j)
=
(n-1)\varphi_j+\frac{1}{2}k_{1,j}
\]
and
\[
J_\Sigma''(0,\varphi_j)
=
(n-1)^2\varphi_j^2
+
(n-1)\varphi_jk_{1,j}
+
k_{2,j}
-
\frac{1}{4}k_{1,j}^2
\]
also give
\[
\left|
\partial_t^kJ_\Sigma(0,\varphi_j)
\right|
\le C,
\qquad
k=1,2,
\]
near \(o\).

Indeed,
\[
a=1+\rho^2\ge1,
\]
the coordinate functions and their spherical first derivatives are
uniformly bounded, and
\[
|\alpha|+|v|+|\tau|\le C.
\]

Therefore
\[
\begin{aligned}
\left|
(A^\eps_j)^{(k)}(0)-A_j^{(k)}(0)
\right|
&\le
\int_{\Sigma\cap B_{2\eps}(o)}
\left(
\left|\partial_t^kJ^\eps_\Sigma(0)\right|
+
\left|\partial_t^kJ_\Sigma(0,\varphi_j)\right|
\right)dA
\\
&\le
C\,
\Area\bigl(\Sigma\cap B_{2\eps}(o)\bigr).
\end{aligned}
\]

Since \(\Sigma\) is smooth and has dimension \(n-1\),
\[
\Area\bigl(\Sigma\cap B_{2\eps}(o)\bigr)
=
O(\eps^{n-1}).
\]
This proves \eqref{eq:area-error-explicit}.

For the volume, the cutoff ambient Jacobian is
\[
\Jac F^\eps_{t,j}
=
e^{nt\zeta_{j,\eps}}
\left(
1+t\rho\partial_\rho\zeta_{j,\eps}
\right).
\]

The quantities
\[
|\zeta_{j,\eps}|
+
|\rho\partial_\rho\zeta_{j,\eps}|
\]
are uniformly bounded. Hence the first two \(t\)-derivatives of this
Jacobian at \(t=0\) are uniformly bounded.

For the uncut deformation,
\[
\Jac F_{t,\varphi_j}
=
e^{nt\varphi_j},
\]
whose first two derivatives are also uniformly bounded. The two
deformations agree outside \(B_{2\eps}(o)\). Thus
\[
\left|
(V^\eps_j)^{(k)}(0)-V_j^{(k)}(0)
\right|
\le
C\,
\Vol\bigl(\Omega\cap B_{2\eps}(o)\bigr).
\]

Smoothness of the ambient metric near \(o\) gives
\[
\Vol\bigl(\Omega\cap B_{2\eps}(o)\bigr)
=
O(\eps^n).
\]
This proves \eqref{eq:volume-error-explicit}.
\end{proof}

\subsection{The constrained variational framework}\label{sec:Hessian}

The distinction between a second-variation identity and a stability
inequality is essential.  At a critical hypersurface, the Hessian of
$A-\lambda V$ is defined for every smooth initial speed and depends only on
its normal component.  Stability, by contrast, asserts nonnegativity only
on the codimension-one subspace of zero-mean normal speeds.  The classical
framework may be found in
\cite{BarbosaDoCarmo1984,BarbosaDoCarmoEschenburg1988,Ritore2023}.

Let
\[
\Sigma=\partial\Omega
\]
be a smooth compact hypersurface, oriented by the outer unit normal
\(\nu\). We use the convention
\[
H=\divSigma\nu.
\]

For a variation with initial normal speed \(f\),
\[
A'(0)=\int_\Sigma Hf\,dA,
\qquad
V'(0)=\int_\Sigma f\,dA.
\]

\begin{lemma}
\label{lem:stationarity-implies-cmc}
Let $\Omega\Subset M$ have smooth compact boundary $\Sigma=\partial\Omega$.
If $\Omega$ is stationary for perimeter under all smooth variations that
preserve the enclosed volume, then there exists a constant
$\lambda\in\mathbb R$ such that
\[
H\equiv\lambda
\qquad\text{on }\Sigma.
\]
The same constant occurs on every connected component of $\Sigma$.
\end{lemma}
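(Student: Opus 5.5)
The proof is the classical Lagrange-multiplier argument for volume-constrained stationarity, carried out with normal speeds on $\Sigma$. The first step is to show that $\int_\Sigma Hf\,dA=0$ for every smooth $f$ on $\Sigma$ with $\int_\Sigma f\,dA=0$. Given such an $f$, I would build a smooth volume-preserving variation with initial normal speed $f$. To do this, fix an auxiliary $g\in C^\infty(\Sigma)$ with $\int_\Sigma g\,dA=1$. Extend $f\nu$ and $g\nu$ to compactly supported smooth vector fields $Y_f,Y_g$ on a tubular neighborhood of $\Sigma$, and consider the two-parameter family
\[
\Phi(t,s)=\exp\text{-type flow of } tY_f+sY_g ,
\]
which is smooth because $\Sigma$ is compact and $\Omega\Subset M$. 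Set $v(t,s)=\Vol(\Phi(t,s)\Omega)$. The first variation formula $V'(0)=\int_\Sigma f\,dA$ gives $\partial_s v(0,0)=\int_\Sigma g=1\neq0$. The implicit function theorem then yields a smooth $s(t)$ with $s(0)=0$ and $v(t,s(t))=\Vol(\Omega)$. Differentiating this constraint gives $s'(0)=-\int_\Sigma f\,dA=0$. So the curve $t\mapsto\Phi(t,s(t))\Omega$ preserves volume and has initial normal speed $f+s'(0)g=f$.

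The second step applies stationarity. Along this admissible variation the first variation formula $A'(0)=\int_\Sigma Hf\,dA$ holds, so stationarity gives $\int_\Sigma Hf\,dA=0$ for all mean-zero $f$. Now take $\lambda=\Area(\Sigma)^{-1}\int_\Sigma H\,dA$ and choose $f=H-\lambda$, which is smooth and has zero mean. This gives $\int_\Sigma H(H-\lambda)\,dA=0$. Since also $\int_\Sigma\lambda(H-\lambda)\,dA=0$, subtracting yields $\int_\Sigma(H-\lambda)^2\,dA=0$, and hence $H\equiv\lambda$. Because $f$ and $g$ range over functions on all of $\Sigma$, not one component at a time, a single $\lambda$ serves every connected component. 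Equivalently, one can test with $f$ equal to a different constant on each of two components, weighted by their areas so the total mean is zero; this forces the componentwise averages of $H$ to agree.

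I expect the only delicate point to be the construction of the volume-correcting variation, specifically the smoothness of $v(t,s)$ and the nondegeneracy $\partial_s v(0,0)\neq0$. Both follow from compactness of $\Sigma$ and the standard first variation of volume in the tubular neighborhood, as in \cite{BarbosaDoCarmo1984}. Everything else is linear algebra in $L^2(\Sigma)$.
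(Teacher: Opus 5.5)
Your proposal is correct and follows essentially the paper's argument: a two-parameter family with an auxiliary direction of nonzero mean, the implicit function theorem to restore volume with $s'(0)=0$, and then testing $\int_\Sigma Hf\,dA=0$ with $f=H-\overline{H}$. The only cosmetic difference is that you realize the variation through flows of extended vector fields, whereas the paper uses the normal graphs $\exp_x((tf+s\psi)\nu)$.
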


\begin{proof}
Let \(f\in C^\infty(\Sigma)\) satisfy
\[
\int_\Sigma f\,dA=0.
\]
Choose \(\psi\in C^\infty(\Sigma)\) such that
\[
\int_\Sigma \psi\,dA\neq 0,
\]
and consider the two-parameter family of normal graphs
\[
F_{t,s}(x)
=
\exp_x\bigl((tf(x)+s\psi(x))\nu(x)\bigr).
\]
For sufficiently small \((t,s)\), these are embedded hypersurfaces
enclosing a well-defined volume. Let \(V(t,s)\) denote this volume.
With the chosen orientation convention, the first variation of volume
gives
\[
\partial_tV(0,0)=\int_\Sigma f\,dA=0
\]
and
\[
\partial_sV(0,0)=\int_\Sigma\psi\,dA\neq 0.
\]
(The signs of both formulas may be reversed under the opposite
orientation convention, which does not affect the argument.)

By the implicit function theorem, there exist \(\varepsilon>0\) and a
smooth function \(s:(-\varepsilon,\varepsilon)\to\mathbb{R}\) such that
\[
s(0)=0
\qquad\text{and}\qquad
V(t,s(t))=V(0,0).
\]
Differentiating the volume constraint at \(t=0\), we obtain
\[
0
=
\frac{d}{dt}\bigg|_{t=0}V(t,s(t))
=
\partial_tV(0,0)+s'(0)\partial_sV(0,0).
\]
Since \(\partial_tV(0,0)=0\) and \(\partial_sV(0,0)\neq 0\), it follows
that
\[
s'(0)=0.
\]
Consequently, the exactly volume-preserving variation
\(t\mapsto F_{t,s(t)}\) has initial normal speed
\[
\frac{d}{dt}\bigg|_{t=0}\bigl(tf+s(t)\psi\bigr)=f.
\]

Stationarity under volume-preserving variations and the first variation
of area therefore imply
\[
\int_\Sigma Hf\,dA=0
\qquad
\text{for every } f\in C^\infty(\Sigma)
\text{ satisfying }
\int_\Sigma f\,dA=0.
\]
Define
\[
\overline H
=
\frac{1}{\operatorname{Area}(\Sigma)}
\int_\Sigma H\,dA
\]
and take
\[
f=H-\overline H.
\]
This function has zero integral over \(\Sigma\), so
\[
0
=
\int_\Sigma H(H-\overline H)\,dA
=
\int_\Sigma (H-\overline H)^2\,dA,
\]
where we used
\[
\int_\Sigma(H-\overline H)\,dA=0.
\]
It follows that
\[
H\equiv \overline H
\]
on \(\Sigma\). Since the admissible test functions are global, the same
constant occurs on every connected component of \(\Sigma\).
\end{proof}

Suppose
\[
H\equiv\lambda.
\]
Let \(\mathrm{II}_\Sigma\) denote the second fundamental form of \(\Sigma\).
Let \(A\) and \(V\) denote the area and enclosed-volume functionals. Then
\(\Sigma\) is a critical point of
\[
\mathcal L=A-\lambda V.
\]

Define the Jacobi quadratic form
\[
Q_\Sigma(f)
=
\int_\Sigma
\left(
|\nabla_\Sigma f|^2
-
\bigl(
|\mathrm{II}_\Sigma|^2+\Ric(\nu,\nu)
\bigr)f^2
\right)dA.
\]

\begin{definition}[Constrained criticality and stability]
\label{def:constrained-stability}
A smooth closed hypersurface is \emph{critical under the fixed-volume
constraint} if its mean curvature is constant. It is
\emph{stable under the fixed-volume constraint} if
\[
Q_\Sigma(f)\ge0
\qquad
\text{for every }f\in C^\infty(\Sigma)
\text{ with }\int_\Sigma f\,dA=0.
\]
The zero-mean condition is precisely the infinitesimal volume constraint.
The normal-graph correction constructed in the proof of
Lemma~\ref{lem:stationarity-implies-cmc} realizes every smooth zero-mean
function as the initial normal speed of an exactly volume-preserving
variation. Conversely, every such variation has zero-mean initial speed.
At a critical hypersurface, Lemma~\ref{lem:Hessian-property} below
identifies the corresponding second derivative of \(A-\lambda V\) with
\(Q_\Sigma(f)\).
\end{definition}

\ithead{The Hessian at a critical hypersurface.}

\begin{lemma}
\label{lem:Hessian-property}
Let
\[
F:(-\delta,\delta)\times\Sigma\longrightarrow M
\]
be a smooth variation with
\[
F_0=\iota_\Sigma,
\]
where \(\iota_\Sigma:\Sigma\hookrightarrow M\) denotes the inclusion map.
Write its initial velocity as
\[
Y
=
\left.\partial_tF_t\right|_{t=0}
=
f\nu+Y^\top.
\]

If \(H\equiv\lambda\), then
\[
\left.
\frac{d^2}{dt^2}
\right|_{t=0}
\mathcal L(F_t(\Sigma))
=
Q_\Sigma(f).
\]

In particular, the second derivative of \(A-\lambda V\) at a critical
hypersurface depends only on the initial normal speed \(f\), and not on
the tangential part \(Y^\top\) or on the acceleration of the variation.
\end{lemma}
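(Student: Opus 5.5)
The statement is the classical second-variation formula for $\mathcal L=A-\lambda V$ at a constant-mean-curvature hypersurface. I would prove it in three steps: reduce to normal variations, compute the normal second variation, and discard the acceleration terms using the CMC condition.

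\textbf{Step 1: remove the tangential part.} First I would show that the second derivative does not depend on the tangential part $Y^\top$. Compose $F_t$ with a family of diffeomorphisms $\phi_t$ of $\Sigma$, chosen so that the reparametrized variation $\tilde F_t=F_t\circ\phi_t$ has initial velocity $f\nu$ plus a term tangent along $F_t(\Sigma)$. The image hypersurfaces $F_t(\Sigma)$ are unchanged, so $A$ and $V$, and hence $\mathcal L(F_t(\Sigma))$, are unchanged as functions of $t$. It therefore suffices to treat variations with $\partial_tF|_{t=0}=f\nu$. Their acceleration $Z=\nabla_{\partial_t}\partial_tF|_{t=0}$ is arbitrary.

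\textbf{Step 2: first variation along the family.} For each $t$, write $\partial_tF_t=f_t\nu_t+Y_t^\top$. The first variation formulas give
\[
A'(t)=\int_{\Sigma_t}H_tf_t\,dA_t,\qquad V'(t)=\int_{\Sigma_t}f_t\,dA_t,
\]
where the tangential parts integrate to zero on the closed hypersurface. Hence
\[
\mathcal L'(t)=\int_{\Sigma_t}(H_t-\lambda)f_t\,dA_t.
\]

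\textbf{Step 3: differentiate at $t=0$.} Since $H_0-\lambda\equiv0$, every term in which the derivative falls on $f_t$ or on $dA_t$ is multiplied by $H_0-\lambda$ and vanishes. This removes all dependence on the acceleration $Z$ and on $\partial_tf_t$. What remains is
\[
\mathcal L''(0)=\int_\Sigma \bigl(\partial_tH_t\bigr)\big|_{t=0}\,f\,dA .
\]
For a normal initial velocity $f\nu$, the standard linearization of the mean curvature with the convention $H=\divSigma\nu$ is
\[
\partial_tH=-\Delta_\Sigma f-\bigl(|\mathrm{II}_\Sigma|^2+\Ric(\nu,\nu)\bigr)f .
\]
A tangential velocity would add only the term $Y^\top\!\cdot\nabla H=0$, because $H$ is constant. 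Integrating by parts on the closed surface $\Sigma$ then gives $Q_\Sigma(f)$.

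\textbf{Main obstacle.} The only delicate point is the sign and normalization in the formula for $\partial_tH$ under the paper's conventions: outer normal, $H=\divSigma\nu$, and $V'=\int f$. I would check these on the concentric geodesic spheres by taking $f\equiv1$. There $\partial_rH=-|\mathrm{II}|^2-\Ric(N,N)$ follows from the Riccati equation, which confirms that the signs are consistent.
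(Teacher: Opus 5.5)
Your proposal is correct and follows essentially the paper's route: reparametrize to remove $Y^\top$, differentiate $\mathcal L'(t)=\int_{\Sigma_t}(H_t-\lambda)f_t\,dA_t$, insert the standard linearization of $H$, and integrate by parts on the closed hypersurface. The one difference is how you dispose of the acceleration: you do it directly, since the terms carrying $\partial_tf_t$ and $\partial_t\,dA_t$ are multiplied by $H_0-\lambda=0$ and $\partial_tH_t|_{t=0}$ depends only on the initial velocity (a point worth stating explicitly), whereas the paper first applies the second-order chain rule in a normal-graph Banach chart with $D\mathcal L=0$ and then passes to the exponential normal graph, a step your argument makes unnecessary.
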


\begin{proof}
Area and enclosed volume are invariant under reparametrization of
\(\Sigma\). Let \(\psi_t:\Sigma\to\Sigma\) be a local family of
diffeomorphisms satisfying
\[
\psi_0=\operatorname{Id}_\Sigma
\]
and
\[
\left.\partial_t\psi_t\right|_{t=0}
=
-Y^\top.
\]

Set
\[
\widetilde F_t
=
F_t\circ\psi_t.
\]
Then
\[
\widetilde F_t(\Sigma)=F_t(\Sigma)
\]
as unparametrized hypersurfaces, so
\[
\mathcal L(\widetilde F_t(\Sigma))
=
\mathcal L(F_t(\Sigma))
\]
for every sufficiently small \(t\). Moreover,
\[
\left.
\partial_t\widetilde F_t
\right|_{t=0}
=
Y-Y^\top
=
f\nu.
\]

It therefore suffices to consider a variation whose initial velocity is
purely normal.

{On the local space of unparametrized hypersurfaces, use the
normal-graph chart supplied by a fixed tubular neighborhood of \(\Sigma\).
For any \(C^2\) path \(u(t)\) in this chart,}
\[
\left.
\frac{d^2}{dt^2}
\right|_{t=0}
\mathcal L(u(t))
=
D^2\mathcal L_{\iota_\Sigma}
\bigl[u'(0),u'(0)\bigr]
+
 D\mathcal L_{\iota_\Sigma}
 \bigl[u''(0)\bigr].
\]
{This is the ordinary second-order chain rule in a Banach chart;
no geometric quantity depends on a chosen parametrization.}

Since
\[
H-\lambda=0,
\]
the first variation of \(\mathcal L\) vanishes in every direction:
\[
D\mathcal L_{\iota_\Sigma}=0.
\]
Hence the acceleration term vanishes, and the second derivative depends
only on the initial tangent vector \(f\nu\).

We may therefore replace the original path by the normal graph variation
\[
G_t(x)
=
\exp_x\bigl(tf(x)\nu(x)\bigr).
\]

For this normal variation, the standard evolution formula is
\[
\left.
\frac{d}{dt}
\right|_{t=0}
H_t
=
-\Delta_\Sigma f
-
\bigl(
|\mathrm{II}_\Sigma|^2+\Ric(\nu,\nu)
\bigr)f.
\]
See, for example, \cite[\S 1.3.4, pp.~27--30]{Ritore2023}, after translating
to the convention \(H=\divSigma\nu\).

Because \(H-\lambda=0\) at \(t=0\), differentiating
\[
\mathcal L'(t)
=
\int_{\Sigma_t}
(H_t-\lambda)f_t\,dA_t
\]
at \(t=0\) gives
\[
\mathcal L''(0)
=
\int_\Sigma
f
\left[
-\Delta_\Sigma f
-
\bigl(
|\mathrm{II}_\Sigma|^2+\Ric(\nu,\nu)
\bigr)f
\right]dA.
\]

Integration by parts on the closed hypersurface yields
\[
\mathcal L''(0)
=
\int_\Sigma
\left(
|\nabla_\Sigma f|^2
-
\bigl(
|\mathrm{II}_\Sigma|^2+\Ric(\nu,\nu)
\bigr)f^2
\right)dA.
\]

This is \(Q_\Sigma(f)\).
\end{proof}

\begin{remark}
Lemma~\ref{lem:Hessian-property} is an identity for every smooth normal
speed \(f\). Stability is a separate assertion: it gives
\[
Q_\Sigma(f)\ge0
\]
only when
\[
\int_\Sigma f\,dA=0.
\]
\end{remark}
Consequently, every smooth isoperimetric boundary is critical and stable
under the fixed-volume constraint. Indeed,
Lemma~\ref{lem:stationarity-implies-cmc} gives constant mean curvature,
while the normal-graph correction used in its proof realizes every smooth
zero-mean function \(f\) as the initial normal speed of an exactly
volume-preserving variation. Minimality and
Lemma~\ref{lem:Hessian-property} then give \(Q_\Sigma(f)\ge0\).

\ithead{Continuity of the Jacobi form.}

\begin{lemma}
\label{lem:Q-continuity}
The quadratic form \(Q_\Sigma\) extends continuously to \(H^1(\Sigma)\).
More precisely, there exists \(C_\Sigma>0\) such that
\[
\left|
Q_\Sigma(f)-Q_\Sigma(g)
\right|
\le
C_\Sigma
\|f-g\|_{H^1(\Sigma)}
\left(
\|f\|_{H^1(\Sigma)}
+
\|g\|_{H^1(\Sigma)}
\right)
\]
for all \(f,g\in H^1(\Sigma)\).
\end{lemma}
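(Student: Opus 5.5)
The plan is to write $Q_\Sigma$ as a continuous symmetric bilinear form and then use the polarization identity.

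Set
\[
P:=|\mathrm{II}_\Sigma|^2+\Ric(\nu,\nu).
\]
Since $\Sigma$ is a smooth compact hypersurface and the ambient metric is smooth, $P$ is a smooth function on $\Sigma$. In particular $\|P\|_{L^\infty(\Sigma)}<\infty$, and this is the only geometric input needed.

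Define the bilinear form
\[
\mathcal B(f,g)=\int_\Sigma\bigl(\langle\nabla_\Sigma f,\nabla_\Sigma g\rangle-Pfg\bigr)\,dA,
\]
so that $Q_\Sigma(f)=\mathcal B(f,f)$. Cauchy--Schwarz gives
\[
|\mathcal B(f,g)|\le \|\nabla_\Sigma f\|_{L^2}\|\nabla_\Sigma g\|_{L^2}+\|P\|_\infty\|f\|_{L^2}\|g\|_{L^2}.
\]
With $C_\Sigma:=\max\{1,\|P\|_\infty\}$ this yields $|\mathcal B(f,g)|\le C_\Sigma\|f\|_{H^1}\|g\|_{H^1}$ for all $f,g\in H^1(\Sigma)$. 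Smooth functions are dense in $H^1(\Sigma)$ on a closed manifold, and every integral in $\mathcal B$ makes sense for $H^1$ functions. So $\mathcal B$, and hence $Q_\Sigma$, is defined on $H^1(\Sigma)$ by the same formula, and this is the unique continuous extension.

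For the quantitative estimate, symmetry of $\mathcal B$ gives
\[
Q_\Sigma(f)-Q_\Sigma(g)=\mathcal B(f-g,f+g).
\]
Combining this with the bound above,
\[
|Q_\Sigma(f)-Q_\Sigma(g)|\le C_\Sigma\|f-g\|_{H^1}\|f+g\|_{H^1}\le C_\Sigma\|f-g\|_{H^1}\bigl(\|f\|_{H^1}+\|g\|_{H^1}\bigr),
\]
which is the asserted inequality.

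I expect no real obstacle. The only point needing care is the boundedness of the potential $P$, and that follows from compactness and smoothness of $\Sigma$. This is exactly what is needed later, where the estimate combined with \eqref{eq:gjeps-H1} gives $Q_\Sigma(g_{j,\eps})\to Q_\Sigma(f_j)$.
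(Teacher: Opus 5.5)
Your proposal is correct and follows essentially the paper's own argument: the paper likewise writes $Q_\Sigma(f)-Q_\Sigma(g)$ as the bilinear pairing of $f-g$ with $f+g$ and concludes by Cauchy--Schwarz together with boundedness of $|\mathrm{II}_\Sigma|^2+\Ric(\nu,\nu)$ on the compact smooth hypersurface. Your explicit constant $C_\Sigma=\max\{1,\|P\|_\infty\}$ and the remark on density of smooth functions in $H^1(\Sigma)$ only make explicit what the paper leaves implicit.
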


\begin{proof}
Since \(\Sigma\) is compact and smooth,
\[
\mathcal V
=
|\mathrm{II}_\Sigma|^2+\Ric(\nu,\nu)
\]
is bounded. Therefore
\[
\begin{aligned}
Q_\Sigma(f)-Q_\Sigma(g)
={}&
\int_\Sigma
\langle\nabla(f-g),\nabla(f+g)\rangle\,dA
\\
&-
\int_\Sigma
\mathcal V(f-g)(f+g)\,dA.
\end{aligned}
\]
The estimate follows from Cauchy--Schwarz and the boundedness of
\(\mathcal V\).
\end{proof}

\subsection{The boundary-center cutoff--Hessian bridge}\label{sec:boundary-Hessian-bridge}

Assume in this section that
\[
o\in\Sigma.
\]

Recall
\[
X_j
=
\varphi_j\rho\partial_\rho,
\qquad
f_j
=
\langle X_j,\nu\rangle.
\]

Let
\[
f_{j,\eps}
=
\chi_\eps f_j,
\]
and choose
\[
\psi\in C_c^\infty(\Sigma\setminus\{o\}),
\qquad
\int_\Sigma\psi\,dA=1.
\]

Define
\[
m_{j,\eps}
=
\int_\Sigma f_{j,\eps}\,dA
\]
and
\begin{equation}
g_{j,\eps}
=
f_{j,\eps}-m_{j,\eps}\psi.
\label{eq:mean-corrected-speed-rigorous}
\end{equation}

\begin{theorem}[Boundary-center cutoff--Hessian bridge]
\label{thm:boundary-Hessian-bridge}
Let \(m\ge2\) and let \(\Omega\Subset\CH^m\) have smooth compact boundary
\(\Sigma=\partial\Omega\). Suppose that \(o\in\Sigma\) is a volume geometric
median and that \(H\equiv\lambda\). Then:

\begin{enumerate}[label=\textup{(\roman*)}]

\item
\(f_j\in H^1(\Sigma)\) and
\[
f_{j,\eps}\longrightarrow f_j
\quad\text{strongly in }H^1(\Sigma).
\]

\item
The limiting speed has zero mean:
\[
\int_\Sigma f_j\,dA=0.
\]

\item
The corrected functions satisfy
\[
\int_\Sigma g_{j,\eps}\,dA=0
\]
and
\[
g_{j,\eps}\longrightarrow f_j
\quad\text{strongly in }H^1(\Sigma).
\]

\item
The punctured polar second derivative is identified with the Jacobi form:
\[
A_j''(0)-\lambda V_j''(0)
=
Q_\Sigma(f_j).
\]

\item
If \(\Sigma\) is stable under the fixed-volume constraint, then
\begin{equation}
Q_\Sigma(f_j)\ge0.
\label{eq:boundary-stability-rigorous}
\end{equation}

\end{enumerate}
\end{theorem}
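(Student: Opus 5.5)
Items (i)--(iii) essentially collect estimates already proved in Section~\ref{sec:boundary-cutoff}. For (i), the normal-coordinate description near the pole gives that $X_j$ is locally Lipschitz with $|X_j|\le Cr$. Smoothness of $\nu$ then yields $|f_j|\le Cr$ and $|\nabla_\Sigma f_j|\le C$ a.e.\ near $o$, so $f_j\in H^1(\Sigma)$. Combined with $\Area(\Sigma\cap B_{2\eps}(o))=O(\eps^{n-1})$, this gives the bounds $\|f_{j,\eps}-f_j\|_{L^2}^2\le C\eps^{n+1}$ and $\|\nabla_\Sigma(f_{j,\eps}-f_j)\|_{L^2}^2\le C\eps^{n-1}$, i.e.\ \eqref{eq:fjeps-H1}. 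The gradient term is the one to watch: $|\nabla\chi_\eps|\,|f_j|\le (C/\eps)\cdot C\eps$ stays bounded on the annulus. Item (ii) is Lemma~\ref{lem:boundary-zero-mean} together with the median balance. For (iii), the zero mean of $g_{j,\eps}$ holds by construction. Since $m_{j,\eps}=-\int_\Sigma(1-\chi_\eps)f_j\,dA=O(\eps\cdot\eps^{n-1})$ and $\psi$ is a fixed smooth function, $\|m_{j,\eps}\psi\|_{H^1}\to0$, which gives \eqref{eq:gjeps-H1}.

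For (iv) I would use the cutoff deformations $F^\eps_{t,j}$. By Lemma~\ref{lem:uniform-Jacobian}(i) these are smooth ambient diffeomorphisms for $|t|\le t_0$, with $t_0$ independent of $\eps$. Their initial velocity is $\zeta_{j,\eps}\rho\partial_\rho=\chi_\eps X_j$, so the normal speed on $\Sigma$ is exactly $f_{j,\eps}$. Since $H\equiv\lambda$, Lemma~\ref{lem:Hessian-property} applies to this smooth variation and gives
\[
(A^\eps_j)''(0)-\lambda (V^\eps_j)''(0)=Q_\Sigma(f_{j,\eps}).
\]
As $\eps\to0$, the left side converges to $A_j''(0)-\lambda V_j''(0)$ by Lemma~\ref{lem:cutoff-derivative-convergence}. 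The right side converges to $Q_\Sigma(f_j)$ by Lemma~\ref{lem:Q-continuity} and (i). This proves (iv). One point needs checking here: Lemma~\ref{lem:Hessian-property} is stated for the surface variation $F^\eps_{t,j}|_\Sigma$. The enclosed volume of $F^\eps_{t,j}(\Omega)$ agrees with the volume enclosed by the varied surface because the map is a global diffeomorphism, so $V^\eps_j$ is the correct constraint functional.

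For (v), the cutoff speed $f_{j,\eps}$ itself need not have zero mean, so it cannot be fed into stability directly; this is why the corrected $g_{j,\eps}$ is used. First, $g_{j,\eps}$ is Lipschitz, not smooth. I would either extend stability to zero-mean $H^1$ functions by density (smooth approximants corrected by a multiple of $\psi$ to keep zero mean, then Lemma~\ref{lem:Q-continuity}), or mollify directly. Either way $Q_\Sigma(g_{j,\eps})\ge0$. Letting $\eps\to0$ and using \eqref{eq:gjeps-H1} with Lemma~\ref{lem:Q-continuity} gives $Q_\Sigma(f_j)\ge0$, which is \eqref{eq:boundary-stability-rigorous}.

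The main obstacle I expect is the uniformity in (iv): the area second derivatives must converge despite the $\eps^{-1}$ gradient of the cutoff. This is handled by the uniform annular bounds \eqref{eq:uniform-zeta}, which depend on the factor $\rho$ multiplying $\partial_\rho\zeta$, and by the area scale $\eps^{n-1}$. I would also verify that $A_j''(0)$, defined as a punctured integral, is finite, using the bounded explicit formulas for $J''_\Sigma(0,\varphi_j)$.
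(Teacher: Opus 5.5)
Your proposal is correct and follows the paper's proof essentially step for step: (i)--(iii) come from the cutoff estimates and Lemma~\ref{lem:boundary-zero-mean}; (iv) comes from applying Lemma~\ref{lem:Hessian-property} to the smooth cutoff deformation with normal speed $f_{j,\eps}$ and passing to the limit via Lemmas~\ref{lem:cutoff-derivative-convergence} and~\ref{lem:Q-continuity}; and (v) comes from applying stability to $g_{j,\eps}$. One small correction: $g_{j,\eps}$ is in fact smooth, because $\chi_\eps(r)$ vanishes near $o$ and $f_j$ is smooth on $\Sigma\setminus\{o\}$, so your density or mollification step in (v) is harmless but unnecessary.
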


\begin{proof}
Statement (i) is \eqref{eq:fjeps-H1}, and (ii) follows from
Lemma~\ref{lem:boundary-zero-mean}.  The same calculation as in
\eqref{eq:gjeps}--\eqref{eq:gjeps-H1} shows that the functions defined in
\eqref{eq:mean-corrected-speed-rigorous} have zero mean and converge strongly
to \(f_j\) in \(H^1(\Sigma)\), proving (iii).

The smooth cutoff ambient deformation
\[
F^\eps_{t,j}
\]
has initial normal speed \(f_{j,\eps}\). By
Lemma~\ref{lem:Hessian-property},
\begin{equation}
(A^\eps_j)''(0)
-
\lambda(V^\eps_j)''(0)
=
Q_\Sigma(f_{j,\eps}).
\label{eq:cutoff-Hessian-identity-rigorous}
\end{equation}

Lemma~\ref{lem:cutoff-derivative-convergence} gives
\[
(A^\eps_j)''(0)\to A_j''(0),
\qquad
(V^\eps_j)''(0)\to V_j''(0).
\]

Lemma~\ref{lem:Q-continuity} and (i) give
\[
Q_\Sigma(f_{j,\eps})
\longrightarrow
Q_\Sigma(f_j).
\]

Passing to the limit in
\eqref{eq:cutoff-Hessian-identity-rigorous} proves (iv).

Finally, if \(\Sigma\) is stable, every smooth zero-mean function is an
admissible first-order volume-preserving speed. Thus
\[
Q_\Sigma(g_{j,\eps})\ge0.
\]

By (iii) and Lemma~\ref{lem:Q-continuity},
\[
Q_\Sigma(g_{j,\eps})
\longrightarrow
Q_\Sigma(f_j).
\]
This proves (v).
\end{proof}

\begin{remark}
The two approximation sequences have different roles:
\[
f_{j,\eps}
\]
is the normal speed of the explicit cutoff ambient deformation and is used
to identify the Hessian, whereas
\[
g_{j,\eps}
\]
has exactly zero mean and is used to invoke stability.
\end{remark}

\section{The integrated trace identity and smooth rigidity}\label{sec:integrated-trace}
\subsection{The integrated constrained trace identity}

This section is the sign-changing heart of the proof.  Stability gives
$Q_\Sigma(f_j)\ge0$ for each balanced coordinate speed.  The geometric
calculation gives a formula for their sum as the negative integral of a
nonnegative function.  The only way both statements can hold is that the
nonradial part of the normal vanishes identically.  The uniform radial
variation appears only as a bookkeeping device that cancels the second
volume derivatives.

For the coordinate deformation generated by \(\varphi_j\), denote its
area and volume derivatives by
\[
A_j'(0),
\qquad
A_j''(0),
\qquad
V_j'(0),
\qquad
V_j''(0).
\]

For the uniform radial deformation
\[
F_{t,\mathrm{rad}}(\rho,\theta)
=
(e^t\rho,\theta),
\]
write
\[
A_{\mathrm{rad}}'(0),
\qquad
V_{\mathrm{rad}}'(0).
\]

When \(o\in\Sigma\), the derivatives \(A_j^{(k)}(0)\) and
\(V_j^{(k)}(0)\), \(k=1,2\), are understood in the punctured sense
introduced before Lemma~\ref{lem:cutoff-derivative-convergence}. The
uniform radial deformation itself extends smoothly through \(o\). Indeed,
in geodesic normal coordinates \(z=r\theta\),
\[
F_{t,\mathrm{rad}}(z)
=
\frac{\operatorname{arsinh}(e^t\sinh |z|)}{|z|}\,z
\qquad (z\ne0),
\]
with \(F_{t,\mathrm{rad}}(0)=0\). The scalar quotient is an even
real-analytic function jointly of \(t\) and \(|z|^2\), with value \(e^t\)
at the origin.
Thus \(A_{\mathrm{rad}}'(0)\) and \(V_{\mathrm{rad}}'(0)\) are ordinary
first variations for every possible location of \(o\).

\ithead{The volume cancellation.}

Because
\[
\Jac F_{t,\varphi_j}
=
e^{nt\varphi_j},
\]
one has
\[
V_j''(0)
=
n^2\int_\Omega\varphi_j^2\,dV.
\]

Since
\[
\sum_{j=1}^n\varphi_j^2=1,
\]
it follows that
\[
\sum_{j=1}^nV_j''(0)
=
n^2|\Omega|.
\]

For the uniform radial deformation,
\[
\Jac F_{t,\mathrm{rad}}=e^{nt},
\]
and therefore
\[
V_{\mathrm{rad}}'(0)
=
n|\Omega|.
\]

Since \(H\equiv\lambda\),
\[
A_{\mathrm{rad}}'(0)
=
\lambda V_{\mathrm{rad}}'(0)
=
\lambda n|\Omega|.
\]

Consequently,
\begin{equation}
\lambda
\sum_{j=1}^nV_j''(0)
=
nA_{\mathrm{rad}}'(0).
\label{eq:volume-cancellation}
\end{equation}

\begin{remark}[Why the volume terms cancel]
The coordinate deformations are not individually volume preserving to
second order.  Their summed second volume variation is nevertheless
explicit because $\sum_j\varphi_j^2=1$.  The constant-mean-curvature
identity for the uniform radial deformation converts this summed volume
term into the radial area term already present in the pointwise trace
formula.  This exact cancellation is what allows the final formula to be
expressed solely through the Jacobi forms and the nonnegative polynomial.
\end{remark}

\begin{theorem}[Integrated constrained trace identity]
\label{thm:integrated-trace}
Let \(m\ge 2\), and let \(\Omega\Subset\CH^m\) be a domain with smooth
compact boundary \(\Sigma\).  Let \(o\) be a volume geometric median of
\(\Omega\), which may lie in \(\Omega\), on \(\Sigma\), or in
\(\CH^m\setminus\overline{\Omega}\). Suppose that
\(H\equiv\lambda\). Then
\begin{equation}
\sum_{j=1}^n Q_\Sigma(f_j)
=
-\int_\Sigma
\frac{\mathcal R_{n,x}(\beta,\gamma)}{(1+x)^2}\,dA.
\label{eq:integrated-trace}
\end{equation}
When \(o\in\Sigma\), the polar quantities in the integrand are understood
on \(\Sigma\setminus\{o\}\), and their values at \(o\) are immaterial.
\end{theorem}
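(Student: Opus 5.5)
The plan is to identify each Jacobi form $Q_\Sigma(f_j)$ with the second derivative $A_j''(0)-\lambda V_j''(0)$ of the exact polar deformation. We then sum over $j$, use the volume cancellation \eqref{eq:volume-cancellation} to trade the volume terms for the radial area derivative, and finally invoke the pointwise trace of Corollary~\ref{cor:tangent-plane-trace}.

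\textbf{Step 1: the Hessian identification, by position of $o$.}
\begin{itemize}
\item If $o\notin\overline\Omega$, the field $X_j$ and the maps $F_{t,\varphi_j}$ are smooth near $\overline\Omega$. Lemma~\ref{lem:Hessian-property} applies directly, with initial normal speed $f_j$, and gives $Q_\Sigma(f_j)=A_j''(0)-\lambda V_j''(0)$.
\item If $o\in\Omega$, this identity is \eqref{eq:interior-Hessian-identification} of Lemma~\ref{lem:interior-cutoff}.
\item If $o\in\Sigma$, it is part (iv) of Theorem~\ref{thm:boundary-Hessian-bridge}, with $A_j''(0)$ and $V_j''(0)$ understood in the punctured sense.
\end{itemize}
In all three cases the area term has the form $A_j''(0)=\int_{\Sigma\setminus\{o\}}J_\Sigma''(0,\varphi_j)\,dA$. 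Here $J_\Sigma(t,\varphi)=|\cof DF_{t,\varphi}\nu|$ by Theorem~\ref{thm:surface-jacobian}, which is exactly the quantity $J_\Pi$ with $\Pi=T_x\Sigma$. When $o$ is off $\Sigma$ this follows by differentiating under the integral over the compact hypersurface, where everything is smooth. When $o\in\Sigma$ it holds by definition.

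\textbf{Step 2: summation and volume cancellation.} Summing the identities over $j$ and applying \eqref{eq:volume-cancellation} gives
\[
\sum_jQ_\Sigma(f_j)=\sum_jA_j''(0)-nA_{\mathrm{rad}}'(0).
\]
The uniform radial deformation extends smoothly through $o$, as noted above. Its first area variation is therefore $\int_\Sigma J_\Sigma'(0,1)\,dA$, the point $o$ carrying no area. Combining, we get
\[
\sum_jQ_\Sigma(f_j)=\int_{\Sigma\setminus\{o\}}\Bigl(\sum_jJ_\Sigma''(0,\varphi_j)-nJ_\Sigma'(0,1)\Bigr)dA.
\]
Corollary~\ref{cor:tangent-plane-trace} identifies the integrand pointwise with $-\mathcal R_{n,x}(\beta,\gamma)/(1+x)^2$, which yields \eqref{eq:integrated-trace}.

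The balance condition from the median is not needed for this identity itself. It will be used only afterwards, when stability is invoked.

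\textbf{Main obstacle.} The delicate point is the case $o\in\Sigma$. We must make sure the punctured integrals are genuinely finite and that the trace integrand is integrable near $o$. This holds because $J''$ and $J'$ are bounded near $o$, as in the proof of Lemma~\ref{lem:cutoff-derivative-convergence}. Likewise $\mathcal R_{n,x}/(1+x)^2$ is bounded, since $\beta,\gamma\in[0,1]$ and $x\to0$. With this in hand, the sum of the punctured second derivatives equals the integral of the summed pointwise integrand by linearity. A secondary check is that the sign conventions for the outer normal $\nu$ and for $V'$ agree with those of Lemma~\ref{lem:Hessian-property} in the identification $A_{\mathrm{rad}}'(0)=\lambda V_{\mathrm{rad}}'(0)$.
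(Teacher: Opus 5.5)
Your proposal is correct and follows the paper's proof essentially step for step. It uses the same three-case identification $Q_\Sigma(f_j)=A_j''(0)-\lambda V_j''(0)$ via Lemma~\ref{lem:Hessian-property}, Lemma~\ref{lem:interior-cutoff}, and Theorem~\ref{thm:boundary-Hessian-bridge}, then the volume cancellation \eqref{eq:volume-cancellation}, and finally integrates the pointwise trace using the same boundedness argument near $o$. Your side remark is also accurate: the median balance is not used in the Hessian identifications, and the paper carries that hypothesis only because the cited boundary-center bridge theorem is stated under it.
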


\begin{proof}
We distinguish the three possible locations of the median.

\medskip
\noindent
\textbf{Case 1: \(o\notin\overline\Omega\).}

The polar coordinate deformations are smooth on a neighborhood of
\(\overline\Omega\). Lemma~\ref{lem:Hessian-property} applies directly and
gives
\[
Q_\Sigma(f_j)
=
A_j''(0)-\lambda V_j''(0).
\]

\medskip
\noindent
\textbf{Case 2: \(o\in\Omega\).}

The boundary is separated from \(o\). Apply
Lemma~\ref{lem:interior-cutoff}. It gives
\[
Q_\Sigma(f_j)
=
A_j''(0)-\lambda V_j''(0).
\]

\medskip
\noindent
\textbf{Case 3: \(o\in\Sigma\).}

The boundary-center cutoff--Hessian bridge,
Theorem~\ref{thm:boundary-Hessian-bridge}, gives
\[
Q_\Sigma(f_j)
=
A_j''(0)-\lambda V_j''(0).
\]

Thus, in every case,
\begin{equation}
Q_\Sigma(f_j)
=
A_j''(0)-\lambda V_j''(0).
\label{eq:Q-area-volume-all-cases}
\end{equation}

We now integrate the pointwise trace identity
\[
\sum_{j=1}^n
J_\Sigma''(0,\varphi_j)
-
nJ_\Sigma'(0,1)
=
-
\frac{
\mathcal R_{n,x}(\beta,\gamma)
}{
(1+x)^2
}
\]
If \(o\notin\Sigma\), this is a direct integration over \(\Sigma\). If
\(o\in\Sigma\), the identity holds on \(\Sigma\setminus\{o\}\). The
explicit formulas \eqref{eq:J-second} and \eqref{eq:J-radial-first} show
that the densities on the left are bounded near \(o\), as also used in the
proof of Lemma~\ref{lem:cutoff-derivative-convergence}. The density on the
right is bounded there because \(x\to0\) and
\[
0\le\beta,\gamma,
\qquad
\beta+\gamma\le1.
\]
Thus all terms are integrable, and the omission of the single point does
not change the integral. In either case we obtain
\begin{equation}
\sum_{j=1}^nA_j''(0)
-
nA_{\mathrm{rad}}'(0)
=
-
\int_\Sigma
\frac{
\mathcal R_{n,x}(\beta,\gamma)
}{
(1+x)^2
}\,dA.
\label{eq:integrated-area-trace-rigorous}
\end{equation}

Summing \eqref{eq:Q-area-volume-all-cases} over \(j\) gives
\[
\sum_{j=1}^nQ_\Sigma(f_j)
=
\sum_{j=1}^nA_j''(0)
-
\lambda\sum_{j=1}^nV_j''(0).
\]

Using the volume cancellation
\eqref{eq:volume-cancellation},
\[
\sum_{j=1}^nQ_\Sigma(f_j)
=
\sum_{j=1}^nA_j''(0)
-
nA_{\mathrm{rad}}'(0).
\]

Equation \eqref{eq:integrated-area-trace-rigorous} now proves
\eqref{eq:integrated-trace}.
\end{proof}

\begin{corollary}
\label{cor:stability-radial-normal}
Let \(m\ge2\), let \(\Omega\Subset\CH^m\) have smooth compact boundary
\(\Sigma=\partial\Omega\), and let \(o\) be a volume geometric median of
\(\Omega\).
Suppose that \(H\equiv\lambda\) and that \(\Sigma\) is stable under the
fixed-volume constraint. Then
\[
\beta=\gamma=0
\qquad
\text{on }\Sigma\setminus\{o\}.
\]
Equivalently,
\[
\nu=\pm N
\qquad
\text{on }\Sigma\setminus\{o\}.
\]
\end{corollary}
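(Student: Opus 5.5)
The argument combines stability with the integrated trace identity of Theorem~\ref{thm:integrated-trace}, and then uses the strict positivity of the certificate in Theorem~\ref{thm:strict-positivity}.

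\emph{Step 1: each $Q_\Sigma(f_j)$ is nonnegative.} Since $\Sigma$ is stable, I want $Q_\Sigma(f_j)\ge0$ for every $j=1,\ldots,n$, in each of the three locations of the median $o$.
\begin{itemize}
\item If $o\notin\Sigma$, then $f_j$ is smooth on $\Sigma$. It has zero mean, by \eqref{eq:fj-mean-zero-direct} when $o\notin\overline\Omega$ and by \eqref{eq:interior-zero-mean} from Lemma~\ref{lem:interior-cutoff} when $o\in\Omega$. So $f_j$ is an admissible speed, and stability gives $Q_\Sigma(f_j)\ge0$ directly.
\item If $o\in\Sigma$, then $f_j$ is only in $H^1(\Sigma)$. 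In this case I invoke \eqref{eq:boundary-stability-rigorous} of Theorem~\ref{thm:boundary-Hessian-bridge}, which passes stability to the limit through the zero-mean approximants $g_{j,\eps}$.
\end{itemize}
Summing over $j$ gives $\sum_jQ_\Sigma(f_j)\ge0$.

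\emph{Step 2: combine with the trace identity.} Theorem~\ref{thm:integrated-trace} gives
\[
0\le\sum_{j=1}^nQ_\Sigma(f_j)=-\int_\Sigma\frac{\mathcal R_{n,x}(\beta,\gamma)}{(1+x)^2}\,dA .
\]
By Theorem~\ref{thm:strict-positivity}, the integrand $\mathcal R_{n,x}(\beta,\gamma)$ is nonnegative pointwise on $\Sigma\setminus\{o\}$. This uses $n=2m\ge4$, $x\ge0$, and $\beta,\gamma\ge0$ with $\beta+\gamma\le1$. The integral therefore vanishes. As the integrand is nonnegative and $dA$-integrable, $\mathcal R_{n,x}(\beta,\gamma)=0$ holds $dA$-a.e. on $\Sigma\setminus\{o\}$.

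\emph{Step 3: upgrade from a.e.\ to everywhere.} On $\Sigma\setminus\{o\}$ the quantities $x$, $\beta$ and $\gamma$ are continuous functions. They are built from the smooth normal $\nu$, the smooth radial field $N$ and the smooth splitting $\mathcal H_\theta\oplus\mathbb R\bar T$, all away from the pole. Hence $\mathcal R_{n,x}(\beta,\gamma)$ is continuous there. A continuous nonnegative function vanishing a.e.\ on a smooth hypersurface vanishes identically, because a nonempty relatively open subset of $\Sigma$ has positive area. The equality case of Theorem~\ref{thm:strict-positivity} then gives $\beta=\gamma=0$ on $\Sigma\setminus\{o\}$. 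Since $\alpha^2=1-\beta-\gamma=1$, the normal decomposition gives $\nu=\alpha N$ with $\alpha=\pm1$, which is the asserted $\nu=\pm N$.

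\emph{Main obstacle.} The only delicate point is the case $o\in\Sigma$. There $f_j$ is not a smooth admissible speed, and stability must be transferred through the $H^1$-continuity argument of Theorem~\ref{thm:boundary-Hessian-bridge}. That step has already been carried out in the paper, so here it only needs to be cited correctly.
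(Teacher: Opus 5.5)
Your proposal is correct and follows the paper's proof essentially step for step. You establish zero mean of each $f_j$ in the three median locations, apply stability directly when $o\notin\Sigma$ and via Theorem~\ref{thm:boundary-Hessian-bridge}(v) when $o\in\Sigma$, combine with the integrated trace identity and the sign from Theorem~\ref{thm:strict-positivity}, and finish with a continuity upgrade. The only cosmetic difference is ordering: you first upgrade $\mathcal R_{n,x}(\beta,\gamma)=0$ to everywhere by continuity and then apply the equality case, whereas the paper applies the equality case almost everywhere and then upgrades $\beta,\gamma$ by continuity.
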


\begin{proof}
Equation~\eqref{eq:fj-mean-zero-direct},
Lemma~\ref{lem:interior-cutoff}, and
Lemma~\ref{lem:boundary-zero-mean} show, respectively in the three possible
locations of the median, that
\[
\int_\Sigma f_j\,dA=0.
\]
When \(o\notin\Sigma\), the speeds are smooth on \(\Sigma\), and stability
gives
\[
Q_\Sigma(f_j)\ge0
\]
directly. When \(o\in\Sigma\), the same inequality is precisely
\eqref{eq:boundary-stability-rigorous} in
Theorem~\ref{thm:boundary-Hessian-bridge}. Thus, in every case,
\[
0
\le
\sum_{j=1}^nQ_\Sigma(f_j).
\]

The integrated trace identity, which now includes the
boundary-center case, gives
\[
\sum_{j=1}^nQ_\Sigma(f_j)
=
-
\int_\Sigma
\frac{
\mathcal R_{n,x}(\beta,\gamma)
}{
(1+x)^2
}\,dA
\le0.
\]

Therefore equality holds. Since
\[
\mathcal R_{n,x}(\beta,\gamma)\ge0,
\]
the integrand vanishes almost everywhere. The strict positivity theorem
then gives
\[
\beta=\gamma=0
\]
almost everywhere on \(\Sigma\setminus\{o\}\).

Smoothness of \(\Sigma\) and continuity of the normal decomposition upgrade
this to pointwise vanishing away from \(o\).
\end{proof}

\subsection{Rigidity of smooth stable critical domains}

The preceding trace argument shows that stability forces the normal to be
radial.  It remains only to identify the resulting boundary components and
to use their common oriented mean curvature to exclude all but one.

\begin{theorem}[Rigidity of stable critical domains]
\label{thm:smooth-rigidity}
Let
\[
m\ge2
\]
and let
\[
\Omega\Subset\CH^m
\]
have smooth compact boundary \(\Sigma\). Assume that \(\Sigma\) is critical
and stable for perimeter under a fixed-volume constraint.

Then \(\Omega\) is a geodesic ball.
\end{theorem}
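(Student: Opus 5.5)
Choose a volume geometric median $o$ of $\Omega$. Criticality gives $H\equiv\lambda$ by Lemma~\ref{lem:stationarity-implies-cmc}, so Corollary~\ref{cor:stability-radial-normal} applies: $\nu=\pm N$ at every point of $\Sigma\setminus\{o\}$. The plan is first to show that each connected component of $\Sigma$ is a centered geodesic sphere, then to rule out $o\in\Sigma$, and finally to use the common constant mean curvature, together with stability, to exclude every configuration except a single ball.

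\emph{Step 1 (components are centered spheres).} On $\Sigma\setminus\{o\}$ the normal is radial, so $dr|_{T\Sigma}=\langle N,\cdot\rangle|_{T\Sigma}=0$. Hence $r$ is locally constant on each connected component of $\Sigma\setminus\{o\}$. If $o\notin\Sigma$, every component $\Sigma_i$ is a closed hypersurface contained in a level set $\{r=r_i\}$, with $r_i>0$. A closed embedded hypersurface of dimension $n-1$ inside the connected sphere $S_{r_i}(o)$ is open and closed in it, so $\Sigma_i=S_{r_i}(o)$.

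If $o\in\Sigma$, the component through $o$, minus $o$, is still connected, because $n-1\ge3$. Thus $r$ is constant there, and by continuity this constant equals $r(o)=0$. That would force the component to be $\{o\}$, which is impossible for a hypersurface. So $o\notin\Sigma$.

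\emph{Step 2 (nested spheres and mean curvature).} Now $\Sigma=\bigcup_{i=1}^k S_{r_i}(o)$ with $0<r_1<\dots<r_k$. Hence $\Omega$ is a union of alternating radial shells, and possibly the ball $B_{r_1}(o)$. The outer normal is $+N$ on the outermost sphere $S_{r_k}$. For adjacent boundary spheres, the outer normal alternates between $+N$ and $-N$.

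With the convention $H=\divSigma\nu$, the sphere $S_r$ with normal $\pm N$ has mean curvature $\pm h(r)$, where
\[
h(r)=(n-2)\coth r+2\coth 2r>0.
\]
Constancy of $H$ on all of $\Sigma$ then forces every component to carry the same sign of the normal. Adjacent spheres have opposite signs, so $k=1$ and $\Omega=B_{r_1}(o)$. Alternatively, if $k\ge2$ one can note that $h$ is strictly decreasing, so two distinct radii cannot share the value $\lambda$ even up to sign compatibility.

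\emph{Expected obstacle.} I expect the main difficulty to be the boundary-center exclusion in Step 1, together with making the connectedness argument rigorous near $o$. The idea there is to use that $\Sigma$ is smooth at $o$: in normal coordinates it is a $C^1$ graph over its tangent hyperplane, and along a short curve in $\Sigma$ leaving $o$ the radial function $r$ grows linearly, contradicting local constancy. This contradiction is only apparent where $\Sigma$ is tangent to radial directions, which happens along the whole tangent hyperplane at $o$. Since $\nu(o)$ is orthogonal to those directions while $\nu=\pm N$ nearby, one obtains $\nu(o)=\pm N$ in every radial direction of $T_o\Sigma$, which is impossible. This is the argument I would try first, with the connectedness version above as a backup.
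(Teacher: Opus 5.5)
Your proposal is correct and follows the paper's proof essentially step for step: you get a radial normal from Corollary~\ref{cor:stability-radial-normal}, local constancy of $r$, exclusion of $o\in\Sigma$ via connectedness of $\Sigma_0\setminus\{o\}$ and continuity, the open--closed argument identifying each component with a centered sphere, and alternation of the outer normal against the positive mean curvature $h(r)=(n-1)\coth r+\tanh r$ to force a single sphere. The concern in your last paragraph is already settled by your Step~1 connectedness argument, which is exactly the paper's; your tangent-limit alternative is also valid, since it gives $|\langle\nu(o),e\rangle|=1$ for a unit $e\in T_o\Sigma$, which is impossible.
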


\begin{proof}
Let \(o\) be a volume geometric median. By
Corollary~\ref{cor:stability-radial-normal},
\[
\beta=\gamma=0
\]
on \(\Sigma\setminus\{o\}\), and hence
\[
\nu=\pm N.
\]

For every tangent vector \(Y\in T\Sigma\),
\[
Y(r)
=
\langle\nabla r,Y\rangle
=
\langle N,Y\rangle
=
0.
\]
Thus \(r\) is constant on every connected component of
\(\Sigma\setminus\{o\}\).

We next rule out
\[
o\in\Sigma.
\]

Let \(\Sigma_0\) be the connected component of \(\Sigma\) containing \(o\).
A connected smooth manifold of dimension at least two remains connected
after removal of one point. Here
\[
\dim\Sigma_0=n-1\ge3.
\]
Therefore
\[
\Sigma_0\setminus\{o\}
\]
is connected.

The function \(r\) is constant on
\(\Sigma_0\setminus\{o\}\); write
\[
r\equiv c.
\]
Since points of \(\Sigma_0\setminus\{o\}\) approach \(o\), continuity gives
\[
c=0.
\]
This is impossible away from \(o\). Hence
\[
o\notin\Sigma.
\]

It follows that \(r\) is constant on every connected component of
\(\Sigma\). Therefore every connected component of \(\Sigma\) is a geodesic
sphere centered at \(o\).  More explicitly, if a component \(\Gamma\) lies
in \(S_c(o)\), then
\[
T_p\Gamma=T_pS_c(o)
\qquad(p\in\Gamma).
\]
Thus the inclusion \(\Gamma\hookrightarrow S_c(o)\) is open.  Since
\(\Gamma\) is compact it is also closed, and connectedness of the geodesic
sphere gives \(\Gamma=S_c(o)\).

It remains to exclude multiple concentric components. The area density of a
geodesic sphere is proportional to
\[
\sinh^{n-1}r\,\cosh r.
\]
Its scalar mean curvature with outward radial normal is
\[
H_0(r)
=
\frac{d}{dr}
\log\bigl(\sinh^{n-1}r\cosh r\bigr)
=
(n-1)\coth r+\tanh r
>0.
\]

Equivalently,
\[
H_0(r)
=
(n-2)\coth r+2\coth(2r).
\]

Order the distinct boundary radii as
\[
0<r_1<\cdots<r_k.
\]

The outermost component has outer normal \(+N\) and oriented mean curvature
\[
H_0(r_k)>0.
\]

If \(k\ge2\), the next component inward has outer normal \(-N\), and hence
oriented mean curvature
\[
-H_0(r_{k-1})<0.
\]
Indeed, the indicator of \(\Omega\) is constant on each connected annulus
between successive radii.  Crossing the outermost sphere from outside enters
\(\Omega\), so crossing the next sphere inward leaves \(\Omega\); its outer
normal is therefore \(-N\).  The orientations alternate at all successive
boundary spheres.

A volume-constrained critical boundary has one constant oriented mean
curvature \(\lambda\) on every component. The opposite signs are impossible.
Thus
\[
k=1.
\]

Since \(\Omega\) is bounded, it is the ball enclosed by the unique boundary
sphere.
\end{proof}

This proves Theorem~\ref{thm:intro-main}.

\section{Sharp analytic consequences}\label{sec:analytic-consequences}

We record several consequences of the isoperimetric classification in
$\CH^m$, \(m\ge2\).  These statements were previously known under the
assumption that geodesic balls are the isoperimetric regions of the complex
hyperbolic ball; see \cite{LiSu2025,Singh2026}.  Corollary~\ref{thm:gromov-ros-all-m} discharges that geometric assumption
in every complex dimension at least two.  The deductions below are therefore short but rigorous: we invoke
the conditional analytic theorems of Li and Su \cite{LiSu2025} and Singh
\cite{Singh2026}, and verify that their only unresolved geometric hypothesis
is exactly the isoperimetric classification proved in
Theorem~\ref{thm:gromov-ros-all-m}.
The resulting convex Bergman inequality also verifies the analytic
hypothesis in Melentijevi\'c's higher-dimensional stability theorem
\cite[Theorem~4]{Melentijevic2025}.
We also record the sharp constants and
equality cases in the normalization used here.

\ithead{The one-dimensional precursor.}
Kulikov proved in the unit disk that certain monotone functionals on Hardy
spaces and convex functionals on weighted Bergman spaces attain their
maximum, among functions of norm one, at normalized reproducing kernels
\cite{Kulikov2022}.  His theorem simultaneously yields sharp contractive
embeddings and the $SU(1,1)$ Wehrl-type entropy inequality
\cite{LiebSolovej2021}. It is therefore
the natural one-dimensional model for the results below.

The higher-dimensional difficulty is geometric.  The distribution-function
argument requires the sharp isoperimetric inequality for the invariant
metric on $\mathbb B^m$. Li and Su \cite{LiSu2025} proved that this
complex-hyperbolic isoperimetric statement implies the corresponding
Kulikov-type convex contraction principle in $\mathbb B^m$. Nicola,
Riccardi, and Tilli formulated the higher-dimensional convex Bergman
statement as Conjecture~5.1
\cite[Section~5]{NicolaRiccardiTilli2025}, while Singh \cite{Singh2026}
used the same geometric input for the $SU(N,1)$ Lieb--Wehrl and Bergman
Faber--Krahn inequalities.  Melentijevi\'c
\cite[Theorem~4]{Melentijevic2025} proved a higher-dimensional quadratic
stability theorem conditional on this convex Bergman inequality.
Theorem~\ref{thm:gromov-ros-all-m}
supplies the missing isoperimetric input for every \(m\ge2\), and hence also
makes that stability theorem unconditional.
Consequently, the theorems in this section are
unconditional higher-dimensional extensions of the one-dimensional
reproducing-kernel extremal principle.

{
Kalaj and Zhu \cite{KalajZhu2026} recently obtained a local quantitative
contraction theorem near the constant extremizer, under near-sphericity
hypotheses on the weighted level sets.  That stability result is
complementary to the global sharp consequences below and is not used in
their proof.
}

\subsection{Invariant measure and weighted Bergman spaces}

Let $m\ge2$ and let
\[
\mathbb B^m=\{z\in\mathbb C^m:|z|<1\}.
\]
Let $dv$ be Euclidean volume normalized by $v(\mathbb B^m)=1$, and let
$d\sigma$ be normalized surface measure on $\partial\mathbb B^m$.  We use
the invariant measure
\begin{equation}
 d\mu_m(z)=\frac{dv(z)}{(1-|z|^2)^{m+1}}.
 \label{eq:invariant-ball-measure}
\end{equation}
For the metric \eqref{eq:normalized-Bergman-metric},
\[
 dV_g(z)=\frac{d\lambda_{2m}(z)}{(1-|z|^2)^{m+1}},
 \qquad
 d\mu_m=\frac{m!}{\pi^m}\,dV_g.
\]
Thus the invariant measure in this section differs from Riemannian volume
only by a fixed dimensional constant.

\begin{remark}[Normalization and hypothesis transfer]
\label{rem:analytic-transfer}
The conditional results quoted below use the same unit-ball model and
invariant measure, up to the fixed normalization displayed above.  In the
notation of Li and Su \cite{LiSu2025}, their complex dimension is our \(m\),
and their parameters \(p\) and \(\alpha\) are unchanged.  In Singh's
notation \cite{Singh2026}, \(N=m\) and \(\lambda=(m+1)k\).  Under this
dictionary, the unresolved geometric hypothesis in those results is exactly
that finite-volume isoperimetric regions for the complex-hyperbolic metric
are geodesic balls.  Theorem~\ref{thm:gromov-ros-all-m} supplies this
hypothesis.  Multiplying volume and perimeter by fixed positive constants
does not change the minimizing sets or their equality classification; the
equality statements used below are the equality statements in the cited
theorems under this same dictionary.
\end{remark}

For $0<p<\infty$ and $\alpha>m$, define the weighted Bergman space
$\mathcal A^p_\alpha(\mathbb B^m)$ by
\begin{equation}
 \|f\|_{\mathcal A^p_\alpha}^{p}
 =c_{\alpha,m}\int_{\mathbb B^m}
 |f(z)|^p(1-|z|^2)^\alpha\,d\mu_m(z),
 \qquad
 c_{\alpha,m}=\frac{\Gamma(\alpha)}{m!\,\Gamma(\alpha-m)}.
 \label{eq:weighted-Bergman-norm}
\end{equation}
The constant is chosen so that $\|1\|_{\mathcal A^p_\alpha}=1$.  We also
write
\[
 \|f\|_{H^s}^{s}
 =\sup_{0<r<1}\int_{\partial\mathbb B^m}|f(r\zeta)|^s\,d\sigma(\zeta)
\]
for the Hardy quasi-norm.  Our Hermitian product is
$\langle z,w\rangle=\sum_{j=1}^m z_j\overline{w_j}$.

For a normalized $f\in\mathcal A^p_\alpha$, the standard point-evaluation
estimate \cite[(1.1)]{LiSu2025} gives
\[
 u_f(z):=|f(z)|^p(1-|z|^2)^\alpha\le1.
\]
Thus convex functions below need only be defined on $[0,1]$.

\subsection{The convex Bergman contraction principle}

The following is the higher-dimensional Kulikov-type contraction conjecture
discussed by Nicola, Riccardi, and Tilli
\cite[Section~5]{NicolaRiccardiTilli2025}.  Kulikov proved the corresponding
one-dimensional reproducing-kernel extremal principle in $\mathbb B^1$
\cite{Kulikov2022}.  For every \(m\ge2\), the higher-dimensional
statement is a theorem.

\begin{theorem}[Sharp convex Bergman contraction]
\label{thm:convex-Bergman-contraction}
Let $m\ge2$, $0<p<\infty$, and $\alpha>m$.  Let
$f\in\mathcal A^p_\alpha(\mathbb B^m)$ satisfy
$\|f\|_{\mathcal A^p_\alpha}=1$.  If
$\Phi:[0,1]\to\mathbb R$ is convex, $\Phi(0)=0$, and the right-hand side
below is finite, then
\begin{equation}
 \int_{\mathbb B^m}
 \Phi\!\left(|f(z)|^p(1-|z|^2)^\alpha\right)d\mu_m(z)
 \le
 \int_{\mathbb B^m}
 \Phi\!\left((1-|z|^2)^\alpha\right)d\mu_m(z).
 \label{eq:convex-Bergman-contraction}
\end{equation}
For every $w\in\mathbb B^m$ and $\theta\in\mathbb R$, equality is attained
by
\begin{equation}
 f_{w,\theta}(z)
 =e^{i\theta}
 \frac{(1-|w|^2)^{\alpha/p}}{(1-\langle z,w\rangle)^{2\alpha/p}}.
 \label{eq:Bergman-extremizer}
\end{equation}
If $\Phi$ is strictly convex, then these are the only equality cases.
\end{theorem}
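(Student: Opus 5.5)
The plan is to follow the Kulikov distribution-function method, in the form Li--Su \cite{LiSu2025} carried over to $\mathbb B^m$, and to use Corollary~\ref{thm:gromov-ros-all-m} as the geometric input. Put $u=u_f=|f|^p(1-|z|^2)^\alpha\le1$ and let $\mu(s)=\mu_m(\{u>s\})$ be its distribution function for $s\in(0,1)$. For the extremizer, with $u_0=(1-|z|^2)^\alpha$, the distribution function is explicit: $\mu_0(s)=\mu_m(\{(1-|z|^2)^\alpha>s\})$, which is the invariant volume of a centered geodesic ball. The first step is to prove the key comparison $\mu(s)\le\mu_0(s)$ for every $s$. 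Once that holds, the inequality \eqref{eq:convex-Bergman-contraction} follows from the layer-cake formula $\int\Phi(u)\,d\mu_m=\int_0^1\Phi'(s)\mu(s)\,ds$. That formula is valid first for convex $\Phi$ with $\Phi(0)=0$ and $\Phi'(0)\ge0$; the general case is reduced to it by subtracting the linear part, which is harmless because $\int u\,d\mu_m=\int u_0\,d\mu_m=c_{\alpha,m}^{-1}$ by normalization.

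To prove $\mu\le\mu_0$, set $I(s)=\int_{\{u>s\}}u\,d\mu_m$, so that $I'(s)=s\mu'(s)$. Use the coarea formula on the level sets of $u$, where $\log u=p\log|f|+\alpha\log(1-|z|^2)$. Plurisubharmonicity of $\log|f|$, together with the invariant Laplacian of $\log(1-|z|^2)$ being a negative constant, gives a differential inequality for $\mu$. The Cauchy--Schwarz step in that derivation produces $\Per(\{u>s\})^2$, measured in the invariant metric. At this point the isoperimetric inequality $\Per(\{u>s\})\ge I_{\CH^m}(\mu(s))$ from Corollary~\ref{thm:gromov-ros-all-m} is inserted, after rescaling $\mu_m=\frac{m!}{\pi^m}dV_g$ and matching the curvature normalization. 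This yields an ODE inequality $g'(s)\ge G(g(s))$ that is saturated by $u_0$. Combining it with $I(0)=I_0(0)$ and $\mu(s)\to0$ as $s\to1$ gives $\mu(s)\le\mu_0(s)$ by a comparison (Grönwall-type) argument. This is exactly the conditional theorem of Li--Su under the dictionary of Remark~\ref{rem:analytic-transfer}. In the write-up I would cite their conditional result verbatim and check only that its hypothesis coincides with Corollary~\ref{thm:gromov-ros-all-m}.

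For equality, note first that each $f_{w,\theta}$ is $u_0$ composed with the involutive automorphism $\varphi_w$. Since $\mu_m$ is invariant, $u_{f_{w,\theta}}$ has the same distribution as $u_0$, so equality holds. For rigidity, suppose $\Phi$ is strictly convex. Then $\Phi'$ is strictly increasing, so equality forces $\mu\equiv\mu_0$. Equality in the Cauchy--Schwarz step and in the isoperimetric step then forces almost every superlevel set to be a geodesic ball, by the equality case of Corollary~\ref{thm:gromov-ros-all-m}, and also forces $|\nabla u|$ to be constant on its boundary. From there, Li--Su's argument shows that $\log|f|$ is pluriharmonic with the exact profile of a reproducing kernel, so $f=f_{w,\theta}$.

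The main obstacle is the regularity needed to apply the finite-perimeter isoperimetric inequality to the level sets $\{u>s\}$: critical values and the boundary behaviour as $|z|\to1$ must be handled. I would first use real-analyticity of $u$ away from the zeros of $f$, which makes almost every level set a smooth compact hypersurface, together with $u\to0$ at the boundary, which makes the sets $\{u>s\}$ relatively compact. Then I would check that every constant in the dictionary of Remark~\ref{rem:analytic-transfer} matches, so that the cited conditional theorems apply without change.
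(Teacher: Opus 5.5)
Citing Li--Su's conditional Theorem~1.7 with Corollary~\ref{thm:gromov-ros-all-m} as the geometric input, and getting attainment from $u_{f_{w,\theta}}=u_0\circ\phi_w$ plus invariance of $\mu_m$, is exactly the paper's proof. The problem is the mechanism you sketch for the Li--Su step: its key lemma is false. The comparison $\mu(s)\le\mu_0(s)$ for every $s$ cannot hold. As you note yourself, $\int u\,d\mu_m=\int u_0\,d\mu_m=c_{\alpha,m}^{-1}$, that is, $\int_0^1\mu(s)\,ds=\int_0^1\mu_0(s)\,ds$. Pointwise domination would then force $\mu=\mu_0$ a.e., so every normalized $f$ would be equimeasurable with the kernel. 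This fails for every non-extremal $f$: there $\max u<1$, so $\mu$ vanishes on $[\max u,1)$, where $\mu_0>0$.

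The coarea, Cauchy--Schwarz and isoperimetric chain in fact gives a \emph{lower} bound on the rate of decrease,
\[
 -\mu'(s)\ \ge\ \frac{\Psi(\mu(s))}{s},
\]
with an explicit $\Psi>0$ built from the profile and equality for $u_0$. Integrated down from $s=\max u$, this bounds $\mu$ from below, not from above. Rewritten for the decreasing rearrangements $u^*,u_0^*$ with respect to $\mu_m$, it says that $u^*/u_0^*$ is nondecreasing. Combined with equal total integrals, this gives the majorization
\[
 \int_{\mathbb B^m}(u-s)_+\,d\mu_m\le\int_{\mathbb B^m}(u_0-s)_+\,d\mu_m\qquad(0\le s\le1).
\]
Convexity then enters through $\Phi(x)=\Phi'(0^+)x+\int_{(0,1)}(x-s)_+\,d\Phi'(s)$, not through a layer-cake formula weighted by $\Phi'(s)$. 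Pointwise domination $\mu\le\mu_0$ is the Hardy-type statement, as in Corollary~\ref{cor:Hardy-Bergman-contractivity}. There the norm pins the behaviour of $\mu(s)$ as $s\to0$ rather than $\int_0^1\mu$. Citing Li--Su verbatim, as you intend, still gives a correct proof, but the sketch as written does not prove the convex inequality.

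For uniqueness your route differs from the paper's. The paper does not trace equality through the isoperimetric step; it cites Nicola--Riccardi--Tilli (Theorem~5.2), which yields rigidity once the inequality holds for the full class of convex $\Phi$. Your route ends with an unproved step. Knowing that almost every superlevel set is a geodesic ball with $|\nabla u|$ constant on its boundary does not by itself make the balls concentric or identify $f$. The passage to $f=f_{w,\theta}$ is asserted, not shown.

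A short self-contained argument avoids this:
\begin{enumerate}
\item For strictly convex $\Phi$, $d\Phi'$ charges every subinterval. So majorization together with equality forces $\int(u-s)_+\,d\mu_m=\int(u_0-s)_+\,d\mu_m$ for all $s$, hence $\mu\equiv\mu_0$.
\item Then $\mu(s)>0$ for all $s<1$, so $\sup u=1$. The supremum is attained at some $w$, because $u$ is continuous and tends to $0$ at $\partial\mathbb B^m$.
\item Put $g=(f\circ\phi_w)\,(1-|w|^2)^{\alpha/p}(1-\langle z,w\rangle)^{-2\alpha/p}$, which satisfies $u_g=u_f\circ\phi_w$ and $\|g\|=\|f\|$. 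Then $|g(0)|^p=\|g\|^p_{\mathcal A^p_\alpha}$, which is equality in the sub-mean-value inequality for the plurisubharmonic function $|g|^p$. This forces $g$ to be constant, so $f=f_{w,\theta}$.
\end{enumerate}
No equality case of the isoperimetric theorem is needed.
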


\begin{proof}
Under the dictionary in Remark~\ref{rem:analytic-transfer}, Li and Su prove
\eqref{eq:convex-Bergman-contraction} under the assumption
that isoperimetric regions in the complex hyperbolic ball are geodesic
balls; see \cite[Theorem~1.7]{LiSu2025}.
Theorem~\ref{thm:gromov-ros-all-m} establishes precisely that assumption
for every \(m\ge2\). Their distribution-function
and rearrangement argument therefore applies without an additional
hypothesis and proves the inequality.

For \(w\ne0\), set
\[
P_wz=\frac{\langle z,w\rangle}{|w|^2}w,
\qquad
Q_w=I-P_w,
\qquad
s_w=\sqrt{1-|w|^2},
\]
and define the standard involutive ball automorphism
\[
\phi_w(z)
=
\frac{w-P_wz-s_wQ_wz}{1-\langle z,w\rangle};
\qquad
\phi_0(z)=-z.
\]
The automorphism identity
\[
 1-|\phi_w(z)|^2
 =\frac{(1-|w|^2)(1-|z|^2)}{|1-\langle z,w\rangle|^2}
\]
shows that the functions in \eqref{eq:Bergman-extremizer} are isometric
images of the constant function $1$.  Since $d\mu_m$ is invariant under
$\phi_w$, both sides of \eqref{eq:convex-Bergman-contraction} are unchanged,
so these functions attain equality.  Conversely, once the convex inequality
is known for the full admissible class of convex functions in a given
dimension, the equality theorem of Nicola, Riccardi, and Tilli
\cite[Theorem~5.2]{NicolaRiccardiTilli2025} shows that strict convexity
forces exactly the family \eqref{eq:Bergman-extremizer}.  Li and Su's theorem
supplies precisely that full class.  The
complex power is defined by the holomorphic logarithm of
$1-\langle z,w\rangle$, whose real part is positive on $\mathbb B^m$.
For the standard ball automorphisms and their basic identities, see
\cite[Chapter~2]{Zhu2005}.
\end{proof}

\begin{remark}[Quantitative stability of convex Bergman functionals]
Melentijevi\'c \cite[Theorem~4]{Melentijevic2025} established a
higher-dimensional quadratic stability estimate conditional on the full
convex inequality \eqref{eq:convex-Bergman-contraction}: the deficit in that
inequality controls the squared weighted-Bergman distance from the family of
normalized reproducing kernels.  Theorem~\ref{thm:convex-Bergman-contraction}
verifies his hypothesis for every complex dimension $m\ge2$.  Thus his
higher-dimensional stability conclusion now holds unconditionally (after
the parameter translation in Remark~\ref{rem:analytic-transfer}).
\end{remark}

\begin{corollary}
\label{cor:sharp-Bergman-contractivity}
Let $m\ge2$ and suppose
\[
 0<p<q<\infty,
 \qquad
 m<\alpha<\beta<\infty,
 \qquad
 \frac{p}{\alpha}=\frac{q}{\beta}.
\]
Then every holomorphic function on $\mathbb B^m$ satisfies
\begin{equation}
 \|f\|_{\mathcal A^q_\beta}
 \le
 \|f\|_{\mathcal A^p_\alpha}.
 \label{eq:sharp-Bergman-contractivity}
\end{equation}
For nonzero $f$, equality holds if and only if $f$ is a constant multiple of
one of the functions in \eqref{eq:Bergman-extremizer}.
\end{corollary}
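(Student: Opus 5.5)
The plan is to deduce Corollary~\ref{cor:sharp-Bergman-contractivity} from Theorem~\ref{thm:convex-Bergman-contraction} by choosing a suitable convex function $\Phi$. This is the standard Kulikov argument. Put $s=q/p=\beta/\alpha>1$. Since both norms are homogeneous of degree one, we may assume $\|f\|_{\mathcal A^p_\alpha}=1$; the case $\|f\|_{\mathcal A^p_\alpha}=\infty$ is trivial. Set $u_f(z)=|f(z)|^p(1-|z|^2)^\alpha$. Then $u_f\le1$, and
\[
 u_f(z)^s=|f(z)|^q(1-|z|^2)^{\beta}.
\]
Therefore
\[
 \|f\|_{\mathcal A^q_\beta}^q=c_{\beta,m}\int_{\mathbb B^m}\Phi(u_f)\,d\mu_m
 \qquad\text{with}\qquad \Phi(t)=t^s.
\]
This $\Phi$ is strictly convex on $[0,1]$ and satisfies $\Phi(0)=0$.

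Next I would check the finiteness hypothesis in Theorem~\ref{thm:convex-Bergman-contraction}. The right-hand side is
\[
 \int_{\mathbb B^m}(1-|z|^2)^{\beta}\,d\mu_m=c_{\beta,m}^{-1}<\infty,
\]
which holds because $\beta>\alpha>m$. This is exactly the normalization $\|1\|_{\mathcal A^q_\beta}=1$. Applying \eqref{eq:convex-Bergman-contraction} then gives
\[
 \|f\|_{\mathcal A^q_\beta}^q\le c_{\beta,m}\,c_{\beta,m}^{-1}=1=\|f\|_{\mathcal A^p_\alpha}^q,
\]
which is \eqref{eq:sharp-Bergman-contractivity}. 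For a general nonzero $f$ with finite $\mathcal A^p_\alpha$ norm, rescaling gives the same conclusion.

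For the equality case, suppose $\|f\|_{\mathcal A^q_\beta}=\|f\|_{\mathcal A^p_\alpha}=1$. Then equality holds in \eqref{eq:convex-Bergman-contraction} for the strictly convex $\Phi(t)=t^s$. By the equality clause of Theorem~\ref{thm:convex-Bergman-contraction}, $f=f_{w,\theta}$ for some $w,\theta$, so the original $f$ is a constant multiple of such a function. For the converse, $f_{w,\theta}$ is the isometric image of the constant $1$ under the composition $(f\circ\phi_w)$ times the appropriate power of the Jacobian factor. The automorphism identity for $1-|\phi_w|^2$ shows that this map is simultaneously an isometry of $\mathcal A^p_\alpha$ and of $\mathcal A^q_\beta$. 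The point is that the exponents $2\alpha/p=2\beta/q$ coincide, so the same kernel power serves both spaces. Since $\|1\|=1$ in both spaces, equality holds.

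The only delicate step is this coincidence of the kernel exponents together with the choice of complex power branch. The relation $p/\alpha=q/\beta$ is exactly what makes $f_{w,\theta}$ normalized in both spaces simultaneously, so I would verify the isometry identity explicitly via the change of variables $z\mapsto\phi_w(z)$ and the invariance of $d\mu_m$.
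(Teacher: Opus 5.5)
Your proposal is correct and follows the paper's proof essentially verbatim. You normalize in $\mathcal A^p_\alpha$, apply Theorem~\ref{thm:convex-Bergman-contraction} with the strictly convex $\Phi(t)=t^{q/p}$, use $q/p=\beta/\alpha$ and $\int(1-|z|^2)^\beta\,d\mu_m=c_{\beta,m}^{-1}$, and read off the equality cases from that theorem. Your separate isometry check for the converse is valid but unnecessary, since the theorem already asserts that the $f_{w,\theta}$ attain equality for every admissible $\Phi$, in particular for $t^{q/p}$.
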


\begin{proof}
The result is trivial when $f=0$.  Otherwise normalize
$g=f/\|f\|_{\mathcal A^p_\alpha}$ and apply
Theorem~\ref{thm:convex-Bergman-contraction} with
\[
 \Phi(t)=t^{q/p}.
\]
Since $q/p=\beta/\alpha$, we obtain
\[
 \int_{\mathbb B^m}|g|^q(1-|z|^2)^\beta\,d\mu_m
 \le
 \int_{\mathbb B^m}(1-|z|^2)^\beta\,d\mu_m
 =\frac{1}{c_{\beta,m}}.
\]
Multiplication by $c_{\beta,m}$ gives
$\|g\|_{\mathcal A^q_\beta}\le1$, which is
\eqref{eq:sharp-Bergman-contractivity}.  Since $q/p>1$, the function
$\Phi$ is strictly convex, and the equality statement follows from
Theorem~\ref{thm:convex-Bergman-contraction}.
\end{proof}

\begin{corollary}
\label{cor:Hardy-Bergman-contractivity}
Let $m\ge2$, $r>0$, and $\alpha>m$.  Then
\[
 \|f\|_{\mathcal A^{\alpha r}_\alpha}
 \le
 \|f\|_{H^{mr}}
\]
for every holomorphic $f$ for which the right-hand side is finite.  Constant
functions attain equality.
\end{corollary}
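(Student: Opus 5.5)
The plan is to obtain this inequality as a boundary limit of Corollary~\ref{cor:sharp-Bergman-contractivity}. The Hardy space arises as the limit $\alpha'\downarrow m$ of the spaces $\mathcal A^{p}_{\alpha'}$, taken along the admissible line $p/\alpha'=r$. To avoid any integrability issue with $f$ itself, I would first work with the dilations $f_\rho(z)=f(\rho z)$, $0<\rho<1$. These are holomorphic on a neighbourhood of $\overline{\mathbb B^m}$, and in particular bounded and continuous up to the sphere.

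\emph{Step 1 (apply the corollary).} Fix $\rho<1$ and $\eps\in(0,\alpha-m)$. Apply Corollary~\ref{cor:sharp-Bergman-contractivity} to $f_\rho$ with the parameters
\[
\alpha'=m+\eps,\qquad p=r(m+\eps),\qquad q=\alpha r,\qquad \beta=\alpha.
\]
These satisfy $p/\alpha'=r=q/\beta$, $m<\alpha'<\beta$ and $p<q$. The corollary gives
\[
\|f_\rho\|_{\mathcal A^{\alpha r}_{\alpha}}\le\|f_\rho\|_{\mathcal A^{r(m+\eps)}_{m+\eps}}.
\]

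\emph{Step 2 (let $\eps\downarrow0$).} Write the right-hand side in polar form. The measure
\[
c_{m+\eps,m}(1-|z|^2)^{m+\eps}\,d\mu_m
\]
is a probability measure. Its radial part is a constant multiple of
\[
t^{2m-1}(1-t^2)^{\eps-1}\,dt,
\]
normalized, which concentrates at $t=1$ as $\eps\to0$. Hence the measure converges weakly to $d\sigma$ on $\partial\mathbb B^m$. Since $|f_\rho|^{r(m+\eps)}\to|f_\rho|^{mr}$ uniformly on $\overline{\mathbb B^m}$, we obtain
\[
\|f_\rho\|_{\mathcal A^{r(m+\eps)}_{m+\eps}}\to\Bigl(\int_{\partial\mathbb B^m}|f(\rho\zeta)|^{mr}\,d\sigma\Bigr)^{1/(mr)}\le\|f\|_{H^{mr}},
\]
where the last bound is the definition of the Hardy quasi-norm. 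This yields $\|f_\rho\|_{\mathcal A^{\alpha r}_\alpha}\le\|f\|_{H^{mr}}$ for every $\rho<1$.

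\emph{Step 3 (let $\rho\uparrow1$).} We have $f_\rho\to f$ pointwise, so Fatou's lemma gives $\|f\|_{\mathcal A^{\alpha r}_\alpha}\le\liminf_{\rho\to1}\|f_\rho\|_{\mathcal A^{\alpha r}_\alpha}\le\|f\|_{H^{mr}}$. Alternatively, monotonicity of the integral means of the plurisubharmonic function $|f|^{\alpha r}$ gives the same bound. For constant $f$ both sides equal $|f|$ by the normalizations $\|1\|_{\mathcal A^{\alpha r}_\alpha}=1$ and $\sigma(\partial\mathbb B^m)=1$, so constants attain equality.

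The main obstacle is the passage $\eps\downarrow0$. The exponent $r(m+\eps)$ on the Bergman side is not $mr$, so one cannot simply compare with $\|f\|_{H^{mr}}$, because $f$ need not lie in $H^{r(m+\eps)}$. The dilation trick is exactly what removes this difficulty. With it, the limit reduces to the weak convergence of the normalized radial weights to boundary measure, applied to a continuous integrand on the closed ball, together with the correct normalizing constant $c_{m+\eps,m}$.
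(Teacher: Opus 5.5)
Your proof is correct, and it takes a different route from the paper.

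\textbf{The paper's route.} The paper does not derive this corollary from its own results. Its proof is a single citation: it invokes the Hardy endpoint of Li and Su's contractive chain \cite[Corollary~1.8]{LiSu2025}. It then observes that the only unproved hypothesis there is the isoperimetric classification supplied by Corollary~\ref{thm:gromov-ros-all-m}, with normalizations matched in Remark~\ref{rem:analytic-transfer}.

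\textbf{Your route.} You obtain the Hardy endpoint internally, as the limit $\alpha'\downarrow m$ of Corollary~\ref{cor:sharp-Bergman-contractivity} along the line $p/\alpha'=r$. Each step checks out.
\begin{enumerate}[label=\textup{(\arabic*)}]
\item The parameter constraints $m<\alpha'<\beta$ and $p<q$ hold for $0<\eps<\alpha-m$.
\item The probability measures $c_{m+\eps,m}(1-|z|^2)^{\eps-1}\,dv$ converge weakly to $d\sigma$. Since $c_{m+\eps,m}\sim\eps/m$, the mass on $\{|z|\le1-\delta\}$ is $O(\eps)$.
\item The uniform convergence of $|f_\rho|^{r(m+\eps)}$ to $|f_\rho|^{mr}$ uses only $mr>0$ and the boundedness and continuity of $f_\rho$ on $\overline{\mathbb B^m}$.
\item Fatou's lemma, or monotone convergence of the integral means in $\rho$, removes the dilation.
\end{enumerate}
You also correctly identified that the dilation is essential: $f$ itself need not lie in $H^{r(m+\eps)}$.

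\textbf{Comparison.} Your route is self-contained. The only external analytic input is the convex contraction theorem already used for Theorem~\ref{thm:convex-Bergman-contraction}. So you need not separately verify that the hypotheses and normalizations of Li--Su's Corollary~1.8 match the present setting. Your argument also makes transparent why the Hardy exponent is $mr$. The paper's route is shorter, but it delegates the endpoint passage entirely to the cited work.
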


\begin{proof}
This is the Hardy endpoint of the contractive chain of Li and Su
\cite[Corollary~1.8]{LiSu2025}. Its sole geometric hypothesis is the
classification of isoperimetric regions as geodesic balls, supplied by
Theorem~\ref{thm:gromov-ros-all-m}; the normalizations are matched in
Remark~\ref{rem:analytic-transfer}.
\end{proof}

\subsection{The Gaussian--Fock scaling limit}

Define the probability measure
\begin{equation}
 d\nu_{\alpha,m}(z)
 =
 c_{\alpha,m}(1-|z|^2)^\alpha\,d\mu_m(z),
 \qquad \alpha>m.
 \label{eq:weighted-Bergman-probability-measure}
\end{equation}
Let \(S_\alpha(z)=\sqrt{\alpha}\,z\), let
\(d\lambda_{2m}\) denote Lebesgue measure on \(\mathbb C^m\), and put
\[
 d\gamma_m(\xi)
 =
 \pi^{-m}e^{-|\xi|^2}\,d\lambda_{2m}(\xi).
\]
Thus \(\gamma_m\) is the standard complex Gaussian probability measure.
Under the identification \(\mathbb C^m\simeq\mathbb R^{2m}\), its real
covariance is \(\tfrac12 I_{2m}\).  Hence a real standard Gaussian vector in \(\mathbb R^{2m}\) has the same law as
\(\sqrt2\,\xi\) when \(\xi\sim\gamma_m\).

\begin{proposition}
\label{prop:Bergman-to-Gaussian-scaling}
The pushforward measures
\[
 \widetilde\nu_{\alpha,m}
 =
 (S_\alpha)_\#\nu_{\alpha,m}
\]
have densities
\begin{equation}
 \frac{d\widetilde\nu_{\alpha,m}}{d\lambda_{2m}}(\xi)
 =
 \frac{1}{\pi^m}
 \frac{\Gamma(\alpha)}{\Gamma(\alpha-m)\alpha^m}
 \begin{cases}
 \left(1-\dfrac{|\xi|^2}{\alpha}\right)^{\alpha-m-1},
 & |\xi|<\sqrt\alpha,\\[4pt]
 0,& |\xi|\ge\sqrt\alpha.
 \end{cases}
 \label{eq:scaled-Bergman-density}
\end{equation}
As \(\alpha\to\infty\), these densities converge in
\(L^1(\mathbb C^m)\) to the density of \(\gamma_m\).  Equivalently,
\[
 \widetilde\nu_{\alpha,m}
 \longrightarrow
 \gamma_m
 \qquad\text{in total variation}.
\]
\end{proposition}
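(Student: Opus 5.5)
The plan is to compute the pushforward density by a change of variables and then prove $L^1$ convergence by pointwise convergence plus Scheffé's lemma.

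\textbf{Density.} First rewrite $\nu_{\alpha,m}$ in Lebesgue measure on $\mathbb B^m$. Since $v$ is normalized, $dv=\frac{m!}{\pi^m}\,d\lambda_{2m}$ (the volume of $\mathbb B^m\subset\mathbb C^m$ is $\pi^m/m!$). Combining this with \eqref{eq:invariant-ball-measure} and \eqref{eq:weighted-Bergman-probability-measure} gives
\[
 d\nu_{\alpha,m}
 =\frac{\Gamma(\alpha)}{\pi^m\Gamma(\alpha-m)}(1-|z|^2)^{\alpha-m-1}\,d\lambda_{2m}(z).
\]
Next push forward under the real-linear map $S_\alpha$. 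Its real Jacobian on $\mathbb R^{2m}$ is $\alpha^{m}$, so the density at $\xi=\sqrt\alpha\,z$ equals $\alpha^{-m}$ times the density at $z=\xi/\sqrt\alpha$. This produces exactly \eqref{eq:scaled-Bergman-density}, with support in $|\xi|<\sqrt\alpha$. As a sanity check, the total mass is $1$ because $\nu_{\alpha,m}$ is a probability measure by the choice of $c_{\alpha,m}$; this could also be verified directly by polar coordinates and a Beta integral.

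\textbf{Pointwise limit.} Fix $\xi$. For $\alpha>|\xi|^2$ we have $(1-|\xi|^2/\alpha)^{\alpha-m-1}\to e^{-|\xi|^2}$, by taking logarithms and using $\log(1-s)=-s+O(s^2)$. For the constant, $\Gamma(\alpha)/(\Gamma(\alpha-m)\alpha^m)=\prod_{k=1}^m(1-k/\alpha)\to1$, which is exact since $m$ is an integer. Hence the densities converge pointwise to $\pi^{-m}e^{-|\xi|^2}$.

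\textbf{$L^1$ convergence.} All the densities are probability densities, and so is the limit. Scheffé's lemma therefore upgrades pointwise convergence to $L^1(\mathbb C^m)$ convergence, which is the same as total variation convergence; no domination estimate is needed. If one prefers a dominated convergence argument instead, note that $(1-s)^{\alpha-m-1}\le e^{-s(\alpha-m-1)}$, which gives the bound $e^{-|\xi|^2/2}$ uniformly for $\alpha\ge2(m+1)$.

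The only step needing care is the normalization bookkeeping: the factors $m!/\pi^m$ from $dv$ and $c_{\alpha,m}$ must cancel the $m!$ correctly, and the real Jacobian of $S_\alpha$ must be taken as $\alpha^m$ rather than $\alpha^{m/2}$. After that, everything reduces to routine limits.
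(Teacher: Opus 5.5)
Your proof is correct and follows the paper's argument essentially step by step: rewrite $\nu_{\alpha,m}$ against $d\lambda_{2m}$ using $dv=(m!/\pi^m)\,d\lambda_{2m}$, change variables $\xi=\sqrt\alpha\,z$ with real Jacobian $\alpha^m$, take pointwise limits, and invoke Scheff\'e's lemma because all the densities, and the limit, are probability densities. Your explicit product $\prod_{k=1}^m(1-k/\alpha)$ for the Gamma ratio and the optional domination by $e^{-|\xi|^2/2}$ for $\alpha\ge 2(m+1)$ add detail beyond what the paper records, but they do not change the argument.
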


\begin{proof}
Since \(dv=(m!/\pi^m)\,d\lambda_{2m}\), definitions
\eqref{eq:invariant-ball-measure},
\eqref{eq:weighted-Bergman-norm}, and
\eqref{eq:weighted-Bergman-probability-measure} give
\[
 d\nu_{\alpha,m}(z)
 =
 \frac{1}{\pi^m}
 \frac{\Gamma(\alpha)}{\Gamma(\alpha-m)}
 (1-|z|^2)^{\alpha-m-1}\,d\lambda_{2m}(z).
\]
The change of variables \(\xi=\sqrt{\alpha}\,z\) proves
\eqref{eq:scaled-Bergman-density}.  For every fixed \(\xi\),
\[
 \frac{\Gamma(\alpha)}{\Gamma(\alpha-m)\alpha^m}
 \longrightarrow1.
\]
The piecewise factor in \eqref{eq:scaled-Bergman-density} converges to
\(e^{-|\xi|^2}\).
The densities in \eqref{eq:scaled-Bergman-density} and the limiting
Gaussian density all have integral one.  Scheff\'e's lemma therefore
upgrades pointwise convergence to \(L^1\) convergence.
\end{proof}

\begin{remark}[Scope of the scaling]
Proposition~\ref{prop:Bergman-to-Gaussian-scaling} is a fixed-dimensional
limit as \(\alpha\to\infty\).  It asserts neither a rate of convergence nor
uniformity when the dimension grows, and it is not an extreme-value limit.
\end{remark}

\begin{corollary}
\label{cor:Gaussian-Fock-hypercontractivity}
Let \(m\ge2\) and \(0<p<q<\infty\).  For every entire function
\(F:\mathbb C^m\to\mathbb C\) with \(F\in L^p(\gamma_m)\),
\begin{equation}
 \left(
 \int_{\mathbb C^m}
 \left|F\!\left(\sqrt{\frac{p}q}\,\xi\right)\right|^q
 \,d\gamma_m(\xi)
 \right)^{1/q}
 \le
 \left(
 \int_{\mathbb C^m}|F(\xi)|^p\,d\gamma_m(\xi)
 \right)^{1/p}.
 \label{eq:Janson-hypercontractivity}
\end{equation}
Thus the sharp holomorphic Gaussian hypercontractivity inequality of
Janson \cite{Janson1983} is the high-weight flat limit of
Corollary~\ref{cor:sharp-Bergman-contractivity}.
\end{corollary}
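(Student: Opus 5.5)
The plan is to derive \eqref{eq:Janson-hypercontractivity} from Corollary~\ref{cor:sharp-Bergman-contractivity} by rescaling and letting the weight tend to infinity, using Proposition~\ref{prop:Bergman-to-Gaussian-scaling}.

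First reduce to good test functions. Assume first that $F$ is a holomorphic polynomial, or more generally entire with $|F(\xi)|\le Ce^{\epsilon|\xi|^2}$ for small $\epsilon$. Fix $0<p<q$ and take large $\alpha$ with $\beta=\alpha q/p$; then $m<\alpha<\beta$ and $p/\alpha=q/\beta$. Set
\[
f_\alpha(z)=F(\sqrt{\alpha}\,z),
\]
which is holomorphic on $\mathbb B^m$. Corollary~\ref{cor:sharp-Bergman-contractivity} gives $\|f_\alpha\|_{\mathcal A^q_\beta}\le\|f_\alpha\|_{\mathcal A^p_\alpha}$. Since $\|f\|_{\mathcal A^p_\alpha}^p=\int|f|^p\,d\nu_{\alpha,m}$, we have
\[
\|f_\alpha\|_{\mathcal A^p_\alpha}^p=\int|F|^p\,d\widetilde\nu_{\alpha,m}.
\]
For the left side, write $\sqrt\alpha\,z=\sqrt{\alpha/\beta}\,\sqrt\beta\,z=\sqrt{p/q}\,\sqrt\beta\,z$. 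This gives
\[
\|f_\alpha\|_{\mathcal A^q_\beta}^q=\int\bigl|F\bigl(\sqrt{p/q}\,\xi\bigr)\bigr|^q\,d\widetilde\nu_{\beta,m}(\xi).
\]
This scaling bookkeeping is what produces the dilation $\sqrt{p/q}$ in \eqref{eq:Janson-hypercontractivity}.

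Next, pass to the limit $\alpha\to\infty$, so that $\beta\to\infty$ as well. The densities in \eqref{eq:scaled-Bergman-density} converge pointwise to $\pi^{-m}e^{-|\xi|^2}$. Using $1-s\le e^{-s}$, for $\alpha\ge 2m+2$ they are dominated by
\[
C\,e^{-|\xi|^2(\alpha-m-1)/\alpha}\le C\,e^{-|\xi|^2/2},
\]
uniformly in $\alpha$, because the gamma-ratio prefactor is bounded. Under the growth assumption with $\epsilon p<1/2$, and similarly for the $q$-integral (where the dilation by $\sqrt{p/q}<1$ helps), dominated convergence yields convergence of both integrals to their Gaussian counterparts. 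This proves \eqref{eq:Janson-hypercontractivity} for this class of $F$.

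\textbf{Main obstacle: general $F\in L^p(\gamma_m)$.} This is the step I expect to be hardest, since such $F$ need not obey a uniform growth bound good enough for domination. I would try the dilation $F_s(\xi)=F(s\xi)$ with $s<1$ first. By the subharmonicity and mean-value argument for $|F|^p$ in the Fock space, $\|F_s\|_{L^p(\gamma_m)}\le\|F\|_{L^p(\gamma_m)}$. The pointwise Fock estimate $|F(\xi)|\le\|F\|_{L^p(\gamma)}e^{|\xi|^2/p}$ gives $|F_s(\xi)|\lesssim e^{s^2|\xi|^2/p}$, which for $s$ close to $0$ satisfies the growth hypothesis above. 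The difficulty is that the earlier step needs $\epsilon p<1/2$, i.e. $s^2<1/2$, so small $s$ is only available at first. I would therefore bootstrap, or better, bound the Bergman integrals directly through domination by $e^{-|\xi|^2(1-\delta)}$ for $\alpha$ large, which allows growth $e^{\epsilon|\xi|^2}$ with $\epsilon p<1$. With that improvement every $s<1$ is admissible. Apply the inequality to $F_s$, then let $s\uparrow1$: Fatou's lemma on the left and the monotone bound on the right finish the proof. Passing to the limit $s\uparrow1$ is the delicate point of the argument.
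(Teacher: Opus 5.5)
Your argument is correct, but it differs from the paper's in both analytic steps.

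\textbf{Flat limit.} The paper restricts to holomorphic polynomials. It gets uniform integrability from the explicit radial moment identity for $\widetilde\nu_{\alpha,m}$. You instead dominate the scaled densities pointwise: from $1-s\le e^{-s}$ and $\prod_{j=1}^m(1-j/\alpha)\le 1$, they are bounded by $\pi^{-m}e^{-(1-\delta)|\xi|^2}$ once $\alpha\ge(m+1)/\delta$. This treats at once every entire $F$ with $|F|^p\lesssim e^{c|\xi|^2}$ for some $c<1-\delta$. The same exponent governs the $q$-side after the dilation by $\sqrt{p/q}$.

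\textbf{Extension to general $F\in L^p(\gamma_m)$.} The paper's route is:
\begin{itemize}
\item invoke density of polynomials in the Gaussian Fock space;
\item apply the polynomial inequality to the differences $P_\ell-P_j$;
\item use completeness of $L^q$, as a quasi-normed space when $q<1$;
\item identify the limit through local uniform convergence.
\end{itemize}
You dilate instead. Monotonicity of the spherical means of the plurisubharmonic function $|F|^p$ gives $\|F_s\|_{L^p(\gamma_m)}\le\|F\|_{L^p(\gamma_m)}$. The point-evaluation bound $|F(\xi)|\le\|F\|_{L^p(\gamma_m)}e^{|\xi|^2/p}$ then places $F_s$ in your dominated class for every $s<1$, after choosing $\delta<1-s^2$.

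Your route is more self-contained. It avoids the density theorem, whose standard proof itself goes through dilations, and the quasi-Banach Cauchy argument. The paper's moment computation is more explicit but only covers polynomials directly.

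The step you flag as delicate is routine. By continuity, $F(s\sqrt{p/q}\,\xi)\to F(\sqrt{p/q}\,\xi)$ pointwise as $s\uparrow1$. Fatou's lemma together with the uniform bound $\|F_s\|_{L^p(\gamma_m)}\le\|F\|_{L^p(\gamma_m)}$ then closes the argument immediately, so the bootstrap alternative you mention is unnecessary.
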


\begin{proof}
First let \(F\) be a holomorphic polynomial.  For \(\alpha>m\), set
\[
 \beta=\frac{q}p\,\alpha,
 \qquad
 f_\alpha(z)=F(\sqrt{\alpha}\,z).
\]
Corollary~\ref{cor:sharp-Bergman-contractivity} gives
\[
 \|f_\alpha\|_{\mathcal A^q_\beta}
 \le
 \|f_\alpha\|_{\mathcal A^p_\alpha}.
\]
By Proposition~\ref{prop:Bergman-to-Gaussian-scaling},
\[
 \|f_\alpha\|_{\mathcal A^p_\alpha}^p
 =
 \int_{\mathbb C^m}|F(\xi)|^p
 \,d\widetilde\nu_{\alpha,m}(\xi),
\]
whereas
\[
 \|f_\alpha\|_{\mathcal A^q_\beta}^q
 =
 \int_{\mathbb C^m}
 \left|F\!\left(\sqrt{\frac{\alpha}{\beta}}\,\eta\right)\right|^q
 \,d\widetilde\nu_{\beta,m}(\eta).
\]
The case \(F\equiv0\) is immediate. Otherwise, let \(d=\deg F\)
and choose an integer \(k\) such that
\(2k>\max\{dp,dq\}\).  Since
\[
 |F(\xi)|^s\le C_s(1+|\xi|^{ds}),
 \qquad s\in\{p,q\},
\]
the required uniform integrability follows from the radial moment identity
\[
 \int_{\mathbb C^m}|\xi|^{2k}\,
 d\widetilde\nu_{\alpha,m}(\xi)
 =
 \alpha^k
 \frac{\Gamma(m+k)\Gamma(\alpha)}{\Gamma(m)\Gamma(\alpha+k)}
 \qquad(k=0,1,\ldots),
\]
whose right-hand side is bounded for fixed \(k\) and all sufficiently large
\(\alpha\).  Passing to the limit and using \(\alpha/\beta=p/q\) proves
\eqref{eq:Janson-hypercontractivity} for polynomials.

For a general entire \(F\in L^p(\gamma_m)\), choose holomorphic polynomials
\(P_\ell\to F\) in \(L^p(\gamma_m)\), using polynomial density in Gaussian
Fock spaces \cite[Chapter~2]{ZhuFock2012}.  Applying the polynomial
inequality to \(P_\ell-P_j\) shows that
\(P_\ell(\sqrt{p/q}\,\cdot)\) is Cauchy in \(L^q(\gamma_m)\).  This remains
valid below exponent one: for every \(0<s<\infty\), \(L^s\) is complete for
the metric
\[
 d_s(u,v)=\|u-v\|_{L^s(\gamma_m)}^{\min\{1,s\}}.
\]
The submean-value estimate for entire functions shows that
\(L^p(\gamma_m)\)-convergence implies local uniform convergence.  Therefore
the \(L^q\)-limit is \(F(\sqrt{p/q}\,\cdot)\), and passage to the limit in
the polynomial inequality proves the claim.
\end{proof}

\begin{remark}[Extremizers and entropy in the flat limit]
For fixed \(\mathbf a\in\mathbb C^m\), take
\(w_\alpha=\mathbf a/\sqrt{\alpha}\) in
\eqref{eq:Bergman-extremizer}.  In the scaled variable
\(z=\xi/\sqrt{\alpha}\),
\[
 \frac{(1-|w_\alpha|^2)^{\alpha/p}}{(1-\langle z,w_\alpha\rangle)^{2\alpha/p}}
 \longrightarrow
 \exp\left(
 -\frac{|\mathbf a|^2}{p}
 +\frac{2}{p}\langle\xi,\mathbf a\rangle
 \right).
\]
The convergence is locally uniform.  For the limiting coherent state
\[
 F_{\mathbf a}(\xi)
 =
 \exp\left(
 -\frac{|\mathbf a|^2}{p}
 +\frac{2}{p}\langle\xi,\mathbf a\rangle
 \right),
\]
the complex Gaussian identity
\[
 \int_{\mathbb C^m}
 e^{2\operatorname{Re}\langle\xi,\mathbf b\rangle}\,d\gamma_m(\xi)
 =
 e^{|\mathbf b|^2}
\]
gives
\[
 \int|F_{\mathbf a}(\xi)|^p\,d\gamma_m(\xi)=1,
 \qquad
 \int\left|F_{\mathbf a}\!\left(\sqrt{\frac{p}q}\,\xi\right)\right|^q
 \,d\gamma_m(\xi)=1.
\]
Thus the Bergman reproducing-kernel extremizers converge to exponential
coherent states that attain equality in
\eqref{eq:Janson-hypercontractivity}.  Likewise, as
\(\lambda\to\infty\),
\[
 \lambda\sum_{j=1}^m\frac{1}{\lambda-j}\longrightarrow m,
\]
so the \(SU(m,1)\) entropy constant converges to its Gaussian--Fock value.
These are compatibility statements; the Gaussian inequalities themselves
are classical.
\end{remark}

\subsection{A Bergman-space Faber--Krahn inequality}

For $s>0$, let $B_s\subset\mathbb B^m$ be the Euclidean ball centered at the
origin whose invariant measure is $\mu_m(B_s)=s$.  If its Euclidean radius
is $R_s$, then
\begin{equation}
 s=\left(\frac{R_s^2}{1-R_s^2}\right)^m,
 \qquad
 R_s^2=\frac{s^{1/m}}{1+s^{1/m}}.
 \label{eq:invariant-ball-radius}
\end{equation}

\begin{theorem}[Sharp Bergman Faber--Krahn concentration]
\label{thm:Bergman-Faber-Krahn}
Let $m\ge2$, $\alpha>m$, and $s>0$.  If
$f\in\mathcal A^2_\alpha(\mathbb B^m)$,
$\|f\|_{\mathcal A^2_\alpha}=1$, and $E\subset\mathbb B^m$ is measurable
with $\mu_m(E)=s$, then
\begin{equation}
 \int_E |f(z)|^2(1-|z|^2)^\alpha\,d\mu_m(z)
 \le
 \mathfrak F_{m,\alpha}(s),
 \label{eq:Bergman-Faber-Krahn}
\end{equation}
where
\begin{equation}
 \begin{aligned}
 \mathfrak F_{m,\alpha}(s)
 &:=\int_{B_s}(1-|z|^2)^\alpha\,d\mu_m(z)\\
 &=m\int_0^{\frac{s^{1/m}}{1+s^{1/m}}}
 t^{m-1}(1-t)^{\alpha-m-1}\,dt.
 \end{aligned}
 \label{eq:Bergman-Faber-Krahn-profile}
\end{equation}
Equality holds if and only if, up to a null set, $E$ is a complex-hyperbolic
ball centered at some $w\in\mathbb B^m$ and
\[
 f(z)=e^{i\theta}
 \frac{(1-|w|^2)^{\alpha/2}}{(1-\langle z,w\rangle)^\alpha}
\]
for some $\theta\in\mathbb R$.
\end{theorem}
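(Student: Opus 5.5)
The plan is a bathtub (layer-cake) argument combined with the convex contraction principle. Since the paper's pattern is to discharge conditional results, I would first check whether Singh's Bergman Faber--Krahn theorem applies directly. Under the dictionary of Remark~\ref{rem:analytic-transfer}, with $N=m$ and $\lambda=(m+1)k$, Singh proves \eqref{eq:Bergman-Faber-Krahn} and its equality cases assuming only that geodesic balls are the complex-hyperbolic isoperimetric regions, and Corollary~\ref{thm:gromov-ros-all-m} supplies that hypothesis. A self-contained route is also available, and I would outline it as a check.

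Put $u=u_f=|f|^2(1-|z|^2)^\alpha\le1$ and $u_0=(1-|z|^2)^\alpha$. The target is $\sup_{\mu_m(E)=s}\int_E u\,d\mu_m\le\int_{B_s}u_0\,d\mu_m$.

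\textbf{Step 1 (bathtub).} For fixed $f$, the supremum over $E$ is attained on superlevel sets $\{u>t\}$, suitably completed on $\{u=t\}$, since $\int u\,d\mu_m<\infty$. So
\[
\int_E u\,d\mu_m\le\int_0^{1}\min\{\mu_m(\{u>t\}),s\}\,dt .
\]
The same holds for $u_0$, whose superlevel sets are the centered balls. On the right side for $u_0$ equality holds with $E=B_s$.

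\textbf{Step 2 (domination of $\min\{\mu,s\}$).} Write $\min\{\mu_m(\{u>t\}),s\}$ as the integral of a convex functional. The useful identity is
\[
\int_0^1\min\{\mu_u(t),s\}\,dt=\inf_{\tau}\Bigl(s\tau+\int(u-\tau)_+\,d\mu_m\Bigr),
\]
where $\mu_u(t)=\mu_m(\{u>t\})$. For each $\tau\in[0,1]$, the map $\Phi_\tau(x)=(x-\tau)_+$ is convex with $\Phi_\tau(0)=0$. Theorem~\ref{thm:convex-Bergman-contraction} then gives $\int(u-\tau)_+\le\int(u_0-\tau)_+$. Choose $\tau$ to be the optimal level for $u_0$, namely $\tau=u_0|_{\partial B_s}$. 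At that $\tau$ the right side equals $\int_{B_s}u_0\,d\mu_m$. Taking the infimum on the left proves \eqref{eq:Bergman-Faber-Krahn}.

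\textbf{Step 3 (profile formula).} Evaluate $\int_{B_s}u_0\,d\mu_m$ in polar coordinates with $t=|z|^2$ and radius from \eqref{eq:invariant-ball-radius}, after checking that the normalization gives $m\,t^{m-1}(1-t)^{\alpha-m-1}dt$. This yields \eqref{eq:Bergman-Faber-Krahn-profile}.

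\textbf{Step 4 (equality), the expected main obstacle.} If equality holds, then equality holds in the contraction inequality for the single non-strictly convex $\Phi_\tau$, so the strict-convexity equality clause of Theorem~\ref{thm:convex-Bergman-contraction} does not apply directly. Here I would invoke Singh's equality analysis, which comes from the isoperimetric equality case now unconditional by Theorem~\ref{thm:rank-one-gromov-ros}. That analysis forces $\{u>t\}$ to be geodesic balls with equality in the distribution-function differential inequality on a range of levels. This pins $f$ as a reproducing kernel $f_{w,\theta}$, and $E$ as the superlevel set $\{u>\tau\}$, a ball centered at $w$. The automorphism $\phi_w$ transports $B_s$ to that ball, which shows the converse.
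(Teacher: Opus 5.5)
Your first paragraph is exactly the paper's proof. The paper invokes Singh's Theorem~C under the dictionary of Remark~\ref{rem:analytic-transfer}, discharges its isoperimetric hypothesis with Corollary~\ref{thm:gromov-ros-all-m}, and verifies the profile by the radial computation $d\mu_m=m\,t^{m-1}(1-t)^{-m-1}\,dt$, which is your Step~3.

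Your self-contained route for the inequality is genuinely different, and it is correct. For every $\tau\ge0$ you have
$\int_E u\,d\mu_m\le\int_0^1\min\{\mu_u(t),s\}\,dt\le s\tau+\int(u-\tau)_+\,d\mu_m$.
Theorem~\ref{thm:convex-Bergman-contraction} applies to $\Phi_\tau(x)=(x-\tau)_+$; its right-hand side is finite because $\{u_0>\tau\}$ is a ball of finite measure. Taking $\tau=u_0|_{\partial B_s}$ gives $\{u_0>\tau\}=B_s$, hence $s\tau+\int(u_0-\tau)_+\,d\mu_m=\int_{B_s}u_0\,d\mu_m$, and \eqref{eq:Bergman-Faber-Krahn} follows. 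This is the majorization (Hardy--Littlewood--P\'olya) direction of the equivalence between the convex principle and Faber--Krahn. It buys you two things. First, the inequality rests only on the external input already used for Theorem~\ref{thm:convex-Bergman-contraction} (Li--Su). Second, it covers every real $\alpha>m$ at once, so no parameter matching with Singh's $\lambda$ is needed for this half.

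What it cannot buy is the equality case, as you correctly note. The family $\Phi_\tau$ only tells you that $\tau\mapsto\int(u_0-\tau)_+\,d\mu_m-\int(u-\tau)_+\,d\mu_m$ is nonnegative and vanishes at one $\tau$. That does not identify $f$. You must return to Singh's level-set rigidity, which is what the paper cites for both halves. The paper's version is shorter; yours shows that only the equality characterization genuinely requires Singh's theorem.
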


\begin{proof}
Under the dictionary in Remark~\ref{rem:analytic-transfer}, Singh proves
this statement in every complex dimension under the hypothesis
that the isoperimetric regions of the complex hyperbolic ball are geodesic
balls; see \cite[Theorem~C]{Singh2026}.  The hypothesis is supplied here by
Theorem~\ref{thm:gromov-ros-all-m}. This proves
\eqref{eq:Bergman-Faber-Krahn} and its equality characterization.

It remains only to verify the explicit expression
\eqref{eq:Bergman-Faber-Krahn-profile}.  With $t=|z|^2$ and normalized
Euclidean measure, radial integration gives
\[
 d\mu_m=m t^{m-1}(1-t)^{-m-1}\,dt.
\]
Consequently
\[
 \mu_m(B_R)=m\int_0^{R^2}t^{m-1}(1-t)^{-m-1}\,dt
 =\left(\frac{R^2}{1-R^2}\right)^m,
\]
which proves \eqref{eq:invariant-ball-radius}; inserting the additional
factor $(1-t)^\alpha$ gives
\eqref{eq:Bergman-Faber-Krahn-profile}.
\end{proof}

\subsection{The Lieb--Wehrl inequality for
\texorpdfstring{$SU(m,1)$}{SU(m,1)}}

The terminology originates in Lieb's proof of Wehrl's entropy conjecture
\cite{Lieb1978}. Fix \(m\ge2\) and an integer \(k\ge1\), and put
\[
 \lambda=(m+1)k.
\]
The weighted Hilbert space $\mathcal A^2_\lambda$ realizes the corresponding
holomorphic discrete-series representation of $SU(m,1)$; see
\cite{Singh2026}.  Its normalized
coherent vector centered at $w\in\mathbb B^m$ is
\[
 \varphi_w(z)=
 \frac{(1-|w|^2)^{\lambda/2}}{(1-\langle z,w\rangle)^\lambda}.
\]
Here a density operator means a positive trace-class operator \(\varrho\) on
\(\mathcal A^2_\lambda\), normalized by \(\operatorname{tr}\varrho=1\).  Its
trace is
\(\operatorname{tr}\varrho=\sum_j\langle e_j,\varrho e_j\rangle\) for any
orthonormal basis \((e_j)\) of \(\mathcal A^2_\lambda\); the series converges,
its sum is finite, and its value is independent of the chosen basis.  Define
the Husimi function of \(\varrho\) by
\[
 h_\varrho(z)=\langle\varphi_z,\varrho\varphi_z\rangle.
\]
It satisfies $0\le h_\varrho\le1$ and
\[
 c_{\lambda,m}\int_{\mathbb B^m}h_\varrho\,d\mu_m=1.
\]

\begin{theorem}[Generalized Lieb--Wehrl inequality]
\label{thm:Lieb-Wehrl-SUm1}
Let \(m\ge2\), let \(k\ge1\), and put \(\lambda=(m+1)k\).
Let \(\varrho\) be a density operator on \(\mathcal A^2_\lambda\).
Then, for every convex
\(\Phi:[0,1]\to\mathbb R\) with
{\(\Phi(0)=0\)} for which the integrals are finite,
\begin{equation}
 \int_{\mathbb B^m}\Phi(h_\varrho(z))\,d\mu_m(z)
 \le
 \frac{m}{\lambda}\int_0^1
 \Phi(s)s^{-m/\lambda-1}
 \left(1-s^{1/\lambda}\right)^{m-1}\,ds.
 \label{eq:generalized-Lieb-Wehrl}
\end{equation}
The right-hand side is the value for any rank-one coherent projector
$\varrho=|\varphi_w\rangle\langle\varphi_w|$.  If $\Phi$ is strictly
convex, these are the only equality cases.
\end{theorem}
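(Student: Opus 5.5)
The plan is to follow the pattern of the preceding analytic consequences: invoke the conditional theorem of Singh \cite{Singh2026} for the $SU(m,1)$ Lieb--Wehrl inequality, whose only unverified input is the complex-hyperbolic isoperimetric classification. Corollary~\ref{thm:gromov-ros-all-m} supplies that input. First, use Remark~\ref{rem:analytic-transfer} to match normalizations: $N=m$, $\lambda=(m+1)k$, and $\mu_m=(m!/\pi^m)\,dV_g$. Since $h_\varrho$ and $\mu_m$ are defined in the same unit-ball model, Singh's generalized Wehrl statement for convex $\Phi$ with $\Phi(0)=0$ then reads exactly as \eqref{eq:generalized-Lieb-Wehrl}. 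His argument should be the Kulikov-type one: superlevel sets of $h_\varrho$ are compared with geodesic balls via coarea, the isoperimetric inequality and a differential inequality for the distribution function. It is exactly there that the geodesic-ball hypothesis is used.

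Second, I will compute the right-hand side independently as a consistency check. For $\varrho=|\varphi_0\rangle\langle\varphi_0|$ we have $h(z)=|\langle\varphi_z,\varphi_0\rangle|^2=(1-|z|^2)^\lambda$. With $t=|z|^2$ and $d\mu_m=mt^{m-1}(1-t)^{-m-1}\,dt$ (as in the Faber--Krahn proof), substitute $s=(1-t)^\lambda$, so $1-t=s^{1/\lambda}$ and $dt=-\lambda^{-1}s^{1/\lambda-1}\,ds$. This gives
\[
\int\Phi(h)\,d\mu_m=\frac{m}{\lambda}\int_0^1\Phi(s)\,(1-s^{1/\lambda})^{m-1}s^{-(m+1)/\lambda}s^{1/\lambda-1}\,ds ,
\]
which is the displayed constant. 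Invariance of $\mu_m$ under the automorphisms $\phi_w$, together with the unitary action of $SU(m,1)$ on $\mathcal A^2_\lambda$ carrying $\varphi_0$ to $\varphi_w$ up to phase, shows that every coherent projector attains the same value.

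Third, I will treat mixed states. By linearity, $h_\varrho=\sum_j p_j h_{\psi_j}$ for the spectral decomposition of $\varrho$. If Singh's theorem is stated only for pure states, convexity of $\Phi$ reduces the mixed case to the pure case; the pure case is Theorem~\ref{thm:convex-Bergman-contraction} with $p=2$ and $\alpha=\lambda$, since $h_\psi(z)=|\psi(z)|^2(1-|z|^2)^\lambda$. For equality with $\Phi$ strictly convex, equality in Jensen forces all $h_{\psi_j}$ with $p_j>0$ to coincide almost everywhere, and the pure-state equality case forces each to be a coherent state. Hence $\varrho$ is a rank-one coherent projector.

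The main obstacle is the transfer step. I need to confirm that Singh's hypothesis is precisely the finite-perimeter classification of Corollary~\ref{thm:gromov-ros-all-m}, with balls up to null sets, rather than a stronger smooth or quantitative version. I also need to check that the finiteness and integrability conventions for $\Phi$ agree with his. If the pure-state reduction is used instead, the delicate point is the Jensen step when $\Phi$ is not bounded below near $0$. I would handle it by splitting $\Phi$ into a linear part, which integrates to a constant by the Husimi normalization, and a nonnegative convex part vanishing at $0$.
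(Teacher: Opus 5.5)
Your proposal is correct, and its core step is exactly the paper's proof: read Singh's conditional $SU(N,1)$ Lieb--Wehrl theorem (his Theorems~A and~B) through the dictionary $N=m$, $\lambda=(m+1)k$ of Remark~\ref{rem:analytic-transfer}, and discharge its only geometric hypothesis with Corollary~\ref{thm:gromov-ros-all-m}. Your coherent-state evaluation of the right-hand side and your pure-to-mixed fallback (Theorem~\ref{thm:convex-Bergman-contraction} with $p=2$, $\alpha=\lambda$, followed by Jensen) are sound extras, with one refinement to the Jensen step: a convex $\Phi$ on $[0,1]$ is automatically bounded, so the linear-plus-nonnegative splitting you propose is the wrong fix, and it fails anyway when $\Phi'(0^+)=-\infty$; instead apply Tonelli separately to $\Phi^{\pm}$, using $\Phi^+(s)\le s\,\Phi(1)^+$ to make the positive part integrable.
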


\begin{proof}
Under the dictionary in Remark~\ref{rem:analytic-transfer}, Singh's
generalized Lieb--Wehrl theorem for $SU(N,1)$ is conditional only
on the assertion that complex-hyperbolic isoperimetric regions are
geodesic balls; see \cite[Theorems~A and B]{Singh2026}.  Setting $N=m$ and
using Theorem~\ref{thm:gromov-ros-all-m} removes that condition for every
\(m\ge2\) and gives
\eqref{eq:generalized-Lieb-Wehrl}, including its equality statement.
\end{proof}

\begin{corollary}
\label{cor:Lieb-Wehrl-entropy}
Under the assumptions of Theorem~\ref{thm:Lieb-Wehrl-SUm1}, define the
normalized Wehrl entropy
\begin{equation}
 \mathcal S_\lambda(\varrho)
 =-c_{\lambda,m}\int_{\mathbb B^m}
 h_\varrho(z)\log h_\varrho(z)\,d\mu_m(z).
 \label{eq:Wehrl-entropy-definition}
\end{equation}
Then
\begin{equation}
 \mathcal S_\lambda(\varrho)
 \ge
 \lambda\bigl(\psi(\lambda)-\psi(\lambda-m)\bigr)
 =\lambda\sum_{j=1}^{m}\frac{1}{\lambda-j},
 \label{eq:Wehrl-entropy-bound}
\end{equation}
where $\psi=\Gamma'/\Gamma$ is the digamma function.  Equality holds if and
only if $\varrho$ is a rank-one coherent projector.  Without the normalizing factor
$c_{\lambda,m}$ in \eqref{eq:Wehrl-entropy-definition}, the right-hand side is
divided by $c_{\lambda,m}$.
\end{corollary}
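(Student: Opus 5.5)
This follows from Theorem~\ref{thm:Lieb-Wehrl-SUm1} applied to the strictly convex function $\Phi(s)=s\log s$, with $\Phi(0)=0$ by continuity. Since $0\le h_\varrho\le1$, the integrand $-h_\varrho\log h_\varrho$ is nonnegative. The left side of \eqref{eq:generalized-Lieb-Wehrl} is then $-c_{\lambda,m}^{-1}\mathcal S_\lambda(\varrho)$, so the inequality reverses into a lower bound for $\mathcal S_\lambda(\varrho)$. Finiteness is not a real issue. If the entropy integral diverges to $+\infty$, the bound holds trivially. The right-hand side is finite because $s\log s\cdot s^{-m/\lambda-1}$ is integrable at $0$: we have $m/\lambda\le m/(m+1)<1$. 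Equality for strictly convex $\Phi$ gives exactly the rank-one coherent projectors.

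The remaining work is computing the right-hand side. The cleanest route is to evaluate it directly for the coherent state $\varrho=|\varphi_0\rangle\langle\varphi_0|$, where $h_\varrho(z)=(1-|z|^2)^\lambda$. Thus
\[
\mathcal S_\lambda=-c_{\lambda,m}\int_{\mathbb B^m}(1-|z|^2)^\lambda\,\lambda\log(1-|z|^2)\,d\mu_m .
\]
Using $d\mu_m=m t^{m-1}(1-t)^{-m-1}dt$ with $t=|z|^2$, as in the proof of Theorem~\ref{thm:Bergman-Faber-Krahn}, this becomes
\[
-\lambda\, c_{\lambda,m}\, m\int_0^1 t^{m-1}(1-t)^{\lambda-m-1}\log(1-t)\,dt .
\]
Write this as $-\lambda\,\partial_a$ of the Beta integral $B(m,a)$ at $a=\lambda-m$, normalized by $c_{\lambda,m}m=\Gamma(\lambda)/(\Gamma(m)\Gamma(\lambda-m))=1/B(m,\lambda-m)$. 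The derivative of $\log B(m,a)$ in $a$ is $\psi(a)-\psi(a+m)$. Hence the value is $\lambda(\psi(\lambda)-\psi(\lambda-m))$. The telescoping identity $\psi(x+1)=\psi(x)+1/x$ then gives $\sum_{j=1}^m 1/(\lambda-j)$.

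The final normalization remark is immediate: dropping $c_{\lambda,m}$ divides both sides by $c_{\lambda,m}$.

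The only point needing care is the sign and normalization bookkeeping between \eqref{eq:generalized-Lieb-Wehrl}, whose measure is unnormalized $d\mu_m$, and the normalized entropy \eqref{eq:Wehrl-entropy-definition}. I would make sure the factor $c_{\lambda,m}$ is applied consistently. As a cross-check, I would also verify the Beta-derivative evaluation against the right-hand side of \eqref{eq:generalized-Lieb-Wehrl}, via the substitution $s=(1-t)^\lambda$.
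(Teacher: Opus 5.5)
Your proposal is correct and follows essentially the paper's route: apply Theorem~\ref{thm:Lieb-Wehrl-SUm1} with $\Phi(s)=s\log s$, evaluate the entropy of the coherent state at the origin, where $h=(1-|z|^2)^\lambda$, by differentiating a Beta function, and finish with the digamma recurrence. The only difference is cosmetic. You differentiate $B(m,a)$ in $a$ directly in radial coordinates, whereas the paper differentiates the normalized $q$-moment $\mathcal M(q)$ obtained from the distribution formula. Your explicit handling of the case of infinite entropy is a small addition the paper omits.
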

\begin{remark}[The one-dimensional case]
For \(m=1\), the preceding entropy inequality is the
Lieb--Wehrl inequality for \(SU(1,1)\). In this case,
\[
\mathcal S_\lambda(\varrho)
\ge
\lambda\bigl(\psi(\lambda)-\psi(\lambda-1)\bigr)
=
\frac{\lambda}{\lambda-1}.
\]
The corresponding convex-functional extremal principle for normalized
functions in weighted Bergman spaces was proved by Kulikov
\cite{Kulikov2022}, resolving the \(SU(1,1)\) conjecture formulated by
Lieb and Solovej \cite{LiebSolovej2021}. The extension from pure states
to arbitrary density operators follows from the spectral decomposition
of the density operator and Jensen's inequality. Equality holds precisely
for rank-one coherent projectors. At the weighted Bergman-space level,
the result is valid for every real parameter \(\lambda>1\).
\end{remark}
\begin{proof}
Let
\[
\Phi(s)=s\log s,\qquad s\geq 0,
\]
with the convention \(0\log 0=0\). Since \(\Phi\) is strictly convex,
Theorem~\ref{thm:Lieb-Wehrl-SUm1} implies that
\[
c_{\lambda,m}\int h\log h\,d\mu_m
\leq
c_{\lambda,m}\int
h_{\mathrm{coh}}\log h_{\mathrm{coh}}\,d\mu_m,
\]
where \(h_{\mathrm{coh}}\) is the Husimi function of a coherent-state
projector. Multiplying the preceding inequality by \(-1\), we obtain
\[
-c_{\lambda,m}\int h\log h\,d\mu_m
\geq
-c_{\lambda,m}\int
h_{\mathrm{coh}}\log h_{\mathrm{coh}}\,d\mu_m.
\]
It therefore remains to compute the coherent-state entropy.

By the \(SU(m,1)\)-invariance of the problem, we may take the coherent
projector centered at the origin. Its Husimi function is
\[
h_{\mathrm{coh}}(z)=(1-|z|^2)^\lambda.
\]
For
\[
q>\frac{m}{\lambda},
\]
the distribution formula in
Theorem~\ref{thm:Lieb-Wehrl-SUm1} gives
\[
\int h_{\mathrm{coh}}^q\,d\mu_m
=
\frac{m}{\lambda}
\int_0^1
s^{q-m/\lambda-1}
\left(1-s^{1/\lambda}\right)^{m-1}\,ds.
\]
The restriction \(q>m/\lambda\) guarantees convergence at \(s=0\).
Since \(\lambda>m\), the admissible range contains an open neighborhood
of \(q=1\).

We make the substitution
\[
t=s^{1/\lambda},
\qquad
s=t^\lambda,
\qquad
ds=\lambda t^{\lambda-1}\,dt.
\]
Then
\[
s^{q-m/\lambda-1}
=
t^{\lambda q-m-\lambda},
\]
and consequently
\[
\begin{aligned}
\frac{m}{\lambda}
\int_0^1
s^{q-m/\lambda-1}
\left(1-s^{1/\lambda}\right)^{m-1}\,ds
&=
m\int_0^1
t^{\lambda q-m-1}(1-t)^{m-1}\,dt \\
&=
m\,B(\lambda q-m,m).
\end{aligned}
\]
Using the beta-function identity
\[
B(x,y)
=
\frac{\Gamma(x)\Gamma(y)}{\Gamma(x+y)}
\]
and the identity
\[
\Gamma(m)=(m-1)!,
\]
we obtain
\[
\begin{aligned}
m\,B(\lambda q-m,m)
&=
m\,
\frac{\Gamma(\lambda q-m)\Gamma(m)}{\Gamma(\lambda q)} \\
&=
m!\,
\frac{\Gamma(\lambda q-m)}{\Gamma(\lambda q)}.
\end{aligned}
\]
Therefore
\[
\int h_{\mathrm{coh}}^q\,d\mu_m
=
m!\,
\frac{\Gamma(\lambda q-m)}{\Gamma(\lambda q)}.
\]

Since
\[
c_{\lambda,m}
=
\frac{\Gamma(\lambda)}{m!\Gamma(\lambda-m)},
\]
the normalized \(q\)-moment is
\[
\mathcal M(q)
:=
c_{\lambda,m}\int h_{\mathrm{coh}}^q\,d\mu_m
=
\frac{\Gamma(\lambda)}{\Gamma(\lambda-m)}
\frac{\Gamma(\lambda q-m)}{\Gamma(\lambda q)}.
\]
At \(q=1\), this gives
\[
\mathcal M(1)
=
\frac{\Gamma(\lambda)}{\Gamma(\lambda-m)}
\frac{\Gamma(\lambda-m)}{\Gamma(\lambda)}
=
1.
\]
Thus
\[
c_{\lambda,m}
\int h_{\mathrm{coh}}\,d\mu_m
=
1.
\]

We next differentiate \(\mathcal M(q)\) at \(q=1\). Since
\(q=1\) lies in the interior of the range \(q>m/\lambda\),
differentiation under the integral sign is justified. Hence
\[
\mathcal M'(q)
=
c_{\lambda,m}
\int
h_{\mathrm{coh}}^q
\log h_{\mathrm{coh}}\,d\mu_m,
\]
and therefore
\[
\mathcal M'(1)
=
c_{\lambda,m}
\int
h_{\mathrm{coh}}
\log h_{\mathrm{coh}}\,d\mu_m.
\]

Recall that the digamma function is defined by
\[
\psi(x)
=
\frac{\Gamma'(x)}{\Gamma(x)}
=
\frac{d}{dx}\log\Gamma(x).
\]
Taking the logarithmic derivative of \(\mathcal M(q)\), we obtain
\[
\frac{\mathcal M'(q)}{\mathcal M(q)}
=
\lambda\psi(\lambda q-m)
-
\lambda\psi(\lambda q).
\]
Since
\[
\mathcal M(1)=1,
\]
evaluation at \(q=1\) yields
\[
c_{\lambda,m}
\int
h_{\mathrm{coh}}
\log h_{\mathrm{coh}}\,d\mu_m
=
\lambda
\left(
\psi(\lambda-m)-\psi(\lambda)
\right).
\]
Consequently,
\[
-c_{\lambda,m}
\int
h_{\mathrm{coh}}
\log h_{\mathrm{coh}}\,d\mu_m
=
\lambda
\left(
\psi(\lambda)-\psi(\lambda-m)
\right).
\]
Combining this identity with the inequality obtained from
Theorem~\ref{thm:Lieb-Wehrl-SUm1}, we conclude that
\[
-c_{\lambda,m}
\int h\log h\,d\mu_m
\geq
\lambda
\left(
\psi(\lambda)-\psi(\lambda-m)
\right).
\]

Finally, since \(\lambda\) and \(\lambda-m\) are positive integers, the
recurrence relation
\[
\psi(x+1)-\psi(x)
=
\frac{1}{x}
\]
gives
\[
\begin{aligned}
\psi(\lambda)-\psi(\lambda-m)
&=
\sum_{k=\lambda-m}^{\lambda-1}\frac{1}{k} \\
&=
\sum_{j=1}^{m}\frac{1}{\lambda-j}.
\end{aligned}
\]
Hence
\[
-c_{\lambda,m}
\int h\log h\,d\mu_m
\geq
\lambda
\sum_{j=1}^{m}\frac{1}{\lambda-j},
\]
which proves the two equivalent expressions in
\eqref{eq:Wehrl-entropy-bound}.

Because \(\Phi(s)=s\log s\) is strictly convex, the equality statement in
Theorem~\ref{thm:Lieb-Wehrl-SUm1} applies. Therefore equality holds
precisely for coherent-state projectors, up to the natural
\(SU(m,1)\)-action.
\end{proof}
\begin{remark}[Related quantitative results]
The Faber--Krahn theorem of Nicola and Tilli \cite{NicolaTilli2022}
concerns Gaussian short-time Fourier transforms, equivalently the Fock
space. Quantitative stability in that problem was subsequently established
by G\'omez, Guerra, Ramos, and Tilli
\cite{GomezGuerraRamosTilli2024}. G\'omez, Kalaj, Melentijevi\'c, and Ramos
developed a uniform stability theory for related Cauchy-wavelet
concentration inequalities, including their Hardy-space limit
\cite{GomezKalajMelentijevicRamos2025}. These results are not used here, but
they suggest quantitative refinements of the Bergman concentration
inequalities proved above.  In the convex-functional setting, the
higher-dimensional stability theorem of Melentijevi\'c
\cite[Theorem~4]{Melentijevic2025} is a direct quantitative consequence of
the now-unconditional contraction principle
\eqref{eq:convex-Bergman-contraction}.
\end{remark}

\ithead{Final perspective.}
The volume-radius formulation shows that the geometric core of the argument
is rank-one rather than specifically complex: its cofactor Hessian detects
the nonradial part of the boundary normal, while the log-partition correction
supplies exact-volume variations in every root multiplicity.  In the complex
case the same log-partition Hessian is an angular covariance whose Gaussian
average gives the trace, and the high-weight flat limit carries the resulting
Bergman contractivity to the classical Gaussian--Fock hypercontractive
inequality.

\appendix
\section{Extended algebraic verification of Theorems~2.7--2.11}
\label{app:extended-trace-verification}

This appendix expands the algebra behind Theorems~\ref{thm:intrinsic-trace},
\ref{thm:strict-positivity}, \ref{thm:quaternionic-trace-sign}, and
\ref{thm:Cayley-trace-sign}, together with Corollary~\ref{cor:tangent-plane-trace}.
The purpose is not to introduce new geometry, but to make every specialization
of the universal trace formula reproducible by hand.

Throughout, the starting point is Theorem~\ref{thm:universal-rank-one-trace}:
\begin{equation}
\begin{aligned}
\mathscr T_{p,q}
={}&-nZ-Z^2\\
&+\alpha^2\left(
-\ddot A+2\dot A^2
+\eta_H^2(p-\beta)
+\eta_V^2(q-\gamma)
\right)\\
&+\beta(-\ddot B+2\dot B^2)
+\gamma(-\ddot C+2\dot C^2),
\end{aligned}
\label{eq:appx-universal-trace}
\end{equation}
where
\[
A=\log\mathfrak a,\qquad B=\log b,\qquad C=\log c,
\]
\[
\eta_H=\frac{\mathfrak a}{b},\qquad
\eta_V=\frac{\mathfrak a}{c},
\]
and
\[
Z=\dot A\,\alpha^2+\dot B\,\beta+\dot C\,\gamma.
\]
We use the abbreviation
\begin{equation}
\delta:=\alpha^2=1-\beta-\gamma.
\label{eq:appx-delta}
\end{equation}
All dots denote differentiation with respect to $y=\log R$.

\subsection{The complex case: derivation of Theorem~2.7}

Here
\[
q=1,\qquad p=n-2.
\]
Since $R=\sinh r$, put
\[
x=\sinh^2r=R^2.
\]
Then
\[
y=\log R=\frac{1}{2}\log x,
\qquad
\frac{d}{dy}=2x\frac{d}{dx}.
\]
The three metric factors are
\[
\mathfrak a=\frac{dr}{dy}=\tanh r=\sqrt{\frac{x}{1+x}},
\qquad
b=\sqrt x,
\qquad
c=\sqrt{x(1+x)}.
\]
Thus
\[
A=\frac{1}{2}\log x-\frac{1}{2}\log(1+x),
\qquad
B=\frac{1}{2}\log x,
\qquad
C=\frac{1}{2}\log x+\frac{1}{2}\log(1+x).
\]
Applying $2x\partial_x$ gives
\begin{equation}
\dot A=\frac{1}{1+x},
\qquad
\ddot A=-\frac{2x}{(1+x)^2},
\label{eq:appx-complex-A}
\end{equation}
\begin{equation}
\dot B=1,
\qquad
\ddot B=0,
\label{eq:appx-complex-B}
\end{equation}
and
\begin{equation}
\dot C=\frac{1+2x}{1+x},
\qquad
\ddot C=\frac{2x}{(1+x)^2}.
\label{eq:appx-complex-C}
\end{equation}
Moreover,
\begin{equation}
\eta_H^2=\frac{1}{1+x},
\qquad
\eta_V^2=\frac{1}{(1+x)^2}.
\label{eq:appx-complex-eta}
\end{equation}

The quantity $Z$ simplifies before the main expansion:
\begin{align}
Z
&=\frac{\delta}{1+x}+\beta+\frac{1+2x}{1+x}\gamma\notag\\
&=\frac{\delta+(1+x)\beta+(1+2x)\gamma}{1+x}\notag\\
&=\frac{1+x\beta+2x\gamma}{1+x},
\label{eq:appx-complex-Z}
\end{align}
where the last step uses $\delta=1-\beta-\gamma$.

It is useful to simplify the three second-order scalar combinations appearing
in \eqref{eq:appx-universal-trace}.  From
\eqref{eq:appx-complex-A}--\eqref{eq:appx-complex-C},
\begin{equation}
-\ddot A+2\dot A^2
=\frac{2x+2}{(1+x)^2}
=\frac{2}{1+x},
\label{eq:appx-complex-SA}
\end{equation}
\begin{equation}
-\ddot B+2\dot B^2=2,
\label{eq:appx-complex-SB}
\end{equation}
and
\begin{equation}
-\ddot C+2\dot C^2
=\frac{2+6x+8x^2}{(1+x)^2}.
\label{eq:appx-complex-SC}
\end{equation}

Substituting \eqref{eq:appx-complex-Z}--\eqref{eq:appx-complex-SC} into
\eqref{eq:appx-universal-trace} gives
\begin{align}
\mathscr T_{n-2,1}
={}&-n\frac{1+x\beta+2x\gamma}{1+x}
-\frac{(1+x\beta+2x\gamma)^2}{(1+x)^2}\notag\\
&+\delta\left[
\frac{2}{1+x}
+\frac{n-2-\beta}{1+x}
+\frac{1-\gamma}{(1+x)^2}
\right]\notag\\
&+2\beta
+\gamma\frac{2+6x+8x^2}{(1+x)^2}.
\label{eq:appx-complex-before-clear}
\end{align}
Multiplying by $(1+x)^2$ yields
\begin{align}
(1+x)^2\mathscr T_{n-2,1}
={}&-n(1+x)(1+x\beta+2x\gamma)\notag\\
&-(1+x\beta+2x\gamma)^2\notag\\
&+\delta\big[(n-\beta)(1+x)+(1-\gamma)\big]\notag\\
&+2\beta(1+x)^2+\gamma(2+6x+8x^2).
\label{eq:appx-complex-cleared}
\end{align}
We now substitute $\delta=1-\beta-\gamma$ and collect the coefficients of
\[
1,\quad \beta,\quad\gamma,\quad\beta^2,\quad\beta\gamma,\quad\gamma^2.
\]
The constant coefficient cancels.  The remaining coefficients of
$-(1+x)^2\mathscr T_{n-2,1}$ are
\begin{align*}
[\beta^2]&=x^2-x-1,\\
[\beta\gamma]&=4x^2-x-2,\\
[\gamma^2]&=4x^2-1,\\
[\beta]&=(n-2)x^2+(2n-1)x+n,\\
[\gamma]&=(2n-8)x^2+(3n-2)x+n.
\end{align*}
Consequently
\begin{equation}
\mathscr T_{n-2,1}
=-\frac{\mathcal R_{n,x}(\beta,\gamma)}{(1+x)^2},
\label{eq:appx-complex-final}
\end{equation}
where
\begin{equation}
\begin{aligned}
\mathcal R_{n,x}(\beta,\gamma)
={}&(x^2-x-1)\beta^2
 +(4x^2-x-2)\beta\gamma
 +(4x^2-1)\gamma^2\\
&+\bigl((n-2)x^2+(2n-1)x+n\bigr)\beta\\
&+\bigl((2n-8)x^2+(3n-2)x+n\bigr)\gamma.
\end{aligned}
\label{eq:appx-complex-R}
\end{equation}
This reproduces Theorem~\ref{thm:intrinsic-trace} exactly.

\subsection{Corollary~2.8 and the reduced-boundary interpretation}

Corollary~\ref{cor:tangent-plane-trace} introduces no new polynomial
identity.  The point is that the formula in Theorem~\ref{thm:intrinsic-trace}
is a pointwise statement for a hyperplane $\Pi=\nu^\perp$, depending on the
normal only through
\[
\nu=\alpha N+v+w,
\qquad
\beta=|v|^2,
\qquad
\gamma=|w|^2.
\]
At $\mathcal H^{n-1}$-almost every point of a reduced boundary $\partial^*E$
there is an approximate tangent hyperplane $\Pi_x=\nu_E(x)^\perp$.  Since the
tangential Jacobian is
\[
J_{\Pi_x}(t,\varphi)=|\cof DF_{t,\varphi}\,\nu_E(x)|,
\]
the same pointwise identity applies there.  Thus Corollary~2.8 is the bridge
from the algebraic hyperplane calculation to the finite-perimeter integral.

\subsection{Theorem~2.9: exact rearrangement and positivity}

Set
\[
\delta=1-\beta-\gamma,
\qquad
L_T=(n-1)+(3n-2)x+(2n-5)x^2.
\]
The claimed rearrangement is
\begin{equation}
\begin{aligned}
\mathcal R_{n,x}
={}&(n-1)(1+x)^2\beta
+\delta(1+x-x^2)\beta
+L_T\gamma\\
&+\delta(1-3x^2)\gamma
+x^2\gamma^2.
\end{aligned}
\label{eq:appx-complex-rearrangement}
\end{equation}
To verify it, expand only the terms containing $\delta$:
\begin{align}
\delta(1+x-x^2)\beta
={}&(1+x-x^2)\beta
-(1+x-x^2)\beta^2\notag\\
&-(1+x-x^2)\beta\gamma,
\label{eq:appx-delta-beta-expand}
\end{align}
and
\begin{align}
\delta(1-3x^2)\gamma
={}&(1-3x^2)\gamma
-(1-3x^2)\beta\gamma\notag\\
&-(1-3x^2)\gamma^2.
\label{eq:appx-delta-gamma-expand}
\end{align}
Hence
\begin{align*}
[\beta^2]&=-(1+x-x^2)=x^2-x-1,\\
[\beta\gamma]&=-(1+x-x^2)-(1-3x^2)=4x^2-x-2,\\
[\gamma^2]&=-(1-3x^2)+x^2=4x^2-1,
\end{align*}
and
\begin{align*}
[\beta]
&=(n-1)(1+x)^2+(1+x-x^2)\\
&=n+(2n-1)x+(n-2)x^2,\\
[\gamma]
&=L_T+(1-3x^2)\\
&=n+(3n-2)x+(2n-8)x^2.
\end{align*}
These are exactly the coefficients in \eqref{eq:appx-complex-R}; therefore
\eqref{eq:appx-complex-rearrangement} is an identity.

For the sign, use the elementary bound
\[
0\le\delta\le1
\quad\Longrightarrow\quad
\delta c\ge\min\{0,c\}
\qquad(c\in\mathbb R).
\]
Thus
\[
\mathcal R_{n,x}
\ge c_H\beta+c_T\gamma+x^2\gamma^2,
\]
where
\[
c_H=(n-1)(1+x)^2+\min\{0,1+x-x^2\},
\]
and
\[
c_T=L_T+\min\{0,1-3x^2\}.
\]
If $1+x-x^2\ge0$, then $c_H\ge(n-1)(1+x)^2>0$.  If
$1+x-x^2<0$, then
\[
c_H=n+(2n-1)x+(n-2)x^2>0.
\]
Similarly, if $1-3x^2\ge0$, then $c_T\ge L_T>0$, while if
$1-3x^2<0$, then
\[
c_T=n+(3n-2)x+(2n-8)x^2>0
\]
for $n\ge4$ and $x\ge0$.  In the borderline case $n=4$ this becomes
$c_T=4+10x>0$.  Hence
\[
\mathcal R_{n,x}\ge0,
\qquad
\mathcal R_{n,x}=0
\quad\Longleftrightarrow\quad
\beta=\gamma=0.
\]
This verifies Theorem~\ref{thm:strict-positivity}.

\subsection{A bookkeeping identity for the $\delta$-basis}

The quaternionic and Cayley formulas are most transparent when written in the
basis
\[
\beta^2,\quad\beta\gamma,\quad\gamma^2,
\quad\beta\delta,\quad\gamma\delta,
\qquad
\delta=1-\beta-\gamma.
\]
Suppose a quadratic polynomial with no constant term is first obtained in the
ordinary basis as
\begin{equation}
Q=q_{20}\beta^2+q_{11}\beta\gamma+q_{02}\gamma^2
+q_{10}\beta+q_{01}\gamma.
\label{eq:appx-raw-basis}
\end{equation}
If we want to rewrite it as
\begin{equation}
Q=A_{20}\beta^2+A_{11}\beta\gamma+A_{02}\gamma^2
+A_{10}\beta\delta+A_{01}\gamma\delta,
\label{eq:appx-delta-basis}
\end{equation}
then expanding $\delta=1-\beta-\gamma$ gives
\begin{align*}
Q
={}&(A_{20}-A_{10})\beta^2
+(A_{11}-A_{10}-A_{01})\beta\gamma\\
&+(A_{02}-A_{01})\gamma^2
+A_{10}\beta+A_{01}\gamma.
\end{align*}
Therefore
\begin{equation}
A_{10}=q_{10},\qquad A_{01}=q_{01},
\label{eq:appx-delta-basis-linear}
\end{equation}
\begin{equation}
A_{20}=q_{20}+q_{10},\qquad
A_{02}=q_{02}+q_{01},
\label{eq:appx-delta-basis-diagonal}
\end{equation}
and
\begin{equation}
A_{11}=q_{11}+q_{10}+q_{01}.
\label{eq:appx-delta-basis-mixed}
\end{equation}
These relations explain exactly how the five positive coefficients displayed
in Theorems~2.10 and~2.11 arise from the ordinary polynomial expansion.

\subsection{The quaternionic case: derivation of Theorem~2.10}

Here
\[
q=3,\qquad p=n-4,
\]
and
\[
R^n=x^{n/2}\left(1+\frac{n}{n+2}x\right).
\]
Set
\[
D:=nx+n+2.
\]
Then
\begin{equation}
R^n=x^{n/2}\frac{D}{n+2}.
\label{eq:appx-quat-R}
\end{equation}
Taking logarithms,
\[
y=\frac{1}{2}\log x+\frac{1}n\log D-\frac{1}n\log(n+2),
\]
so
\begin{align}
\frac{dy}{dx}
&=\frac{1}{2x}+\frac{1}D
=\frac{D+2x}{2xD}\notag\\
&=\frac{(n+2)(1+x)}{2xD},
\label{eq:appx-quat-dydx}
\end{align}
because $D+2x=(n+2)(1+x)$.  Hence
\begin{equation}
\frac{d}{dy}
=\frac{2xD}{(n+2)(1+x)}\frac{d}{dx}.
\label{eq:appx-quat-Dy}
\end{equation}
Since
\[
\frac{dr}{dx}=\frac{1}{2\sqrt{x(1+x)}},
\]
we obtain
\begin{equation}
\mathfrak a
=\frac{\sqrt x\,D}{(n+2)(1+x)^{3/2}},
\qquad
b=\sqrt x,
\qquad
c=\sqrt{x(1+x)}.
\label{eq:appx-quat-abc}
\end{equation}
Thus
\[
A=\frac{1}{2}\log x+\log D-\log(n+2)-\frac{3}{2}\log(1+x),
\]
\[
B=\frac{1}{2}\log x,
\qquad
C=\frac{1}{2}\log x+\frac{1}{2}\log(1+x).
\]
Applying \eqref{eq:appx-quat-Dy} gives
\begin{equation}
\dot A=\frac{nx+n-4x+2}{(n+2)(1+x)^2},
\label{eq:appx-quat-Adot}
\end{equation}
\begin{equation}
\ddot A
=-\frac{2xD(nx+n-4x+8)}{(n+2)^2(1+x)^4},
\label{eq:appx-quat-Addot}
\end{equation}
\begin{equation}
\dot B=\frac{D}{(n+2)(1+x)},
\qquad
\ddot B=-\frac{4xD}{(n+2)^2(1+x)^3},
\label{eq:appx-quat-Bdots}
\end{equation}
and
\begin{equation}
\dot C=\frac{(2x+1)D}{(n+2)(1+x)^2},
\qquad
\ddot C=\frac{2xD(nx+n-4x)}{(n+2)^2(1+x)^4}.
\label{eq:appx-quat-Cdots}
\end{equation}
Furthermore,
\begin{equation}
\eta_H^2=\frac{D^2}{(n+2)^2(1+x)^3},
\qquad
\eta_V^2=\frac{D^2}{(n+2)^2(1+x)^4}.
\label{eq:appx-quat-eta}
\end{equation}
Using $\delta=1-\beta-\gamma$ once, and only once, gives
\begin{equation}
Z=\frac{
(nx+n-4x+2)
+x(nx+n+6)\beta
+2x(nx+n+4)\gamma
}{(n+2)(1+x)^2}.
\label{eq:appx-quat-Z}
\end{equation}

Insert \eqref{eq:appx-quat-Adot}--\eqref{eq:appx-quat-Z} into
\eqref{eq:appx-universal-trace} and clear the common denominator by defining
\begin{equation}
Q_{\mathbb H}
:=-(n+2)^2(1+x)^4\mathscr T_{n-4,3}.
\label{eq:appx-quat-Q}
\end{equation}
The constant term of $Q_{\mathbb H}$ vanishes, and the ordinary-basis
expansion has the form \eqref{eq:appx-raw-basis}.  The two linear coefficients
are
\begin{align}
q_{10}
={}&n^3x^4+(4n^3+5n^2-16n)x^3\notag\\
&+(6n^3+14n^2-16n-48)x^2\notag\\
&+(4n^3+13n^2+4n-12)x+n^3+4n^2+4n,
\label{eq:appx-quat-q10}
\end{align}
and
\begin{align}
q_{01}
={}&(2n^3-4n^2)x^4+(7n^3-48n)x^3\notag\\
&+(9n^3+16n^2-48n-96)x^2\notag\\
&+(5n^3+16n^2+4n-16)x+n^3+4n^2+4n.
\label{eq:appx-quat-q01}
\end{align}
Therefore, by \eqref{eq:appx-delta-basis-linear},
\[
A_{10}=q_{10},\qquad A_{01}=q_{01}.
\]
The remaining three combinations
\eqref{eq:appx-delta-basis-diagonal}--\eqref{eq:appx-delta-basis-mixed}
simplify to
\begin{align}
A_{20}
={}&(1+x)D\Big[(n^2+n)(1+x)^2-6x-2\Big],
\label{eq:appx-quat-A20}
\end{align}
\begin{align}
A_{11}
={}&D\Big[3n^2x^3+(8n^2+6n-24)x^2\notag\\
&\hspace{2.5cm}+(7n^2+8n-16)x+2n^2+2n-4\Big],
\label{eq:appx-quat-A11}
\end{align}
and
\begin{align}
A_{02}
={}&D\Big[2n^2x^3+(5n^2+4n-16)x^2\notag\\
&\hspace{2.5cm}+(4n^2+5n-8)x+n^2+n-2\Big].
\label{eq:appx-quat-A02}
\end{align}
Therefore
\begin{equation}
\mathscr T_{n-4,3}
=-\frac{
A_{20}\beta^2+A_{11}\beta\gamma+A_{02}\gamma^2
+A_{10}\beta\delta+A_{01}\gamma\delta
}{(n+2)^2(1+x)^4}.
\label{eq:appx-quat-final}
\end{equation}
This is precisely Theorem~\ref{thm:quaternionic-trace-sign}.

For completeness, we verify positivity of the coefficients.  The factor
$D=nx+n+2$ is positive.  In $A_{20}$,
\[
(n^2+n)(1+x)^2-6x-2
=(n^2+n-2)+(2n^2+2n-6)x+(n^2+n)x^2,
\]
whose coefficients are positive for $n\ge4$.  The coefficients displayed in
$A_{11}$ and $A_{02}$ are likewise positive for $n\ge4$.  For $A_{10}$ and
$A_{01}$, the only potentially non-obvious coefficients satisfy
\[
4n^3+5n^2-16n=n(4n^2+5n-16)>0,
\]
\[
2n^3-4n^2=2n^2(n-2)>0,
\qquad
7n^3-48n=n(7n^2-48)>0,
\]
and
\[
9n^3+16n^2-48n-96>0
\qquad(n\ge4).
\]
The remaining coefficients are manifestly positive or are already positive at
$n=4$ and increase thereafter.  Hence every $A_{ij}>0$ for $n\ge4$ and
$x\ge0$.  Since $\beta,\gamma,\delta\ge0$,
\[
\mathscr T_{n-4,3}\le0.
\]
If $\beta>0$, then $A_{20}\beta^2>0$; if $\gamma>0$, then
$A_{02}\gamma^2>0$.  Thus
\[
\mathscr T_{n-4,3}=0
\quad\Longleftrightarrow\quad
\beta=\gamma=0.
\]

\subsection{The Cayley case: derivation of Theorem~2.11}

Now
\[
n=16,\qquad p=8,\qquad q=7.
\]
Define
\[
L(x)=120x^3+396x^2+440x+165.
\]
The volume-radius formula can be rewritten as
\begin{equation}
R^{16}=\frac{x^8L(x)}{165}.
\label{eq:appx-cayley-R}
\end{equation}
Taking logarithms,
\[
y=\frac{1}{2}\log x+\frac{1}{16}\log L-\frac{1}{16}\log165,
\]
so
\begin{equation}
\frac{dy}{dx}=\frac{1}{2x}+\frac{L'}{16L}
=\frac{8L+xL'}{16xL}.
\label{eq:appx-cayley-dydx-pre}
\end{equation}
The crucial simplification is
\begin{equation}
8L(x)+xL'(x)=1320(1+x)^3.
\label{eq:appx-cayley-key}
\end{equation}
Indeed,
\[
8L=960x^3+3168x^2+3520x+1320,
\]
\[
xL'=360x^3+792x^2+440x,
\]
and their sum is
\[
1320x^3+3960x^2+3960x+1320=1320(1+x)^3.
\]
Hence
\begin{equation}
\frac{d}{dy}
=\frac{2xL}{165(1+x)^3}\frac{d}{dx}.
\label{eq:appx-cayley-Dy}
\end{equation}
Since $dr/dx=(2\sqrt{x(1+x)})^{-1}$,
\begin{equation}
\mathfrak a=\frac{\sqrt x\,L}{165(1+x)^{7/2}},
\qquad
b=\sqrt x,
\qquad
c=\sqrt{x(1+x)}.
\label{eq:appx-cayley-abc}
\end{equation}
Therefore
\[
A=\frac{1}{2}\log x+\log L-\log165-\frac{7}{2}\log(1+x),
\]
\[
B=\frac{1}{2}\log x,
\qquad
C=\frac{1}{2}\log x+\frac{1}{2}\log(1+x).
\]
Applying \eqref{eq:appx-cayley-Dy} gives
\begin{equation}
\dot A=\frac{48x^3+220x^2+330x+165}{165(1+x)^4},
\label{eq:appx-cayley-Adot}
\end{equation}
\begin{equation}
\ddot A
=-\frac{4x(24x^3+148x^2+275x+165)L}{27225(1+x)^8},
\label{eq:appx-cayley-Addot}
\end{equation}
\begin{equation}
\dot B=\frac{L}{165(1+x)^3},
\qquad
\ddot B=-\frac{2x(36x^2+88x+55)L}{27225(1+x)^7},
\label{eq:appx-cayley-Bdots}
\end{equation}
and
\begin{equation}
\dot C=\frac{(2x+1)L}{165(1+x)^4},
\qquad
\ddot C=\frac{4x(24x^3+92x^2+121x+55)L}{27225(1+x)^8}.
\label{eq:appx-cayley-Cdots}
\end{equation}
Moreover,
\begin{equation}
\eta_H^2=\frac{L^2}{27225(1+x)^7},
\qquad
\eta_V^2=\frac{L^2}{27225(1+x)^8}.
\label{eq:appx-cayley-eta}
\end{equation}
Finally,
\begin{equation}
Z=\frac{Z_0+Z_H\beta+Z_V\gamma}{165(1+x)^4},
\label{eq:appx-cayley-Z}
\end{equation}
where
\begin{align*}
Z_0&=48x^3+220x^2+330x+165,\\
Z_H&=120x^4+468x^3+616x^2+275x,\\
Z_V&=240x^4+864x^3+1056x^2+440x.
\end{align*}

Insert these expressions into \eqref{eq:appx-universal-trace} and clear the
common denominator by setting
\begin{equation}
Q_{\mathbb O}:=-27225(1+x)^8\mathscr T_{8,7}.
\label{eq:appx-cayley-Q}
\end{equation}
Again the constant term vanishes, and $Q_{\mathbb O}$ is quadratic in
$(\beta,\gamma)$.  The ordinary-basis linear coefficients are
\begin{align}
q_{10}
={}&288000x^8+2399040x^7+8774208x^6+18408720x^5\notag\\
&+24235728x^4+20497400x^3+10865800x^2\notag\\
&+3294225x+435600,
\label{eq:appx-cayley-q10}
\end{align}
and
\begin{align}
q_{01}
={}&8\bigl(64800x^8+512640x^7+1774992x^6+3511992x^5\notag\\
&\hspace{0.7cm}+4339236x^4+3421880x^3+1675850x^2\notag\\
&\hspace{0.7cm}+462825x+54450\bigr).
\label{eq:appx-cayley-q01}
\end{align}
Therefore $B_{10}=q_{10}$ and $B_{01}=q_{01}$.  Applying the conversion
relations \eqref{eq:appx-delta-basis-diagonal}--\eqref{eq:appx-delta-basis-mixed}
to the three quadratic coefficients gives
\begin{align}
B_{20}
={}&(1+x)L(x)\notag\\
&\times(2520x^4+9972x^3+14828x^2+9845x+2475),
\label{eq:appx-cayley-B20}
\end{align}
\begin{align}
B_{11}
={}&2L(x)(3600x^5+16944x^4+31752x^3\notag\\
&\hspace{2.2cm}+29568x^2+13640x+2475),
\label{eq:appx-cayley-B11}
\end{align}
and
\begin{align}
B_{02}
={}&L(x)(4800x^5+21792x^4+39144x^3\notag\\
&\hspace{2.2cm}+34628x^2+14960x+2475).
\label{eq:appx-cayley-B02}
\end{align}
Thus
\begin{equation}
\mathscr T_{8,7}
=-\frac{
B_{20}\beta^2+B_{11}\beta\gamma+B_{02}\gamma^2
+B_{10}\beta\delta+B_{01}\gamma\delta
}{27225(1+x)^8}.
\label{eq:appx-cayley-final}
\end{equation}
This is Theorem~\ref{thm:Cayley-trace-sign}.

The sign is immediate from the displayed factorization: $L(x)>0$ for
$x\ge0$, every coefficient in each factor of $B_{20},B_{11},B_{02}$ is
positive, and every coefficient of $B_{10},B_{01}$ is positive.  Since
$\beta,\gamma,\delta\ge0$,
\[
\mathscr T_{8,7}\le0.
\]
If $\beta>0$, then $B_{20}\beta^2>0$; if $\gamma>0$, then
$B_{02}\gamma^2>0$.  Therefore
\[
\mathscr T_{8,7}=0
\quad\Longleftrightarrow\quad
\beta=\gamma=0.
\]

\subsection{What the equality condition means geometrically}

The normal decomposition is
\[
\nu=\alpha N+v+w,
\qquad
\beta=|v|^2,
\qquad
\gamma=|w|^2,
\qquad
\alpha^2+\beta+\gamma=1.
\]
Thus
\[
\beta=\gamma=0
\quad\Longleftrightarrow\quad
v=w=0
\quad\Longleftrightarrow\quad
\alpha^2=1
\quad\Longleftrightarrow\quad
\nu=\pm N.
\]
Accordingly, Theorems~2.7--2.11 establish the pointwise rigidity statement
\[
\text{if the boundary normal has a nonradial component, then }
\mathscr T_{p,q}<0.
\]
For an isoperimetric region, the exact-volume family satisfies
\[
0\le\operatorname{tr}D^2\mathscr P_E(0).
\]
Proposition~\ref{prop:rank-one-exact-volume-trace} gives
\[
\operatorname{tr}D^2\mathscr P_E(0)
=\int_{\partial^*E}\mathscr T_{p,q}\,d\mathcal H^{n-1}.
\]
Since the trace certificates give $\mathscr T_{p,q}\le0$ pointwise,
\[
0
\le
\int_{\partial^*E}\mathscr T_{p,q}\,d\mathcal H^{n-1}
\le0.
\]
Hence $\mathscr T_{p,q}=0$ almost everywhere on $\partial^*E$, and the strict
equality cases proved above imply
\[
\beta=\gamma=0
\quad\text{a.e. on }\partial^*E,
\]
that is,
\[
\nu_E=\pm N
\quad\mathcal H^{n-1}\text{-a.e. on }\partial^*E.
\]
This is the precise role of the algebraic certificates in the global rigidity
argument.

\subsection{Optional exact symbolic cross-check}

The calculations above are completely explicit and do not require computer
algebra.  As an independent reproducibility check, one may nevertheless verify
symbolically that
\[
(1+x)^2\mathscr T_{n-2,1}+\mathcal R_{n,x}\equiv0,
\]
\[
(n+2)^2(1+x)^4\mathscr T_{n-4,3}
+\mathcal P^{\mathbb H}_{n,x}\equiv0,
\]
and
\[
27225(1+x)^8\mathscr T_{8,7}
+\mathcal P^{\mathbb O}_{x}\equiv0.
\]
The symbolic check is supplementary only; all transformations needed to audit
the formulas by hand have been displayed above.

\section*{Statements and declarations}
\noindent\textbf{Competing interests.}
The author has no relevant financial or non-financial interests to disclose.

\noindent\textbf{Declaration of generative AI use}

During the preparation of this manuscript, the author used OpenAI's
ChatGPT (GPT-5.6 Sol) as an auxiliary tool for linguistic and stylistic
editing, exploratory algebra, symbolic consistency checks, development and
exposition of the volume-radius formulation, and literature discovery.  In
particular, it was used to cross-check the $q=1$ specialization of the
universal trace and the explicit polynomial expansions in the quaternionic
and Cayley cases, and to draw attention to standard BV and variational
references including \cite{AFP2000,Maggi2012,BarbosaDoCarmo1984,
BarbosaDoCarmoEschenburg1988,BattagliaMontefalconeMonti2019}.

AI output was treated as an auxiliary drafting and checking aid rather than
as a mathematical authority.  The arguments and formulas used in the paper
are presented explicitly in the manuscript, and the author assumes full
responsibility for their verification, the cited literature, and the final
conclusions.

\noindent\textbf{Data availability.}
No datasets were generated or analysed in the course of this study.
\section*{Acknowledgements}
The author thanks Enkelejd Hashorva and Manuel Ritor\'e for helpful
suggestions that improved this work.
The author gratefully acknowledges financial support from the Ministry
of Education, Science and Innovation of Montenegro through the grants
\emph{``Mathematical Analysis, Optimisation and Machine Learning''} and
\emph{``Complex-analytic and geometric techniques for non-Euclidean machine
learning: theory and applications.''}

\end{document}